\def\0{\mathbf{0}}
\def \cardim {\textrm{cardim}}
\def \dim {\textrm{dim}}
\def \< {\langle}
\def \> {\rangle}
\def\eps{\varepsilon}
\def\lam{\lambda}
\def\rr{\rightarrow}
\def\al{\kappa_0}
\def \cardim {\textrm{cardim}}
\def \dim {\textrm{dim}}
\def \< {\langle}
\def \> {\rangle}
\newcommand{\supp}{\operatorname{supp}}
\def\beqa{\begin{eqnarray}}
\def\eeqa{\end{eqnarray}}
\def\beqas{\begin{eqnarray*}}
\def\eeqas{\end{eqnarray*}}
\newtheorem{theorem}{Theorem}[section]
\newtheorem{lemma}[theorem]{Lemma}
\newtheorem{proposition}[theorem]{Proposition}
\newtheorem{corollary}[theorem]{Corollary}
\newtheorem{remark}[theorem]{Remark}
\newtheorem{definition}[theorem]{Definition}
\numberwithin{equation}{section}
\newcommand{\old}[1]{{}}
\def\endpf{{\ \hfill\hbox{\vrule width1.0ex height1.0ex}\parfillskip 0pt}}
\newenvironment{proof}{\noindent{\bf Proof:}}{\endpf}
\newcommand{\qed}{\hfill\rule{2mm}{2mm}}
\newcommand{\bd}{\begin{displaymath}}
\newcommand{\ed}{\end{displaymath}}
\newcommand{\be}{\begin{equation}}
\newcommand{\ee}{\end{equation}}
\newcommand{\bq}{\begin{eqnarray*}}
\newcommand{\eq}{\end{eqnarray*}}
\newcommand{\bn}{\begin{eqnarray}}
\newcommand{\en}{\end{eqnarray}}
\newcommand{\dl}{\delta}
\newcommand{\re}{\mathds{R}}
\title{Pathwise Uniqueness of the Stochastic Heat Equations with Spatially Inhomogeneous White Noise}
\author{Eyal Neuman   \\ \\ Faculty of Industrial Engineering \\ and Management  \\ Technion - Institute of Technology \\ Haifa 3200 \\ Israel }
\date{}
\begin{document}

\maketitle

\paragraph{Abstract.}
%We study the solutions of the stochastic heat equation with spatially inhomogeneous white noise.
%% This equation has the form
%%\be \label{FracStoHeat}
%%\frac{\partial}{\partial t} u(t,x) = \frac{1}{2}\Delta u(t,x) + \sigma(t,x,u(t,x))\dot{W} , \ \ t\geq0, \ \ x\in \re.
%%\ee
%%Here $\Delta$ denotes the Laplacian and $\sigma(t,x,u):\re_{+}\times\re^2\rr \re$ is a continuous function with at most a linear growth in the $u$ variable.
%We assume that the noise $\dot{W}$ is a spatially inhomogeneous white noise.
%%on
%%$\re_{+}\times\re$. When $\sigma(t,x,u)=\sqrt{u}$ such equations arise as scaling limits of critical branching particle systems which are known as \emph{catalytic super Brownian motion}.
%In particular
We study the solutions of the stochastic heat equation driven by spatially inhomogeneous multiplicative white noise based on a fractal measure.
We prove pathwise uniqueness for solutions of this equation when the noise coefficient is H\"{o}lder continuous of index $\gamma>1-\frac{\eta}{2(\eta+1)}$. Here $\eta\in(0,1)$ is a constant that defines the spatial regularity of the noise.

\section{Introduction and Main Results} \label{Sec-results}
We study the solutions of the stochastic heat equation with spatially inhomogeneous white noise. This equation has the form
\be \label{SHE}
\frac{\partial}{\partial t} u(t,x) = \frac{1}{2}\Delta u(t,x) + \sigma(t,x,u(t,x))\dot{W} , \ \ t\geq0, \ \ x\in \re.
\ee
Here $\Delta$ denotes the Laplacian and $\sigma(t,x,u):\re_{+}\times\re^2\rr \re$ is a continuous function with at most a linear growth in the $u$ variable.
We assume that the noise $\dot{W}$ is white noise on $\re_{+}\times\re$ based on some $\sigma$-finite measure $\mu(dx)dt$. Equations like (\ref{SHE}) may arise as scaling limits of critical interacting branching particle systems. For example, in the case where $\sigma(t,x,u)=\sqrt{u}$ and $\mu(dx):=dx$,
such equations describe the evolution in time and space of the density of the classical \emph{super-Brownian motion} (see e.g. \cite{Shiga88}). If $\mu$ is any finite measure and $\sigma(u)=\sqrt{u}$, then (\ref{SHE}) describes evolution of the density of  \emph{catalytic super-Brownian motion} with the catalyst $\mu(dx)$ (see e.g. \cite{Zahle2005}). \\\\
In this work we consider the problem of the pathwise uniqueness for the solution of (\ref{SHE}) where $\sigma(\cdot,\cdot,u)$ is H\"{o}lder continuous in $u$ and $\dot{W}$ is a spatially inhomogeneous Gaussian white noise based on a measure $\mu(dx)dt$. More precisely $W $ is a mean zero Gaussian process defined on a filtered probability space $(\Omega,\mathcal{F},\mathcal{F}_t,P)$, where $\mathcal{F}_t$ satisfies the usual hypothesis and we assume that $W$ has the following properties. We denote by
\bd
W_t(\phi)=\int_{0}^{t}\int_{\re}\phi(s,y)W(dyds), \ \  t\geq 0,
\ed
the stochastic integral of a function $\phi$ with respect to $W$.
We denote by $ \mathcal{C}_c^\infty(\re_{+}\times\re)$ the space of compactly supported infinitely differentiable functions on $\re_{+}\times\re$.
We assume that $W$ has the following covariance structure
\bd
E(W_t(\phi)W_t(\psi))=\int_{0}^{t}\int_{\re}\phi(s,y)\psi(s,y)\mu(dy) ds, \ \ t\geq 0,
\ed
for $\phi,\psi\in \mathcal{C}_c^\infty(\re_{+}\times\re)$.
Assume that the measure $\mu$ satisfies the following conditions
\begin{itemize} \label{assump1-2}
  \item [{\bf (i)}] There exists $\eta\in (0,1)$ such that
\bn \label{eta-poten}
\sup_{x\in \re} \int_{\re} |x-y|^{-\eta+\eps} \mu(dy) <\infty, \ \ \forall \eps>0,
\en
 \item [{\bf (ii)}]
 $$\cardim(\mu)=\eta.$$
\end{itemize}
Note that (ii) means that there exists a Borel set $A\subset \re$ of Hausdorff dimension $\eta$ such that
$\mu(A^c)=0$, and this fails for $\eta'<\eta$ (see Definition \ref{def-cardim}).  \medskip \\
In what follows if a white noise is based on the measure $dx \times dt$ (that is $\mu(dx)$ is the Lebesgue measure), we will call it a homogenous white noise.
The stochastic heat equations deriven by homogeneous white noises were studied among many others, by Caba{\~n}a \cite{Cabana70}, Dawson \cite{Dawson72}, \cite{Dawson75}, Krylov and Rozovskii \cite{Krylov77}, \cite{Krilov79}, \cite{Krilov79b}, Funaki \cite{Funaki83}, \cite{Funaki84} and Walsh \cite{Walsh}.
Pathwise uniqueness of the solutions for the stochastic heat equation, when the white noise coefficient $\sigma$ is Lipschitz continuous was derived in \cite{Walsh}. In \cite{MP09}, the pathwise uniqueness was established for the solutions of the stochastic heat equation, when the white noise coefficient $\sigma(\cdot,\cdot,u)$ is H\"{o}lder continuous in $u$ of index $\gamma >3/4$. The $d$-dimensional stochastic heat equation driven by \textit{colored noise} was also extensively studied. Pathwise uniqueness for the solutions of the stochastic heat equation of type (\ref{SHE}) driven by colored noise, with H\"{o}lder continuous coefficients was studied in \cite{MPS06}. The result in \cite{MPS06} was later improved by Rippl and Sturm in \cite{Rippl-Sturm2012}.
The method of proof in \cite{MP09}, \cite{MPS06} and \cite{Rippl-Sturm2012} is a version of the Yamada-Watanabe argument (see \cite{YW71}) for infinite dimensional stochastic differential equations. However, in the case where (\ref{SHE}) is driven by inhomogeneous white noise, this method does not go through. In this work we needed to construct a new covering argument (see Remark \ref{remark-covering}) and to change the regularity argument which was developed in \cite{MP09} (see Remark \ref{Remark-Molifier}).     \\\\
Before we describe in more detail the known uniqueness results for the case of equations driven by homogeneous and inhomogeneous white noises, we introduce an additional notation and definitions.
\paragraph{Notation.}
For every $E\subset \re$, we denote by $\mathcal{C}(E)$ the space of continuous functions on $E$. In addition, a superscript $k$, (respectively, $\infty$), indicates that functions are $k$ times (respectively, infinite times), continuously differentiable. A subscript $b$, (respectively, $c$), indicates that they are also bounded, (respectively, have compact support).
For $f\in \mathcal{C}(\re)$ set
\be \label{CtemNorm}
\|f\|_\lam=\sup_{x\in \re}|f(x)|e^{-\lam|x|},
\ee
and define $$\mathcal{C}_{tem}:=\{f\in \mathcal{C}(\re), \|f\|_\lam<\infty \ \ \textrm{for every} \ \  \lam>0\}.$$ The topology on this space is induced by the norms $\|\cdot \|_\lam$ for $\lam>0$. \\\\
For $I\subset \re_{+}$ let $\mathcal{C}(I,E)$ be the space of all continuous functions on $I$ taking values in topological space $E$ endowed with the topology of uniform convergence on compact subsets of $I$.
Hence the notation $u \in \mathcal{C}(\re_{+},\mathcal{C}_{tem})$ implies that $u$ is a continuous function on $\re_{+}\times \re$ and
\bd
\sup_{t\in[0,T]}\sup_{x\in\re}|u(t,x)|e^{-\lam|x|}< \infty , \ \ \forall \ \lam>0, \ T>0.
\ed
In many cases it is possible to show that solutions to (\ref{SHE}) are in $\mathcal{C}(\re_{+},\mathcal{C}_{tem})$. \medskip \\
We set
\bn
G_t(x)=\frac{1}{\sqrt{2\pi t}}e^{\frac{-x^2}{2t}}, \ x\in\re , \ t>0.
\en
Let us define a stochastically strong solution to (\ref{SHE}), which is also called a strong solution to (\ref{SHE}).
\begin{definition} (Definition next to (1.5) in \cite{MP09}) \label{Def-Strong-sol}
Let $(\Omega,\mathcal{F},\mathcal{F}_t,P)$ be a probability space and let $W$ be a white noise process defined on $(\Omega,\mathcal{F},\mathcal{F}_t,P)$. Let $\mathcal{F}_t^W\subset \mathcal{F}_t$ be the filtration generated by $W$. A stochastic process $u:\Omega \times \re_{+}\times \re \rr \re $ which is jointly measurable and $\mathcal{F}^W_t$-adapted, is said to be a stochastically strong solution to (\ref{SHE}) with initial condition $u_0$ on $(\Omega,\mathcal{F},\mathcal{F}_t,P)$, if for all $t \geq 0$ and $x\in \re$,
\bn \label{MSHE}
u(t,x)&=&G_{t}u_0(x) + \int_{0}^{t}\int_{\re}G_{t-s}(x-y)\sigma(s,y,u(s,y))W(ds,dy), \ \  P-\rm{a.s.}
\en
Here $G_{t}f(x)=\int_{\re}G_{t}(x-y)f(y)dy$, for all $f$ such that the integral exists.
\end{definition}
In this work we study uniqueness for (\ref{SHE}) in the sense of pathwise uniqueness. The definition of pathwise uniqueness is given below.
 \begin{definition} (Definition before Theorem 1.2 in \cite{MP09})
We say that pathwise uniqueness holds for solutions of (\ref{SHE}) in $\mathcal{C}(\re_{+},\mathcal{C}_{tem})$ if for every deterministic initial condition, $u_0\in \mathcal{C}_{tem}$, any two solutions to (\ref{SHE}) with sample paths a.s. in $\mathcal{C}(\re_{+},\mathcal{C}_{tem})$ are equal with probability $1$.
\end{definition}
%\begin{definition} (Definition in Section 2 of \cite{BassPerkins2011})
%We say that uniqueness in law holds for solutions of (\ref{SHE}) in $\mathcal{C}(\re_{+},\mathcal{C}_{tem})$, if for every deterministic initial condition, $u_0\in \mathcal{C}_{tem}$, any two solutions to (\ref{SHE}), $(u,W)$ and $(\bar{u},\bar{W})$, with sample paths a.s. in $\mathcal{C}(\re_{+},\mathcal{C}_{tem})$, have the same joint law.
%\end{definition}
\paragraph{Convention.} Constants whose values are unimportant and may change from line to line are denoted by $C_i,M_i, \ \ i=1,2,..$, while constants whose values will be referred to later and appear initially in say, Lemma $i.j$ (respectively, Equation $(i.j)$) are denoted by $C_{i.j}$ (respectively, $C_{(i.j)}$). \\\\
Next we present in more detail some results on pathwise uniqueness for the solutions of (\ref{SHE}) driven by homogeneous white noise which are relevant to our context.
If $\sigma$ is Lipschitz continuous, the existence and uniqueness of a strong solution to (\ref{SHE}) in $\mathcal{C}(\re_+,\mathcal{C}_{tem})$ was proved in \cite{Shi94}. The proof uses the standard tools that were developed in \cite{Walsh} for solutions to SPDEs.
In \cite{MP09}, Lipschitz assumptions on $\sigma$ were relaxed and the following conditions were introduced:
for every $T>0$, there exists a constant $C_{(\ref{grow})}(T)>0$ such that for all  $(t,x,u)\in [0,T]\times \re^2$,
\be \label{grow}
|\sigma(t,x,u)|\leq C_{(\ref{grow})}(T)(1+|u|),
\ee
for some $\gamma>3/4$ there are $\bar R_1,\bar R_2>0$ and for all $T>0$ there is an $\bar R_0(T)$ so that for all $t\in[0,T]$ and all $(x,u,u')\in \re^3$,
\be \label{HolSigmaCon}
|\sigma(t,x,u) - \sigma(t,x,u')| \leq \bar R_0(T)e^{\bar R_1|x|}(1+|u|+|u'|)^{\bar R_2}|u-u'|^\gamma.
\ee
%and there is a $B>0$ such that for all $(t,x,u,u')\in \re_{+}\times \re^3$,
%\be \label{HolDriftCon}
%|b(t,x,u) - b(t,x,u')| \leq B|u-u'|.
%\ee
Mytnik and Perkins in \cite{MP09} proved that if $u_0\in \mathcal{C}_{tem}$, $\mu(dx)=dx$, and $\sigma:\re_{+}\times\re^2\rr\re$ satisfy (\ref{grow}), (\ref{HolSigmaCon}) then there exists a unique strong solution of (\ref{SHE}) in $\mathcal{C}(\re_+,\mathcal{C}_{tem})$. It was also shown in \cite{MP09} that addition of a Lipschitz continuous drift term to the right hand side of (\ref{SHE}) does not affect the uniqueness result. \medskip \\
%Let us state our main results for the equation (\ref{SHE}) driven by a spatially inhomogeneous white noise with a H\"{o}lder continuous $\sigma$. First we show, under some conditions on $\mu$ and on the H\"{o}lder exponent of $\sigma$, that the pathwise uniqueness for the solutions of (\ref{SHE}) holds. Later we prove under milder assumptions, that the difference of two solutions of (\ref{SHE}) is H\"{o}lder continuous in the spatial variable with any exponent $\xi<1$ at the points of the zero set.
% \medskip \\
Before we introduce our results let us define some spaces of measures that will be used in the definition of spatially inhomogeneous white noise.
\paragraph{Notation:}
Let $\eta \in (0,1)$. For a measure $\mu$ on $(\re,\mathcal{B}(\re))$, let us define
\bn \label{phi-eta}
\phi_{\eta,\mu}(x) &:=& \int_{\re^d}|x-y|^{-\eta}\mu(dy),
\en
Denote by $M_f(\re)$ the space of finite measures on $(\re,\mathcal{B}(\re))$. Let $\dim_H(A)$ be the Hausdorff dimension of any set $A\subset\mathcal{B}(\re)$. \medskip \\
We need the definition of carrying dimension before we state the result.
\begin{definition} \label{def-cardim} [Definition in Section 9.3.1 of \cite{Dawson93}]
A measure $\mu\in M_f (\re)$ is  said to have carrying dimension $\cardim(\mu)=l$, if there exists a Borel set $A$
such that $\mu(A^c)=0$ and $\dim_H(A)=l$, and this fails for any $l'<l$.
\end{definition}
We introduce the following subset of $M_f (\re)$:
\be \label{Meta}
 M^{\eta}_f (\re) := \bigg\{ \mu \in M_f (\re) \bigg|\sup_{x\in \re} \phi_{\eta-\eps,\mu}(x)<\infty, \ \forall \eps>0, \textrm{ and }\cardim(\mu)=\eta \bigg\}.
\ee
Next we define the inhomogeneous white noise that we are going to work with.
\begin{definition} \label{frac-W-noise}
A white noise $W$ based on the measure $\mu(dx)\times dt$, where $\mu \in  M^\eta_{f}(\re)$, is called a spatially inhomogeneous white noise based on $\mu$.
The corresponding white noise process $W_t(A):=W((0,t]\times A)$, where $t\geq 0$ and  $A \in \mathcal{B}(\re)$, is called a spatially inhomogeneous white noise process based on $\mu$.
\end{definition}
Now we are ready to state the main result of the paper: the pathwise uniqueness to the stochastic heat equation (\ref{SHE}) with spatially inhomogeneous white noise holds for some class of H\"{o}lder continuous noise coefficients. The existence of a weak solution to this equation under similar assumptions on $\mu$ and less restrictive assumptions on $\sigma$ was proved by Zahle in \cite{Zahle2005}.
%For some $\gamma>0$ there are $R_1,R_2>0$ and for all $T>0$ there is an $R_0(T)$ so that for all $t\in[0,T]$ and all $(x,u,u')\in \re^3$,
%\be \label{HolSigmaConF}
%|\sigma(t,x,u) - \sigma(t,x,u')| \leq R_0(T)e^{R_1|x|}(1+|u|+|u'|)^{R_2}|u-u'|^\gamma.
%\ee
%As we hope that Conjecture \ref{ExistHolSol} is verified by then,
\begin{theorem}\label{holder-conj}
Let $\dot{W}$ be spatially inhomogeneous white noise based on a measure $\mu \in M^\eta_f(\re)$, for some \\ $\eta \in (0,1)$. Let $u(0,\cdot)\in \mathcal{C}_{tem}(\re)$. Assume that $\sigma: \re_{+}\times \re^2 \rr \re$ satisfies (\ref{grow}), (\ref{HolSigmaCon}), for some $\gamma$ satisfying
\bn \label{optsol}
\gamma > 1-\frac{\eta}{2(\eta+1)},
\en
then pathwise uniqueness holds for the solutions to (\ref{SHE}) with sample paths a.s. in $\mathcal{C}(\re_{+},\mathcal{C}_{tem}(\re))$.
\end{theorem}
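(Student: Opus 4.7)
The plan follows the Yamada--Watanabe type argument developed by Mytnik and Perkins \cite{MP09} for the homogeneous case, modified in two places to accommodate the fractal nature of $\mu$: a revised regularity/mollification step (Remark \ref{Remark-Molifier}) and a new covering argument (Remark \ref{remark-covering}). Let $u^1,u^2\in\mathcal{C}(\re_{+},\mathcal{C}_{tem})$ be two solutions with common initial data, and set $u=u^1-u^2$. I would introduce the standard Yamada--Watanabe sequence $\psi_n\in \mathcal{C}^2(\re)$ with $\psi_n(x)\uparrow|x|$ and $0\le \psi_n''(x)\le 2(n|x|)^{-1}\mathbf{1}_{[a_n,a_{n-1}]}(|x|)$ for a geometric sequence $a_n\downarrow 0$, and apply It\^o's formula to $\langle \psi_n(u(t)),\Phi\rangle_\mu$ against a nonnegative, compactly supported test function $\Phi$. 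Since $u(t,\cdot)$ is only continuous, one works first with spatial mollifications $u^i*\rho_\varepsilon$ and then sends $\varepsilon\downarrow 0$; here the argument of \cite{MP09} must be reworked because the natural reference measure in the pairing is now $\mu$ rather than Lebesgue, and the relevant regularity of $u$ is weaker.

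Taking expectations kills the martingale part and, after letting $\varepsilon\downarrow 0$, the resulting bound is
\[
E\langle\psi_n(u(t)),\Phi\rangle_\mu \le R_n(\Phi) + \tfrac12 E\!\int_0^t\!\!\int \psi_n''(u(s,x))\bigl(\sigma(s,x,u^1)-\sigma(s,x,u^2)\bigr)^2 \Phi(x)\,\mu(dx)\,ds,
\]
where $R_n(\Phi)$ collects the contributions from $\Delta u^1-\Delta u^2$ and tends to $0$ as $n\to\infty$ once $\Phi$ is averaged against the heat kernel. Using (\ref{HolSigmaCon}) and $\psi_n''(x)|x|^{2\gamma}\le 2|x|^{2\gamma-1}/n$, the quadratic variation term is dominated by $n^{-1}E\int_0^t\!\int |u|^{2\gamma-1}\mathbf{1}_{\{0<|u|<a_{n-1}\}}F\,\Phi\,d\mu\,ds$ with $F=Ce^{2\bar R_1|x|}(1+|u^1|+|u^2|)^{2\bar R_2}$, which is controlled by $\mathcal{C}_{tem}$--moment bounds on the solutions. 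Since $2\gamma-1<1$, the negative power $|u|^{2\gamma-1}$ on $\{|u|<a_{n-1}\}$ blocks the naive $n\to\infty$ limit, and the regularity of $u$ together with the fractal structure of $\mu$ must be brought in.

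The core of the proof is therefore a regularity/covering step. First, using Burkholder together with (\ref{eta-poten}), I would bound $\int_0^t\!\int |G_{t-s}(x-y)-G_{t-s}(x'-y)|^2\mu(dy)\,ds\le C|x-x'|^{\eta/(\eta+1)-\varepsilon}$ (and the analogous temporal estimate), which through Kolmogorov's criterion gives a.s.\ Hölder regularity of $u^1,u^2$ with any spatial exponent $\xi<\xi_\eta:=\eta/(\eta+1)$ and temporal exponent $\xi/2$. Second, on a space--time grid of spatial mesh $N^{-1}$ and temporal mesh $N^{-2}$, I would introduce stopping times at which $|u|$ exceeds a threshold $N^{-\beta}$ for a carefully chosen $\beta<\xi$, so that on the event $\{|u|<N^{-\beta}\}$ the zero set of $u$ is confined, via the Hölder estimate, to neighbourhoods of grid centres of controlled size. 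The new covering argument then covers $\supp\mu$ by intervals of length $N^{-1}$ and uses $\cardim(\mu)=\eta$ together with (\ref{eta-poten}) to control the total $\mu$-mass of the bad intervals by an explicit power of $N$ that is sharper than any Lebesgue-based count.

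Inserting this into the Yamada--Watanabe inequality, with $a_n=N^{-\alpha}$ for an $\alpha$ tuned to $\beta,\xi,\gamma,\eta$, yields a Gronwall bound $E\langle|u(t)|,\Phi\rangle_\mu\le C\!\int_0^t\! E\langle|u(s)|,\Phi_s\rangle_\mu\,ds$ that forces $u\equiv 0$ in $L^1(\mu)$ on compact time intervals; the mild equation (\ref{MSHE}) then propagates this to $u^1\equiv u^2$ on $\re_+\times\re$. The main obstacle is to close the three-way balance between the spatial Hölder exponent $\xi_\eta=\eta/(\eta+1)$ of $u$, the carrying dimension $\eta$ used to count bad boxes, and the deficit $1-\gamma$ coming from the Hölder coefficient: the new covering argument must be tight enough that the combined loss on bad boxes beats the prefactor $n^{-1}$, and it is exactly this matching of exponents that pins down (\ref{optsol}) as the sharp condition on $\gamma$.
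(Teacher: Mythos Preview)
Your outline has the right high-level shape (Yamada--Watanabe plus a regularity/covering step), but two structural errors prevent it from closing. First, you pair $\psi_n(u(t))$ against $\Phi\,d\mu$, whereas the distributional form (\ref{mildForm}) of the equation is a pairing against $L^2(dx)$: the drift term $\langle u_s,\tfrac12\Delta\phi\rangle$ is a Lebesgue integral and has no natural $\mu$-version, so your $R_n(\Phi)$ is ill-defined. The paper instead mollifies by the heat kernel and integrates $\phi_n(G_{m^{-2}}u_t(x))$ against $\Psi_s(x)\,dx$; the quadratic-variation term $I_3^{m,n}$ in (\ref{ito2}) is then a \emph{double} integral
\[
\int\!\!\int \psi_n\bigl(|G_{m^{-2}}u_s(x)|\bigr)\,\bigl(\sigma(s,y,u^1)-\sigma(s,y,u^2)\bigr)^2 G_{m^{-2}}(x-y)^2\,\mu(dy)\,\Psi_s(x)\,dx,
\]
with the mollified $u$ inside $\psi_n$ at $x$ and the unmollified $u$ (through $\sigma$) at $y$. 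Collapsing this to $\int |u|^{2\gamma-1}\,d\mu$ erases precisely the $x\leftrightarrow y$ mismatch that the regularity and covering steps are designed to exploit.

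Second, your regularity input is both incorrect and insufficient. Lemma \ref{condmu}(c) gives the kernel bound $\int_0^t\!\int(G_{t-s}(x-y)-G_{t-s}(x'-y))^2\mu(dy)\,ds\le C|x-x'|^{\delta}$ for $\delta<\eta$, not $\eta/(\eta+1)$; Kolmogorov then yields spatial H\"older exponent $\xi<\eta/2$ for each $u^i$ (cf.\ \cite{Zahle2004}), and even your own bound would only produce $\xi<\eta/(2(\eta+1))$, not $\eta/(\eta+1)$. More importantly, no one-step Kolmogorov estimate on $u^1,u^2$ can yield the threshold (\ref{optsol}): the paper needs an \emph{iterative} bootstrap (property $(P_m)$, Proposition \ref{Prop-Induction}) that improves the effective exponent of $u=u^1-u^2$ \emph{near its zero set} from $\eta/2$ up to anything below $1+\eta$, by feeding the current bound back into the estimates for $u_{1,\delta}'$ and $\tilde u_{2,\delta}$. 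Without this, even the sharpest covering of $\supp\mu$ gives only $\gamma>1-\eta/4$ (see the remark after Theorem \ref{holder-conj} and Remark \ref{remark-covering}). Finally, the covering you sketch is in the right spirit but must be coupled to a further stratification of the near-zero set according to the size of the mollified derivative $u'_{1,a_n^{2\kappa_0}}(s,\hat x_n(s,x))$ on a grid $a_n^{\beta_i}$ (the sets $J_{n,i}$, $\tilde J_{n,i}$ of Section 4); a single-threshold decomposition $\{|u|<N^{-\beta}\}$ does not produce the exponent balance leading to (\ref{Iibound}).
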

\begin{remark}
Assume that the condition $\cardim(\mu)=\eta$ is omitted from the assumptions of Theorem \ref{holder-conj}. Then from the proof of Theorem \ref{holder-conj} one can obtain that, in this case, pathwise uniqueness holds for the solutions of (\ref{SHE}) if $\gamma> 1-\eta/4$.
\end{remark}
Note that assumptions (i) and (ii) in (\ref{assump1-2}) are related as follows. Let
\bn
I_{\eta}(\mu)&:=& \int_{\re^d}\phi_{\eta,\mu}(x)\mu(dx).  \nonumber
\en
$\phi_{\eta,\mu}(\cdot)$ (in (\ref{phi-eta})) and $I_{\eta,\mu}(\cdot)$ are often called the $\eta$-potential and $\eta$-energy
of the measure $\mu$ respectively. \\\\
In Section 4.3 of \cite{falconer}, the connection between the sets of measures above and the Hausdorff dimension of sets that contain their support is introduced.
Theorem 4.13 in \cite{falconer} states that if a mass distribution $\mu$ on a set $F\subset \re$ has finite $\eta$-energy, that is,
\bq
I_{\eta}(\mu)<\infty,
\eq
then the Hausdorff dimension of $F$ is at least $\eta$. Recall that a measure $\mu$ is called a mass distribution on a set $F\subset \re^d$, if the support of $\mu$ is contained in $F$ and $0<\mu(F)<\infty$ (see definition in Section 1.3 of \cite{falconer}). \medskip \\
Let us discuss the connection between Theorem \ref{holder-conj} and Theorem 1.2 in \cite{MP09}. The case of $\eta=1$ formally corresponds to the "homogeneous" white noise case that was studied in \cite{MP09}. We see that in Theorem \ref{holder-conj}, our lower bound on $\gamma$ coincides with the bound $3/4$ obtained in \cite{MP09}. Note that as it was shown in \cite{Myt-Mull-Perk2011}, the $3/4$ bound is optimal in the homogeneous case. A counter example for $\gamma < 3/4$ was constructed in \cite{Myt-Mull-Perk2011}.
The optimality of the bound $1-\frac{\eta}{2(\eta+1)}$ in Theorem \ref{holder-conj} is a very interesting open problem. \medskip \\
In our proof we  use the Yamada-Watanabe argument for the stochastic heat equation that was carried out in \cite{MP09} for equations driven by homogeneous white noise.
We describe very briefly the main idea of the argument. Let $\tilde{u}\equiv u^1-u^2$ be the difference between two solutions to (\ref{SHE}). The proof of uniqueness relies on the regularity of $\tilde{u}$ at the points $x_0$ where
$\tilde{u}(t,x_0)$ is "small". To be more precise, we need to show that there exists a certain $\xi$, such that for points $x_0$ where $\tilde{u}(t,x_0)\approx 0$ and for points $x$ near by, we have
\be \label{strr}
|\tilde{u}(t,x)-C_1(\omega)(x-x_0)|\leq C_2(\omega)|x-x_0|^\xi,
\ee
for some (random) constants $C_1, \ C_2$.
Moreover, we will show that in our case, for any $\xi$ such that
\bd
\xi<\frac{\eta}{2(1-\gamma)}\wedge (1+\eta),
\ed
(\ref{strr}) holds for $x_0$ such that $\tilde{u}(t,x_0)\approx 0$.
This, will allow us to derive the following condition for the pathwise uniqueness
\bd
\gamma> 1-\frac{\eta}{2(1+\eta)}.
\ed
One of the by-products of the proof of Theorem \ref{holder-conj} is the following theorem. We prove under milder assumptions that the difference of two solutions of (\ref{SHE}) is H\"{o}lder continuous in the spatial variable with any exponent $\xi<1$ at the points of the zero set.
\begin{theorem}\label{thm-reg}
Assume the hypotheses of Theorem \ref{holder-conj}, however instead of (\ref{optsol}) suppose
\bn \label{gamma-reg}
\gamma > 1-\frac{\eta}{2}.
\en
Let $u^1$ and $u^2$ be two solutions of (\ref{SHE}) with sample paths in $\mathcal{C}(\re_{+},\mathcal{C}_{tem})$ a.s. and with the same initial condition $u^1(0)=u^2(0)=u_0\in C_{tem}$.
Let $u\equiv u^1-u^2$,
\be \label{TK}
T_K=\inf \big\{s\geq 0 : \sup_{y\in \re}(|u^1(s,y)|\vee |u^2(s,y)|)e^{-|y|}>K\big\}\wedge K,
\ee
for some constant $K>0$ and
\bq
S_0(\omega)=\{(t,x)\in [0,T_K]\times \re: u(t,x)=0\}.
\eq
Then at every $(t_0,x_0)\in S_0$, $u$ is H\"{o}lder continuous with exponent $\xi$ for any $\xi<1$.
\end{theorem}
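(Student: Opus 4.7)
The plan is to carry out a Yamada--Watanabe style regularity bootstrap at zero points of $u=u^1-u^2$, mirroring the scheme of \cite{MP09} but redoing all second--moment computations against the fractal reference measure $\mu$ via its $\eta$-potential. Subtracting the two mild formulations of (\ref{SHE}) gives
\bd
u(t,x)=\int_{0}^{t}\!\int_{\re}G_{t-s}(x-y)\,D(s,y)\,W(ds,dy),\quad D(s,y):=\sigma(s,y,u^1(s,y))-\sigma(s,y,u^2(s,y)),
\ed
and on the stopped interval $[0,T_K]$ assumption (\ref{HolSigmaCon}) combined with the sup-bound built into (\ref{TK}) yields $|D(s,y)|\le \widetilde C_K e^{\widetilde R|y|}|u(s,y)|^\gamma$. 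This is the only structural property of the coefficient that will be used.

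The heart of the argument is an inductive regularity statement: for each $n$ there exist an exponent $\xi_n<1$, a random radius $r_n>0$, and a random constant $C_n(\omega)$ such that for any fixed $(t_0,x_0)\in S_0$,
\bd
|u(s,y)|\le C_n(\omega)\bigl(|y-x_0|+\sqrt{|s-t_0|}\bigr)^{\xi_n}\quad\text{whenever }|y-x_0|+\sqrt{|s-t_0|}\le r_n .
\ed
To pass from $\xi_n$ to a better exponent I would fix $(t,x)$ close to $(t_0,x_0)$ and apply Burkholder--Davis--Gundy to $u(t,x)-u(t_0,x_0)=u(t,x)$ to obtain
\bd
E|u(t,x)|^{q}\le C\,E\!\left[\left(\int_{0}^{t}\!\int_{\re}G_{t-s}(x-y)^{2}|u(s,y)|^{2\gamma}\mu(dy)\,ds\right)^{q/2}\right].
\ed
Substituting the inductive bound $|u(s,y)|^{2\gamma}\le C(|y-x_0|+\sqrt{|s-t_0|})^{2\gamma\xi_n}$ inside the ball of radius $r_n$, controlling the outer region by the global temperedness built into $T_K$, and then integrating $G_{t-s}(x-y)^2$ against $\mu$ with the aid of assumption (i) of (\ref{assump1-2}) -- which replaces the usual $(t-s)^{-1/2}$ Lebesgue scaling by $(t-s)^{-(1-\eta+\eps)/2}$ -- produces, after performing the $s$-integral and taking $q$ large,
\bd
E|u(t,x)|^{q}\le C\bigl(|x-x_0|+\sqrt{|t-t_0|}\bigr)^{q(\gamma\xi_n+\eta/2-2\eps)} .
\ed
A Kolmogorov-type continuity argument centred at $(t_0,x_0)$ (summing the tail bounds over a dyadic parabolic grid shrinking to that point) upgrades this pointwise moment estimate to a pathwise H\"older bound at $(t_0,x_0)$ with any exponent $\xi_{n+1}<(\gamma\xi_n+\eta/2)\wedge 1$.

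The resulting scalar recursion $\xi\mapsto\gamma\xi+\eta/2$ has unique fixed point $\eta/(2(1-\gamma))$, which strictly exceeds $1$ under hypothesis (\ref{gamma-reg}). Starting from a baseline H\"older exponent $\xi_0>0$ available from the classical analysis of stochastic integrals against $W$ (via the same BDG/$\eta$-potential estimate without the induction hypothesis), the recursion therefore reaches any prescribed value $\xi<1$ after finitely many steps, giving the claimed regularity at each fixed zero point.

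The main obstacle is that $S_0$ is a random set, so the argument above -- which is aimed at a deterministic $(t_0,x_0)$ -- has to be strengthened to hold simultaneously at \emph{every} zero point of $u$ in $[0,T_K]\times\re$. Concretely, at scale $r_n$ one has to choose a net of candidate reference points, estimate the probability that $u$ is smaller than $C\,r_n^{\xi_n}$ on the surrounding parabolic ball, and sum those probabilities. Because $\mu$ charges only an $\eta$-dimensional set the relevant $\mu$-masses of balls of radius $r$ scale like $r^{\eta}$ rather than $r$, and the chaining/counting used in the corresponding argument of \cite{MP09} has to be redone with this fractal volume input; this is the new covering argument announced in Remark~\ref{remark-covering}. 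Once it is in place, the pointwise inductive conclusion holds uniformly on $S_0\cap([0,T_K]\times\re)$, completing the proof.
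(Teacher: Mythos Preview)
Your bootstrap is essentially the paper's: the recursion $\xi\mapsto\gamma\xi+\eta/2$ with fixed point $\eta/(2(1-\gamma))>1$ under (\ref{gamma-reg}), launched from a base H\"older exponent (the paper takes $\xi_0=\eta/4$ from \cite{Zahle2004}) and driven by quadratic-variation estimates in which the $\eta$-potential bound on $\mu$ supplies the extra $\eta/2$. This is exactly Theorem~\ref{Theorem4.1-MPS} iterated into Theorem~\ref{Theorem2.3MP-P0-ind}, after which Theorem~\ref{thm-reg} is a two-line corollary (a zero point lies in $Z(K,N)$ for every $N$).

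You do, however, misidentify the device that makes the estimate uniform over the random zero set. The fractal covering of Remark~\ref{remark-covering} is \emph{not} used for Theorem~\ref{thm-reg}: it appears only in the uniqueness argument (bounding the Lebesgue measure of $\tilde J_{n,i}(s)$ in Section~4), relies on the carrying-dimension hypothesis (ii), and is precisely what pushes the uniqueness threshold from $1-\eta/4$ down to $1-\eta/(2(\eta+1))$. None of this enters the regularity proof. The paper handles uniformity differently: rather than fixing a random $(t_0,x_0)\in S_0$ and then summing over a net of candidate reference points, each inductive step (Theorem~\ref{Theorem4.1-MPS}) is formulated for \emph{all} points of the set $Z_{K,N,\xi}$ simultaneously, with a single random scale threshold $N_\xi(K,\omega)$ that does not depend on the point. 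The chaining is then the ordinary dyadic space-time chaining of \cite{MPS06}, and only the potential condition (i) on $\mu$ is invoked (via Lemma~\ref{condmu} and Lemma~\ref{Lemma-MP4.3}); the $\eta$-dimensionality of the support of $\mu$ plays no role here.
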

Z{\"a}hle in \cite{Zahle2004} considered (\ref{SHE}) when $\dot{W}$ is an inhomogeneous
white noise based on $\mu(dx)\times dt$, where $\mu$ satisfy conditions which are slightly more general then (\ref{eta-poten})(i) and (ii).
The existence and uniqueness of a strong $\mathcal{C}(\re_+,\mathcal{C}_{tem})$ solution to (\ref{SHE}) when $\sigma$ is Lipschitz continuous and satisfies (\ref{grow}) was proved in \cite{Zahle2004}.
The H\"{o}lder continuity of the solutions to (\ref{SHE}) with inhomogeneous white noise based on $\mu(dx)\times dt$ as above
 was also derived under some more relaxed assumptions on $\sigma$. In fact, it was proved in \cite{Zahle2004} that the solution to (\ref{SHE}), when $\mu$ satisfies (\ref{eta-poten}), is H\"{o}lder continuous with any exponent $\xi<\eta/2$ in space and $\xi<\eta/4$ in time. \medskip \\
The rest of this paper is devoted to the proof of Theorem \ref{holder-conj}. In the end of Section \ref{Section-Proof2.3} we prove Theorem \ref{thm-reg}.
\section{Proof of Theorem \ref{holder-conj} } \label{PoofConj}
Let us introduce the following useful proposition.
\begin{proposition} \label{parameter}
Let $u_0\in C_{tem}$. Let $\sigma$ be a continuous function satisfying (\ref{grow}). Then any solution $u \in C(\re_{+},C_{tem})$ to (\ref{SHE}) satisfies the following property. For any $T,\lam>0$ and $p\in (0,\infty)$,
\bd
E\bigg(\sup_{0\leq t \leq T}\sup_{x\in \re}|u(t,x)|^pe^{-\lam|x|}\bigg) < \infty.
\ed
\end{proposition}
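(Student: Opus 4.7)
The plan is to start from the mild form in Definition \ref{Def-Strong-sol} and decompose
$$u(t,x) = G_t u_0(x) + N(t,x), \qquad N(t,x) := \int_0^t \!\! \int_\re G_{t-s}(x-y)\sigma(s,y,u(s,y))\,W(ds,dy).$$
The deterministic piece is immediate: since $u_0 \in \mathcal{C}_{tem}$, pick $\lam_0 \in (0,\lam)$ and bound $|u_0(y)| \leq \|u_0\|_{\lam_0} e^{\lam_0|y|}$; the elementary Gaussian estimate $\int G_t(x-y)e^{\lam_0|y|}dy \leq e^{\lam_0^2 t/2} e^{\lam_0|x|}$ (completion of the square) yields $\sup_{t\leq T,\,x \in \re} |G_t u_0(x)|e^{-\lam|x|} \leq C(T,\lam,u_0) < \infty$ deterministically.

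For the stochastic convolution I would localize via the stopping times $\tau_N := \inf\{t\geq 0 : \sup_x |u(t,x)|e^{-\lam|x|} > N\}\wedge T$. Because $u\in \mathcal{C}(\re_+,\mathcal{C}_{tem})$ almost surely, the map $t\mapsto \|u(t,\cdot)\|_\lam$ is a.s.\ continuous and finite on $[0,T]$, so $\tau_N \uparrow T$ a.s.\ (in fact $\tau_N = T$ eventually). On $[0,\tau_N]$ the linear growth (\ref{grow}) gives the deterministic bound $|\sigma(s,y,u(s,y))|\leq C_T(1+Ne^{\lam|y|})$, so the stopped convolution has moments of every order. Set $F_N(t) := E[\sup_{s\leq t\wedge\tau_N}\sup_x |u(s,x)|^p e^{-p\lam|x|}]$, which is finite by construction. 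Applying the Burkholder--Davis--Gundy inequality together with the factorization method of Da~Prato--Kwapie\'n--Zabczyk --- which rewrites $N(t,x)$ as a deterministic convolution of an auxiliary process $Y(s,y)=\int_0^s\!\!\int_\re(s-r)^{-\alpha}G_{s-r}(y-z)\sigma(r,z,u(r,z))W(dr,dz)$ for a small $\alpha\in(0,1/2)$ --- one converts the $(t,x)$-supremum into pointwise $L^p$-bounds on $Y$. The linear growth of $\sigma$ then closes a Gronwall-type inequality $F_N(t)\leq C_0+C_1\int_0^t F_N(r)\,K(r)\,dr$ with constants and kernel $K$ independent of $N$. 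Gronwall's lemma delivers a uniform bound $F_N(T)\leq C$, and monotone convergence as $N\to\infty$ concludes the proof.

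The main obstacle, hidden inside the BDG step, is a uniform analytic estimate of the form
$$\sup_{x\in\re} \int_0^T \!\! \int_\re G_r(x-y)^2\, e^{2\lam|x-y|}\, r^{-2\alpha}\, \mu(dy)\,dr < \infty,$$
where the exponential weight arises from pushing the factor $e^{-p\lam|x|}$ inside the convolution and is dealt with by completion of the square, $G_r(z)^2 e^{2\lam z} = e^{\lam^2 r} G_r(z-\lam r)^2$. The remaining integral $\int G_r(x-y)^2 \mu(dy)$ is controlled via the $\eta$-potential condition (\ref{eta-poten}): a standard argument shows $\mu(B(x,\sqrt{r}))\leq C r^{(\eta-\eps)/2}$ for every $\eps>0$, hence $\int G_r(x-y)^2 \mu(dy)\leq C r^{-1+(\eta-\eps)/2}$, which leaves only an integrable singularity in $r$ since $\eta>0$ (with enough room to absorb $r^{-2\alpha}$ for $\alpha$ sufficiently small). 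This is the single nontrivial analytic input; once it is in hand, the localization/Gronwall scheme above is routine.
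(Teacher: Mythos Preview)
Your proposal is correct and follows essentially the same approach the paper indicates: the paper omits the proof entirely, noting only that it ``follows the same lines as the proof of Theorem~1.8(a) in \cite{MPS06}'' via the Da~Prato--Kwapie\'n--Zabczyk factorization method, which is precisely the route you outline. Your identification of the key analytic input --- the bound $\int G_r(x-y)^2\,\mu(dy)\leq C r^{-1+(\eta-\eps)/2}$ coming from the $\eta$-potential condition --- is exactly what appears in the paper as Lemma~\ref{condmu}(a) (with $r=2$), so the pieces match.
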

The proof of Proposition \ref{parameter} follows the same lines as the proof of Theorem 1.8(a) in \cite{MPS06}. It uses the factorization method developed by Da Prato, Kwapien and Zabczyk in \cite{DaPratoKwapZab87}. In fact, in our case, the calculations become simpler because of the orthogonality of the white noise. Since the proof of Proposition \ref{parameter} is straightforward and technical, it is omitted.
\paragraph{Proof of Theorem \ref{holder-conj}}
The proof follows the similar lines as the proof of Theorem 1.2 in \cite{MP09}.
In what follows let $u^1$ and $u^2$ be two solutions of (\ref{SHE}) on $(\Omega,\mathcal{F},\mathcal{F}_t,P)$ with sample paths in $\mathcal{C}(\re_{+},\mathcal{C}_{tem})$ $P$-a.s., with the same initial condition $u^1(0)=u^2(0)=u_0\in C_{tem}$ and the same white noise.
By Proposition 4.4 in \cite{Zahle2004}, (\ref{MSHE}) is equivalent to the distributional form of (\ref{SHE}). That is, for $i=1,2$ and for every $\phi\in C_c^{\infty}(\re)$:
\bn \label{mildForm}
\langle u^i(t),\phi \rangle &=& \langle u_0,\phi \rangle + \frac{1}{2}\int_{0}^{t}\langle u^i(s),\frac{1}{2}\Delta \phi \rangle ds \nonumber \\
&&+ \int_{0}^{t}\int_{\re} \sigma(s,x,u^i(s,x))\phi(x)W(ds , dx), \ \ \forall t \geq 0, \ P-\rm{a.s.}
\en
Let $R_0=\bar R_0(K)$ and $R_1=\bar R_1 +\bar R_2$. By the same truncation argument as in Section 2 of \cite{MP09}, it is enough to prove Theorem \ref{holder-conj} with the following condition instead of (\ref{HolSigmaCon}): \\\\
There are $R_0,R_1>1$ so that for all $t>0$ and all $(x,u,u')\in \re^3$,
\be \label{HolSigmaCon2}
|\sigma(t,x,u) - \sigma(t,x,u')| \leq R_0e^{R_1|x|}|u-u'|^\gamma.
\ee
We use the following definitions and notations from \cite{MPS06}.
Let
\be \label{an0}
a_n=e^{-n(n+1)/2}
\ee
so that
\be \label{an}
a_{n+1} = a_ne^{-n-1} = a_n a_n^{n/2}.
\ee
Define functions $\psi\in \mathcal{C}_c^\infty(\re)$ such that $\supp(\psi_n) \subset (a_n/2,a_{n-1}/2)$,
\be \label{psi1}
0\leq \psi_{n}(x) \leq \frac{2}{nx}, \ \ \forall x\in \re,
\ee
and
\be
\int_{a_n/2}^{a_{n-1}/2}\psi_n(x)dx=1.
\ee
Finally, set
\be
\phi_{n}(x) = \int_{0}^{|x|}\int_{0}^{y}\psi_n(z)dzdy.
\ee
Note that $\phi_n(x) \uparrow |x|$ uniformly in $x$ and $\phi_n(x)\in \mathcal{C}^{\infty}(\re)$. We also have
\be
\phi'_n(x)=\textrm{sign}(x)\int_{0}^{|x|}\psi_n(y)dy,
\ee
\be
\phi''_n(x)=\psi_n(|x|).
\ee
Thus,
\bn \label{phi-tag-bound}
|\phi_n'(x)|\leq 1,  \ \forall x\in \re, \ n\in \mathds{N}.
\en
and for any function $h$ which is continuous at zero
\bd
\lim_{n\rr\infty} \int_{\re}\phi''_n (x)h(x)dx = h(0).
\ed
Define
\bd
u\equiv u^1-u^2.
\ed
%Let $\Phi\in \mathcal{C}_c^\infty(\re)$ satisfy $0\leq \Phi \leq 1$,  $\supp(\Phi) \subset (-1,1)$ and
%\bd
%\int_{\re} \Phi(x)dx=1.
%\ed
Let $\< \cdot, \cdot \> $ denote the scalar product on $L^2(\re)$. Let $m\in \mathds{N}$ and recall that $G_t(x)$ denotes the  heat kernel.
Apply It\^{o}'s formula to the semimartingales $ \< u^{i}_t,G_{m^{-2}}(x-\cdot) \> =G_{m^{-2}}u^{i}_t(x), \ i=1,2$ in (\ref{mildForm}) to get
\bn \label{ito1}
&&\phi_n(G_{m^{-2}}u^{i}_t(x))  \nonumber \\
&&=  \int_{0}^{t}\int_{\re}\phi'_n(G_{m^{-2}}u^{i}_s(x))(\sigma(s,y,u^1(s,y))-\sigma(s,y,u^2(s,y))) G_{m^{-2}}(x-y)  W(ds , dy) \nonumber \\
&&+ \int_{0}^{t}\phi'_n(G_{m^{-2}}u^{i}_s(x))\langle u_s,\frac{1}{2}\Delta G_{m^{-2}}(x-\cdot)  \rangle ds \nonumber \\
&&+ \frac{1}{2}\int_{0}^{t}\int_{\re}\psi_n(|G_{m^{-2}}u^{i}_s(x)|)(\sigma(s,y,u^1(s,y))-\sigma(s,y,u^2(s,y)))^2 G_{m^{-2}}(x-y) ^2 \mu(dy)ds.
\en
\begin{remark}
We would like to emphasize that we use here a very specific mollifier $G_{m^{-2}}$. It is different from $\Phi^m$ that was used in \cite{MP09}, since $\Phi^m$ in \cite{MP09} has compact support and $G_{m^{-2}}$ is the heat kernel that has an unbounded support. This choice of mollifier will help us later on.
\end{remark}
Fix $t_0\in(0,\infty)$ and let us integrate (\ref{ito1}) with respect to the $x$-variable, against another test function $\Psi\in \mathcal{C}_c([0,t_0]\times \re)$. Choose $K_1\in \mathds{N}$ large enough so that for $\lam =1$,
\be \label{K1}
\|u_0 \|_\lam< K_1 \ \ \textrm{and } \Gamma \equiv \{x:\Psi_s(x)>0 \textrm{ for some } s\leq t_0 \} \in (-K_1,K_1).
\ee
Now apply the stochastic Fubini's Theorem (see Theorem 2.6 in \cite{Walsh}) and Proposition 2.5.7 in \cite{Perk2002} to \\ $\< \phi_n(G_{m^{-2}}u_t(\cdot)),\Psi_t \> $ as in \cite{MP09} to get,
\bn \label{ito2}
&& \< \phi_n(G_{m^{-2}}u_t(\cdot)),\Psi_t \>  \nonumber \\ \nonumber
&& =  \int_{0}^{t}\int_{\re} \< \phi'_n(G_{m^{-2}}u_s(\cdot))G_{m^{-2}}(\cdot-y),\Psi_s \> (\sigma(s,y,u^1(s,y))-\sigma(s,y,u^2(s,y)))  W(ds , dy) \nonumber \\
&&\quad + \int_{0}^{t} \< \phi'_n(G_{m^{-2}}u_s(\cdot))\langle u_s,\frac{1}{2}\Delta G_{m^{-2}}(\cdot) \rangle,\Psi_s \> ds \nonumber \\
&& \quad + \frac{1}{2}\int_{0}^{t}\int_{\re}\int_{\re}\psi_n(|G_{m^{-2}}u_s(x)|)(\sigma(s,y,u^1(s,y))-\sigma(s,y,u^2(s,y)))^2 G_{m^{-2}}(x-y)^2 \mu(dy)\Psi_s(x)dx ds \nonumber \\
&& \quad +\int_{0}^{t} \< \phi_n( G_{m^{-2}}u_s(\cdot)) , \dot{\Psi}_s \> ds \nonumber \\
%&&+ \int_{0}^{t} \int_{\re}  \< \phi'_n(\langle u_s,\Phi_{\cdot}^m \rangle)\Phi_{\cdot}^m(y),\Psi_s \> (b(s,y,u^1(s,y))-(b(s,y,u^2(s,y))dyds \nonumber \\
&&=:I_1^{m,n}(t)+ I_2^{m,n}(t)+ I_3^{m,n}(t)+ I_4^{m,n}(t).
\en
Note that for $i=2,4$, $I_i^{m,n}(t)$ look exactly like the terms considered in $\cite{MP09}$. The only difference is that
we chose here the heat kernel as a mollifier instead of the compact support mollifier that was chosen in \cite{MP09}. For $I^{m,n}_1,I^{m,n}_3$, the expressions are different since here we use the inhomogeneous white noise based on the measure $\mu(dy)ds$. \\\\
\paragraph{Notation.} We fix the following positive constants satisfying
\bn \label{eps1}
0< \eps_1 < \frac{1}{100}\big(\gamma-1+\frac{\eta}{2(\eta+1)}\big), \ \ 0 < \eps_0 <\frac{\eta^2\eps_1}{100}.
\en
Let $\al\in [1/2,1)$ be some fixed constant (to be chosen later). Set $m_n= a_{n-1}^{-\al-\eps_0}=\exp\{(\al+2\eps_0)(n-1)n/2\}$, for $n\in \mathds{N}$. Note that for $I_3^{m_{n+1},n+1}$ we may assume $|x|\leq K_1$ by (\ref{K1}). Recall that $T_K$ was defined in (\ref{TK}). If $s\leq T_k$, then we have
\be
|u^i(s,y)|\leq Ke^{|y|}, \ \ i=1,2.
\ee
If $K'=Ke^{K_1+1}$, by (\ref{HolSigmaCon2}), (\ref{psi1}), $\supp(\psi_n) \subset (a_n/2,a_{n-1}/2)$, $\sup_{x\in \re}|G_{a_n^{2(\al+\eps_0)}}(x)|\leq m_{n+1}/\sqrt{2\pi}$, (\ref{an0}) and (\ref{an}), we get for all $t\in [0,t_0]$,
\bn
&&I_3^{m_{n+1},n+1}(t\wedge T_K) \nonumber \\
&& \leq C(R_0) \frac{ m_{n+1}}{(n+1)}\int_{0}^{t\wedge T_K}\int_{\re}\int_{\re}|\langle u_s,G_{a_n^{2(\al+\eps_0)}}(x-\cdot) \rangle|^{-1}\mathds{1}_{\{a_{n+1}/2 \leq |\langle u_s,G_{a_n^{2(\al+\eps_0)}}(x-\cdot) \rangle|\leq a_n/2\}}e^{2\gamma R_1|y|}|u(s,y)|^{2\gamma} \nonumber \\ \nonumber \\
&& \quad \times G_{a_n^{2(\al+\eps_0)}}(x-y) \mu(dy)\Psi_s(x)dx ds \nonumber \\
&& \leq C(R_0)  a_n^{-\al-\eps_0}2a_{n+1}^{-1}\int_{0}^{t\wedge T_K}\int_{\re}\int_{\re}\mathds{1}_{\{a_{n+1}/2 \leq |\langle u_s,G_{a_n^{2(\al+\eps_0)}}(x-\cdot) \rangle|\leq a_n/2\}}e^{2R_1|y|}|u(s,y)|^{2\gamma} \nonumber \\ \nonumber \\
&&  \quad \times G_{a_n^{2(\al+\eps_0)}}(x-y) \mu(dy)\Psi_s(x)dx ds \nonumber \\
&& \leq  C(R_0)  a_n^{-1-\al-\eps_0-2/n}\int_{0}^{t\wedge T_K}\int_{\re}\int_{\re}\mathds{1}_{\{a_{n+1}/2 \leq |\langle u_s,G_{a_n^{2(\al+\eps_0)}}(x-\cdot) \rangle|\leq a_n/2\}}e^{2R_1|y|}|u(s,y)|^{2\gamma} \nonumber \\ \nonumber \\
&& \quad \times G_{a_n^{2(\al+\eps_0)}}(x-y) \mu(dy)\Psi_s(x)dx ds. \nonumber \\
\en
Define
\bn \label{I-n}
&&I^n(t)\nonumber \\
&&:= a_n^{-1-\al-\eps_0-2/n}\int_{0}^{t\wedge T_K}\int_{\re}\int_{\re}\mathds{1}_{ |\langle u_s,G_{a_n^{2(\al+\eps_0)}}(x-\cdot) \rangle|\leq a_n/2\}}e^{2R_1|y|}|u(s,y)|^{2\gamma}G_{a_n^{2(\al+\eps_0)}}(x-y) \mu(dy)\Psi_s(x)dx ds. \nonumber \\
\en
The following proposition is crucial for the proof of Theorem \ref{holder-conj}.
\paragraph{Notation:} Denote by
$ \mathds{N}^{\geq K_1} = \{K_1,K_1+1,...\}$.
\begin{proposition} \label{PropStop}
Suppose $\{U_{M,n,K}: \ M,n,K \in \mathds{N}, K\geq K_1 \}$ are $\mathcal{F}_t$ stopping times such that for each $K\in \mathds{N}^{\geq K_1}$,
\bn \label{H1}
U_{M,n,K}\leq T_K, \ U_{M,n,K} \uparrow T_K \textrm{ as } M\rr \infty \textrm{ for each } n \textrm{ and } \nonumber \\
\lim_{M\rr \infty } \sup_{n}P(U_{M,n,K}<T_K)=0,
\en
and
\bn \label{H2}
\textrm{For all } M\in \mathds{N} \textrm{ and } t_0>0, \ \lim_{n\rr\infty}E(I^n(t_0\wedge U_{M,n,K}))=0.
\en
Then, the conclusion of Theorem \ref{holder-conj} holds.
\end{proposition}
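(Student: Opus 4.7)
The plan is to substitute $m = m_{n+1}$ in the It\^o decomposition (\ref{ito2}), stop at $t \wedge U_{M,n,K}$, and then successively let $n \to \infty$, $M \to \infty$, and $K \to \infty$. Hypothesis (H2) is precisely what is needed to kill the "bad" quadratic-variation contribution $I_3$ arising from the H\"older (non-Lipschitz) nature of $\sigma$; once that term is eliminated, the remaining drift and martingale pieces combine, via a backward heat-equation test-function, to force $u \equiv 0$, establishing pathwise uniqueness.

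\emph{Reduction of $I_3$.} The computation carried out immediately before (\ref{I-n}) already gives $I_3^{m_{n+1}, n+1}(t \wedge T_K) \leq C(R_0, K, K_1)\, I^n(t \wedge T_K)$. Since $U_{M,n,K} \leq T_K$ by (H1) and the integrand of $I^n$ is nonnegative, the same bound holds at $t \wedge U_{M,n,K}$, and (H2) then yields $E[I_3^{m_{n+1}, n+1}(t_0 \wedge U_{M,n,K})] \to 0$ as $n \to \infty$ for each fixed $M, K, t_0$.

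\emph{Limits of the other terms.} Since $\phi_n(x) \to |x|$ uniformly and $u \in \mathcal{C}(\re_+, \mathcal{C}_{tem})$ implies $G_{m_{n+1}^{-2}} u_s \to u_s$ locally uniformly (as $m_{n+1} \to \infty$), the LHS of (\ref{ito2}) stopped at $t \wedge U_{M,n,K}$ converges in $L^1$ to $\langle |u_{t \wedge U_{M,n,K}}|, \Psi_{t \wedge U_{M,n,K}}\rangle$, with uniform integrability supplied by Proposition \ref{parameter}. The same argument gives $I_4^{m_{n+1}, n+1} \to \int_0^{t \wedge U_{M,n,K}} \langle |u_s|, \dot\Psi_s\rangle ds$. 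Since $I_2$ and $I_4$ have the same structure as in \cite{MP09} (as noted after (\ref{ito2})), the Laplacian in $I_2$ is transferred onto $\Psi$ by two integrations by parts: one produces a non-positive $\phi''_n$-term that can be dropped for an upper bound, while the other is controlled by the pointwise estimate $|x\phi''_n(x)| \leq 2/n$, yielding in the limit $\int_0^{t \wedge U_{M,n,K}} \langle |u_s|, \tfrac{1}{2}\Delta \Psi_s\rangle ds$. The martingale $I_1^{m_{n+1}, n+1}$ has predictable bracket
\[
\int_0^t \int_\re \Big\langle \phi'_n(G_{m_{n+1}^{-2}} u_s(\cdot))\, G_{m_{n+1}^{-2}}(\cdot - y),\, \Psi_s \Big\rangle^2 \big(\sigma(s,y,u^1(s,y)) - \sigma(s,y,u^2(s,y))\big)^2 \mu(dy)\, ds,
\]
whose inner integrand converges pointwise to $\mathds{1}\{u_s(y) \neq 0\}\Psi_s(y)^2$ (using $\phi'_n \to \mathrm{sign}$ and the approximate-identity property of $G_{m_{n+1}^{-2}}(\cdot - y)$); the potential bound (\ref{eta-poten}) together with (\ref{HolSigmaCon2}) and Proposition \ref{parameter} supply a dominating envelope in $L^1(\mu(dy)\,ds)$, so Burkholder--Davis--Gundy gives $L^2$ convergence of $I_1^{m_{n+1}, n+1}$ to a mean-zero martingale $\tilde M_t$.

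\emph{Conclusion.} Combining the above and taking expectations produces
\[
E\big[\langle |u_{t_0 \wedge U_{M,n,K}}|, \Psi_{t_0 \wedge U_{M,n,K}}\rangle\big] \leq E\bigg[\int_0^{t_0 \wedge U_{M,n,K}} \big\langle |u_s|, \dot\Psi_s + \tfrac{1}{2}\Delta\Psi_s \big\rangle ds\bigg] + o_n(1).
\]
Sending $M \to \infty$, the uniform-in-$n$ statement of (H1) allows us to replace $U_{M,n,K}$ by $T_K$, and then $K \to \infty$ removes the stopping altogether (as $T_K \uparrow \infty$ a.s. because $u^i \in \mathcal{C}(\re_+, \mathcal{C}_{tem})$). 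Testing against (an $\mathcal{C}_c$-approximation of) $\Psi_s(x) = (G_{t_0 - s}\phi)(x)$ with $\phi \in \mathcal{C}_c^\infty(\re)$, $\phi \geq 0$, annihilates the drift since $\dot\Psi + \tfrac{1}{2}\Delta\Psi = 0$, giving $E[\langle |u_{t_0}|, \phi\rangle] \leq 0$; by arbitrariness of $\phi$ and continuity of the sample paths, $u(t_0,\cdot) \equiv 0$ a.s., which is the desired pathwise uniqueness.

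\emph{Main obstacle.} The delicate step is the identification of the limiting quadratic variation of $I_1^{m_{n+1}, n+1}$ against the singular measure $\mu$: it is not automatic that $G_{m_{n+1}^{-2}}(\cdot - y)$ acts as an approximate identity under $\mu$, and it is the potential bound (\ref{eta-poten}) that licenses the limit. This is also the reason the Gaussian mollifier $G_{m^{-2}}$ was preferred over the compactly supported one used in \cite{MP09}, since the Gaussian tails interact cleanly with $\mu$ via the potential $\phi_{\eta,\mu}$.
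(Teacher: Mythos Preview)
Your proposal is essentially correct and follows the same route as the paper: both adapt Proposition~2.1 of \cite{MP09}, handling $I_2$ and $I_4$ exactly as in \cite{MPS06}/\cite{MP09}, killing $I_3$ via hypothesis (H2), and finishing with the backward-heat test function.  Two small points are worth noting.  First, for $I_1$ the paper does not try to identify a limiting martingale; it only needs the \emph{uniform} $L^2$ bound on the bracket, for which it isolates a short technical lemma (Lemma~\ref{new-int-bound}) controlling $\sup_{\eps\in(0,1]}\int\!\!\int e^{|y|}G_\eps(z-y)\mathds{1}_{\{|z|\le K\}}\,\mu(dy)\,dz$, and the relevant growth input is (\ref{grow}), not (\ref{HolSigmaCon2}).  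Your attempt to pass to the limit in the bracket and invoke BDG is more than necessary---once the bracket is uniformly bounded, taking expectations already makes $I_1$ vanish.  Second, your ``main obstacle'' paragraph slightly misreads the role of the Gaussian mollifier: the paper's stated reason for preferring $G_{m^{-2}}$ over a compactly supported mollifier is not the $I_1$ bound here but the later regularity argument (see Remark~\ref{Remark-Molifier}), where the decomposition $\langle u_t, G_\epsilon(x-\cdot)\rangle = u_{1,\delta}(t+\epsilon,x)+\tilde u_{2,\delta}(t,\epsilon,x)$ is exploited.
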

\begin{proof}
The proof of Proposition \ref{PropStop} follows the same lines as the proof of Proposition 2.1 in \cite{MP09}. Fix an arbitrary $K\in \mathds{N}^{\geq K_1}$, $t_0>0$ and $t\in[0,t_0]$.
Denote by
\be
Z_n(t)=\int_{\re}\phi_n( \< u_t, G_{a_n^{2(\al+\eps_0)}}(x-\cdot) \> ) \Psi_t(x)dx.
\ee
From Lemma 6.2(ii) in \cite{Shi94} we have
\bn \label{shilem}
\sup_{t\in [0,T]}\sup_{y\in \re}e^{-\lam|y|}\int_{\re}G_{t}(y-z)e^{\lam|z|}dy<C(\lam), \  \forall t\in(0,T], \ \lam\in \re.
\en
By similar lines as in the proof of Proposition 2.1 in \cite{MP09} and with (\ref{shilem}) we have
\be \label{Zn}
0\leq Z_n(t\wedge T_K)  \leq 2Ke^{K_1+1}C_{\ref{Zn}}(\Psi).
\ee
Denote by $g_{m_{n+1},n}(s,y)= \< \phi'_n(G_{a_n^{2(\al+\eps_0)}}u_s(\cdot) )G_{a_n^{2(\al+\eps_0)}}(\cdot-y),\Psi_s \> $. From (\ref{phi-tag-bound}) we have
\be \label{gn}
|g_{m_{n+1},n}(s,y)| \leq \< G_{a_n^{2(\al+\eps_0)}}(\cdot-y),\Psi_s \> .
\ee
We will use the following lemma to bound $E(\< I_1^{m_n,n} \> _{t\wedge T_K})$.
\begin{lemma} \label{new-int-bound}
For every $K>0$ we have
\bn  \label{new-int-bound2}
\sup_{\eps\in(0,1]} \int_{\re}\int_{\re} e^{|y|}G_{\eps}(z-y)\mathds{1}_{\{|z|\leq K\}}\mu(dy)dz <\infty.
\en
\end{lemma}
The proof of Lemma \ref{new-int-bound} is given at the end of Section \ref{Sec.Heat-Kernel-Bounds}. \medskip \\
By (\ref{grow}), (\ref{gn}), Jensen's inequality, Proposition \ref{parameter}, and Lemma \ref{new-int-bound} we have
\bn \label{I1}
E(\< I_1^{m_{n+1},n} \> _{t\wedge T_K}) &=&  E\bigg(\int_{0}^{t\wedge T_K}\int_{\re} (g_{m_{n+1},n}(s,y))^2(\sigma(s,y,u^1(s,y))-\sigma(s,y,u^2(s,y)))^2\mu(dy) ds \bigg) \nonumber \\
&\leq&C(T)\int_{0}^{t\wedge T_K}\int_{\re} ( \< G_{a_n^{2(\al+\eps_0)}}(\cdot-y),\Psi_s \> )^2e^{|y|}E(e^{-|y|}(|u^1(s,y)|^2+|u^2(s,y)|^2))\mu(dy) ds  \nonumber \\
&\leq&C(T)\int_{0}^{t}\int_{\re}\int_{\re} e^{|y|}G_{a_n^{2(\al+\eps_0)}}(z-y)\Psi^2_s(z)\mu(dy)dz ds  \nonumber \\
&\leq&C(T)\|\Psi\|^2_{\infty}\int_{0}^{t}\int_{\re}\int_{\re} e^{|y|}G_{a_n^{2(\al+\eps_0)}}(z-y)\mathds{1}_{\{|z|\leq K+1\}}\mu(dy)dz ds  \nonumber \\
&\leq& C(T,\|\Psi\|_{\infty},K).
\en
Hence
\be
\{I_1^{m_{n+1},n}(s): \ s\leq t_0 \} \textrm { is an } \mathds{L}^2- \textrm{bounded sequence of } \mathds{L}^2- \textrm{martingales}.
\ee
We handle $I_2^{m_{n+1},n}$ similarly as in \cite{MP09}. The argument in  \cite{MP09} is based on the proof of Lemma 2.2(b) in \cite{MPS06}.
It is shown in  \cite{MPS06} that $I_2^{m_{n+1},n}$ has the following decomposition
\bn
I_2^{m_{n+1},n}&=&I_2^{m_{n+1},n,1}+I_2^{m_{n+1},n,2},
\en
where
\bn
&&I_2^{m_{n+1},n,1}(t) \nonumber \\
&&:=-\frac{1}{2}\int_{0}^{t}\int_{\re}  \psi_n(|\langle u_s,G_{a_n^{2(\al+\eps_0)}}(\cdot-x)\rangle|)\bigg(\frac{\partial}{\partial x}\langle u_s,G_{a_n^{2(\al+\eps_0)}}(\cdot-x) \rangle\bigg)^2\Psi_s(x) dx ds \nonumber \\
&& \quad +\frac{1}{2}\int_{0}^{t}\int_{\re}  \psi_n(\langle u_s,G_{a_n^{2(\al+\eps_0)}}(\cdot-x) \rangle)\frac{\partial}{\partial x}(\langle u_s, G_{a_n^{2(\al+\eps_0)}}(\cdot-x) \rangle)\langle u_s, G_{a_n^{2(\al+\eps_0)}}(\cdot-x) \rangle \frac{\partial}{\partial x}\Psi_s(x) dx ds, \nonumber
\en
and
\bn
I_2^{m_{n+1},n,2}(t)&:=&\int_{0}^{t}\int_{\re} \phi'_n(\langle u_s,G_{a_n^{2(\al+\eps_0)}}(\cdot-x) \rangle)\langle u_s, G_{a_n^{2(\al+\eps_0)}}(\cdot-x) \rangle \frac{1}{2}\Delta\Psi_s(x) dx ds. \nonumber
\en
Repeat the same steps as in the proof of Lemma 2.2(b) in \cite{MPS06} to get that for any stopping time $T$
\bn
I_2^{m_{n+1},n,2}(t\wedge T) \rr \int_{0}^{ t\wedge T} \int_{\re}|u(s,x)|\frac{1}{2}\Delta\Psi_s(x)dxds, \ \ \textrm{in } \mathds{L}^1 \textrm{ as } n\rr \infty,
\en
and
\bn
I_2^{m_{n+1},n,1}(s) \leq \frac{a_n}{n}C(\Phi), \ \ \forall  s\leq t_0,  \ n\geq 1.
\en
By the same steps as in Lemma 2.2(c) in \cite{MPS06}, we get that for any stopping time $T$
\bn
I_4^{m_{n+1},n}(t\wedge T) \rr \int_{0}^{ t\wedge T} \int_{\re}|u(s,x)|\dot{\Psi}_s(x)dxds, \ \ \textrm{in } \mathds{L}^1 \textrm{ as } n\rr \infty.
\en
%From the proof of Proposition 2.1 in \cite{MP09} there exists a constant $B$ from (\ref{HolDriftCon}) such that
%\bn
%I_5^{m_n,n}(t\wedge T)&\leq& B\int_{0}^{ t\wedge T} \int_{\re}\int_{\re}|u(s,y)|\Phi_x^{m_n}(y)\Psi_s(x)dydxds \nonumber \\
%&=:&B\tilde{I}_5^{n}(t\wedge T)
%\en
%and
%\bn
%\tilde{I}_5^{n}(t\wedge T) \rr \int_{0}^{ t\wedge T} \int_{\re}|u(s,x)|\Psi_s(x)dxds, \ \ \textrm{in } \mathds{L}^1 \textrm{ as } n\rr \infty.
%\en
The rest of the proof is identical to the proof of Proposition 2.1 in \cite{MP09}, hence it is omitted.
\end{proof}
\\\\
The rest of this work is devoted to the proof of Proposition \ref{PropStop}. The proof of Proposition \ref{PropStop} is long and involved. In the next section we provide short heuristics for this proof.
\section{Heuristics for the Proof of Proposition \ref{PropStop}} \label{Sec-Heuristics}
We have shown in Section \ref{PoofConj} that the proof of Theorem \ref{holder-conj} depends only on the verification of the results of Proposition \ref{PropStop}. In this section we give a heuristic explanation for this proof.
We use the argument from Section 2 of \cite{MP09} and, in fact, we adapt it for the case of inhomogeneous white noise. \\ \\
For the sake of simplicity neglect $\eps_0$, the exponent $e^{2R_1|y|}$ in $I_n$, in the calculations of this section. We also replace $a_n/2$ with $a_n$ in the indicator function in $I_n$. For the same reasons we also replace the mollifier function in $I_n$ from $G_{a_n^{2(\al+\eps_0)}}(x-\cdot)$ to the following compact support mollifier, which was used in Section 2 of \cite{MP09}.
Let $\Phi\in\mathcal{C}_c^\infty(\re)$ satisfy $0\leq \Phi \leq 1$ $\supp(\Phi)\in (-1,1)$ and $\int_{\re}\Phi(x)dx=1$, and set $\Phi_x^m(y)=m\Phi(m(x-y))$.
Our goal is to show that
\bn \label{heu1}
&& I^n(t)\nonumber \\
&&\approx a_n^{-1-\al}\int_{0}^{t}\int_{\re}\int_{\re}\mathds{1}_{ |\langle u_s,\Phi_x^m \rangle|\leq a_n\}}|u(s,y)|^{2\gamma}\Phi^{m_{n+1}}_x(y) \Psi_s(x)\mu(dy)dx ds \nonumber \\
&&\rr 0 \textrm{ as } n\rr \infty.
\en
The following discussion is purely formal. To simplify the exposition we assume for that $u'$ (the spatial derivative of $u$) exists.
The key to derive (\ref{heu1}) is to control $u$ near its zero set. We will show in Section \ref{Sec-Local-bound} a rigorous analog for the following statement
\bn \label{heu2}
\gamma>1-\frac{\eta}{2(\eta+1)}  \Rightarrow &&  u'(s,\cdot) \textrm{ is } \xi-\textrm{ H\"{o}lder continuous on } \{x:u(s,x)\approx u'(s,x)\approx 0\},   \nonumber \\
 && \forall \xi<\eta.
\en
For the rest of this section we assume that (\ref{heu2}) holds. \medskip \\
Let us expand $I^n$ in (\ref{heu1}) to get,
\bn \label{heu3}
&&  I^n(t)\nonumber \\
&&\approx a_n^{-1-\al}\sum_{\beta}\int_{0}^{t}\int_{\re}\int_{\re}\mathds{1}_{ \{ |u(s,x)|\leq a_n, u'(s,x)\approx \pm a_n^{\beta}\}}|u(s,y)|^{2\gamma}\Phi^{m_{n+1}}_x(y) \Psi_s(x)\mu(dy)dx ds \nonumber \\
&&\equiv \sum_{\beta}I_{\beta}^n(t),
\en
where $\sum_{\beta}$ is a summation over a finite grid $\beta_i\in[0,\bar\beta]$ where $\bar\beta$ will be specified later. The notation $u'(s,x)\approx \pm a_n^{\beta_i}$ refers to partition of the space-time to sets where $|u'(s,x)|\in [a_n^{\beta_{i+1}},a_n^{\beta_i}]$. In what follows, assume that $u'(s,x)\approx a_n^{\beta_i}$, (the negative values are handled in the same way).
Let us expand $u(s,y)$ in (\ref{heu3}) to a Taylor series. The fact that
\bn\label{heu3.1}
\supp{(\Phi_{x}^{m_{n+1}})}\in[x-a_n^{\al},x+a_n^{\al}]
\en
and (\ref{heu2}) leads to,
\bn \label{heu4}
|u(s,y)|&\leq& |u(s,x)|+(|u'(s,x)|+M|x-y|^{\xi})|x-y| \nonumber \\
&\leq& a_n+a_n^{\beta+\al}+Ma_n^{(\xi+1)\al} \nonumber \\
&\leq& a_n^{(\beta\wedge \al\xi)+\al}.
\en
Here (\ref{heu2}) and a comparison of the first and third summands in the second line of (\ref{heu4}) leads to the following choice
\bn \label{alpha_0}
\al=\frac{1}{\eta+1}.
\en
From now on we assume (\ref{alpha_0}).
From (\ref{heu3}) and (\ref{heu4}) we have
\bn \label{heu5}
 I_{\beta}^n(t)&\leq &
 Ca_n^{-1-\al+2\gamma[(\beta\wedge \al\xi)+\al]}\int_{0}^{t}\int_{\re}\int_{\re}\mathds{1}_{ \{ |u(s,x)|\leq a_n, u'(s,x)\approx \pm a_n^{\beta}\}}\Phi^{m_{n+1}}_x(y) \Psi_s(x)\mu(dy)dx ds. \nonumber \\
 \en
%From (\ref{heu3.1}) and the fact that $\supp(\Psi_s)\in [-K_1,K_1]$, we conclude that the integration with respect to the $x$-axis could be done over the set
Recall that $\cardim{(\mu)}=\eta$. Hence by Definition \ref{def-cardim}, there exists a set $A\subset \re$, such that $\mu(A^c)=0$ and $\dim_H(A)=\eta$. From this, (\ref{heu3.1}), the fact that $\supp(\Psi_s)\subset [-K_1,K_1]$, and the definitions of the Hausdorff measure and the Hausdorff dimension in Sections 2.1 and 2.2 in \cite{falconer}, we can construct a cover $\mathds{V}^{\beta}$  of $A \cap [-K_1,K_1]$  with at most $N(\beta)\leq K_1a_n^{-\beta-\eps}$ balls of diameter $3a_n^{\beta/\eta}$. Here $\eps>0$ is arbitrarily small. We conclude that the integration with respect to the $x$-axis could be done over the set $\mathds{V}^{\beta}$, instead of the whole real line.
\paragraph {Notation:} For a set $B\subset \re$, let $|B|$ denote the Lebesgue measure of $B$. \medskip \\
From the discussion above, we have
\bn \label{lebV}
|\mathds{V}^{\beta}|\leq   3K_1a_n^{-\beta+\beta/\eta-\eps}.
\en
This cover $\mathds{V}^{\beta}$ will help us to improve the argument from \cite{MP09} in the case of inhomogeneous noise, by taking into account that the noise "lives" on a "smaller" set. \medskip \\
Note that by the definitions of $\Phi_{x}^{m_{n+1}}$ and the fact that $\mu \in M^{\eta}_f (\re)$ we have
\bn \label{heu7}
\int_{\re}\Phi_{x}^{m_{n+1}}(y)\mu(dy)&\leq& a_n^{-\al} \int_{\re}\mathds{1}_{\{|x-y|\leq a_n^{\al}\}}|x-y|^{\eta-\eps}|x-y|^{-\eta+\eps}\mu(dy)\nonumber \\
&\leq& a_n^{-\al(1-\eta+\eps)}.
\en
From (\ref{heu5}) and (\ref{heu7}) we have
\bn \label{heu8}
 I_{\beta}^n(t)&\leq &
 Ca_n^{-1-\al+2\gamma[(\beta\wedge \al\xi)+\al]}\int_{0}^{t}\int_{\mathds{V}^{\beta}}\int_{\re}\mathds{1}_{ \{ |u(s,x)|\leq a_n, u'(s,x)\approx \pm a_n^{\beta}\}}\Phi^{m_{n+1}}_x(y) \Psi_s(x)\mu(dy)dx ds \nonumber \\
 &\leq& a_n^{-1-\al+2\gamma[(\beta\wedge \al\xi)+\al]-\al(1-\eta+\eps)}\int_{0}^{t}\int_{\mathds{V}^{\beta}}\mathds{1}_{ \{ |u(s,x)|\leq a_n, u'(s,x)\approx \pm a_n^{\beta}\}} \Psi_s(x)dx ds.
\en
Our goal is to show that the right hand side of (\ref{heu8}) goes to zero as $n\rr \infty$, for any $\beta\geq 0$. The value $\beta=0$ is a little bit special, so we will concentrate here on $\beta>0$. Fix some $\hat \beta>0$ whose value will be verified in the end of the section.
For $\beta \geq \bar\beta$, we get from (\ref{lebV}) and (\ref{heu8}),
\bn \label{heu9}
 I_{\bar \beta}^n(t)
 &\leq& C t a_n^{-1-\al+2\gamma[(\beta\wedge \al\xi)+\al]-\al(1-\eta+\eps)}|\mathds{V}^{\beta}| \nonumber \\
 &\leq& C t a_n^{-1-\al+2\gamma[(\bar\beta\wedge \al\xi)+\al]-\al(1-\eta+\eps)}|\mathds{V}^{\bar\beta}| \nonumber \\
&\leq& C K_1t a_n^{-1-\al+2\gamma[(\bar\beta\wedge \al\xi)+\al]-\al(1-\eta+\eps)-\bar\beta+\bar\beta/\eta-\eps}.
\en
Consider the case where $0<\beta<\bar\beta$. Let
\bn
S_n(s)=\{x\in [-K_1,K_1]: |u(s,x)|<a_n, u'(s,x)\geq a_n^{\beta} \}.
\en
From (\ref{heu2}) we have that if $n$ is large enough, then for every $x\in S_n(s)$, $u'(s,y)\geq a_n^{\beta}/2$ if $|y-x|\leq M^{-1} a_n^{\beta/\xi}$ ($M$ is from (\ref{heu4})). By the Fundamental Theorem of Calculous we get,
\bn
u(s,y)>a_n, \ \forall 4a_n^{1-\beta} <|y-x|\leq M^{-1} a_n^{\beta/\xi}.
\en
The covering argument in \cite{MP09} suggests that $|S_n(s)|$ can be covered by $K_1a_n^{-\beta/\xi}$ disjoint balls of diameter $3a_n^{1-\beta}$. Since we are interested to bound $ I_\beta^n(t)$, then from (\ref{heu8}) it is sufficient to cover $S_n(s)\cap \mathds{V}^{\beta}$. We will assume for now that $a_n^{\beta/\eta}\geq a_n^{1-\beta}$ for $\beta\in[0,\bar \beta]$. This inequality will be verified when we will fix all our constants at the end of this section. By the last inequality and the construction of $\mathds{V}^{\beta}$ and $S_n(s)$, we get that $S_n(s)\cap \mathds{V}^{\beta}$ can be covered with $K_1a_n^{-\beta-\eps}$ balls of diameter $3a_n^{1-\beta}$. Then from the discussion above we have for $0<\beta<\bar\beta$,
\bn \label{heu10}
|S_n(s)\cap \mathds{V}^{\beta}|\leq C(K_1)a_n^{-\beta-\eps}a_n^{1-\beta}.
\en
From (\ref{heu8}) and (\ref{heu10}) we get,
\bn \label{heu11}
 I_{\beta}^n(t)
 &\leq& C(M,K_1,t)a_n^{-1-\al+2\gamma[(\beta\wedge \al\xi)+\al]-\al(1-\eta+\eps)-\beta-\eps+1-\beta}.
\en
Recall that $\eps>0$ is arbitrarily small.
Hence from (\ref{heu9}) and (\ref{heu11}) we get that $\lim_{n\rr \infty}I^{n}_\beta =0$ if
\bn \label{heu12}
\gamma[(\beta\wedge \al\eta)+\al]>\frac{(2-\eta)\al+2\beta}{2}, \ \forall \beta\leq \bar \beta,
\en
and
\bn \label{heu13}
\gamma[(\bar\beta\wedge \al\eta)+\al]>\frac{(2-\eta)\al+\bar\beta+1-\bar\beta/\eta}{2}.
\en
Now recall that $\gamma>1-\frac{\eta}{2(\eta+1)}$. In this case the choice $\bar\beta=1-\bar\beta/\eta$ together with $\bar\beta=\al\eta$ are optimal for (\ref{heu12}) and (\ref{heu13}).
Since $\al=\frac{1}{1+\eta}$ was fixed, we get that $\bar\beta=\frac{\eta}{\eta+1}$ and then by substituting $\bar\beta =\frac{\eta}{\eta+1}$, $\al=\frac{1}{\eta+1}$ in (\ref{heu12}) and (\ref{heu13}) we get  $\lim_{n\rr \infty}  I_{\beta}=0$ if (\ref{optsol}) holds.
\begin{remark} \label{remark-covering}
Note that (\ref{heu8}) corresponds to (2.58) in Section 2 of \cite{MP09}. In the case of SPDE driven by homogeneous white noise, the integral in
(2.58) is bounded by the constant times Lebesgue measure of the covering of $S_n$. In (\ref{heu8}) we improved the bound on the integral by using a covering of a smaller set $S_n(s)\cap \mathds{V}^{\beta}$. The improved upper bound allowed us to get (\ref{optsol}) as a condition for uniqueness in the case of SPDE driven by white noise based on a fractal measure. A direct implementation of the cover from Section 2 of \cite{MP09} to our case would give us the condition $\gamma>1-\eta/4$ which is more restrictive than (\ref{optsol}).
\end{remark}

\section{Verification of the Hypotheses of Proposition \ref{PropStop}}
This section is devoted to the proof of hypotheses of Proposition \ref{PropStop}. The proof follows the same lines as the proof of Proposition 2.1 in Section 3 of \cite{MP09}.
Let $u^1$ and $u^2$ be as in Section \ref{PoofConj}. We assume also the hypothesis of Theorem \ref{holder-conj} and (\ref{HolSigmaCon2}).  \\\\
Denote by
\be \label{D-notat}
D(s,y) = \sigma(s,y,u^1(s,y))-\sigma(s,y,u^2(s,y)).
\ee
From (\ref{MSHE}) and (\ref{D-notat}) we have
\be \label{ud}
u(t,x)=\int_{0}^{t}\int_{\re}G_{t-s}(y-x)D(s,y)W(ds,dy), \ \ P-\textrm{a.s. for all } (t,x).
\ee
By (\ref{HolSigmaCon2}) we have
\be \label{Dbound}
|D(s,y)| \leq R_0(T)e^{R_1|y|}|u(s,y)|^\gamma.
\ee
Let $\dl \in (0,1]$. Recall that $G_t(\cdot)$ was defined as the heat kernel. Denote by
\be \label{u1}
u_{1,\dl}(t,x) = G_{\dl}(u_{(t-\dl)_{+}})(x)
\ee
and
\be\label{u2}
u_{2,\dl}(t,x) =u(t,x)- u_{1,\dl}(t,x).
\ee
Note that by the same argument as in Section 3 of \cite{MP09}, both $u_{1,\dl}$ and $ u_{2,\dl}$ have sample paths in $\mathcal{C}(\re_+,\mathcal{C}_{tem})$. From  (\ref{ud}), (\ref{u1}) we have
\be
u_{1,\dl}(t,x)=\int_{\re}\bigg(\int_{0}^{(t-\dl)_{+}}\int_{\re}G_{(t-\dl)_{+}-s}(y-z)D(s,y)W(ds,dy)\bigg)G_\dl(z-x)dz.
\ee
By the stochastic Fubini's theorem we get \label{check conditions}
\be \label{u11}
u_{1,\dl}(t,x)=\int_{0}^{(t-\dl)_{+}}\int_{\re}G_{t-s}(y-x)D(s,y)W(ds,dy), \ \ P-\textrm{a.s. for all } (t,x)\in \re_+\times \re.
\ee
From (\ref{u2}) and (\ref{u11}) we have
\be \label{u2exp}
u_{2,\dl}(t,x)=\int_{(t-\dl)_{+}}^{t}\int_{\re}G_{t-s}(y-x)D(s,y)W(ds,dy), \ \ P-\textrm{a.s. for all } (t,x)\in \re_+\times \re.
\ee
Let $\epsilon \in (0,\dl)$. We denote by
\be \label{tilde-u2exp}
\tilde u_{2,\dl}(t,\epsilon,x)=\int_{(t+\epsilon-\dl)_{+}}^{t}\int_{\re}G_{t+\epsilon-s}(y-x)D(s,y)W(ds,dy), \ \ P-\textrm{a.s. for all } (t,x)\in \re_+\times \re.
\ee
Again that by (\ref{ud}), (\ref{u11}) and Fubini's Theorem we have
\be \label{decomp-mol}
 \< u_t,G_{\epsilon}(x-\cdot) \> =  u_{1,\dl}(t+\epsilon,x)+ \tilde u_{2,\dl}(t,\epsilon,x).
\ee
\begin{remark} \label{Remark-Molifier}
The proof of Proposition \ref{PropStop} relies on the regularity of $u$ at space-time points where $ G_{\epsilon}u_t(x)\leq a_n$ (see Section \ref{Sec-Heuristics} for heuristics). In \cite{MP09} a sufficient regularity for $u$ was obtained by the following decomposition
\bn \label{decom}
u(t,x)=u_{1,\dl}(t,\cdot)+u_{2,\dl}(t,\cdot),
\en
and by deriving the regularity of $u_{1,\dl_n}(t,\cdot)$ and $u_{2,\dl_n}(t,\cdot)$ (see Lemmas 6.5 and 6.7 in \cite{MP09}). In our case we could not show that $u_{2,\dl_n}(t,\cdot)$ is "regular enough" to get the criterion (\ref{optsol}) for uniqueness. Therefore, we had to change the argument from \cite{MP09}. The crucial observation is that the integral at the right hand side of (\ref{I-n}) is taken over points where $ \< u_t,G_{a_n^{2(\al+\eps_0)}}(x-\cdot) \> \leq a_n$. Therefore by the decomposition in (\ref{decom}), to bound (\ref{I-n}) it is enough to use the regularity of $u_{1,\delta_n}(s+\eps_n,x),\ \tilde u_{2,\delta_n}(s,\eps_n,x)$ (for particular $\dl_n,\eps_n$). It turns out that $\tilde u_{2,\dl}(t,\epsilon,x)$ is "regular enough" so that the proof of Proposition \ref{PropStop} goes through, and as a result we get the condition (\ref{optsol}) for the pathwise uniqueness.
\end{remark}
We adopt the following notation from Section 3 of \cite{MP09}.
\paragraph{Notation.}
If $s,t,\dl\geq 0$ and $x\in \re$, let $\mathbb{G}_{\dl}(s,t,x)=G_{(t-s)_{+}+\dl}(u_{(s-\dl)_{+}})(x)$ and $F_\dl(s,t,x)=-\frac{d}{dx}\mathbb{G}_{\dl}(s,t,x) \equiv -\mathbb{G}'_{\dl}(s,t,x)$, if the derivative exists. \\\\
We will need the following Lemma.
\begin{lemma}\label{Lem.Fdl}
$\mathbb{G}'_{\dl}(s,t,x)$ exists for all $(s,t,x) \in \re^2_+\times \re$, is jointly continuous in $(s,t,x)$, and satisfies
\bn \label{Fdl2}
F_{\dl}(s,t,x)=\int_{0}^{(s-\dl)_{+}}\int_{\re}G'_{(t\vee s)-r}(y-x)D(r,y)W(dr,dy), \textrm{ for all } s\in\re_+, \nonumber \\ P-\textrm{a.s. for all } (t,x)\in \re_+\times \re.
\en
\end{lemma}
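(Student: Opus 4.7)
The plan is to derive the stochastic integral representation (\ref{Fdl2}) first by stochastic Fubini, and then to obtain joint continuity via Kolmogorov's criterion. Starting from $\mathbb{G}_\delta(s,t,x) = \int_\re G_{(t-s)_+ + \delta}(z-x)\, u_{(s-\delta)_+}(z)\, dz$, I substitute the mild representation (\ref{ud}) for $u$ (no initial-condition term appears since $u = u^1 - u^2$ has vanishing initial data) and swap integrals using stochastic Fubini (Theorem 2.6 of \cite{Walsh}), whose hypotheses are verified exactly as they were for (\ref{u11}) via (\ref{Dbound}), Proposition \ref{parameter} and Lemma \ref{new-int-bound}. The Chapman--Kolmogorov identity $\int_\re G_a(z-x)\,G_b(y-z)\,dz = G_{a+b}(y-x)$ then collapses the iterated Gaussian to a single kernel, and a case analysis on the signs of $t-s$ and $s-\delta$ (using $u_0 = 0$ in the trivial case $s \leq \delta$) verifies that the merged time parameter is precisely $(t\vee s)-r$. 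This yields $\mathbb{G}_\delta(s,t,x) = \int_0^{(s-\delta)_+}\!\!\int_\re G_{(t\vee s)-r}(y-x)\, D(r,y)\, W(dr,dy)$.

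Next I would establish existence of $\mathbb{G}'_\delta$ and the identity (\ref{Fdl2}) by an $L^2$ difference-quotient argument. Using the identity $\partial_x G_a(y-x) = -G'_a(y-x)$ and the stochastic integral isometry, the convergence of $h^{-1}[\mathbb{G}_\delta(s,t,x+h) - \mathbb{G}_\delta(s,t,x)] \to -F_\delta(s,t,x)$ in $L^2$ reduces to deterministic dominated convergence, where the dominant integrand $\sup_{|h|\leq 1}|G'_{(t\vee s)-r}(y-x-h)|^2\, D(r,y)^2$ is integrable against $\mu(dy)\,dr$ on the domain of integration. The key point is that $(t\vee s)-r \geq \delta > 0$ uniformly in $r \in [0,(s-\delta)_+]$, which keeps the Gaussian derivative away from its $r = (t\vee s)$ singularity, and then (\ref{Dbound}), Proposition \ref{parameter} and Lemma \ref{new-int-bound} handle the $\mu$-integrability.

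Joint continuity then follows from Kolmogorov's criterion on compacts $K\subset \re_+^2\times \re$, once I prove a moment estimate of the form $E[|F_\delta(s_1,t_1,x_1) - F_\delta(s_2,t_2,x_2)|^{2p}] \leq C_K (|s_1-s_2| + |t_1-t_2| + |x_1-x_2|)^{\alpha p}$ for some $\alpha>0$ and $p$ large enough to beat the three-dimensional parameter space. I would telescope the difference by moving one variable at a time. The $x$-piece reduces via BDG to integrating $|G'_a(y-x_1) - G'_a(y-x_2)|^2$ for $a\geq \delta$ against $\mu$, controlled by the smoothness of $G'_a$ away from $a=0$ together with the $\eta$-potential bound (\ref{eta-poten}). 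The $s$-piece splits into a short stochastic integral over the slice $[(s_1-\delta)_+,(s_2-\delta)_+]$ of order $O(|s_1-s_2|^{1/2})$, plus a kernel-shift term controlled by Hölder continuity of $a\mapsto G'_a(y-x)$ on $[\delta,\infty)$. The $t$-piece involves only the latter type of term.

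The main technical obstacle is the $s$-shift: both the upper limit of integration $(s-\delta)_+$ and the time parameter $(t\vee s)-r$ depend on $s$ in a non-smooth way, and the Gaussian derivative $G'_a$ is unbounded as $a\to 0$. The positive time cushion $\delta$ is what rescues the argument, bounding all the relevant Gaussian derivatives uniformly so that the needed estimates reduce to the standard heat-kernel bounds developed in the forthcoming Section \ref{Sec.Heat-Kernel-Bounds} combined with the $\eta$-potential and carrying-dimension assumptions on $\mu$ encoded in Lemma \ref{new-int-bound}.
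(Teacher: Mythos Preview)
Your outline is correct and follows the same route the paper defers to: the paper omits the proof entirely, stating it is ``similar to the proof of Lemma 3.1 in \cite{MP09}'', and that argument is precisely stochastic Fubini plus Chapman--Kolmogorov to obtain the integral representation, followed by an $L^2$ difference-quotient to identify the derivative, and Kolmogorov's continuity criterion on compacts for joint continuity in $(s,t,x)$. One minor correction: for the $\mu$-integrability estimates on $(G'_{(t\vee s)-r})^2$ and its increments you should invoke Lemma~\ref{Lem4.2MP} together with Lemma~\ref{condmu}(a) and Lemma~\ref{Lemma-MP4.4} rather than Lemma~\ref{new-int-bound}, which is tailored to a different purpose; otherwise the plan is sound.
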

The proof of Lemma \ref{Lem.Fdl} is similar to the proof of Lemma 3.1 in \cite{MP09} and hence is omitted.
\begin{remark} \label{remark32}
Since $\mathbb{G}_{\dl}(t,t,x)=u_{1,\dl}(t,x)$, as a special case of Lemma \ref{Lem.Fdl} we get that $u'_{1,\dl}(t,x)$ is a.s. jointly continuous and satisfies
\be \label{Fdl}
u'_{1,\dl}(t,x)=-\int_{0}^{(t-\dl)_{+}}\int_{\re}G'_{t-s}(y-x)D(s,y)W(ds,dy),\ \ P-\textrm{a.s. for all } (t,x)\in \re_+\times \re.
\ee
\end{remark}
Now we define some constants that are used repeatedly throughout the proof.
\paragraph{Notation.}
We introduce the following grid of $\beta$. Let
\be \label{L-def}
L=L(\eps_0,\eps_1)=\bigg\lfloor\big(\frac{\eta}{\eta+1}-6\eta\eps_1\big)\frac{1}{\eps_0}\bigg\rfloor,
\ee
and
\bn \label{beta-def}
\beta_i&=&i\eps_0\in\bigg[0,\frac{\eta}{\eta+1}-6\eta\eps_1\bigg], \ \ \alpha_i=2\bigg(\frac{\beta_i}{\eta}+\eps_1\bigg) \in \bigg[0,\frac{2}{\eta+1}\bigg], \ \ i=0,...,L, \nonumber \\
\beta_{L+1} &=&\frac{\eta}{\eta+1} - \eta\eps_1.
\en
Note that $\beta = \beta_i, \ \ i=0,...L+1,$ satisfies
\be \label{beta-up-lim}
0\leq \beta \leq \frac{\eta}{\eta+1} - \eta\eps_1.
\ee
\begin{definition} \label{Xn-def}
For $(t,x)\in \re_+\times \re$,
\bn
\hat{x}_n(t,x)(\omega)&=&\inf\{y\in [x-a_n^{\al},x+a_n^{\al}]:|u(t,y)|=\inf\{|u(t,z)|:|z-x|\leq a_n^{\al} \} \} \nonumber \\
 &\in& [x-a_n^{\al},x+a_n^{\al}].
\en
\end{definition}
In what follows we introduce some notation which is relevant to the support of the measure $\mu$.
\paragraph{Notation} We fix $K_0\in \mathds{N}^{\geq K_1}$. Let $\rho\in(0,1-\al]$.
Since $\cardim(\mu)=\eta$, then, by Definition \ref{def-cardim}, there exists a set $A\subset \re $ such that $\mu(A^c) =0$ and $\dim_H(A)=\eta$. Then there exists $N(n,K_0,\eta,\rho,\eps_0)\in\mathds{N}$ such that
\bn \label{cov0}
N(n,K_0,\eta,\rho,\eps_0)\leq 2K_0a_n^{-\rho-\eps_0},
\en
and a covering, $\{\tilde V^{n,\rho}_i\}_{i=1}^{N(n,K_0,\eta,\rho,\eps_0)}$,
such that $\tilde V^{n,\rho}_i$ is an open interval of length bounded by $|\tilde V^{n,\rho}_i|\leq a_n^{\rho/\eta}$, for all   $i=1,\dots, N(n,K_0,\eta,\rho,\eps_0)$ and
\bn \label{cov1}
(A\cap [-K_0,K_0])\subset \cup_{i=1}^{N(n,K_0,\eta,\rho,\eps_0)}\tilde V^{n,\rho}_i.
\en
We refer to Sections 2.1 and 2.2 in \cite{falconer}, for the definitions of the Hausdorff measure and the Hausdorff dimension (respectively). Note that (\ref{cov0}) and (\ref{cov1}) follow immediately from those definitions.
We define the following extension of the covering above. We denote by $V^{n,\rho}_i=\{x:|x-y| \leq  a_n^{\rho/\eta} \textrm{ for some } y\in \tilde V^{n,\rho}_i\}$. Note that $| V^{n,\rho}_i|\leq 3a_n^{\rho/\eta}$, for all  $i=1,\dots, N(n,K_0,\eta,\rho,\eps_0)$.
From (\ref{cov0}) and (\ref{cov1}) we get
\bn\label{Cover-Bound}
|V^{n,\rho}_i|N(n,K_0,\eta,\rho,\eps_0)&\leq& 6K_0a_n^{\rho/\eta}a_n^{-\rho-\eps_0}\nonumber \\
&=&6K_0a_n^{(1/\eta-1)\rho-\eps_0}.
\en
We denote by $\mathds{V}^{n,\eta,\rho,\eps_0}=\cup_{i=1}^{N(n,K_0,\eta,\rho,\eps_0)}V^{n,\rho}_i$.
We will use the covering $\mathds{V}^{n,\eta,\rho,\eps_0}$ repeatedly throughout this proof.
 \\\\
We define the following sets.
If $s\geq 0$ set
\bn
J_{n,0}(s)&=&\big\{x:|x|\leq K_0, | \< u_s,G_{a_n^{2\al+2\eps_0}}(x-\cdot) \> | \leq \frac{a_n}{2}, u'_{1,a^{2\al}_n}(s,\hat{x}_n(s,x))\geq \frac{a_n^{\eps_0}}{4}\big\} \nonumber \\
J_{n,L}(s)&=&\big\{x:|x|\leq K_0, | \< u_s,G_{a_n^{2\al+2\eps_0}}(x-\cdot) \> | \leq \frac{a_n}{2}, u'_{1,a^{2\al}_n}(s,\hat{x}_n(s,x))\in [0,\frac{a_n^{\beta_L}}{4}]\big\}. \nonumber \\
\en
For $i=1,...,L-1$ set
\bn
J_{n,i}(s)=\big\{x\in\mathds{V}^{n,\eta,\beta_i,\eps_0}: | \< u_s,G_{a_n^{2\al+2\eps_0}}(x-\cdot) \> | \leq \frac{a_n}{2}, u'_{1,a^{2\al}_n}(s,\hat{x}_n(s,x))\in [\frac{a_n^{\beta_{i+1}}}{4},\frac{a_n^{\beta_i}}{4}]\big\}. \nonumber \\
\en
If $t_0>0$ as in (\ref{H2}) and $i=0,...,L$, define
\be
J_{n,i}=\big\{(s,x):0\leq s, \ \ x\in J_{n,i}(s)\big\},
\ee
We also define
\bn \label{cover1}
\hat J_{n}(s)=\big\{x: |x|\leq K_0, \textrm{ and } x \not \in \cup_{i=1}^{L-1}\mathds{V}^{n,\eta,\beta_i,\eps_0} \big\}.
\en
Note that
\bq
\big\{x:|x|\leq K_0, \ u'_{1,a^{2\al}_n}(s,\hat{x}_n(s,x))\geq 0, \  | \< u_s,G_{a_n^{2\al+2\eps_0}}(x-\cdot) \> | \leq \frac{a_n}{2}\big\} \subset \big(\cup_{i=0}^{L}J_{n,i}(s)\big)\cup \hat J_{n}(s), \ \forall s \geq 0.
\eq
If $0\leq t\leq t_0$, let
\be \label{tildeIn}
\hat I^n(t)=a_n^{-1-\al-\eps_0-\frac{2}{n}}\int_{0}^{t}\int_{\re}\int_{\re}\mathds{1}_{\hat J_{n}(s)}(x)e^{2 R_1|y|}|u(s,y)|^{2\gamma}
G_{a_n^{2\al+2\eps_0}}(x-y)\Psi_s(x)\mu(dy)dx ds,
\ee
and
\be \label{Iin}
I_i^n(t)=a_n^{-1-\al-\eps_0-\frac{2}{n}}\int_{0}^{t}\int_{\re}\int_{\re}\mathds{1}_{J_{n,i}(s)}(x)e^{2 R_1|y|}|u(s,y)|^{2\gamma}
G_{a_n^{2\al+2\eps_0}}(x-y)\Psi_s(x)\mu(dy)dx ds.
\ee
The following lemma gives us an essential bound on the heat kernel.
\begin{lemma}  \label{LemmaNewBound2}
Let $T>0$. Let $\mu\in M_f^\eta(\re)$ for some $\eta\in(0,1)$. There is a $C_{\ref{LemmaNewBound2}}(\eta,\nu_1,\lam)>0$ so that for any $\nu_1\in(0,1/2)$
\bn
\int_{\re}e^{\lam|x-y|}G_{t}(x-y)\mathds{1}_{\{|x-y|>t^{1/2-\nu_1}\}}\mu(dy) \leq C_{\ref{LemmaNewBound2}}(\eta,\nu_1,\lam)\exp\{t/4-t^{-2\nu_1}/32\}, \ \forall x\in \re, \ t\in[0,T],\ \lam>0. \nonumber
\en
\end{lemma}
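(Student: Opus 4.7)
\textbf{Proof plan for Lemma \ref{LemmaNewBound2}.}

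The plan is to absorb the exponential weight $e^{\lambda|x-y|}$ against a portion of the Gaussian using Young's inequality, split what remains of the Gaussian into one piece that exploits the indicator $\{|x-y|>t^{1/2-\nu_1}\}$ and one piece that controls the integral against the finite measure $\mu$, and then verify that the resulting constant matches the claimed form on $[0,T]$.

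More precisely, write $z=x-y$. By the elementary inequality $\lambda|z|\leq \lambda^2 t + z^2/(4t)$ (Young's inequality with $p=q=2$), the pointwise bound
\[
e^{\lambda|z|}G_t(z)\;\leq\;\frac{1}{\sqrt{2\pi t}}\exp\!\Big(\lambda^2 t-\frac{z^2}{4t}\Big)
\]
holds. I would then split $z^2/(4t)=z^2/(8t)+z^2/(8t)$. On the set $\{|z|>t^{1/2-\nu_1}\}$ one has $z^2/(8t)\geq t^{-2\nu_1}/8$, so one of the two factors can be pulled out as the deterministic constant $e^{-t^{-2\nu_1}/8}$, leaving
\[
e^{\lambda|z|}G_t(z)\mathds{1}_{\{|z|>t^{1/2-\nu_1}\}}\;\leq\;\frac{1}{\sqrt{2\pi t}}\exp\!\Big(\lambda^2 t-\frac{t^{-2\nu_1}}{8}-\frac{z^2}{8t}\Big).
\]
Since $\mu\in M_f^{\eta}(\re)$ is a finite measure, $\int_{\re} e^{-z^2/(8t)}\mu(dy)\leq \mu(\re)$, so integrating against $\mu(dy)$ gives
\[
\int_{\re}e^{\lambda|z|}G_t(z)\mathds{1}_{\{|z|>t^{1/2-\nu_1}\}}\mu(dy)\;\leq\;\frac{\mu(\re)}{\sqrt{2\pi t}}\exp\!\Big(\lambda^2 t-\frac{t^{-2\nu_1}}{8}\Big).
\]

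To match the claimed right-hand side, compare this bound with $\exp\!\big(t/4-t^{-2\nu_1}/32\big)$. The ratio equals
\[
\frac{\mu(\re)}{\sqrt{2\pi t}}\exp\!\Big((\lambda^2-\tfrac14)t-\tfrac{3}{32}t^{-2\nu_1}\Big),
\]
which I will argue is uniformly bounded in $t\in(0,T]$: the factor $e^{(\lambda^2-1/4)t}$ is bounded by $e^{|\lambda^2-1/4|T}$, while $t^{-1/2}\exp(-\frac{3}{32}t^{-2\nu_1})$ tends to $0$ as $t\downarrow 0$ (the super-polynomial decay of the exponential dominates the $t^{-1/2}$ blow-up) and is bounded as $t\uparrow T$. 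Taking the supremum provides the desired constant $C_{\ref{LemmaNewBound2}}(\eta,\nu_1,\lambda)$ (the dependence on $T$ and $\mu(\re)$ being absorbed silently, as $T$ and $\mu$ are fixed throughout the paper).

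There is no genuine obstacle here: the argument is essentially a Gaussian tail estimate combined with Young's inequality, and the only care required is the bookkeeping of the constants $1/8$ versus $1/32$ so that a strictly negative coefficient of $t^{-2\nu_1}$ remains after subtracting the target exponent. The split of the quadratic into exactly two halves, with one half discarded and the other half supplying the $t^{-2\nu_1}/8$ gain, is the natural choice that makes the finiteness of $\mu$ sufficient and avoids invoking the $\eta$-potential bound from (\ref{eta-poten}).
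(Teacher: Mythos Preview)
Your argument is correct. The paper does not actually give a proof of this lemma: it simply states that the proof ``uses the same estimates as the proof of Lemma 4.4(b) from \cite{MP09} (see the equation after (4.20) in \cite{MP09}), and hence it is omitted.'' The estimate after (4.20) in \cite{MP09} is precisely the Gaussian tail manipulation you describe: absorb $e^{\lambda|z|}$ by Young's inequality at the cost of doubling the Gaussian variance, then on $\{|z|>t^{1/2-\nu_1}\}$ extract a factor $\exp(-c\,t^{-2\nu_1})$ from the remaining Gaussian. So your route is the intended one, and your bookkeeping of the constants ($1/8$ beating $1/32$, the residual $t^{-1/2}$ being killed by the spare $\frac{3}{32}t^{-2\nu_1}$) is exactly what is needed to land on the stated bound.

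One small remark: your argument uses only that $\mu$ is a finite measure, not the $\eta$-potential condition, so the $\eta$-dependence of the constant $C_{\ref{LemmaNewBound2}}(\eta,\nu_1,\lambda)$ in the statement is in fact only through $\mu(\re)$. This is harmless, since $\mu$ is fixed throughout the paper and constants routinely absorb $\mu(\re)$ and $T$ without explicit mention.
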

The proof of Lemma \ref{LemmaNewBound2} uses the same estimates as the proof of Lemma 4.4(b) from \cite{MP09} (see the equation after (4.20) in \cite{MP09}), and hence it is omitted. \medskip \\
The following lemma is one of the ingredients in the proof of (\ref{H2}) in Proposition \ref{PropStop}.
\begin{lemma} \label{Lemma-tildeJ}
\bn  \label{til0}
\lim_{n \rr \infty} \hat I^n(t\wedge T_K)=0, \ \ \forall \ t\leq t_0.
\en
\end{lemma}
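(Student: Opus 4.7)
The key idea is that every $x\in\hat J_n(s)$ lies at distance at least $a_n^{\eps_0/\eta}$ from the effective support of $\mu$, so the heat kernel $G_{a_n^{2\al+2\eps_0}}(x-\cdot)$ carries only super-exponentially small $\mu$-mass, easily defeating the polynomial prefactor $a_n^{-1-\al-\eps_0-2/n}$ in (\ref{tildeIn}). The first step is to strip the $u$-dependent weight using the stopping time: for $s\le T_K$, the bound $|u(s,y)|\le K e^{|y|}$ yields $e^{2R_1|y|}|u(s,y)|^{2\gamma}\le K^{2\gamma}e^{C|y|}$ with $C:=2R_1+2\gamma$. Since $\supp(\Psi_s)\subset(-K_1,K_1)$, the triangle inequality gives $e^{C|y|}\le e^{CK_1}e^{C|x-y|}$, so it is enough to control
$$\mathds{1}_{\hat J_n(s)}(x)\int_{\re}e^{C|x-y|}G_{a_n^{2\al+2\eps_0}}(x-y)\,\mu(dy).$$

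Next I would exploit the covering geometry. Since $\mu(A^c)=0$ and $\mathds{V}^{n,\eta,\beta_1,\eps_0}$ is the $a_n^{\beta_1/\eta}=a_n^{\eps_0/\eta}$-fattening of a cover of $A\cap[-K_0,K_0]$ by (\ref{cov0})--(\ref{cov1}), the condition $x\notin \mathds{V}^{n,\eta,\beta_1,\eps_0}$ built into $\hat J_n(s)$ gives $|x-y|>a_n^{\eps_0/\eta}$ for every $y\in A\cap[-K_0,K_0]$. Taking $K_0\ge K_1+1$ without loss of generality, for $y\notin[-K_0,K_0]$ and $x\in(-K_1,K_1)$ one also has $|x-y|\ge 1>a_n^{\eps_0/\eta}$ once $n$ is large, so $|x-y|>a_n^{\eps_0/\eta}$ holds $\mu$-almost everywhere on the effective support. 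Applying Lemma~\ref{LemmaNewBound2} with $t=a_n^{2\al+2\eps_0}$, $\lam=C$, and $\nu_1\in(0,1/2)$ chosen so that $(\al+\eps_0)(1-2\nu_1)\ge \eps_0/\eta$---which is possible because the smallness of $\eps_0$ imposed in (\ref{eps1}) makes $\eps_0$ much smaller than $\al\eta/(1-\eta)$---forces $t^{1/2-\nu_1}\le a_n^{\eps_0/\eta}$, so the indicator in Lemma~\ref{LemmaNewBound2} is identically $1$ on the effective support and
$$\int_{\re}e^{C|x-y|}G_{a_n^{2\al+2\eps_0}}(x-y)\,\mu(dy)\le C_1\exp\!\Big(\tfrac{a_n^{2\al+2\eps_0}}{4}-\tfrac{a_n^{-4\nu_1(\al+\eps_0)}}{32}\Big)\le C_1\exp(-c_1 a_n^{-c_2})$$
for some $c_1,c_2>0$ and all sufficiently large $n$.

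Plugging this estimate into (\ref{tildeIn}) and integrating $x$ over the bounded set $(-K_1,K_1)$ and $s$ over $[0,t]$ yields
$$\hat I^n(t\wedge T_K)\le C(K,K_0,K_1,\Psi,t_0)\,a_n^{-1-\al-\eps_0-2/n}\exp(-c_1 a_n^{-c_2}),$$
which converges to $0$ because super-exponential decay crushes polynomial growth. The main technical obstacle is purely parameter juggling: one must verify that $\nu_1\in(0,1/2)$ can be chosen to realize $t^{1/2-\nu_1}\le a_n^{\eps_0/\eta}$ given the constraints on $\al\in[1/2,1)$ and the smallness of $\eps_0,\eps_1$ from (\ref{eps1}). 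Once this is secured, everything reduces to a clean application of Lemma~\ref{LemmaNewBound2}.
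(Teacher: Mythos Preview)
Your proof is correct and follows essentially the same route as the paper: bound $|u(s,y)|$ via $s\le T_K$, use membership in $\hat J_n(s)$ to force a positive lower bound on $|x-y|$ for $\mu$-a.e.\ $y$, and then invoke Lemma~\ref{LemmaNewBound2} so that the super-exponential Gaussian tail kills the polynomial prefactor. The only cosmetic difference is that you extract the separation from $x\notin\mathds{V}^{n,\eta,\beta_1,\eps_0}$ (yielding $|x-y|>a_n^{\eps_0/\eta}$), whereas the paper states the weaker consequence $|x-y|>a_n^{\al}$; your choice actually gives a larger separation and a wider admissible range for $\nu_1$, and your explicit handling of $y\notin[-K_0,K_0]$ is a point the paper leaves implicit.
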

\begin{proof}
Let $y\in A\cap [-K_1,K_1]$, where $A$ was defined before (\ref{cov0}). Note that by (\ref{alpha_0}), (\ref{beta-def}) and the construction of $\mathds{V}^{n,\eta,\beta_i,\eps_0}$, if $x\in \hat J_{n}(s)$ then $|x-y|> a_n^{\al}$.
Use this and Lemma \ref{LemmaNewBound2} to get
\bn \label{til1}
\hat I^n(t\wedge T_K)&\leq& a_n^{-1-\al-\eps_0-\frac{2}{n}}\int_{0}^{t_0}\int_{\re}\int_{\re}\mathds{1}_{\hat J_{n}(s)}(x)e^{2 R_1|y|}
e^{2\gamma|y|}G_{a_n^{2\al+2\eps_0}}(x-y)\Psi_s(x)\mu(dy)dxds \nonumber \\
&\leq& a_n^{-1-\al-\eps_0-\frac{2}{n}}\int_{0}^{t_0}\int_{\re}\int_{\re}\mathds{1}_{\{|x-y|> a_n^{2\al}\}}e^{2 (R_1+2)|y|} G_{a_n^{2\al+2\eps_0}}(x-y)\Psi_s(x)\mu(dy)dxds \nonumber \\    \nonumber \\
&\leq & C_{\ref{LemmaNewBound2}}(\eta,\eps_0,R_1)a_n^{-1-\al-\eps_0-\frac{2}{n}}\int_{0}^{t_0}\int_{\re}e^{-a_n^{-2\eps_0}/32}\Psi_s(x)dxds \nonumber \\
&\leq & C(\eta,\eps_0,R_1,t_0,K_1,\|\Psi \|_{\infty})a_n^{-1-\al-\eps_0-\frac{2}{n}}e^{-a_n^{-2\eps_0}/32}.
\en
From (\ref{an0}) and (\ref{til1}) we get (\ref{til0}).
\end{proof}
\\\\
Let
\bn \label{Iplus0}
&&I_{+}^n(t)  \\
&&=a_n^{-1-\al-\eps_0-\frac{2}{n}}\int_{0}^{t_0}\int_{\re}\int_{\re}\mathds{1}_{\{ u'_{1,a^{2\al}_n}(s,\hat{x}_n(s,x))\geq 0\}}\mathds{1}_{\{| \< u_s,G_{a_n^{2\al+2\eps_0}}(x-\cdot) \> | \leq a_n/2 \} }e^{2 R_1|y|}|u(s,y)|^{2\gamma}
\nonumber \\
&&{} \ \ \times G_{a_n^{2\al+2\eps_0}}(x-y)\Psi(x)\mu(dy)dx ds. \nonumber
\en
Then, to prove Proposition \ref{PropStop}, it suffices to construct the sequence of stopping times \\ $\{U_{M,n}\equiv U_{M,n,K_0}:M,n\in \mathds{N}\}$ satisfying (\ref{H1}) and
\bn \label{Iplus}
\textrm{for each } M\in \mathds{N},  \ \ \lim_{n\rr\infty} E(I_{+}^n(t_0\wedge U_{M,n}))=0.
\en
Note that (\ref{Iplus}) implies (\ref{H2}) by symmetry (replace $u_1$ with $u_2$).
By (\ref{cover1})--(\ref{tildeIn}) and (\ref{Iplus0}) we have
\be \label{Iplusbound}
I_{+}^n\leq \sum_{i=0}^{L}I_i^n(t)+\hat I_n(t), \ \ \forall \ t\leq t_0.
\ee
Therefore, from (\ref{Iplusbound}) and Lemma \ref{Lemma-tildeJ}, to prove (\ref{H2}) it is enough to show that for $i=0,...,L$,
\bn \label{Iplus2}
\textrm{for all } M\in \mathds{N},  \ \ \lim_{n\rr\infty} E(I_{i}^n(t_0\wedge U_{M,n}))=0.
\en
\paragraph{Notation.} Let $\bar{l}(\beta)=a_n^{\beta/\eta+5\eps_1}$.
Now introduce the following related sets:
\bn
\tilde{J}_{n,0}(s)&=&\big\{x \in[-K_0,K_0]:  | \< u_s,G_{a_n^{2\al+2\eps_0}}(\cdot-x) \> | \leq \frac{a_n}{2}, u'_{1,a_n^{\alpha_0}}(s+a_n^{2\al+2\eps_0},x')\geq \frac{a_n^{\beta_1}}{16}, \nonumber \\
&&\ \forall x'\in [x-5\bar{l}(\beta_0),x+5\bar{l}(\beta_0)],\nonumber \\
&&\ |\tilde u_{2,a_n^{\alpha_0}}(s,a_n^{2\al+2\eps_0},x')-\tilde u_{2,a_n^{\alpha_0}}(s,a_n^{2\al+2\eps_0},x'')| \leq 2^{-75}a_n^{\beta_1}(|x'-x''|\vee a_n), \nonumber \\
&& \forall x'\in [x-4a_n^{\al},x+4a_n^{\al}], \ x''\in [x'-\bar{l}(\beta_0),x'+\bar{l}(\beta_0)] , \textrm{ and }  \nonumber \\
&&\int_{\re}e^{2 R_1|y|}|u(s,y)|^{2\gamma}G_{a_n^{2\al+2\eps_0}}(x-y)\mu(dy) \leq C(R_1,K_0,\eta,\eps_0)a_n^{2\gamma\al-\al(1-\eta)-2\eps_0} \nonumber \big\},
\en
\bn \label{JtildeL}
\tilde{J}_{n,L}(s)&=&\big\{x \in[-K_0,K_0]: | \< u_s,G_{a_n^{2\al+2\eps_0}}(\cdot-x)  \> | \leq \frac{a_n}{2}, u'_{1,a_n^{\alpha_L}}(s+a_n^{2\al+2\eps_0},x')\leq a_n^{\beta_L},  \\ &&
\forall x'\in [x-5\bar{l}(\beta_L),x+5\bar{l}(\beta_L)],\nonumber \\
&&\ |\tilde u_{2,a_n^{\alpha_L}}(s,a_n^{2\al+2\eps_0},x')- \tilde u_{2,a_n^{\alpha_L}}(s,a_n^{2\al+2\eps_0},x'')| \leq 2^{-75}a_n^{\beta_{L+1}}(|x'-x''|\vee a_n), \nonumber \\
&& \forall x'\in [x-4a_n^{\al},x+4a_n^{\al}], \ x''\in [x'-\bar{l}(\beta_L),x'+\bar{l}(\beta_L)]  , \textrm{ and }  \nonumber \\
&&\int_{\re}e^{2 R_1|y|}|u(s,y)|^{2\gamma}G_{a_n^{2\al+2\eps_0}}(x-y)\mu(dy) \leq C(R_1,K_0,\eta,\eps_0)a_n^{2\gamma(\al+\beta_L)-\al(1-\eta)-2\eps_0} \big\}, \nonumber
\en
and for $i\in\{1,...,L-1\}$,
\bn \label{Jtildei}
\tilde{J}_{n,i}(s)&=&\big\{x \in\mathds{V}^{\eta,\beta_i,\eps_0}: | \< u_s,G_{a_n^{2\al+2\eps_0}}(\cdot-x) \> | \leq \frac{a_n}{2}, u'_{1,a_n^{\alpha_i}}(s+a_n^{2\al+2\eps_0},x')\in [a_n^{\beta_{i+1}}/16,a_n^{\beta_i}],   \\
&& \forall x'\in [x-5\bar{l}(\beta_i),x+5\bar{l}(\beta_i)],\nonumber \\
&&\ |\tilde u_{2,a_n^{\alpha_i}}(s,a_n^{2\al+2\eps_0},x')-\tilde u_{2,a_n^{\alpha_i}}(s,a_n^{2\al+2\eps_0},x'')| \leq 2^{-75}a_n^{\beta_{i+1}}( |x'-x''|\vee a_n), \nonumber \\
&& \forall x'\in [x-4a_n^{\al},x+4a_n^{\al}], \ x''\in [x'-\bar{l}(\beta_i),x'+\bar{l}(\beta_i)]  , \textrm{ and }  \nonumber \\
&&\int_{\re}e^{2 R_1|y|}|u(s,y)|^{2\gamma}G_{a_n^{2\al+2\eps_0}}(x-y)\mu(dy) \leq C(R_1,K_0,\eta,\eps_0)a_n^{2\gamma(\al+\beta_i)-\al(1-\eta)-2\eps_0} \big\}. \nonumber
\en
Finally for $0\leq i \leq L$, set
\be
\tilde{J}_{n,i}=\{(s,x):s\geq 0, x\in\tilde{J}_{n,i}(s)\}.
\ee
\paragraph{Notation}
Denote by
$n_M(\eps_1) = \inf\{n\in \mathds{N}:  a^{\eps_1}_n \leq 2^{-M-8}\}$, $n_0(\eps_1,\eps_0)= \sup\{n\in \mathds{N}:  a_n^{\al} < 2^{-a_n^{-\eps_0\eps_1/4}}\}$, where $\sup\emptyset =1$ and
\bn \label{n1}
 n_{1}(\eps_0,K)= \inf\bigg\{n\in\mathds{N}:a_n\int_{-a_n^{-\eps_0}}^{a_n^{-\eps_0}}G_{1}(y)dy
-2K\int_{a_n^{-\eps_0}}^{\infty}e^{|y|}G_{1}(y)dy > \frac{a_n}{2}\bigg\}.
\en
The following proposition corresponds to Proposition 3.3 in \cite{MP09}. We will prove this proposition in Section \ref{Sec-Proof-Prop3.3}.
\begin{proposition}\label{PropStopTimes}
$\tilde{J}_{n,i}(s)$ is a compact set for all $s\geq 0$. There exist stopping times \\ $\{U_{M,n}=U_{M,n,K_0}:M,n\in\mathds{N}\}$, satisfying (\ref{H1}) and $n_2(\eps_0,\al,\gamma,\eta,K,R_1)\in \mathds{N}$ such that for $i\in \{0,1,...,L\}$, $\tilde{J}_{n,i}(s)$ contains $J_{n,i}(s)$ for all $0\leq s \leq U_{M,n}$ and
\be \label{nM}
n > n_M(\eps_1) \vee n_0(\eps_0,\eps_1)\vee n_{1}(\eps_0,K)\vee n_2(\eps_0,\al,\gamma,\eta,K,R_1).
\ee
\end{proposition}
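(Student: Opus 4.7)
My plan is to follow the template of Proposition 3.3 in \cite{MP09}, with modifications to accommodate (a) the heat-kernel mollifier $G_{a_n^{2\al+2\eps_0}}$ rather than a compactly supported smoother, and (b) the inhomogeneous noise based on $\mu$. The statement has three ingredients: compactness of $\tilde J_{n,i}(s)$, construction of the stopping times $U_{M,n}$, and verification that $J_{n,i}(s)\subset \tilde J_{n,i}(s)$ on the event $\{s\leq U_{M,n}\}$ once $n$ exceeds the threshold in \eqref{nM}.

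\textbf{Compactness of $\tilde J_{n,i}(s)$.} Each defining inequality in \eqref{JtildeL}--\eqref{Jtildei} is closed in $x$: the convolution $x\mapsto \langle u_s, G_{a_n^{2\al+2\eps_0}}(\cdot-x)\rangle$ is continuous by Proposition \ref{parameter}, $x\mapsto u'_{1,\dl}(s+\dl,x)$ is jointly continuous by Remark \ref{remark32}, $x\mapsto \tilde u_{2,\dl}(s,\dl,x)$ is continuous from \eqref{tilde-u2exp} together with a standard moment estimate, and the $\mu$-integral is continuous in $x$ by dominated convergence and Lemma \ref{LemmaNewBound2}. Boundedness in $[-K_0,K_0]$ (respectively $\mathds{V}^{n,\eta,\beta_i,\eps_0}\subset [-K_0-1,K_0+1]$) then yields compactness.

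\textbf{Construction of $U_{M,n}$.} I set $U_{M,n}$ to be $T_K$ intersected with the first time $s$ at which one of the following a priori estimates, controlling the gap between $J_{n,i}$ and $\tilde J_{n,i}$, fails on the scales $a_n^{2\al+2\eps_0}$ and $\bar l(\beta_i)$:
\begin{itemize}
\item[(R1)] A joint modulus of continuity for $u'_{1,\dl}(t,x)$ in $(\dl,t,x)$: for any $\dl\in[a_n^{2\al}, a_n^{\alpha_i}]$, any $|t'-t|\leq a_n^{2\al+2\eps_0}$, and any $|x'-x|\leq 5\bar l(\beta_i)+a_n^{\al}$, the increment $|u'_{1,\dl'}(t',x')-u'_{1,\dl}(t,x)|$ is bounded by $a_n^{\beta_{i+1}}/32$.
\item[(R2)] The spatial modulus $|\tilde u_{2,a_n^{\alpha_i}}(s,a_n^{2\al+2\eps_0},x')-\tilde u_{2,a_n^{\alpha_i}}(s,a_n^{2\al+2\eps_0},x'')|\leq 2^{-75} a_n^{\beta_{i+1}}(|x'-x''|\vee a_n)$ on the relevant ranges of $x',x''$.
\item[(R3)] The noise-intensity bound $\int_\re e^{2R_1|y|}|u(s,y)|^{2\gamma} G_{a_n^{2\al+2\eps_0}}(x-y)\mu(dy)\leq C(R_1,K_0,\eta,\eps_0)\, a_n^{2\gamma(\al+\beta_i)-\al(1-\eta)-2\eps_0}$.
\end{itemize}
Each of (R1)--(R3) will be proved in subsequent sections via Burkholder--Davis--Gundy, the heat-kernel bound of Lemma \ref{LemmaNewBound2}, and the $\eta$-potential assumption \eqref{eta-poten}. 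With the $M$-level moment truncations built into each estimate, the resulting stopping times satisfy \eqref{H1}, since the failure probability tends to $0$ as $M\to\infty$ uniformly in $n$.

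\textbf{Inclusion $J_{n,i}(s)\subset \tilde J_{n,i}(s)$.} Fix $x\in J_{n,i}(s)$ with $s\leq U_{M,n}$. The condition $|\langle u_s,G_{a_n^{2\al+2\eps_0}}(\cdot-x)\rangle|\leq a_n/2$ is common to both sets. The hypothesis $u'_{1,a_n^{2\al}}(s,\hat x_n(s,x))\in[a_n^{\beta_{i+1}}/4,a_n^{\beta_i}/4]$ from the definition of $J_{n,i}$ combined with (R1) applied across the changes $(\dl, t, y)=(a_n^{2\al}, s, \hat x_n(s,x))\to(a_n^{\alpha_i}, s+a_n^{2\al+2\eps_0}, x')$ yields an error of at most $a_n^{\beta_{i+1}}/32$, which keeps $u'_{1,a_n^{\alpha_i}}(s+a_n^{2\al+2\eps_0},x')\in[a_n^{\beta_{i+1}}/16,a_n^{\beta_i}]$ for every $x'\in[x-5\bar l(\beta_i),x+5\bar l(\beta_i)]$. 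Estimates (R2) and (R3) yield the remaining two conditions in the definition of $\tilde J_{n,i}(s)$ verbatim. The thresholds $n_M(\eps_1), n_0(\eps_0,\eps_1), n_1(\eps_0,K), n_2(\eps_0,\al,\gamma,\eta,K,R_1)$ are chosen just large enough so that (i) $a_n^{\eps_1}\leq 2^{-M-8}$ absorbs the $M$-factors in the moduli, (ii) the heat-kernel tail bound in Lemma \ref{LemmaNewBound2} is effective at scale $a_n^{2\al+2\eps_0}$, (iii) $\langle u_s, G_{a_n^{2\al+2\eps_0}}(\cdot-x)\rangle$ reproduces $u(s,\hat x_n(s,x))$ up to an $O(a_n)$ error (using \eqref{n1} and the definition of $T_K$), and (iv) the constants in (R1)--(R3) can be absorbed into the stated numerical thresholds.

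\textbf{Main obstacle.} The technical crux is producing the joint modulus of continuity (R1) with sharp enough powers of $a_n$ to close the arithmetic; in \cite{MP09} this rested on homogeneous heat-kernel estimates, whereas here one must pass through Lemma \ref{LemmaNewBound2} and the $\eta$-potential bound, which is precisely the reason for the choice $\alpha_i=2(\beta_i/\eta+\eps_1)$ and $\bar l(\beta_i)=a_n^{\beta_i/\eta+5\eps_1}$. A secondary difficulty is (R3): because the integration is against $\mu$ rather than Lebesgue measure, one exploits the $\eta$-potential bound to gain a factor $a_n^{-\al(1-\eta)}$, but getting the additional factor $a_n^{2\gamma\beta_i}$ requires the continuity/regularity of $u$ near its zero set encoded in the other defining conditions, so (R3) is inherently coupled with (R1)--(R2) rather than provable in isolation.
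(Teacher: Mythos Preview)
Your structural decomposition is correct and mirrors the paper's: compactness via continuity of each defining map, explicit stopping times encoding the needed moduli, and then verification of the inclusion $J_{n,i}(s)\subset\tilde J_{n,i}(s)$. The paper in fact uses four stopping times $U^{(1)}$--$U^{(4)}$ rather than three (your (R1) is split into the spatial modulus of $u'_{1,a_n^{\alpha_i}}$ and the change of mollification scale $a_n^{2\al}\to a_n^{\alpha_i}$, handled separately via Corollary \ref{corollary-u-tag-reg} and Proposition \ref{Prop5.11}), and there is an additional stopping time $U^{(4)}$ encoding a direct pointwise bound on $|u(s,y)|$, but these are refinements rather than conceptual differences.

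The genuine gap is in how you intend to establish (R1)--(R3) and hence \eqref{H1}. You write that these ``will be proved in subsequent sections via Burkholder--Davis--Gundy, the heat-kernel bound of Lemma \ref{LemmaNewBound2}, and the $\eta$-potential assumption \eqref{eta-poten}.'' This is not enough, and the paper's argument is substantially more involved. Direct Gaussian estimates give at best H\"older-$\eta/2$ regularity (Theorem \ref{Theorem2.3MP-P0-ind}); with that input the increments of $u'_{1,\dl}$ and $\tilde u_{2,\dl}$ carry exponents that are too weak to dominate $a_n^{\beta_{i+1}}$, and (R3) yields only $a_n^{2\gamma\al-\al(1-\eta)}$ rather than $a_n^{2\gamma(\al+\beta_i)-\al(1-\eta)}$. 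What the paper actually uses is the iterative regularity property $(P_{\bar m+1})$ of Proposition \ref{Prop-Induction}: near points where $|u|$ and $|u'_{1,a_n^{2\al}}|$ are small (i.e., on $Z(N,n,K,\beta)$), one has the improved bound $|u(s,y)|\lesssim a_n^{-\eps_0}\eps^{1-\eps_0}[(a_n^{\al}\vee\eps)^{\eta}+a_n^{\beta}]$. This bootstrap is developed through the entire Section \ref{Sec-Local-bound} machinery (Lemmas \ref{Lemma-Q1}--\ref{Lemma-QTan}, Propositions \ref{probIncFdl}, \ref{Prop5.11}, \ref{Prop5.14mod}), and each of the four stopping times is shown to equal $T_{K_0}$ for $M$ large precisely by invoking one of these results at level $m=\bar m+1$ (see Lemmas \ref{Lemma6.1}--\ref{Lemma601}). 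The stochastic boundedness uniform in $(n,\beta)$ that gives \eqref{H1} is inherited from the stochastic boundedness of the thresholds $N_{\ref{probIncFdl}}, N_{\ref{Prop5.11}}, N_{\ref{Prop5.14mod}}$, which in turn is built into the inductive statement $(P_m)$.

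Your closing remark that (R3) is ``coupled with (R1)--(R2)'' points in the right direction but understates the issue: it is not that (R1)--(R3) are mutually coupled, but that all three depend on a \emph{separate, prior} regularity result on $u$ near its zero set, and producing that result is the bulk of the work. Without identifying $(P_{\bar m+1})$ as the key input, the proof cannot close.
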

Throughout the rest of this section we assume that that the parameters $M,n\in \mathds{N}$ satisfy (\ref{nM}). \\\\
The following Lemma corresponds to Lemma 3.4 in \cite{MP09}.
\begin{lemma} \label{Lem3.4MP}
Assume $i\in\{0,...,L\}$, $x\in\tilde{J}_{n,i}(s)$ and $|x-x'|\leq 4a_n^{\al}$.
\begin{itemize}
  \item[\bf{(a)}] If $i>0$, then
  \bd |\< u_s,G_{a_n^{2\al+2\eps_0}}(\cdot-x'') \> -\< u_s,G_{a_n^{2\al+2\eps_0}}(\cdot-x') \> | \leq 2a_n^{\beta_i}(|x'-x''|\vee a_n), \ \ \forall |x''-x'|\leq \bar{l}_n(\beta_i).
  \ed
  \item [\bf{(b)}] If $i<L$, and $  a_n\leq |x''-x'|\leq \bar{l}_n(\beta_i)$, then
  \bd
  \< u_s,G_{a_n^{2\al+2\eps_0}}(\cdot-x'') \> -\< u_s,G_{a_n^{2\al+2\eps_0}}(\cdot-x') \>
  \left\{
   \begin{array}{cc}
    \geq 2^{-5}a_n^{\beta_{i}}(x''-x') & \textrm{if } x''\geq x', \\
    \leq 2^{-5}a_n^{\beta_{i}}(x''-x') & \textrm{if } x'\geq x''. \\
     \end{array}
     \right.
  \ed
\end{itemize}
\end{lemma}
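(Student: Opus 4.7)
The plan is to use the mollification decomposition \eqref{decomp-mol} with $\epsilon = a_n^{2\al+2\eps_0}$ and $\dl = a_n^{\alpha_i}$. The choice is legitimate because $\alpha_i = 2\beta_i/\eta+2\eps_1 \le 2\al$ by \eqref{beta-up-lim} and \eqref{alpha_0}, so $\epsilon < \dl$. Evaluating \eqref{decomp-mol} at $y=x'$ and at $y=x''$ and subtracting, the increment on the left-hand side in (a)/(b) splits as $A+B$, where
\[
A := u_{1,\dl}(s+\epsilon,x'')-u_{1,\dl}(s+\epsilon,x'), \qquad B := \tilde u_{2,\dl}(s,\epsilon,x'')-\tilde u_{2,\dl}(s,\epsilon,x').
\]

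I would first verify, from $|x-x'|\le 4a_n^{\al}$, $|x''-x'|\le \bar{l}_n(\beta_i)$, and the parameter relations in \eqref{eps1}--\eqref{beta-def}, that the line segment from $x'$ to $x''$ sits inside $[x-5\bar{l}_n(\beta_i),x+5\bar{l}_n(\beta_i)]$, so that the bracket on $u'_{1,\dl}(s+\epsilon,\cdot)$ from the definition of $\tilde{J}_{n,i}(s)$ applies pointwise along it (with the obvious one-sided analogues for $i\in\{0,L\}$). The $\tilde u_{2,\dl}$-continuity clause in $\tilde{J}_{n,i}(s)$ is directly applicable at the pair $(x',x'')$ thanks to $|x-x'|\le 4a_n^{\al}$ and $|x''-x'|\le \bar{l}_n(\beta_i)$.

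Granted these, the estimates follow by elementary calculus. For part (a), the fundamental theorem of calculus (using Remark \ref{remark32}) combined with the upper bound $u'_{1,\dl}\le a_n^{\beta_i}$ from $\tilde{J}_{n,i}(s)$ gives $|A|\le a_n^{\beta_i}|x''-x'|$, while the $\tilde u_{2,\dl}$-continuity clause in $\tilde{J}_{n,i}(s)$ yields $|B|\le 2^{-75}a_n^{\beta_{i+1}}(|x''-x'|\vee a_n)$. Adding, and using $a_n^{\beta_{i+1}}\le a_n^{\beta_i}$, produces $|A+B|\le 2a_n^{\beta_i}(|x''-x'|\vee a_n)$. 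For part (b) with $x''\ge x'$ and $a_n\le|x''-x'|\le\bar{l}_n(\beta_i)$, integrating the lower bound on $u'_{1,\dl}$ from $\tilde{J}_{n,i}(s)$ along $[x',x'']$ produces an $A$ of positive sign with the appropriate magnitude, and the $\tilde u_{2,\dl}$-fluctuation $B$ — smaller by the factor $2^{-75}$ — is easily absorbed (using $|x''-x'|\ge a_n$ to replace $|x''-x'|\vee a_n$ by $x''-x'$); the case $x'\ge x''$ is symmetric.

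The main obstacle is the neighborhood bookkeeping in the first step — verifying that the derivative bound on $u'_{1,\dl}$ (available on $[x-5\bar{l}_n(\beta_i),x+5\bar{l}_n(\beta_i)]$) and the $\tilde u_{2,\dl}$-continuity clause (stated on the slightly different neighborhood $[x-4a_n^{\al},x+4a_n^{\al}]$ paired with $\bar{l}_n(\beta_i)$-balls) can both be invoked at the specific $(x',x'')$ appearing in the hypothesis. Once this is settled via the size relations of $a_n^{\al}$ and $\bar{l}_n(\beta_i)$ implied by \eqref{eps1}--\eqref{beta-def} and the range conditions in \eqref{nM}, the remaining work is routine and the two inequalities drop out by adding $A$ and $B$.
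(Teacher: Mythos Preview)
Your approach is essentially identical to the paper's: it too applies the decomposition \eqref{decomp-mol} with $\epsilon=a_n^{2\al+2\eps_0}$ and $\dl=a_n^{\alpha_i}$, bounds $A$ via the mean value theorem using the derivative clause of $\tilde J_{n,i}(s)$, bounds $B$ directly by the $\tilde u_{2,\dl}$-continuity clause, and then adds. The neighborhood bookkeeping you flag as the main obstacle is handled in the paper implicitly (and is underwritten by Lemma~\ref{Lem3.6MP}), so your outline matches the paper's argument line for line.
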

\begin{proof}
(a) For $n,i,s,x,x'',x'$ as in (a) we have
\bn
|x'-x|\vee|x''-x'|\leq 2\bar l_n(\beta_i).
\en
From the definition of $\tilde J_{n,i}$, (\ref{decomp-mol}) and the mean value theorem we have
\bn
|\< u_s,G_{a_n^{2\al+2\eps_0}}(\cdot-x'') \> -\< u_s,G_{a_n^{2\al+2\eps_0}}(\cdot-x') \> |&\leq& |u_{1,a_n^{\alpha_i}}(s+a_n^{2\al+2\eps_0},x'')-u_{1,a_n^{\alpha_i}}(s+a_n^{2\al+2\eps_0},x')| \nonumber \\
&&+|\tilde u_{2,a_n^{\alpha_i}}(s,a_n^{2\al+2\eps_0},x'')-\tilde u_{2,a_n^{\alpha_i}}(s,a_n^{2\al+2\eps_0},x')| \nonumber \\
&\leq&a_n^{\beta_i}|x'-x''|+2^{-75}a_n^{\beta_i}(|x'-x''|\vee a_n) \nonumber \\
&\leq& 2a_n^{\beta_i}(|x'-x''|\vee a_n).
\en
(b) For $n,i,s,x,x'',x'$ as in (b) with $x''\geq x'$ we have
\bn
\< u_s,G_{a_n^{2\al+2\eps_0}}(\cdot-x'') \> -\< u_s,G_{a_n^{2\al+2\eps_0}}(\cdot-x') \> &=& u_{1,a_n^{\alpha_i}}(s+a_n^{2\al+2\eps_0},x'')-u_{1,a_n^{\alpha_i}}(s+a_n^{2\al+2\eps_0},x') \nonumber \\
&&+\tilde u_{2,a_n^{\alpha_i}}(s,a_n^{2\al+2\eps_0},x'')-\tilde u_{2,a_n^{\alpha_i}}(s,a_n^{2\al+2\eps_0},x') \nonumber \\
&\geq& \frac{a_n^{\beta_i}}{16}|x'-x''|-2^{-75}a_n^{\beta_i}|x'-x''| \nonumber \\
&\geq& \frac{a_n^{\beta_i}}{32}|x'-x''|.
\en
\end{proof}
\paragraph{Notation.} Let $l_n(\beta_i)=65 a_n^{1-\beta_{i+1}}$.
The following Lemma corresponds to Lemmas 3.6 in \cite{MP09}.
\begin{lemma}\label{Lem3.6MP}
If $i\in\{0,...,L\}$, then
\be
l_n(\beta_i)< a_n^{\al} <\frac{1}{2}\bar{l}_n(\beta_i).
\ee
\end{lemma}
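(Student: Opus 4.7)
The plan is to reduce both inequalities to comparisons of exponents of $a_n$ and then invoke the smallness of $a_n$ (via the running hypothesis (\ref{nM})) to absorb the multiplicative constants $65$ and $2$. Recall from (\ref{an0}) that $a_n < 1$, so $a_n^p < a_n^q$ is equivalent to $p > q$; any inequality of the form $C a_n^{p} < a_n^{q}$ with $p > q$ is then automatic once $a_n$ is small enough that $a_n^{q-p} > C$, which is already forced by $n > n_M(\eps_1)$.

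For the first inequality $l_n(\beta_i) = 65\, a_n^{1-\beta_{i+1}} < a_n^{\al}$, I would verify the exponent gap
$$1 - \beta_{i+1} - \al \;\geq\; \eta\eps_1 \;>\; 0 \qquad \textrm{for every } i \in \{0,\dots,L\}.$$
Since $\al = 1/(\eta+1)$ by (\ref{alpha_0}), we have $1 - \al = \eta/(\eta+1)$. For $i \leq L-1$, the definition (\ref{beta-def}) gives $\beta_{i+1} \leq L\eps_0 \leq \eta/(\eta+1) - 6\eta\eps_1$; for $i = L$, it gives $\beta_{L+1} = \eta/(\eta+1) - \eta\eps_1$. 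In either case the stated exponent gap holds, and the constant $65$ is absorbed by $a_n^{-\eta\eps_1}$, which is arbitrarily large once $n > n_M(\eps_1)$.

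For the second inequality $a_n^{\al} < \tfrac{1}{2} \bar{l}_n(\beta_i) = \tfrac{1}{2} a_n^{\beta_i/\eta + 5\eps_1}$, I would check that
$$\al - \beta_i/\eta - 5\eps_1 \;\geq\; \eps_1 \;>\; 0 \qquad \textrm{for every } i \in \{0,\dots,L\}.$$
Indeed, (\ref{beta-def}) gives $\beta_i \leq \eta/(\eta+1) - 6\eta\eps_1$ for all such $i$, so dividing by $\eta$ yields $\beta_i/\eta \leq \al - 6\eps_1$ and hence $\beta_i/\eta + 5\eps_1 \leq \al - \eps_1$. The factor $2$ is absorbed by $a_n^{-\eps_1}$, again by the constraint (\ref{nM}).

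There is no real obstacle here: the whole content of the lemma is that the grid parameters $L$, $\eps_0$, $\eps_1$ fixed in (\ref{eps1})--(\ref{beta-def}) were chosen with precisely these two exponent inequalities in mind. The margin $6\eta\eps_1$ built into the definition of $L$ and the margin $\eta\eps_1$ built into $\beta_{L+1}$ are exactly what is needed so that the two scales $l_n(\beta_i)$ and $\bar{l}_n(\beta_i)$ straddle $a_n^{\al}$ uniformly in $i\in\{0,\dots,L\}$.
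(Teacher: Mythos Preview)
Your proposal is correct and follows exactly the approach the paper has in mind: the paper omits the proof, noting only that it is similar to Lemma 3.6 in \cite{MP09}, and that argument is precisely the exponent comparison you carry out. Your computations of the exponent gaps $1-\beta_{i+1}-\al\geq \eta\eps_1$ and $\al-\beta_i/\eta-5\eps_1\geq \eps_1$ are correct, and the absorption of the constants $65$ and $2$ into these gaps for $n$ large (guaranteed by the running hypothesis (\ref{nM})) is the entire content of the lemma.
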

The proof of Lemma \ref{Lem3.6MP} in our case is similar to the proof of Lemma 3.6 in \cite{MP09}, hence it is omitted.
\begin{lemma} \label{LebJ}
\begin{itemize}
\item [\bf{(a)}] For all $s\geq 0$,
\bd
|\tilde{J}_{n,0}(s)|\leq 10K_0\bar{l}_n(\beta_0)^{-1}l_n(\beta_0).
\ed
\item [\bf{(b)}] For all $i\in\{1,...,L-1\}$ and $s\geq 0$,
\bd
|\tilde{J}_{n,i}(s)|\leq 10K_0\bar{l}_n(\beta_i)^{-\eta}l_n(\beta_i).
\ed
\end{itemize}
\end{lemma}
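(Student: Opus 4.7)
The plan is a confinement-plus-covering argument. The confinement step uses Lemma \ref{Lem3.4MP}(b) to show that within any interval of length $\bar{l}_n(\beta_i)$, the set $\tilde{J}_{n,i}(s)$ cannot spread horizontally by more than $l_n(\beta_i)$. The covering step then counts how many such windows are needed to reach the region containing $\tilde{J}_{n,i}(s)$: this is $[-K_0,K_0]$ in case (a) and the fractal-type cover $\mathds{V}^{n,\eta,\beta_i,\eps_0}$ in case (b), which is exactly what produces the different exponent ($\bar{l}_n^{-1}$ versus $\bar{l}_n^{-\eta}$).

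For the confinement, suppose $x_1, x_2 \in \tilde{J}_{n,i}(s)$ with $a_n \leq |x_2 - x_1| \leq \bar{l}_n(\beta_i)$. Both points lie in $\tilde{J}_{n,i}(s)$, so $|\< u_s, G_{a_n^{2\al+2\eps_0}}(\cdot - x_j)\>| \leq a_n/2$ for $j=1,2$, and the difference is at most $a_n$ in absolute value. On the other hand, Lemma \ref{Lem3.4MP}(b), applied with $x = x_1$, $x' = x_1$, $x'' = x_2$ and using the derivative lower bound $a_n^{\beta_{i+1}}/16$ together with the oscillation bound $2^{-75} a_n^{\beta_{i+1}}$ built into the definition of $\tilde{J}_{n,i}$, forces this difference to have absolute value at least $2^{-5} a_n^{\beta_{i+1}} |x_2 - x_1|$. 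Combining, $|x_2 - x_1| \leq 2^6 a_n^{1-\beta_{i+1}} < l_n(\beta_i)$; the case $|x_2 - x_1| < a_n$ is already dominated by $l_n(\beta_i)$ by Lemma \ref{Lem3.6MP}. Hence $\tilde{J}_{n,i}(s) \cap J$ has diameter (and therefore Lebesgue measure) at most $l_n(\beta_i)$ for any interval $J$ of length $\bar{l}_n(\beta_i)$.

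For case (a), partition $[-K_0,K_0] \supset \tilde{J}_{n,0}(s)$ into at most $\lceil 2K_0/\bar{l}_n(\beta_0)\rceil \leq 10 K_0 \bar{l}_n(\beta_0)^{-1}$ consecutive windows of length $\bar{l}_n(\beta_0)$ and sum the confinement bound. For case (b), use the cover $\{V^{n,\beta_i}_j\}$ with $N \leq 2 K_0 a_n^{-\beta_i - \eps_0}$ elements of length $\leq 3 a_n^{\beta_i/\eta}$; subdivide each $V^{n,\beta_i}_j$ into at most $\lceil 3 a_n^{-5\eps_1}\rceil$ consecutive windows of length $\bar{l}_n(\beta_i) = a_n^{\beta_i/\eta + 5\eps_1}$, and apply the confinement bound to each. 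This yields a total of at most $C K_0 a_n^{-\beta_i - \eps_0 - 5\eps_1} l_n(\beta_i)$, which after absorbing the factor $a_n^{-\eps_0 - 5(1-\eta)\eps_1}$ into the constant $10$ (possible in our parameter regime (\ref{eps1})) equals $10 K_0 \bar{l}_n(\beta_i)^{-\eta} l_n(\beta_i)$, since $\bar{l}_n(\beta_i)^{-\eta} = a_n^{-\beta_i - 5\eta\eps_1}$.

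The main technical point is bookkeeping the $\beta$-indices: Lemma \ref{Lem3.4MP}(b) must be invoked with the derivative lower bound $a_n^{\beta_{i+1}}/16$ from the definition of $\tilde{J}_{n,i}$ rather than with the upper bound $a_n^{\beta_i}$, which is exactly what produces $l_n(\beta_i) = 65 a_n^{1 - \beta_{i+1}}$ as the effective confinement scale. The remainder of the proof is routine partitioning and summation.
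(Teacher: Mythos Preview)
Your approach matches the paper's: the confinement step via Lemma~\ref{Lem3.4MP}(b) (correctly invoked with the lower derivative bound $a_n^{\beta_{i+1}}/16$, which is what forces the scale $l_n(\beta_i)=65a_n^{1-\beta_{i+1}}$) followed by a covering argument --- over $[-K_0,K_0]$ for part (a) and over the fractal cover $\mathds{V}^{n,\eta,\beta_i,\eps_0}$ for part (b) --- is exactly the route indicated by the paper and inherited from Lemma~3.7 of \cite{MP09}.

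There is, however, a slip in the last line of (b). You obtain at most $CK_0\,a_n^{-\beta_i-\eps_0-5\eps_1}l_n(\beta_i)$, while the lemma as stated claims $10K_0\,\bar{l}_n(\beta_i)^{-\eta}l_n(\beta_i)=10K_0\,a_n^{-\beta_i-5\eta\eps_1}l_n(\beta_i)$. The discrepancy is a factor of $a_n^{-\eps_0-5(1-\eta)\eps_1}$, and since $\eta<1$ and $\eps_0,\eps_1>0$ this exponent is strictly negative, so the factor diverges as $n\to\infty$; it cannot be ``absorbed into the constant $10$'' under any choice of parameters in (\ref{eps1}). What \emph{is} true is that the slightly weaker bound you actually prove, $CK_0\,a_n^{-\beta_i-\eps_0-5\eps_1}l_n(\beta_i)$, is precisely what the paper feeds into (\ref{Ini1-L2}) (note the factor $a_n^{-\beta_i-5\eps_1}$ there, not $a_n^{-\beta_i-5\eta\eps_1}$), and the extra $\eps_0,\eps_1$ losses are harmlessly absorbed into the exponent estimate (\ref{rho1}). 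So your argument is correct and sufficient for the application; only the final bookkeeping sentence overstates what has been shown.
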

\begin{proof} The proof of (a) is follows the same lines as the proof of Lemma 3.7 in \cite{MP09} for the case where $i=0$. The proof (b) also follows the same lines as the proof of Lemma 3.7 in \cite{MP09} for the case where $i=\{1,...,L-1\}$. The major difference is that in our case $\tilde{J}_{n,i}(s)\subset \mathds{V}^{n,\eta,\beta_i,\eps_0}$, so instead of covering $\tilde{J}_{n,i}(s)$ with $\bar{l}_n(\beta_i)^{-1}$ balls as in \cite{MP09}, we can cover it with a smaller number of balls which is proportional to  $\bar{l}_n(\beta_i)^{-\eta}$.
\end{proof}
\paragraph{Proof of (\ref{H2}) in Proposition \ref{PropStop}.}
Recall that to prove (\ref{H2}) in Proposition \ref{PropStop}, it is enough to show (\ref{Iplus}), for $i=0,...,L$. In fact we will prove a stronger result. Recall that $n_M$, $n_0$, $n_1$, $n_2$ were defined before and in Proposition \ref{PropStopTimes}. We will show that for
\be \label{317}
n > n_M(\eps_1) \vee n_0(\eps_0,\eps_1)\vee \frac{2}{\eps_1}\vee n_{1}(\eps_0,K_0,K_1)\vee n_2(\eps_0,\al,\gamma,\eta,K_0,R_1).
\ee
\be \label{Iibound}
I_i^n(t_0\wedge U_{M,n}) \leq C(\eta,K_0,t_0,\|\Psi\|_{\infty})a_n^{\gamma-1+\frac{\eta}{2(\eta+1)}-13\eps_1},
\ee
which implies (\ref{H2}) since $\gamma > 1-\frac{\eta}{2(\eta+1)}+100\eps_1$.
By Proposition \ref{PropStopTimes}, $\frac{2}{n}<\eps_1$ (by (\ref{317})) and (\ref{eps1}) we have
\bn \label{GenIn}
&& I_i^n(t_0\wedge U_{M,n}) \\
&&=a_n^{-1-\al-\eps_0-\frac{2}{n}}\int_{0}^{t_0\wedge U_{M,n}}\int_{\re}\int_{\re}e^{2R_1|y|}\mathds{1}_{J_{n,i}(s)}(x)|u(s,y)|^{2\gamma}
G_{a_n^{2\al+2\eps_0}}(x-y)\Psi_s(x)\mu(dy)dx ds \nonumber \\
&&\leq a_n^{-1-\al-2\eps_1}\int_{0}^{t_0}\int_{\re}\int_{\re}e^{2R_1|y|}\mathds{1}_{\{s< U_{M,n}\}}\mathds{1}_{\tilde{J}_{n,i}(s)}(x)G_{a_n^{2\al+2\eps_0}}(x-y)|u(s,y)|^{2\gamma}
\Psi_s(x)\mu(dy)dx ds. \nonumber
\en
Consider first the case where $i=0$. For $x\in \tilde J_{n,0}(s)$ we have
\bn \label{nb-b}
\int_{\re}e^{2R_1|y|}|u(s,y)|^{2\gamma}G_{a_n^{2\al+2\eps_0}}(x-y)\mu(dy) \leq C(R_1,K_0,\eta,\eps_0)a_n^{2\gamma\al-\al(1-\eta)-2\eps_0}.
\en
We get from (\ref{GenIn}), (\ref{nb-b}), Lemma \ref{LebJ}(a) and Lemma \ref{condmu}(a),
\bn \label{CIn0}
&& I_0^n(t_0\wedge U_{M,n}) \\
%&&\leq  C(K_0,\eps_0,\omega)a_n^{-1-\al-\eps_1} a_{n}^{2\gamma(\al-\eps_0)}\|\Psi\|_{\infty}\int_{0}^{t_0}\int_{\re}\mathds{1}_{\tilde{J}_{n,0}(s)}(x)\bigg(\int_{\re}
%e^{2\gamma|x-y|}G_{a_n^{2\al-2\eps_0}}(x-y)\mu(dy)\bigg)dx ds \nonumber \\
&&\leq C(R_1,K_0,\eta,\eps_0)a_n^{-1-\al-2\eps_1} a_n^{2\gamma\al-\al(1-\eta)-2\eps_0}\|\Psi\|_{\infty}\int_{0}^{t_0}\int_{\re}\mathds{1}_{\tilde{J}_{n,0}(s)}(x)\mathds{1}_{[-K_0,K_0]}(x)dx ds \nonumber \\
&& \leq C(R_1,K_0,\eta,\eps_0)a_n^{-1-\al-2\eps_1} a_n^{2\gamma\al-\al(1-\eta)-2\eps_0}\|\Psi\|_{\infty}\int_{0}^{t_0}\int_{\re}\mathds{1}_{\tilde{J}_{n,0}(s)}(x)
dx ds \nonumber \\
&&\leq  C(R_1,K_0,\eta,\eps_0)a_n^{-1-\al-2\eps_1} a_n^{2\gamma\al-\al(1-\eta)-2\eps_0}\|\Psi\|_{\infty}\int_{0}^{t_0}|\tilde{J}_{n,0}(s)|ds \nonumber \\
&&\leq  C(R_1,K_0,\eta,\eps_0,t_0,\|\Psi\|_{\infty})a_n^{-1-\al-2\eps_1} a_n^{2\gamma\al-\al(1-\eta)-2\eps_0}10K_0\bar{l}_n(\beta_0)^{-1}l_n(\beta_0). \nonumber
\en
From the definitions of  $\bar{l}_n(\beta_i),l_n(\beta_i)$ we get, %\footnote{now i am getting $\gamma>1-\frac{\eta}{\eta+3}$ if i take $
%\gamma\frac{4\eta+2}{(\eta+2)(\eta+1)}$ in instead of $\gamma$ in $l_n$ i can fix it.}
\bn \label{I0est}
I_0^n(t_0\wedge U_{M,n}) &\leq&C(R_1,K_0,\eta,\eps_0,t_0,\|\Psi\|_{\infty})a_n^{-1-\al-2\eps_1} a_n^{2\gamma\al-\al(1-\eta)-2\eps_0}a_n^{-5\eps_1}65a_n^{1-\eps_0} \nonumber \\
&\leq&C(R_1,K_0,\eta,\eps_0,t_0,\|\Psi\|_{\infty})a_n^{\rho_{0}}.
\en
From (\ref{alpha_0}), (\ref{eps1}) and (\ref{optsol}) we have
\bn \label{I0est1}
\rho_{0}&=& -1-\al-2\eps_1+2\gamma\al+\al(\eta-1)-5\eps_1+1-3\eps_0 \nonumber \\
&\geq& 2\al\gamma-2\al+\al\eta-8\eps_1\nonumber \\
&\geq& \gamma-\frac{2\al-\al\eta}{2\al}-8\eps_1 \nonumber \\
&\geq& \gamma-1+\frac{\eta}{2}-8\eps_1.
\en
%Again, from (\ref{alpha_0}), (\ref{eps1}) and (\ref{optsol}) we have
%\bn \label{I0est2}
%\rho_{2,1}&=& -1-\al-\eps_1+2\al\gamma(1-\eps_0)+\al(\eta-\eps_0-1)-5\eps_1+\frac{2}{2-\eta}\gamma-\eps_1 \nonumber \\
%&\geq& (2\al+\frac{2}{2-\eta})\gamma-(2\al+1)+\al\eta-8\eps_1\nonumber \\
%&\geq& \gamma-\frac{2\al+1-\al\eta}{2\al+\frac{2}{2-\eta}}-8\eps_1 \nonumber \\
%&\geq& \gamma-1+\frac{\eta}{2}-8\eps_1.
%\en
Consider now $i=\{1,...,L\}$. Assume $x\in \tilde{J}_{n,i}(s)$. %and $|x-y|\leq a_n^{\al}$. From (\ref{JtildeL}), (\ref{Jtildei}) we have $ | \< u_s,\Phi_x^{m_{n+1}} \> | \leq a_n$.
%Recall that
%\be \label{suppPhi}
%\supp(\Phi_x^{m_{n+1}})\subset[x-a_n^{\al},x+a_n^{\al}].
%\ee
%Using the continuity of $u(s,\cdot)$, we get
%\be \label{ushatx}
%|u(s,\hat{x}_n(s,x))|\leq a_n,
%\ee
%Since $|\hat{x}_n(s,x)-x|\leq a_n^{\al}$, we get from (\ref{suppPhi}) and Lemma \ref{Lem3.6MP} for every $y\in\supp(\Phi_x^{m_{n+1}})$,
%\bn \label{lnhat}
%|y-\hat{x}_n(s,x)| &\leq& 2a_n^{\al} \nonumber \\
%&\leq& \bar{l}_n(\beta_i).
%\en
%From Lemma \ref{Lem3.4MP}(a) with $x''=y$ and $x'=\hat{x}_n(s,x)$, (\ref{ushatx}), (\ref{lnhat}) and Lemma \ref{Lem3.6MP} we get
%\bn \label{usyest}
%|u(s,y)| &\leq& |u(s,\hat{x}_n(s,x))| +2a_n^{\beta_i}(|y-\hat{x}_n(s,x)|\vee a_n^{\frac{2}{2-\eta}(\gamma-\beta_i-\eps_1)}\vee a_n)\nonumber \\
%&\leq& a_n+ 2a_n^{\beta_i} l_n(\beta_i) \nonumber \\
%&\leq& 5a_n^{\beta_i+\al},
%\en
%where we have used (\ref{alpha_0}) and (\ref{beta-def}) in the last inequality to get $\beta_i+\al<1$, for $i=\{1,\cdots,L\}$. \medskip \\
Repeat the same steps as in (\ref{CIn0}) to get,
\bn \label{Ini1-L}
&& I_i^n(t_0\wedge U_{M,n}) \\
%&&\leq C(K_0,\eps_0,\omega)a_n^{-1-\al-\eps_1}a_n^{2\gamma(\beta_i+\al-\eps_0)}\int_{0}^{t_0}\int_{\re}\int_{\re}\mathds{1}_{\{s< U_{M,n}\}}\mathds{1}_{\tilde{J}_{n,i}(s)}(x)
%G_{a_n^{2\al-2\eps_0}}(x-y)\Psi(x)\mu(dy)dx ds \nonumber \\
&&\leq C(R_1,K_0,\eta,\eps_0,t_0,\|\Psi\|_{\infty}) a_n^{-1-\al-2\eps_1}a_n^{2\gamma(\beta_i+\al)+\al(\eta-1)-2\eps_0}\int_{0}^{t_0}|\tilde{J}_{n,i}(s)|ds. \nonumber
\en
For $i=\{1,\cdots,L-1\}$ apply Lemma \ref{LebJ}(b) to (\ref{Ini1-L}) to get
\bn \label{Ini1-L2}
&& I_i^n(t_0\wedge U_{M,n})  \nonumber \\
&&\leq   C(K_0,\eta,\eps_0,t_0,\|\Psi\|_{\infty},\omega) a_n^{-1-\al-2\eps_1+2\gamma(\beta_i+\al)+\al(\eta-1)-2\eps_0}10K_0\bar{l}_n(\beta_i)^{-\eta}l_n(\beta_i) \nonumber \\
&&\leq  C(K_0,\eta,\eps_0,t_0,\|\Psi\|_{\infty},\omega) a_n^{-1-\al-2\eps_1+2\gamma(\beta_i+\al)+\al(\eta-1)-2\eps_0}a_n^{-\beta_i-5\eps_1}65 a_n^{1-\beta_{i+1}} \nonumber \\
&&\leq C(K_0,\eta,\eps_0,t_0,\|\Psi\|_{\infty},\omega)a_n^{\rho_{1,i}}.
\en
Use (\ref{alpha_0}), (\ref{eps1}), (\ref{optsol}) and (\ref{beta-def}) to get $\beta_i<1-\al$, and
\bn \label{rho1}
\rho_{1,i}&=&2\al\gamma-2\beta_i(1-\gamma)-(2-\eta)\al-3\eps_0-7\eps_1 \nonumber \\
&>& 2\al\gamma-2(1-\al)(1-\gamma)-(2-\eta)\al-3\eps_0-7\eps_1\nonumber \\
&\geq& 2\gamma -\frac{2\eta}{\eta+1}-\frac{2-\eta}{\eta+1}-15\eps_1 \nonumber \\
&\geq& \gamma  - 1+\frac{\eta}{2(\eta+1)}-8\eps_1.
\en
%Note that $2\gamma>1+\frac{1}{\eta+1}$ and use again (\ref{beta-up-lim}), (\ref{eps1}) and (\ref{alpha_0}) to get
%\bn \label{rho2}
%\rho_{2,i}&=&-1-\al+2\gamma(\beta_i+\al)+\al(\eta-\eps_0-1)-\beta_i+\frac{2}{2-\eta}[\gamma-\beta_i-\eps_1]-6\eps_1 \nonumber \\
%&\geq& \bigg(2\al+\frac{2}{2-\eta}\bigg)\gamma+\beta_i\bigg(2\gamma-1
%-\frac{1}{\eta+1}\bigg)-\beta_i\bigg(\frac{2}{2-\eta}-\frac{1}{1+\eta}\bigg)-(2-\eta)\al-1-8\eps_1 \nonumber \\
%&\geq& \bigg(2\al+\frac{2}{2-\eta}\bigg)\gamma-\beta_i\frac{3\eta}{(2-\eta)(\eta+1)}-(2-\eta)\al-1-8\eps_1 \nonumber \\
%&\geq &\gamma-\frac{2\al+1-\al\eta}{2\al+\frac{2}{2-\eta}}-\frac{\frac{\eta}{\eta+1}\frac{3\eta}{(2-\eta)(\eta+1)}}{2\al+\frac{2}{2-\eta}} -8\eps_1  \nonumber \\
%&\geq& \gamma  -1+\frac{\eta}{2}-\frac{\eta^2}{2(\eta+1)}-8\eps_1
% \nonumber \\
%&\geq& \gamma  -1+\frac{\eta}{2(\eta+1)}-8\eps_1.
%\en
For $i=L$, we repeat the same steps as in (\ref{Ini1-L}) with $\rho = 1-\al=\eta\al$. We also use $\mathds{V}^{n,\eta,\al,\eps_0}$ to bound the integration region, (\ref{Cover-Bound}), (\ref{eps1}), (\ref{beta-up-lim}), (\ref{alpha_0}) and (\ref{optsol}) to get
\bn \label{Ini1-L3}
&&I_L^n(t_0\wedge U_{M,n}) \nonumber \\
&&\leq C(R_1,K_0,\eta,\eps_0,\|\Psi\|_{\infty})a_n^{-1-\al-2\eps_1+2\gamma(\beta_L+\al)+\al(\eta-1)-2\eps_0}\int_{0}^{t_0}\int_{\re}\mathds{1}_{\tilde{J}_{n,i}(s)}(x)dx ds  \nonumber \\
&&\leq C(R_1,K_0,\eta,\eps_0,\|\Psi\|_{\infty})a_n^{-1-\al-2\eps_1+ 2\gamma(\beta_L+\al)+\al(\eta-1)-2\eps_0}\int_{0}^{t_0}\sum_{i=1}^{N(n,\eta,K_0,\eta\al)}
|V^{n,\al}_i| ds  \nonumber \\
&&\leq C(R_1,K_0,\eta,\eps_0,\|\Psi\|_{\infty})t_0a_n^{(\eta-2)\al-1-3\eps_1+2\gamma(\beta_L+\al)}\sum_{i=1}^{N(n,\eta,K_0,\eta\al)}|V^{n,\al}_i| \nonumber \\
&&\leq C(R_1,K_0,\eta,\eps_0,t_0,\|\Psi\|_{\infty})a_n^{(\eta-2)\al-1-3\eps_1+2\gamma(\beta_L+\al)+(1-\eta)\al-\eps_0} \nonumber \\
&&\leq C(R_1,K_0,\eta,\eps_0,t_0,\|\Psi\|_{\infty})a_n^{-1-\al-3\eps_1+ 2\gamma-12\eps_1-\eps_0} \nonumber \\
&&\leq C(R_1,K_0,\eta,\eps_0,t_0,\|\Psi\|_{\infty})a_n^{\gamma-\frac{1+\al}{2}-13\eps_1}  \nonumber \\
&&= C(R_1,K_0,\eta,\eps_0,t_0,\|\Psi\|_{\infty})a_n^{\gamma-1+\frac{\eta}{2(\eta+1)}-13\eps_1}
\en
From (\ref{I0est}),(\ref{I0est1}), (\ref{Ini1-L2}),(\ref{rho1}) and (\ref{Ini1-L3}), it follows that,
\be \label{est2}
I_i^n(t_0\wedge U_{M,n}) \leq C(R_1,K_0,\eta,\eps_0,t_0,\|\Psi\|_{\infty}) a_n^{\gamma  -1+\frac{\eta}{2(1+\eta)}-13\eps_1}, \ \forall \ i=0,...,L.
\ee
\qed

\section{Some Integral Bounds for the Heat Kernel} \label{Sec.Heat-Kernel-Bounds}
In this section we introduce some integral bounds for the heat kernel. These bounds will be useful for the proofs in the following sections. The proofs of the following lemmas are straightforward and therefore they are omitted. First let us recall some useful lemmas from \cite{MP09}, \cite{rosen} and \cite{Zahle2004}.\\\\
For $0\leq p \leq 1$, $q\in\re$ and $0\leq \Delta_2\leq \Delta_1\leq t$, define
\bd
J_{p,q}(\Delta_1,\Delta_2,\Delta)=\int_{t-\Delta_1}^{t-\Delta_2}(t-s)^q\big(1\wedge\frac{\Delta}{t-s}\big)^p ds.
\ed
We will use Lemmas 4.1, 4.2 from \cite{MP09}.
\begin{lemma} \label{Lem4.1MP}
\begin{itemize}
\item [\bf{(a)}] If $q>p-1$, then
\bd
J_{p,q}(\Delta_1,\Delta_2,\Delta)\leq \frac{2}{q+1-p}(\Delta\wedge \Delta_1)^p\Delta_1^{q+1-p}.
\ed
\item [\bf{(b)}] If $-1<q<p-1$, then
\bn
J_{p,q}(\Delta_1,\Delta_2,\Delta)&\leq& ((p-1-q)^{-1}+(q+1)^{-1})\big[(\Delta\wedge \Delta_1)^{q+1}\mathds{1}_{\{\Delta_2\leq \Delta\}}  \nonumber  \\ \nonumber  \\
&&+(\Delta \wedge \Delta_1)^p\Delta_2^{q-p+1}\mathds{1}_{\{\Delta_2> \Delta\}}\big]   \nonumber  \\ \nonumber  \\
&\leq&((p-1-q)^{-1}+(q+1)^{-1})\Delta^p(\Delta\vee \Delta_2)^{q-p+1}. \nonumber
\en
\item [\bf{(c)}] If $q<-1$, then
\bd
J_{p,q}(\Delta_1,\Delta_2,\Delta)\leq 2|q+1|^{-1}(\Delta \wedge \Delta_2)^p\Delta_2^{q+1-p}.
\ed
\end{itemize}
\end{lemma}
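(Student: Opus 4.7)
The plan is to reduce this to an elementary calculus exercise by substituting $r = t-s$, which turns
\[
J_{p,q}(\Delta_1,\Delta_2,\Delta) = \int_{\Delta_2}^{\Delta_1} r^{q}\Bigl(1\wedge \tfrac{\Delta}{r}\Bigr)^{p}\,dr.
\]
The integrand equals $r^{q}$ on $\{r\le\Delta\}$ and $\Delta^{p}r^{q-p}$ on $\{r\ge\Delta\}$, so in each regime of $q$ I would split the interval $[\Delta_2,\Delta_1]$ at the point $\Delta$ and consider the three sub-cases $\Delta\le\Delta_2$, $\Delta_2<\Delta<\Delta_1$, $\Delta\ge\Delta_1$ separately, then verify that in each sub-case the stated right-hand side dominates the exact value of the resulting elementary integrals.

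For part (a), the hypothesis $q>p-1$ gives $q-p+1>0$, so the antiderivative $\frac{\Delta^{p}r^{q-p+1}}{q-p+1}$ on $\{r\ge\Delta\}$ and $\frac{r^{q+1}}{q+1}$ on $\{r\le\Delta\}$ are both increasing. The upper-endpoint contribution dominates, and using $(\Delta\wedge\Delta_1)^{q+1}=(\Delta\wedge\Delta_1)^{p}(\Delta\wedge\Delta_1)^{q-p+1}\le(\Delta\wedge\Delta_1)^{p}\Delta_1^{q-p+1}$ (legal because $q+1-p>0$), one absorbs the lower piece into the same form, the two terms combining into the factor $2/(q+1-p)$.

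For part (b), we have $q+1>0$ but $q-p+1<0$. Now the antiderivative on $\{r\ge\Delta\}$ is decreasing in $r$, so that piece is controlled by its value at the lower endpoint. When $\Delta_2\le\Delta$ the entire integral is bounded by a multiple of $(\Delta\wedge\Delta_1)^{q+1}$, while when $\Delta<\Delta_2$ only the $\{r\ge\Delta\}$ piece contributes and evaluation at $r=\Delta_2$ yields $\Delta^{p}\Delta_2^{q-p+1}=(\Delta\wedge\Delta_1)^{p}\Delta_2^{q-p+1}$; the two sub-cases are exactly the indicator split in the statement, and the explicit constants $(p-1-q)^{-1}$ and $(q+1)^{-1}$ come from the respective antiderivatives. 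For part (c), both $q+1$ and $q-p+1$ are negative, both antiderivatives are decreasing, and so everything is dominated by its value at the lower endpoint $\Delta_2$; a brief check on whether $\Delta$ lies below or above $\Delta_2$ produces the factor $(\Delta\wedge\Delta_2)^{p}$ and the constant $2/|q+1|$.

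There is no real obstacle here; the only care required is bookkeeping the three sub-cases in each regime and tracking the constants precisely so that they assemble into $2/(q+1-p)$ in (a), $(p-1-q)^{-1}+(q+1)^{-1}$ in (b), and $2/|q+1|$ in (c). Since the paper explicitly cites Lemma 4.1 of \cite{MP09} and the argument is pure one-variable calculus, I would present it as a short direct verification rather than appealing to any machinery.
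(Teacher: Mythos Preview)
Your proposal is correct, and there is nothing in the paper to compare it against: the paper does not prove this lemma but simply imports it verbatim as Lemma~4.1 of \cite{MP09}, prefacing the section with ``The proofs of the following lemmas are straightforward and therefore they are omitted.'' Your direct substitution $r=t-s$ and case split at $r=\Delta$ is exactly the elementary verification one expects, and you have already noted the citation yourself.
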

 \medskip
Denote
\bd
G'_t(x)=\frac{\partial G_t(x)}{\partial x}.
\ed
\begin{lemma} \label{Lem4.2MP}
\bd
|G'_t(z)|\leq C_{\ref{Lem4.2MP}}t^{-1/2}G_{2t}(z).
\ed
\end{lemma}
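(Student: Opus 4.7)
The plan is a direct pointwise computation. First I would differentiate the Gaussian density $G_t(z) = (2\pi t)^{-1/2}\exp(-z^2/(2t))$ with respect to $z$ to obtain
$$G'_t(z) = -\frac{z}{t}\, G_t(z),$$
so that $|G'_t(z)| = (|z|/t)\, G_t(z)$.

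Next I would express $G_t(z)$ in terms of $G_{2t}(z)$ using the explicit ratio
$$\frac{G_t(z)}{G_{2t}(z)} \;=\; \sqrt{2}\,\exp\!\left(-\frac{z^2}{4t}\right),$$
which follows from the Gaussian density formula. Substituting gives
$$|G'_t(z)| \;=\; \frac{\sqrt{2}}{\sqrt{t}}\cdot\frac{|z|}{\sqrt{t}}\exp\!\left(-\frac{z^2}{4t}\right) G_{2t}(z).$$

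Finally, setting $u = |z|/\sqrt{t}$, the function $u \mapsto u\,e^{-u^2/4}$ on $[0,\infty)$ attains a finite maximum (its derivative $e^{-u^2/4}(1 - u^2/2)$ vanishes at $u = \sqrt{2}$, giving the value $\sqrt{2}\,e^{-1/2}$). This yields the desired inequality with the explicit constant $C_{\ref{Lem4.2MP}} = 2e^{-1/2}$. Since every step is elementary calculus, there is no real obstacle; one only needs to keep careful track of the exponents in the Gaussian densities when forming the ratio $G_t/G_{2t}$.
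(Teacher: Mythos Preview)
Your proof is correct and is exactly the standard direct computation; the paper does not give its own proof, citing this as Lemma 4.2 from \cite{MP09} and stating that the proofs of the lemmas in this section ``are straightforward and therefore they are omitted.'' Your argument is the natural one and even supplies the explicit constant $C_{\ref{Lem4.2MP}} = 2e^{-1/2}$.
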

The inequities in the following lemma were introduced as equations (2.4e) and (2.4f) in Section 2 of \cite{rosen}.
\begin{lemma} \label{heatest}
For any $0\leq \dl \leq 1$, there exists a constant $C_{\ref{heatest}}>0$ such that
\begin{itemize}
\item [\bf{(a)}] \be \label{rosest}
|G_{t}(x-y)-G_{t}(x'-y)| \leq C_{\ref{heatest}} |x-x'|^{\dl} t^{-(1+\dl)/2}\big(e^{-\frac{(x-y)^2}{2t}}+e^{-\frac{(x'-y)^2}{2t}}\big),
\ee
for all $t \geq 0$, $x,x'\in \re$. \\\\
\item [\bf{(b)}] \be|G'_{t}(x-y)-G'_{t}(x'-y)| \leq C_{\ref{heatest}} |x-x'|^{\dl} t^{-(2+\dl)/2}\big(e^{-\frac{(x-y)^2}{2t}}+e^{-\frac{(x'-y)^2}{2t}}\big),
\ee
for all $t \geq 0$, $x,x'\in \re$.
%\\\\
%\rm{(iii)} \be \label{eest}
%\big|e^{-\frac{(x-y)^2}{2t}}-e^{-\frac{(x-y)^2}{2t'}}\big| \leq C_{\ref{heatest}}(x-y)^{-\dl}|t-t'|^{\dl/2},
%\ee
%for all $ t,t' \geq 0$, $x,y\in \re$.
\end{itemize}
\end{lemma}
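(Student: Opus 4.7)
Both inequalities are standard Gaussian heat kernel estimates, and I would prove them by establishing the two endpoint cases $\delta=0$ and $\delta=1$ separately and then interpolating. For part (a) at $\delta=0$, the triangle inequality alone gives
\[
|G_t(x-y)-G_t(x'-y)|\leq G_t(x-y)+G_t(x'-y) = (2\pi t)^{-1/2}\bigl(e^{-(x-y)^2/(2t)}+e^{-(x'-y)^2/(2t)}\bigr),
\]
which matches the claim with $\delta=0$. For $\delta=1$, I would use the fundamental theorem of calculus,
\[
G_t(x-y)-G_t(x'-y)=\int_{x'}^{x}G'_t(\xi-y)\,d\xi,
\]
together with the explicit formula $G'_t(z)=-z\,t^{-1}(2\pi t)^{-1/2}e^{-z^2/(2t)}$ and the elementary bound $|z|\,e^{-z^2/(2t)}\leq C\sqrt{t}\,e^{-z^2/(4t)}$, which yields $|G'_t(\xi-y)|\leq C\,t^{-1}e^{-(\xi-y)^2/(4t)}$.

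To pass from an exponential centred at the intermediate point $\xi$ to the two endpoint Gaussians, I would split into cases according to the relative sizes of $|x-x'|$ and $\min(|x-y|,|x'-y|)$. If $|x-x'|\leq \tfrac{1}{2}\min(|x-y|,|x'-y|)$, then $|\xi-y|\geq\tfrac{1}{2}\min(|x-y|,|x'-y|)$ for every $\xi$ in the segment, so $e^{-(\xi-y)^2/(4t)}\leq e^{-(x-y)^2/(16t)}+e^{-(x'-y)^2/(16t)}$. In the complementary regime, the crude $\delta=0$ bound already has size $\lesssim|x-x'|t^{-1}(e^{-(x-y)^2/(2t)}+e^{-(x'-y)^2/(2t)})$, since $|x-x'|t^{-1/2}\gtrsim 1$. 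The mismatch between constants in the exponent (for example $16t$ versus $2t$) is harmless: $e^{-z^2/(16t)}\leq e\cdot e^{-z^2/(2t)}$ whenever $|z|\leq\sqrt{t}$, and for $|z|>\sqrt{t}$ the whole estimate is absorbed into a constant after a short computation.

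For general $\delta\in(0,1)$ I would interpolate by writing
\[
|G_t(x-y)-G_t(x'-y)|=|G_t(x-y)-G_t(x'-y)|^{\delta}\cdot|G_t(x-y)-G_t(x'-y)|^{1-\delta},
\]
bounding the first factor by the $\delta=1$ estimate and the second by the $\delta=0$ estimate. The factors $|x-x'|^{\delta}$ and $t^{-(1+\delta)/2}$ emerge from the exponents, and the Gaussian factor combines cleanly via $(a+b)^{\delta}(a+b)^{1-\delta}=a+b$. Part (b) follows the same three-step scheme with $G'_t$ replacing $G_t$: the $\delta=0$ case uses $|G'_t(z)|\leq C t^{-1/2}G_{2t}(z)$ from Lemma~\ref{Lem4.2MP}, the $\delta=1$ case uses $G''_t(z)=(z^2-t)t^{-2}G_t(z)$ combined with the pointwise bound $z^2 e^{-z^2/(2t)}\leq Ct\,e^{-z^2/(4t)}$, and the same geometric-mean interpolation finishes. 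I expect the main obstacle to be the case analysis in the $\delta=1$ endpoint: carefully controlling the intermediate Gaussian $e^{-(\xi-y)^2/(\cdot\,t)}$ by the endpoint Gaussians without losing the correct exponential rate is the only nonroutine step.
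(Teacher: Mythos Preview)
The paper does not prove this lemma at all: it simply attributes the inequalities to equations (2.4e) and (2.4f) of Rosen \cite{rosen}. Your interpolation scheme between the endpoints $\delta=0$ and $\delta=1$ is exactly the standard way to obtain such estimates, and it works --- but only up to the constant in the Gaussian exponent, and your attempt to repair that constant is where the argument breaks.

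The claim that the mismatch ``$16t$ versus $2t$'' is harmless is false, and cannot be salvaged by any case analysis, because the inequality as literally written in the paper is \emph{not true} for $\delta\in(0,1]$. Take $x-y=M\sqrt{t}$ and $x'-y=(M+M^{-1})\sqrt{t}$. Then
\[
|G_t(x-y)-G_t(x'-y)|=(2\pi t)^{-1/2}e^{-M^2/2}\bigl(1-e^{-1-M^{-2}/2}\bigr)\sim c\,t^{-1/2}e^{-M^2/2},
\]
while the right-hand side is of order $C\,(M^{-1}\sqrt{t})^{\delta}\,t^{-(1+\delta)/2}e^{-M^2/2}=C\,M^{-\delta}t^{-1/2}e^{-M^2/2}$. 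The ratio grows like $M^{\delta}$, so no finite $C$ works. In particular, for $|z|>\sqrt t$ one genuinely cannot bound $e^{-z^2/(16t)}$ by a constant times $e^{-z^2/(2t)}$; your sentence ``the whole estimate is absorbed into a constant after a short computation'' is the step that fails.

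What your argument \emph{does} prove --- and what Rosen's (2.4e)--(2.4f) actually state --- is the same inequality with $e^{-(x-y)^2/(4t)}+e^{-(x'-y)^2/(4t)}$ (equivalently, $G_{2t}$ in place of $G_t$) on the right. Your $\delta=1$ endpoint naturally produces this weaker Gaussian via $|z|e^{-z^2/(2t)}\le C\sqrt t\,e^{-z^2/(4t)}$, and after that the case split and the interpolation go through cleanly with the $4t$ exponent. So your method is correct; you should simply stop at the version with $4t$, which is the true inequality and the one that is cited. Every application in this paper only needs a Gaussian tail with \emph{some} fixed constant in the exponent, so the $4t$ version suffices throughout.
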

%\paragraph{Proof of \ref{heatest}(iii)}
%Let $\dl \in [0,1]$. Assume w.l.o.g that $0\leq t' \leq t$. Differentiation of  both sides of (\ref{eest}) with respect to $t$ yields
%\be \label{ineq}
%\frac{(x-y)^2}{2t^2} e^{-\frac{(x-y)^2}{2t}} \leq \frac{C_{\ref{heatest}}\dl}{2} (x-y)^{-\dl}(t-t')^{\dl/2-1}.
%\ee
%Since $ t,t' \geq 0$ and $ \dl\in (0,1)$, a sufficient condition to (\ref{ineq}) would be
%\be \label{ineq2}
%\frac{(x-y)^2}{2t^2}e^{-\frac{(x-y)^2}{2t}} \leq \frac{C_{\ref{heatest}}\dl}{2}(x-y)^{-\dl}t^{\dl/2-1}.
%\ee
%From (\ref{ineq2}) we get
%\bd
%\bigg(\frac{(x-y)^2}{t}\bigg)^{1+\dl/2}e^{-\frac{(x-y)^2}{2t}} \leq C_{\ref{heatest}}\dl
%\ed
%Since
%\bd
%\sup_{x \geq 0} x^{1+\dl/2}e^{-x} < \infty,
%\ed
%(iii) follows.
%\qed \\\\
We will use the following bound upper bound on the exponential function.
\begin{lemma} \label{Lem-Ker-up-Bound}
Let $a>0$, then for any $\dl>0$, there exists a constant $C_{\ref{Lem-Ker-up-Bound}}(a,\dl)>0$ such that
\bd
e^{-\frac{x^2}{at}}\leq C_{\ref{Lem-Ker-up-Bound}}(a,\dl)t^{\dl/2}|x|^{-\dl},
\ed
for all $ t > 0$, $x\in \re$.
\end{lemma}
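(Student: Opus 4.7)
The strategy is to reduce the pointwise inequality to the elementary one-variable fact that $u \mapsto u^{\dl/2}e^{-u}$ is bounded on $[0,\infty)$. Introducing the substitution $u = x^2/(at)$ for $x \neq 0$ and $t>0$, one obtains $|x| = \sqrt{aut}$, so
\bd
t^{\dl/2}|x|^{-\dl} = a^{-\dl/2} u^{-\dl/2},
\ed
and the claimed inequality rewrites equivalently as
\bd
u^{\dl/2} e^{-u} \le a^{\dl/2} C_{\ref{Lem-Ker-up-Bound}}(a,\dl),
\ed
which is a statement about a single nonnegative variable $u$.

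The function $f(u) := u^{\dl/2}e^{-u}$ is continuous on $[0,\infty)$, satisfies $f(0)=0$, and tends to $0$ as $u\to\infty$, so it attains a finite maximum $M_\dl$ on $[0,\infty)$; a quick calculus computation gives $M_\dl = (\dl/2)^{\dl/2}e^{-\dl/2}$, but only the finiteness is needed. Setting $C_{\ref{Lem-Ker-up-Bound}}(a,\dl) := a^{-\dl/2} M_\dl$ yields the lemma for every $t>0$ and every $x\neq 0$. The boundary case $x=0$ is handled trivially since the right-hand side is interpreted as $+\infty$ when $\dl>0$, while the left-hand side equals $1$. No obstacle is anticipated; the proof is a one-line reduction plus an elementary extremum computation.
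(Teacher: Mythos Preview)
Your argument is correct and is precisely the elementary reduction one expects; the paper itself omits the proof entirely, stating only that it ``is trivial, hence it is omitted,'' so your approach is in line with the author's intent. One minor algebraic slip: after your substitution the inequality becomes $u^{\dl/2}e^{-u}\le a^{-\dl/2}C_{\ref{Lem-Ker-up-Bound}}(a,\dl)$ (not $a^{\dl/2}$), so the constant should be $C_{\ref{Lem-Ker-up-Bound}}(a,\dl)=a^{\dl/2}M_\dl$; this does not affect the validity of the argument.
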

The proof of Lemma \ref{Lem-Ker-up-Bound} is trivial, hence it is omitted. \medskip \\
The following lemma puts together the results of Lemma 3.4(c) and Lemma 3.7 from \cite{Zahle2004}.
\begin{lemma} \label{condmu}
Let $\mu \in M^{\eta}_\sigma (\re)$ for some $\eta \in (0,1)$. Let $\varpi\in(0,\eta)$ and $T>0$.
Then for every $\lam \geq 0$, $r\in(0,2+\eta-\varpi)$ and $t\in[0,T]$,
\begin{itemize}
\item [\bf{(a)}] There exists a constant $C_{(\ref{mu1cond})}(r,\lam,T,\eta,\varpi)>0$ such that
\be \label{mu1cond}
e^{-\lam|x|}\int_{\re}e^{\lam|y|}G_{t-s}^r(x-y)\mu(dy) <\frac{C_{(\ref{mu1cond})}(r,\lam,T,\eta,\varpi)}{(t-s)^{(r-\eta+\varpi)/2}}, \ \forall  \ s\in[0,t),  \ \ x\in\re.
\ee
\item [\bf{(b)}] There exists a constant $C_{(\ref{mu2cond})}(r,\lam,T,\eta,\varpi)>0$ such that
\be \label{mu2cond}
\sup_{x\in \re} e^{-\lam|x|}\int_{0}^{t}\int_{\re}e^{\lam|y|}G_{t-s}^r(x-y)\mu(dy)ds <C_{(\ref{mu2cond})}(r,\lam,T,\eta,\varpi), \ \forall  t\in[0,T].
\ee
\item [\bf{(c)}] For every $\dl\in (0,\eta-\varpi)$, there exists a constant $C_{(\ref{mu3cond})}(\dl,T,\eta,\varpi,\lam)>0$ such that
\bn \label{mu3cond}
&&\int_{0}^{t\vee t'} \int_{\re} e^{\lam|y|}(G_{t-s}(x-y)-G_{t'-s}(x'-y))^2 \mu(dy)ds \nonumber \\
&& \leq C_{(\ref{mu3cond})}(\dl,T,\eta,\varpi,\lam)(|t-t'|^{\dl/2}+|x-x'|^\dl)e^{\lam|x|}e^{\lam|x-x'|}, \ \forall t,t' \in [0,T], \ x,x'\in \re, \ \lam>0.
\en
\item [\bf{(d)}] For every $a\in[0,2]$, $\eps>0$ and $\theta\geq 0$, there exists a constant $C_{(\ref{mu4cond})}(\theta,a,\eta,T,\eps)>0$ such that
\bn\label{mu4cond}
\int_{\re}e^{\theta|x-y|}(x-y)^{2\alpha}G_{2(t-r)}(y-x)^2\mu(dy) \leq C_{(\ref{mu4cond})}(\theta,a,\eta,T,\eps) (t-r)^{\alpha-1+\eta/2-\eps}, \ \forall t \in [0,T], \ x\in \re.
\en
\end{itemize}
\end{lemma}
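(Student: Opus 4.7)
The proof will ultimately piece together calculations essentially present in Zähle's \cite{Zahle2004} (specifically Lemmas 3.4(c) and 3.7 there) and standard Gaussian manipulations. The common engine is Lemma \ref{Lem-Ker-up-Bound}, which trades Gaussian decay for a negative power of $|x-y|$, thereby reducing every estimate to the $(\eta-\varpi)$-potential bound in assumption (\ref{eta-poten}). The $e^{\lam|y|}$ factor will be handled uniformly by $e^{\lam|y|}\leq e^{\lam|x|}e^{\lam|x-y|}$ and then absorbing $e^{\lam|x-y|}$ into the Gaussian (at the price of an innocuous constant depending on $\lam$ and $T$), so I suppress it in this sketch.

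For part (a), write $G_{t-s}^r(x-y)=(2\pi(t-s))^{-r/2}\exp\{-r(x-y)^2/(2(t-s))\}$ and apply Lemma \ref{Lem-Ker-up-Bound} with $a=2/r$ and $\dl=\eta-\varpi$ to obtain $G_{t-s}^r(x-y)\leq C(t-s)^{-r/2+(\eta-\varpi)/2}|x-y|^{-(\eta-\varpi)}$. Integrating against $\mu$ and invoking (\ref{eta-poten}) with $\eps=\varpi$ yields the claimed $(t-s)^{-(r-\eta+\varpi)/2}$ rate. Part (b) is then immediate: integrate (a) from $0$ to $t$, noting that $(r-\eta+\varpi)/2<1$ exactly because $r<2+\eta-\varpi$.

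Part (d) proceeds along the same lines. Since $|x-y|^{2\alpha}e^{-(x-y)^2/(4(t-r))}\leq C(t-r)^{\alpha}$ for all $\alpha\in[0,2]$, one obtains $|x-y|^{2\alpha}G_{2(t-r)}(y-x)^2\leq C(t-r)^{\alpha-1}e^{-(x-y)^2/(4(t-r))}$. One further application of Lemma \ref{Lem-Ker-up-Bound} with $\dl=\eta-\eps$ and integration against $\mu$ (using (\ref{eta-poten}) once more) gives the exponent $\alpha-1+\eta/2-\eps$, as required; the $e^{\theta|x-y|}$ factor is absorbed by enlarging the Gaussian variance slightly before the estimate.

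Part (c) is the genuinely delicate step and will be the main obstacle. The plan is to split
\bd
(G_{t-s}(x-y)-G_{t'-s}(x'-y))^2\leq 2(G_{t-s}(x-y)-G_{t-s}(x'-y))^2+2(G_{t-s}(x'-y)-G_{t'-s}(x'-y))^2
\ed
and treat the spatial and temporal increments separately. For the spatial piece, Lemma \ref{heatest}(a) with exponent $\dl$ gives an integrand of the form $|x-x'|^{2\dl}(t-s)^{-(1+\dl)}(G_{2(t-s)}(x-y)+G_{2(t-s)}(x'-y))^2$; after applying part (a) (with $r=2$ and a slightly enlarged $\varpi$), the resulting $s$-integral produces $(t-s)^{-(1+\dl)+(\eta-\varpi)/2}$, which is integrable provided $\dl<\eta-\varpi$, yielding $|x-x'|^{2\dl}$ up to a constant. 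For the temporal piece one can either use the identity $G_{t'}-G_t=\int_t^{t'}\tfrac12\Delta G_\theta\,d\theta$ combined with $|\Delta G_\theta(z)|\leq C\theta^{-1}G_{2\theta}(z)$, or directly the elementary $|G_{t-s}(x'-y)-G_{t'-s}(x'-y)|\leq C|t-t'|^{\dl/2}(t-s)^{-(1+\dl)/2}(G_{2(t-s)}(x'-y)+G_{2(t'-s)}(x'-y))$; either way the same calculation as above produces the $|t-t'|^{\dl/2}$ factor. One must take $\dl$ a bit smaller than $\eta-\varpi$ (which is permitted by the hypothesis $\dl\in(0,\eta-\varpi)$) to keep the time integral finite, and a dyadic split near $s=t\wedge t'$ is needed in the boundary case $t\neq t'$; this bookkeeping, together with ensuring that the Gaussians centered at $x$ and $x'$ are handled symmetrically, is what makes (c) the technically heaviest of the four bounds.
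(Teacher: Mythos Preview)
Your proposal is correct and matches the paper's approach: the paper simply cites Z\"ahle's Lemmas~3.4(c) and 3.7 for parts (a)--(c) and remarks that (d) follows along the same lines, and your sketch spells out precisely those computations. One minor bookkeeping correction in (c): with H\"older exponent $\dl$ in Lemma~\ref{heatest}(a) the $s$-integrand is of order $(t-s)^{-(1+\dl)+(\eta-\varpi)/2}$, so integrability actually forces $\dl<(\eta-\varpi)/2$, giving $|x-x'|^{2\dl}$ with $2\dl\in(0,\eta-\varpi)$ --- which is exactly the statement's range after relabeling.
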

\begin{proof}
(a) and (b) follow immediately from Lemma 3.4(c) in \cite{Zahle2004}.  (c) was introduced as Lemma 3.7 in \cite{Zahle2004}. The proof of (d) follows along the same lines as the proof of Lemma 3.4(c) in \cite{Zahle2004}, and hence it is omitted.
\end{proof}
\paragraph{Notation} Denote by \be \label{Notation-d}
d((t,x),(t',x'))=|t-t'|^{1/2}+|x'-x|.
\ee
The following lemma is a modification of Lemma 4.3 from \cite{MP09}.
\begin{lemma}  \label{Lemma-MP4.3}
Let $\mu\in M_f^\eta(\re)$ for some $\eta\in(0,1)$. Let $\eps>0$ be arbitrarily small.
\begin{itemize}
  \item [\bf{(a)}] Then, there exists a constant $C_{\ref{Lemma-MP4.3}}(\eta,\eps)>0$ such that $s<t\leq t',\ x,x'\in \re$,
\be
 \int_{\re} (G_{t'-s}(x'-y)-G_{t-s}(x-y))^2 \mu(dy) \leq C_{\ref{Lemma-MP4.3}}(\eta,\eps)|t-s|^{\eta/2-1-\eps}\bigg[1\wedge \frac{d((t,x),(t',x'))^2}{t-s}\bigg].\nonumber
\ee
  \item [\bf{(b)}]  For any $R>2$ there is a $C_{\ref{Lemma-MP4.3}}(R,\eta,\eps)>0$ so that for any $0\leq p,r \leq R, \\  \nu_0,\nu_1\in(1/R,1/2)$, $0\leq s \leq t \leq t' \leq R, \ x,x' \in \re$,
\bn
 &&\int_{\re}e^{r|x-y|}|x-y|^p(G_{t'-s}(x'-y)-G_{t-s}(x-y))^2 \mathds{1}_{\{|x-y|>(t'-s)^{1/2-\nu_0}\vee 2|x'-x|\}}\mu(dy) \nonumber \\
 &&\leq C_{\ref{Lemma-MP4.3}}(R,\eta,\eps,\nu_0,\nu_1)|t-s|^{\eta/2-1-\eps}\exp\{-\nu_1(t'-s)^{-2\nu_0}/32\}\nonumber \\
  &&{}\times \bigg[1\wedge \frac{d((t,x),(t',x'))^2}{t-s}\bigg]^{1-(\nu_1/2)}. \nonumber
\en
\end{itemize}
\end{lemma}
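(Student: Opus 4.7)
}
The strategy is to follow the same template as Lemma 4.3 of \cite{MP09}, with Lebesgue integration replaced by integration against the fractal measure $\mu$, using Lemma \ref{condmu} in place of the standard Gaussian identities $\int G_{t-s}^r(x-y)\,dy \sim (t-s)^{(1-r)/2}$, and Lemma \ref{LemmaNewBound2} to extract the exponential tail factor in (b).

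For part (a), I first establish the ``global'' bound. By $(a-b)^2 \le 2a^2+2b^2$ and Lemma \ref{condmu}(a) with $r=2$, $\lambda=0$, $\varpi=\varepsilon$ (noting $r=2 < 2+\eta-\varepsilon$ for small $\varepsilon$),
$$\int_{\re}(G_{t'-s}(x'-y)-G_{t-s}(x-y))^2\,\mu(dy) \le C(\eta,\varepsilon)(t-s)^{\eta/2-1-\varepsilon}.$$
This handles the regime $d((t,x),(t',x'))^2 \ge t-s$, where $1\wedge d^2/(t-s)=1$. In the complementary regime I split the difference as
$$G_{t'-s}(x'-y)-G_{t-s}(x-y)=\bigl[G_{t'-s}(x'-y)-G_{t'-s}(x-y)\bigr]+\bigl[G_{t'-s}(x-y)-G_{t-s}(x-y)\bigr].$$
For the spatial piece, Lemma \ref{heatest}(a) with $\delta=1$ gives a pointwise bound of the form $|x-x'|(t'-s)^{-1}\bigl(e^{-(x-y)^2/(2(t'-s))}+e^{-(x'-y)^2/(2(t'-s))}\bigr)$; squaring and using $e^{-2a}\le e^{-a}$ reduces the integrand to $|x-x'|^2(t'-s)^{-2} e^{-\cdot/(t'-s)}$, which I rewrite as a constant times $(t'-s)^{-3/2}$ times a heat kernel $G_{c(t'-s)}(\cdot)$. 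Integrating against $\mu$ via Lemma \ref{condmu}(a) with $r=1$ yields $\le C|x-x'|^2 (t-s)^{\eta/2-2-\varepsilon}$. For the temporal piece I use $\partial_r G_r(z)=\tfrac12\Delta G_r(z)$, the pointwise bound $|\Delta G_r(z)|\le Cr^{-1}G_{2r}(z)$, integration from $t$ to $t'$ and Cauchy--Schwarz to obtain a pointwise estimate of order $(t'-t)(t-s)^{-2}\bigl(e^{-(x-y)^2/(c(t-s))}\bigr)$; integrating again against $\mu$ produces $\le C(t'-t)(t-s)^{\eta/2-2-\varepsilon}$. Summing the two contributions gives
$$\int_{\re}(G_{t'-s}(x'-y)-G_{t-s}(x-y))^2\,\mu(dy) \le C[(x-x')^2+(t'-t)](t-s)^{\eta/2-2-\varepsilon} = C\,\frac{d^2}{t-s}(t-s)^{\eta/2-1-\varepsilon}.$$
Taking the minimum of this and the global bound gives part (a).

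For part (b), the extra indicator $\mathds{1}_{\{|x-y|>(t'-s)^{1/2-\nu_0}\vee 2|x'-x|\}}$ confines the integration to the Gaussian tail region, and the condition $|x-y|>2|x'-x|$ guarantees $|x-y|$ and $|x'-y|$ are comparable so that both pointwise Gaussian bounds above decay in the same tail. I interpolate $|a-b|^2=|a-b|^{2-\nu_1}\cdot|a-b|^{\nu_1}$, bounding the first factor by the pointwise estimates from part (a) (producing the prefactor $[1\wedge d^2/(t-s)]^{1-\nu_1/2}$) and bounding the second factor by Lemma \ref{LemmaNewBound2} applied with the weight $e^{r|x-y|}|x-y|^p$; this generates the $\exp\{-\nu_1(t'-s)^{-2\nu_0}/32\}$ decay. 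The weights $e^{r|x-y|}|x-y|^p$ are absorbed using Lemma \ref{Lem-Ker-up-Bound} to trade powers of $|x-y|$ for powers of $(t'-s)$, or directly via Lemma \ref{condmu}(d), at the cost of arbitrarily small powers of $(t-s)$ which are absorbed into $\varepsilon$.

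The main technical obstacle is ensuring the parameter choices are consistent: the $\mu$-integral estimates in Lemma \ref{condmu}(a) impose $r<2+\eta-\varpi$, which must be checked for each $r$ appearing after squaring and interpolation, and the temporal-difference bound must be carried out carefully so that the two time factors $(t-s)$ and $(t'-s)$ (which differ when $t'-t$ is comparable to $t-s$) combine into a single $(t-s)^{\eta/2-1-\varepsilon}$ up to a constant. A secondary obstacle is bookkeeping the various small losses $\varepsilon$ that accumulate from each application of Lemma \ref{condmu}, but since $\varepsilon$ is arbitrary this only amounts to redefining $\varepsilon$ at the end.
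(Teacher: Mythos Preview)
Your proposal is correct and follows exactly the route the paper intends: the paper omits the proof entirely, remarking at the start of Section~\ref{Sec.Heat-Kernel-Bounds} that these heat-kernel bounds are ``straightforward'' modifications of the corresponding lemmas in \cite{MP09} (here Lemma~4.3), with Lemma~\ref{condmu} replacing the Lebesgue Gaussian identities and Lemma~\ref{LemmaNewBound2} supplying the tail factor in (b). One small slip: in the temporal piece your stated pointwise bound should carry $(t'-t)^2(t-s)^{-2}$ rather than $(t'-t)(t-s)^{-2}$ (from $|G_{t'-s}-G_{t-s}|\le C(t'-t)(t-s)^{-1}G_{c(t-s)}$ and squaring), but since you only need the estimate in the regime $t'-t\le d^2\le t-s$, the extra factor $(t'-t)/(t-s)\le 1$ is harmless and your final bound $C(t'-t)(t-s)^{\eta/2-2-\eps}$ is correct.
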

The following Lemma is a modification of Lemma 4.4 from \cite{MP09}.
\begin{lemma}  \label{Lemma-MP4.4}
Let $\mu\in M_f^\eta(\re)$ for some $\eta\in(0,1)$. Let $\eps>0$ be arbitrarily small.
\begin{itemize}
  \item [{\bf (a)}] Then, there exists a constant $C_{\ref{Lemma-MP4.4}}(\eta,\eps)>0$ such that $s<t\leq t',\ x,x'\in \re$,
\be
 \int_{\re} (G'_{t'-s}(x'-y)-G'_{t-s}(x-y))^2 \mu(dy) \leq C_{\ref{Lemma-MP4.4}}(\eta,\eps)|t-s|^{\eta/2-\eps-2}\bigg[1\wedge \frac{d((t,x),(t',x'))^2}{t-s}\bigg].\nonumber
\ee
  \item [{\bf (b)}] For any $R>2$ there is a $C_{\ref{Lemma-MP4.4}} (R,\nu_0,\nu_1,\eta,\eps)>0$ so that for any $0\leq p,r \leq R, \\ \nu_0,\nu_1\in(1/R,1/2)$, $0\leq s \leq t \leq t' \leq R, \ x,x' \in \re$,
\bn
 &&\int_{\re}e^{r|x-y|}|x-y|^p(G'_{t'-s}(x'-y)-G'_{t-s}(x-y))^2 \mathds{1}_{\{|x-y|>(t'-s)^{1/2-\nu_0}\vee 2|x'-x|\}}\mu(dy) \nonumber \\
 &&\leq C_{\ref{Lemma-MP4.4}}(R,\nu_0,\nu_1,\eta,\eps)|t-s|^{\eta/2-2-\eps}\exp\{-\nu_1(t'-s)^{-2\nu_0}/64\} \nonumber \\
 &&\times \bigg[1\wedge \frac{d((t,x),(t',x'))^2}{t-s}\bigg]^{1-(\nu_1/2)}. \nonumber
\en
\end{itemize}
\end{lemma}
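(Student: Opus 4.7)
The plan is to follow the proof of Lemma 4.4 in \cite{MP09} and substitute the $\mu$-integration bounds from Lemma \ref{condmu} wherever the original proof integrated against Lebesgue measure. Concretely, I will treat (a) first and then reuse its decomposition, with a cutoff-induced Gaussian tail, for (b).

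For part (a), I would split the increment via the triangle inequality as
\bd
G'_{t'-s}(x'-y)-G'_{t-s}(x-y) = \bigl[G'_{t'-s}(x'-y)-G'_{t'-s}(x-y)\bigr] + \bigl[G'_{t'-s}(x-y)-G'_{t-s}(x-y)\bigr],
\ed
so that the spatial piece is controlled by Lemma \ref{heatest}(b) and the time piece by an analogous estimate obtained from $\partial_r G'_r = \tfrac12 G'''_r$ (writing the time-difference as an integral over $r$ and applying a Gaussian $L^\infty$ bound). When $d((t,x),(t',x'))^2/(t-s)\geq 1$ the minimum equals $1$ and it suffices to use $(a-b)^2\le 2(a^2+b^2)$, Lemma \ref{Lem4.2MP} to replace $(G'_r)^2$ by $Cr^{-1}G_{2r}^2$, and then Lemma \ref{condmu}(a) with $r=2$ and small $\varpi$ to obtain $\int G_{2(t-s)}^2(z-y)\,\mu(dy)\le C(t-s)^{-(2-\eta+\varpi)/2}$, which yields the factor $(t-s)^{\eta/2-2-\eps}$ after choosing $\varpi < 2\eps$. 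When $d^2/(t-s)<1$, apply Lemma \ref{heatest}(b) with Hölder exponent $\dl$ close to $2$ so that, after squaring, a factor $|x-x'|^2$ appears together with $e^{-(x-y)^2/(t-s)}=C(t-s)^{1/2}G_{(t-s)/2}(x-y)$; integrating this Gaussian against $\mu$ via Lemma \ref{condmu}(a) with $r=1$ gives the remaining $(t-s)^{\eta/2}$ factor. The same procedure applied to the time piece, using $|t-t'|\le d^2$, gives the required $d^2/(t-s)$ factor.

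For part (b), I would reuse the same decomposition but now exploit the spatial restriction $|x-y|>(t'-s)^{1/2-\nu_0}\vee 2|x'-x|$ to peel off a Gaussian tail exactly as in the proof of Lemma \ref{LemmaNewBound2}: write each squared Gaussian exponent as a sum of a "retained" Gaussian with slightly larger variance and an "excess" factor $\exp\{-c\,(t'-s)^{-2\nu_0}\}$ that is uniform in $y$ on the indicated set. The polynomial weight $|x-y|^p$ and the exponential $e^{r|x-y|}$ are absorbed into an arbitrarily small portion of that Gaussian decay before extracting the tail, so they contribute only constants. The "retained" Gaussian is then integrated against $\mu$ using Lemma \ref{condmu}(a), picking up the same $(t-s)^{\eta/2-\eps-2}$ time singularity as in (a). Finally, to obtain the fractional power $[1\wedge d^2/(t-s)]^{1-\nu_1/2}$ I would interpolate between the trivial bound (constant in $d$) and the sharp $d^2/(t-s)$ bound from the Hölder estimate with exponent close to $2$, sacrificing a $\nu_1/2$ fraction of the Gaussian decay to the tail factor and keeping the remaining $1-\nu_1/2$ fraction for the $d^2/(t-s)$ power.

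The main obstacle is bookkeeping the discrepancy between $t-s$ and $t'-s$ (they can differ by orders of magnitude) while simultaneously tracking three competing scales: the $\mu$-integration exponent $\eta/2-\varpi/2$, the Hölder exponent $\dl$, and the tail-extraction exponent $\nu_1$. In particular, in (b) the tail factor is naturally expressed in terms of $(t'-s)^{-2\nu_0}$ while the polynomial prefactor in $(t-s)$ is driven by the smaller time, and a careful distinction of the cases $t-s\asymp t'-s$ versus $t-s\ll t'-s$ (using that the estimate is only required up to the factor $1\wedge d^2/(t-s)$ with $d^2\ge t'-t$) is needed to make the two bounds compatible. Once these scales are aligned, the argument is a direct analogue of Lemma 4.4 in \cite{MP09}, with every Lebesgue $L^r$-norm of $G_{t-s}$ replaced by its $\mu$-counterpart from Lemma \ref{condmu}, so that each exponent $-(r-1)/2$ from \cite{MP09} is shifted to $-(r-\eta+\varpi)/2$.
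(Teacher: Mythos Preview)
Your approach is correct and coincides with what the paper does: the paper states that this lemma ``is a modification of Lemma 4.4 from \cite{MP09}'' and omits the proof, the modification being precisely the replacement of each Lebesgue $L^r$-bound on $G_{t-s}$ by its $\mu$-counterpart from Lemma \ref{condmu}(a), shifting every exponent $-(r-1)/2$ to $-(r-\eta+\varpi)/2$. One small slip: in the spatial piece you write ``apply Lemma \ref{heatest}(b) with H\"older exponent $\dl$ close to $2$,'' but that lemma requires $\dl\in[0,1]$; you mean $\dl=1$ so that squaring produces the factor $|x-x'|^{2}$ you want.
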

The following lemma follows from Lemmas \ref{condmu}(a) and \ref{Lemma-MP4.3}.
\begin{lemma} \label{lemma-new2}
Let $\mu\in M_f^\eta(\re)$ for some $\eta\in(0,1)$. Let $\lam>0$ and let $\eps>0$ be arbitrarily small. There is a $C_{\ref{lemma-new2}}(\eta,\eps,\lam)>0$ such that for any  $0 \leq s< t\leq t',\ x,x'\in \re$,
\bn  \label{rt2222}
\int_{\re}e^{\lam|y|}[G_{t-s}(x'-y)-G_{t'-s}(x-y)]^2\mu(dy)ds &\leq& C_{\ref{lemma-new2}}(\eta,\eps,\lam)|t-s|^{\eta/2-\eps-2}d((t,x),(t',x'))^2e^{2\lam|x|}e^{2\lam|x'-x|}.  \nonumber
\en
\end{lemma}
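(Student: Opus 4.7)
The plan is to derive the bound by combining the unweighted pointwise estimate of Lemma \ref{Lemma-MP4.3}(a) with a near--far decomposition that absorbs the exponential weight $e^{\lambda|y|}$. Write $\Delta G(y) := G_{t-s}(x'-y) - G_{t'-s}(x-y)$, and use the trivial bound $e^{\lambda|y|} \leq e^{\lambda|x|}e^{\lambda|y-x|}$ so that it suffices to control $\int e^{\lambda|y-x|}(\Delta G(y))^2 \mu(dy)$. The factor $e^{\lambda|x|}$ will eventually be absorbed into the more generous $e^{2\lambda|x|}$ prefactor on the right-hand side.

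Split the region of integration at the threshold $R := (t'-s)^{1/2-\nu_0} \vee 2|x'-x|$ for a small $\nu_0 \in (0,1/2)$, matching the cutoff in Lemma \ref{Lemma-MP4.3}(b). On the near region $|y-x| \leq R$, bound $e^{\lambda|y-x|} \leq e^{\lambda R} \leq e^{\lambda(t'-s)^{1/2-\nu_0}}\cdot e^{2\lambda|x'-x|} \leq C(\lambda,T)\, e^{2\lambda|x'-x|}$, where the first factor is controlled since $t'-s$ lies in a bounded range. Then apply Lemma \ref{Lemma-MP4.3}(a), selecting the $d^2/(t-s)$ branch of the minimum, to get
\begin{equation*}
\int_{|y-x|\leq R} e^{\lambda|y-x|}(\Delta G(y))^2 \mu(dy) \leq C(\eta,\varepsilon,\lambda)\, e^{2\lambda|x'-x|}\, |t-s|^{\eta/2-2-\varepsilon}\, d((t,x),(t',x'))^2,
\end{equation*}
which already matches the target bound after multiplication by $e^{\lambda|x|}$.

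For the far region $|y-x| > R$, apply Lemma \ref{Lemma-MP4.3}(b) with $p=0$ and $r=\lambda$. This produces an estimate of the form
\begin{equation*}
C(R,\nu_0,\nu_1,\eta,\varepsilon,\lambda)\, |t-s|^{\eta/2-1-\varepsilon}\, \exp\{-\nu_1 (t'-s)^{-2\nu_0}/32\}\, \Bigl[1 \wedge \tfrac{d((t,x),(t',x'))^2}{t-s}\Bigr]^{1-\nu_1/2}.
\end{equation*}
Since $t'-s \geq t-s$, the super-exponential factor $\exp\{-\nu_1(t'-s)^{-2\nu_0}/32\}$ decays faster than any power of $(t-s)$, so it absorbs both the loss in the exponent (from $\eta/2-1-\varepsilon$ to $\eta/2-2-\varepsilon$) and the discrepancy between $[1\wedge d^2/(t-s)]^{1-\nu_1/2}$ and the desired $d^2/(t-s)$, once $\nu_1$ is chosen small relative to $\varepsilon$. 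Adding this contribution (dominated by the near-region one) and assembling all factors yields the claim. The role of Lemma \ref{condmu}(a) is as the underlying $\eta$-potential estimate that drives both parts of Lemma \ref{Lemma-MP4.3}.

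The main obstacle is the case analysis needed to verify that the far-region contribution is indeed bounded by the near-region one. Specifically, one must compare $\exp\{-\nu_1(t'-s)^{-2\nu_0}/32\}\cdot[1\wedge d^2/(t-s)]^{1-\nu_1/2}$ against $d^2/(t-s)$ in the two regimes $d^2 \leq t-s$ and $d^2 > t-s$, and in each case use the decay of the exponential to absorb the extra factor $(d^2/(t-s))^{-\nu_1/2}$ or the extra power of $(t-s)$. Provided $\nu_0, \nu_1$ are chosen small (in terms of $\varepsilon$), the comparison goes through uniformly in $(s,t,t',x,x')$ on any bounded time interval.
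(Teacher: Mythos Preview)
Your near--far decomposition is a natural idea, but the far-region step does not close. The bound from Lemma~\ref{Lemma-MP4.3}(b) carries the factor $[1\wedge d^2/(t-s)]^{1-\nu_1/2}$, and when $d^2\le t-s$ this exceeds the needed $d^2/(t-s)$ by $((t-s)/d^2)^{\nu_1/2}$. You claim the factor $\exp\{-\nu_1(t'-s)^{-2\nu_0}/32\}$ absorbs this, but that exponential depends only on $t'-s$, not on $d$. Take $t=t'$, $s=t-1$, and let $|x-x'|=d\to 0$: then $t'-s=1$, the exponential equals a fixed constant $e^{-\nu_1/32}$, while $((t-s)/d^2)^{\nu_1/2}=d^{-\nu_1}\to\infty$. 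So the far-region contribution you obtain is of order $d^{2-\nu_1}$, which dominates the target $d^2$. Choosing $\nu_1$ small relative to $\eps$ does not help, since the loss is in a power of $d$, not of $t-s$.

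The cleaner route (and the one the paper's hint points to) avoids the near--far split and extracts the full $d^2$ pointwise before integrating. Write $\Delta G(y)$ as a space increment plus a time increment. For the space part, Lemma~\ref{heatest}(a) with $\delta=1$ gives
\[
|G_{t-s}(x'-y)-G_{t-s}(x-y)|\le C|x-x'|\,(t-s)^{-1/2}\bigl(G_{t-s}(x'-y)+G_{t-s}(x-y)\bigr).
\]
For the time part, combine (\ref{rt37}) with the trivial bound to get $|G_{t-s}(w)-G_{t'-s}(w)|\le C|t-t'|^{1/2}(t-s)^{-1/2}\bigl(G_{t-s}(w)+G_{2(t'-s)}(w)\bigr)$ (use (\ref{rt37}) when $|t-t'|\le t-s$ and the trivial bound when $|t-t'|>t-s$). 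Squaring and applying Lemma~\ref{condmu}(a) with $r=2$ to the resulting weighted Gaussian integrals yields directly
\[
\int e^{\lambda|y|}(\Delta G(y))^2\mu(dy)\le C\bigl(|x-x'|^2+|t-t'|\bigr)(t-s)^{\eta/2-2-\eps}\,e^{\lambda|x|}e^{\lambda|x'-x|},
\]
which is the claim (indeed with better exponential prefactors than stated). This is how Lemma~\ref{condmu}(a) enters, and why the paper cites it alongside Lemma~\ref{Lemma-MP4.3}: the latter's own proof rests on these same pointwise heat-kernel estimates, and redoing that proof with the weighted bound Lemma~\ref{condmu}(a) in place of its unweighted analogue gives Lemma~\ref{lemma-new2} immediately.
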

\medskip
Now we are ready to give a proof to Lemma \ref{new-int-bound}.
\paragraph{Proof of Lemma \ref{new-int-bound}}
Let $K>0$. Note that
\bn \label{zzz}
\int_{\re}\int_{\re}e^{|y|}G_{\eps}(z-y)\mathds{1}_{\{|z|\leq K\}}\mu(dy)dz &=&\int_{\re}\int_{\re}e^{|y|}G_{\eps}(z-y)\mathds{1}_{\{|z|\leq K\}}\mathds{1}_{\{|y-z|\leq K+1\}}\mu(dy)dz  \nonumber \\
&&+\int_{\re}\int_{\re}e^{|y|}G_{\eps}(z-y)\mathds{1}_{\{|z|\leq K\}}\mathds{1}_{\{|y-z|> K+1\}}\mu(dy)dz \nonumber \\
&:=&I_1(K,\eps)+ I_2(K,\eps), \ \forall \eps\in(0,1].
\en
For $I_1(K,\eps)$ we get that
\bn \label{zzz1}
I_1(K,\eps)&\leq & C(K)\int_{\re}\int_{\re}G_{\eps}(z-y)dz\mu(dy)  \nonumber \\
&\leq& C(K,\mu(\re)),  \ \forall \eps\in(0,1].
\en
From Lemma \ref{LemmaNewBound2} we get for $I_2(K,\eps)$,
\bn \label{zzz2}
I_2(K,\eps) &\leq & C(K)\int_{\re}\mathds{1}_{\{|z|\leq K\}}\int_{\re}e^{|y|}G_{\eps}(z-y)\mathds{1}_{\{|y-z|> \eps^{1/4}\}}\mu(dy)dz \nonumber \\
&\leq &C(\eta,K)\int_{\re}\mathds{1}_{\{|z|\leq K\}}e^{-\eps/4-\eps^{-1/2}/32}dz \nonumber \\
&\leq &  C(\eta,K) , \ \forall \eps\in(0,1].
\en
From (\ref{zzz})--(\ref{zzz2}) we get (\ref{new-int-bound2}). \qed
\section{Local Bounds on the Difference of Solutions} \label{Sec-Local-bound}
This section is devoted to establishing local bounds on the difference of two solutions of (\ref{SHE}). Theses bounds are crucial for the construction of the stopping times in Proposition \ref{PropStopTimes}. In this section we assume again that $u_1,u_2$ are two solutions of (\ref{SHE}). We denote by $u=u^1-u^2$ and we assume the hypotheses of Theorem \ref{holder-conj} and (\ref{HolSigmaCon2}). Our argument follows the same lines as the argument in Section 5 of \cite{MP09}.
\paragraph{Notation}
Let
\bn \label{Def-Z-K-N-Set}
Z(K,N)(\omega)&=&\{(t,x)\in [0,T_K]\times[-K,K]: \textrm{ There is a } (\hat{t}_0,\hat{x}_0)\in[0,T_K]\times \re,  \\ && \textrm{ such that } \nonumber d((t,x),(\hat{t}_0,\hat{x}_0))\leq 2^{-N} \textrm{ and } |\tilde{u}(\hat{t}_0,\hat{x}_0)|\leq 2^{-N\xi} \}.
\en
For all $K,N,n\in \mathds{N}$ and $\beta \in (0,\frac{\eta}{\eta+1}]$,
\bn
Z(N,n,K,\beta)&=&\{(t,x)\in[0,T_K]\times [-K,K] : \textrm{there is a } (\hat{t}_0,\hat{x}_0)\in [0,T_K]\times \re \nonumber \\ && \textrm{ such that } d((\hat{t}_0,\hat{x}_0),(t,x))\leq 2^{-N}, \nonumber \\ && \ |u(\hat{t}_0,\hat{x}_0)|\leq a_n\wedge (a_n^{1-\al}2^{-N}),   \textrm{ and } |u'_{1,a^{2\al}_n}(\hat{t}_0,\hat{x}_0)|\leq a_n^{\beta}\},  \nonumber
\en
and for $\beta =0$ define $Z(N,n,K,0)(\omega)=Z(N,n,K)(\omega)$ as above, but with the condition on $|u'_{1,a_n}(\hat{t}_0,\hat{x}_0)|\leq a_n^{\beta}$ omitted.   \\ \\
Recall that we fixed $\eta\in(0,1)$ and $\gamma\in \big(1-\frac{\eta}{2(\eta+1)},1\big)$. Let
\bn \label{gamma-m}
\gamma_m=\frac{(\gamma-1+\eta/2)(1-\gamma^m)}{1-\gamma} + 1, \ \ \tilde{\gamma}_m=\gamma_m\wedge (1+\eta).
\en
From (\ref{gamma-m}) we get
\bn \label{gamma-rec}
\gamma_{m+1}=\gamma\gamma_m+\eta/2, \ \ \gamma_0=1.
\en
Note that $\gamma_m$ increases to $\gamma_\infty=\frac{(\gamma-1+\eta/2)}{1-\gamma} + 1=\frac{\eta}{2(1-\gamma)}>\eta+1$ (the last inequality follows by (\ref{optsol})) and therefore we can define a finite natural number, $\bar{m}>1$, by
\bn \label{m-bar-def}
\bar{m}=\min\{m:\gamma_{m+1}>\eta+1\}= \min\{m:\gamma\gamma_{m}>1+\frac{\eta}{2}\}.
\en
Note that
\be \label{gamma-m-up-lim}
\tilde{\gamma}_{\bar{m}+1}=\eta+1.
\ee
\begin{remark}
 In this section we often use the constrain $\gamma>1-\eta/2$. Note that it holds trivially for $\gamma$ satisfying (\ref{optsol}).
\end{remark}
\begin{definition}
A collection of $[0,\infty]$-valued random variables, $\{N(\alpha): \alpha \in A\}$, is stochastically bounded uniformly in $\alpha$ if
\bn
\lim_{M\rr \infty} \sup_{\alpha \in A} P(N(\alpha)\geq M)=0.
\en
\end{definition}
The following condition will be the goal of this section. Recall that $K_1$ was chosen to satisfy (\ref{K1}).
\paragraph{Property $(P_m)$.} For $m\in\mathds{Z}_{+}$ we let $(P_m)$ denote the following property:
\bn
&&\textrm{For any } n\in \mathds{N}, \xi, \eps_0 \in(0,1), K\in \mathds{N}^{\geq K_1} \textrm{ and } \beta\in [0,\frac{\eta}{\eta+1}], \textrm{ there is an } \nonumber \\
&&N_1(\omega) = N_1(m,n,\xi,\eps_0,K,\beta,\eta) \textrm{ in } \mathds{N} \textrm{ a.s. such that for all } N\geq N_1,   \nonumber \\
&& \textrm{if } (t,x)\in Z(N,n,K,\beta), \ t^{'}\leq T_K \textrm{ and } d((t,x),(t',x')) \leq 2^{-N}, \nonumber \\
&& \textrm{ then } |u(x',t')|\leq a_n^{-\eps_0}2^{-N\xi}\big[(a_n^{\al}\vee 2^{-N})^{\tilde{\gamma}_m-1}+a_n^{\beta}\mathds{1}_{\{m>0\}}\big].\nonumber \\
&&\textrm{Moreover } N_1 \textrm{ is stochastically bounded uniformly in } (n,\beta).
\en
The goal of this section is to prove the following proposition.
\begin{proposition} \label{Prop-Induction}
For any $m\leq\bar{m}+1$, $(P_m)$ holds.
\end{proposition}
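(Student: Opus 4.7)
The plan is to prove Proposition 6.2 by induction on $m$, mirroring the scheme of Proposition 4.1 in \cite{MP09} but carrying the fractal exponent $\eta$ through every spatial energy estimate: Lemma \ref{condmu}(a) replaces the Lebesgue bound $\int G_{t-s}^{2}\,dy \sim (t-s)^{-1/2}$ by $\int G_{t-s}^{2}\,\mu(dy) \lesssim (t-s)^{\eta/2-1-\eps}$, which accounts for the recursion $\gamma_{m+1} = \gamma\gamma_m + \eta/2$ in (\ref{gamma-rec}) and for the terminal index $\bar m + 1$ at which $\tilde\gamma_{\bar m +1} = 1+\eta$. The base case $(P_0)$: given $(t,x)\in Z(N,n,K,\beta)$, choose $(\hat t_0,\hat x_0)$ from its defining clause, write the mild form (\ref{ud}) as
\[
u(t',x') - u(\hat t_0,\hat x_0) = \int_{0}^{t'\vee \hat t_0}\!\!\int_{\re}\bigl[G_{t'-s}(x'-y) - G_{\hat t_0 - s}(\hat x_0 - y)\bigr]D(s,y)\,W(ds,dy),
\]
use $|D(s,y)| \leq R_{0}e^{R_{1}|y|}|u(s,y)|^{\gamma}$ together with the moment bound of Proposition \ref{parameter}, apply Burkholder--Davis--Gundy with Lemma \ref{Lemma-MP4.3}, and invoke Kolmogorov--Centsov over the dyadic grid in $(N,n)$. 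The random Hölder constant is absorbed into $a_n^{-\eps_0}$ for $N\geq N_1(\omega)$, giving $|u(t',x')| \leq a_n^{-\eps_0} 2^{-N\xi}$, which is exactly $(P_0)$ since $\tilde\gamma_0 - 1 = 0$ and the $\beta$--term is switched off by $\mathds{1}_{\{m>0\}}$.

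For the inductive step $m\mapsto m+1$ with $m\leq \bar m$, fix $R := a_n^{\al}\vee 2^{-N}$ and split the stochastic integral for $u(t',x') - u(\hat t_0,\hat x_0)$ into an inner piece, where $(s,y)$ lies within parabolic distance $R$ of $(t,x)$, and an outer piece. On the inner piece each such $(s,y)$ is itself within $2^{-N'}\asymp R$ of $(t,x)$, so $(P_m)$ applied at scale $N'$ yields $|u(s,y)| \leq C a_n^{-\eps_0} R^{\tilde\gamma_m - 1}\bigl(R^\xi + a_n^\beta \mathds{1}_{\{m>0\}}\bigr)$; hence $|D(s,y)|^{2} \leq C e^{2R_1|y|}|u(s,y)|^{2\gamma}$. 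Integrating $G_{t-s}(x-y)^{2}\mu(dy)\,ds$ over a time interval of length $\asymp R^{2}$ with Lemma \ref{condmu}(d) contributes a factor $R^{\eta+2}$, so the square moment of the inner increment is bounded by $a_n^{-2\eps_0} R^{2\gamma\tilde\gamma_m + \eta + 2} \leq a_n^{-2\eps_0} R^{2\tilde\gamma_{m+1}}$ by (\ref{gamma-rec}) and the cap in (\ref{gamma-m}); the outer piece is exponentially small by Lemma \ref{LemmaNewBound2}. BDG and Kolmogorov--Centsov chaining over the triple dyadic grid $(n,N,\beta)$ then upgrade the moment bound to the almost-sure uniform form claimed in $(P_{m+1})$, and stochastic boundedness of $N_1$ uniformly in $(n,\beta)$ follows because every constant depends only on $K,\eta,\gamma,\eps_0$.

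The two subtle points are the $a_n^\beta \mathds{1}_{\{m>0\}}$ term and the terminal step $m = \bar m + 1$. The former enters because at $\hat x_n(t,x)$ only the gradient bound $|u'_{1,a_n^{2\al}}| \leq a_n^\beta$ is available rather than a small power of $R$; after a Taylor expansion of $u_{1,a_n^{2\al}}$ around $\hat x_n(t,x)$, the second-order remainder is a stochastic integral of $G''_{t-s}$ whose $L^{2}$ estimate is supplied by Lemma \ref{Lemma-MP4.4}, while the first-order drift contributes exactly the $a_n^\beta R$ term. The latter is handled by the cap $\tilde\gamma_{\bar m + 1} = 1 + \eta$ in (\ref{gamma-m-up-lim}): once $\gamma_{m+1}$ would exceed $1+\eta$, no further improvement of $|u|$ is possible without a Hölder exponent strictly greater than $\eta$ for $u'_{1,\delta_n}$, which is not available, so one performs a final Taylor expansion using the decomposition (\ref{decomp-mol}) together with Lemmas \ref{Lem.Fdl} and \ref{Lemma-MP4.4} applied to $\tilde u_{2,\delta_n}$ to carry the induction to $m = \bar m + 1$ and close it. The principal obstacle is ensuring that the chaining preserves uniformity in $\beta$ across the two regimes $2^{-N}\leq a_n^\al$ and $2^{-N} > a_n^\al$; this is achieved by taking the worst-case $\beta = 0$ in the moment inequality, at the cost of one additional factor of $a_n^{-\eps_0}$ absorbed by the condition $n > n_M(\eps_1)$ of Proposition \ref{PropStopTimes}.
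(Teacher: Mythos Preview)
Your proposal has a genuine gap at the heart of the inductive step. You split the stochastic integral into an ``inner'' piece where $(s,y)$ lies within parabolic distance $R=a_n^{\al}\vee 2^{-N}$ of $(t,x)$ and an ``outer'' piece, and you claim the outer piece is exponentially small by Lemma~\ref{LemmaNewBound2}. This is false: Lemma~\ref{LemmaNewBound2} only controls the \emph{spatial} tail $\{|x-y|>t^{1/2-\nu_1}\}$ of the heat kernel. A point $(s,y)$ with $s=t-4R^{2}$ and $y=x$ lies in your outer region yet satisfies $G_{t-s}(x-y)=G_{4R^{2}}(0)\asymp R^{-1}$, which is not small at all. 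The temporal part of the outer region therefore carries the full weight of the integral, and to control it you must combine the kernel estimate of Lemma~\ref{Lemma-MP4.3}(a) with the bound on $|u(s,y)|$ at \emph{every} scale $\bar d_N=d((s,y),(t,x))\vee 2^{-N}$ supplied by Lemma~\ref{LemmaBound-u}. The resulting integrals are exactly the quantities $Q_{S,1},Q_{S,2},Q_T$ of (\ref{Qdef}), and bounding them (Lemmas~\ref{Lemma-Q1}--\ref{Lemma-QT}) is the main analytic work of the section; it cannot be bypassed by a parabolic near/far split.

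The paper's induction therefore proceeds along a different decomposition. Rather than cutting in $(s,y)$-space, it writes $u=u_{1,a_n^{\alpha_i}}+u_{2,a_n^{\alpha_i}}$ with a smoothing scale $\alpha_i\in[\eps_2,2\al]$ chosen adaptively so that $a_n^{\alpha_i/2}\asymp a_n^{\al}\vee 2^{-N'}$ (see (\ref{vx4})), and then invokes the modulus-of-continuity Propositions~\ref{Prop5.13} and~\ref{Prop5.14} for the two pieces separately. Those propositions in turn rest on Corollary~\ref{corollary-u-tag-reg} and Propositions~\ref{Prop5.11}--\ref{Prop5.12}, which track the derivative $u'_{1,\delta}$ through $F_\delta$. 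Your description of the $a_n^{\beta}$ contribution as coming from ``a Taylor expansion of $u_{1,a_n^{2\al}}$'' captures the idea but not the mechanism: the gradient bound $|u'_{1,a_n^{2\al}}(\hat t_0,\hat x_0)|\le a_n^{\beta}$ must first be transferred to $|u'_{1,a_n^{\alpha_i}}|$ at the adaptive scale via Proposition~\ref{Prop5.11}, and this transfer is what forces the condition (\ref{lm1}) and the case split on $a_n^{2\al}\gtrless 2^{-N_5}$.

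Finally, after combining (\ref{vx7}) and (\ref{vx8}) you arrive at an expression containing $a_n^{\beta\gamma}(a_n^{\al}\vee 2^{-N})^{\gamma-1+\eta/2}$, not $a_n^{\beta}$. Converting this into the form required by $(P_{m+1})$ is the content of Lemma~\ref{Lemma5.15}, whose proof uses the constraint $\gamma>1-\frac{\eta}{2(\eta+1)}$ in an essential way (cases (\ref{case1})--(\ref{case4})). This algebraic step is absent from your sketch and cannot be absorbed by ``taking the worst case $\beta=0$'', since doing so would discard the very information that makes $\tilde J_{n,i}$ small in Lemma~\ref{LebJ}.
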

\begin{remark}
We will prove Proposition \ref{Prop-Induction} by induction.
\end{remark}
We introduce the following theorem, that will help us to prove $(P_0)$.
\begin{theorem} \label{Theorem2.3MP-P0-ind}
Assume the same assumptions as in Theorem \ref{holder-conj} except now allow $\gamma\geq1-\eta/2$. For each $K\in\mathds{N}$ and $\xi\in(0,1)$ there is an $N_0(\xi,K,\omega)\in \mathds{N}$ a.s. such that for all natural numbers $N\geq N_0$ and all $(t,x)\in Z(N,K)$,
\bn \label{res-Thm-2.3mps}
d((t',x'),(t,x))\leq 2^{-N} \textrm{ and } t'\leq T_K \textrm{ implies } |u(t',x')-u(t,x)|\leq 2^{-N\xi}. \nonumber
\en
\end{theorem}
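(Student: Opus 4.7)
The plan is to adapt the proof of Theorem~2.3 in \cite{MPS06} to the spatially inhomogeneous setting, bootstrapping the base H\"older regularity of $u=u^1-u^2$---which Z\"ahle \cite{Zahle2004} gives for any exponent $\xi_0<\eta/2$---up to any exponent $\xi<1$ near the zero set. Starting from the mild formulation (\ref{ud}), I write
\begin{equation*}
u(t',x')-u(t,x)=\int_0^{t\vee t'}\!\!\int_\re\bigl[G_{t'-s}(x'-y)\mathds{1}_{s<t'}-G_{t-s}(x-y)\mathds{1}_{s<t}\bigr] D(s,y)\,W(ds,dy),
\end{equation*}
with $|D(s,y)|\le R_0 e^{R_1|y|}|u(s,y)|^\gamma$ by (\ref{HolSigmaCon2}). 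The strategy is to estimate $E[|u(t',x')-u(t,x)|^{2p}]$ for large~$p$, then invoke a Garsia-type continuity criterion on a dyadic grid of mesh $2^{-N}$ in $[0,T_K]\times[-K,K]$ and conclude via Borel--Cantelli that an almost-sure $N_0(\xi,K,\omega)$ exists with the required property.

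For the moment bound I would split the $s$-integration into a \emph{far} region $s<(t\vee t')-d^2$ (with $d=d((t,x),(t',x'))$) and a \emph{near} region $(t\vee t')-d^2\le s<t\vee t'$. On the far region, Lemma~\ref{Lemma-MP4.3}(a) together with Lemma~\ref{condmu}(a) and the global moment bound from Proposition~\ref{parameter} yields a contribution of order $d^{\eta-\eps}$. On the near region, I exploit the current H\"older estimate for $u$: if $u$ is already known to be H\"older of exponent $\xi_{\mathrm{cur}}$, then for $(s,y)$ close to the reference zero $(\hat t_0,\hat x_0)$ we have $|u(s,y)|\le C(2^{-N\xi}+d^{\xi_{\mathrm{cur}}})$, which via $|D|^2\lesssim e^{2R_1|y|}|u|^{2\gamma}$ and Lemma~\ref{condmu}(d) (or Lemma~\ref{new-int-bound} for the exponential weight) produces a near-region contribution of order $d^{\eta}(2^{-N\xi}+d^{\xi_{\mathrm{cur}}})^{2\gamma}$. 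Combining both regions and optimizing gives an improved exponent $\xi'$ in terms of $\xi_{\mathrm{cur}}$ and $\gamma\ge 1-\eta/2$.

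Iterating this bootstrap raises $\xi_{\mathrm{cur}}$ toward its fixed point; because $\gamma\ge 1-\eta/2$ the recursion crosses any threshold strictly below~$1$, so after finitely many steps one obtains $E[|u(t',x')-u(t,x)|^{2p}]\le C\,d^{2p\xi}$ for any prescribed $\xi<1$. Choosing $p$ large enough for Kolmogorov's criterion to dominate the $2^{2N}$-size of a dyadic cover of $[0,T_K]\times[-K,K]$ and summing over $N$, Borel--Cantelli supplies the random index $N_0(\xi,K,\omega)$ after which the bound holds on the grid; a standard chaining argument extends it to all $(t',x')$ within distance $2^{-N}$ and yields (\ref{res-Thm-2.3mps}).

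The principal obstacle is controlling the near-singularity contribution under the inhomogeneous measure~$\mu$: the pointwise heat-kernel manipulations of \cite{MPS06} must be replaced throughout by the $\eta$-potential estimates of Lemmas~\ref{condmu} and~\ref{Lemma-MP4.3}, and one must handle carefully the interplay between the exponential factor $e^{R_1|y|}$ in the bound on $D$ and the local H\"older bound on $u$ (which is only pointwise inside the heat-kernel core), with Lemmas~\ref{LemmaNewBound2} and~\ref{Lem-Ker-up-Bound} disposing of the tail in $y$ at negligible cost. A secondary subtlety is that the bootstrap can approach but never reach the exponent~$1$, which explains why the theorem allows any $\xi<1$ but not $\xi=1$.
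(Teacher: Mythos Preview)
Your approach has a genuine gap: the unconditional moment bound $E[|u(t',x')-u(t,x)|^{2p}]\le C\,d^{2p\xi}$ you aim for cannot hold for $\xi$ close to~$1$. Away from the zero set, $u$ is only H\"older of order $\eta/2-\eps$ (Z\"ahle's result), and nothing in your scheme forces $(t,x)$ to be near a zero when you take the expectation. Concretely, your far-region estimate ``Lemma~\ref{Lemma-MP4.3}(a) together with the global moment bound from Proposition~\ref{parameter} yields a contribution of order $d^{\eta-\eps}$'' caps the quadratic variation at $d^{\eta-\eps}$, hence the increment at $d^{\eta/2-\eps}$; this term is present at every step of your bootstrap and prevents the recursion from ever exceeding $\eta/2$. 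The improvement to $\gamma\xi_{\mathrm{cur}}+\eta/2$ requires using the H\"older bound on $u$ on the far-time region as well, which is only available \emph{pathwise on the event} $\{(t,x)\in Z_{K,N,\xi},\,N\ge N_\xi(\omega)\}$, and this cannot be inserted into an unconditional expectation.

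The paper (following \cite{MPS06}) proceeds differently. It never computes moments; instead it bounds the quadratic variation $\int\!\!\int(G-G)^2|u|^{2\gamma}\,\mu(dy)ds$ \emph{pathwise} on the event $\{(t,x)\in Z_{K,N,\xi}\}$, using a four-way decomposition in both time (near/far from $t$) and space (near/far from $x$). On the far-time near-space region the crucial input is $|u(s,z)|\le C(t-s)^{\xi/2}$ for $z$ within $\sim\sqrt{t-s}$ of $x$ (Lemma~\ref{lemma5.5-MPS}), which yields the needed exponent $\gamma\xi+\eta'/2$ there; the far-space regions are disposed of by Gaussian tails. Having a deterministic bound on the quadratic variation on the event, the paper applies Dubins--Schwarz: the increment is $B(\langle M\rangle)$ for a Brownian motion~$B$, so $P(|u(t',x')-u(t,x)|>d^{\xi_1},\,\langle M\rangle\le a)$ is controlled by a Gaussian tail, and $\langle M\rangle\le a$ holds on the event for free. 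Your plan would be repaired by abandoning the moment/Kolmogorov route and adopting this pathwise-quadratic-variation plus Dubins--Schwarz plus Borel--Cantelli structure, together with the spatial near/far split that carries the H\"older information into the far-time region.
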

The proof of Theorem \ref{Theorem2.3MP-P0-ind} is given in Section \ref{Section-Proof2.3}. Theorem \ref{Theorem2.3MP-P0-ind} was proved in \cite{MP09} for the case of homogeneous white noise.
\paragraph{Proof of $(P_0)$}
The proof of $(P_0)$ is similar to the proof of $(P_0)$ in Section 5 of \cite{MP09}, just replace Theorem 2.3 in \cite{MP09} with Theorem \ref{Theorem2.3MP-P0-ind}. Exactly as in the proof of $(P_0)$ in Section 5 of \cite{MP09}, we get that $N_1=N_1(0,\xi,K)$. That is, $N_1$ does not depend on $(n,\beta)$.
\qed \\\\
To carry out the induction we first use $(P_m)$ to get a local modulus of continuity for $F_\dl$ (as in Section 5 of \cite{MP09}). \\\\
Recall the $F_\dl$ was given by (\ref{Fdl2}) where
\bd
D(r,y)=\sigma(r,y,u^1(r,y))-\sigma(r,y,u^2(r,y)).
\ed
From (\ref{Fdl2}) we get for $s\leq t \leq t'$ and $s'\leq t'$
\bn \label{FdlIncrement}
|F_\dl(s,t,x)-F_\dl(s',t',x')| &\leq & |F_\dl(s,t',x')-F_\dl(s',t',x')|+|F_\dl(s,t',x')-F_\dl(s,t,x)|  \\
&=&\bigg|\int_{(s-\dl)^{+}}^{(s'-\dl)_{+}}\int_{\re}G'_{t'-r}(y-x')D(r,y)W(dr,dy)\bigg|   \nonumber \\
&&+\bigg|\int_{0}^{(s-\dl)_{+}}\int_{\re}\big(G'_{t'-r}(y-x')-G'_{t-r}(y-x)\big)D(r,y)W(dr,dy)\bigg|. \nonumber
\en
From (\ref{FdlIncrement}) and (\ref{Dbound}) we realize that to get the bound on $|F_\dl(s,t,x)-F_\dl(s',t',x')|$ we may use the bounds on the following square functions
\bn \label{Qdef}
Q_{T,\dl}(s,s',t',x')&=&\int_{(s\wedge s' -\dl)^{+}}^{(s\vee s' -\dl)^{+}}\int_{\re}G'_{t'-r}(y-x')^{2}e^{2R_1|y|}|u(r,y)|^{2\gamma}\mu(dy)dr,  \nonumber \\  \nonumber \\
Q_{S,1,\dl,\nu_0}(s,t,x,t',x')&=&\int_{0}^{(s-\dl)^{+}}\int_{\re}\mathds{1}_{\{|x-y|>(t'-r)^{1/2-\nu_0}\vee 2|x'-x|\}}\nonumber \\  \nonumber \\
&&\times(G'_{t'-r}(x'-y)-G'_{t-r}(x-y))^2e^{2R_1|y|}|u(r,y)|^{2\gamma}\mu(dy)dr,  \nonumber \\  \nonumber \\
Q_{S,2,\dl,\nu_0}(s,t,x,t',x')&=&\int_{0}^{(s-\dl)^{+}}\int_{\re}\mathds{1}_{\{|x-y| \leq (t'-r)^{1/2-\nu_0}\vee 2|x'-x|\}}\\  \nonumber \\
&& \times(G'_{t'-r}(x'-y)-G'_{t-r}(x-y))^2e^{2R_1|y|}|u(r,y)|^{2\gamma}\mu(dy)dr, \nonumber
\en
for $\nu_0\in(0,1/2)$, $\dl\in (0,1]$ and $s\leq t \leq t', s'\leq t'$. \\\\
We will use the following lemmas to bound the terms in (\ref{Qdef}).
\begin{lemma} \label{LemmaBound-u}
Let $0\leq m \leq \bar{m}+1$ and assume $(P_m)$. For any $n,\xi,\eps_0,K$ and $\beta$ as in $(P_m)$, if \\ $\bar{d}_N=d((s,y),(t,x))\vee2^{-N}$ and $\sqrt{C_{\ref{LemmaBound-u}}(\omega)}=(4a_n^{-\eps_0}+2^{2N_1(\omega)}2Ke^K)$, then for any $N\in \mathds{N}$, on
\bn \label{rd1}
\{\omega:(t,x)\in Z(N,n,K,\beta), N>N_{1}(m,n,\xi,\eps_0,K,\beta)\},
\en
we have
\bn \label{rd4}
|u(s,y)|\leq \sqrt{C_{\ref{LemmaBound-u}}(\omega)}e^{|y-x|}\bar{d}_N^{\xi}\big[(a_n^{\al}\vee \bar{d}_N)^{\tilde{\gamma}_m-1}+\mathds{1}_{\{m>0\}}a_n^{\beta}\big], \ \forall s\leq T_K, \ y\in\re. \nonumber \\
\en
\end{lemma}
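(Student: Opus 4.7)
The plan is a dichotomy on the distance $d((s,y),(t,x))$: when $(s,y)$ lies close to $(t,x)$ I would invoke $(P_m)$ at a carefully chosen intermediate scale, while when $(s,y)$ is far I would fall back on the crude pointwise bound $|u(s,y)|\le 2Ke^{|y|}$ available from $s\le T_K$ via (\ref{TK}), then extract the factor $e^{|y-x|}$ using $|x|\le K$. The constant $\sqrt{C_{\ref{LemmaBound-u}}}=4a_n^{-\eps_0}+2^{2N_1}\cdot 2Ke^K$ is split precisely along this dichotomy: the first summand absorbs the loss in the near regime and the second absorbs it in the far regime.

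In the near regime $d((s,y),(t,x))\le 2^{-N_1}$, since $N>N_1$ by assumption one has $\bar d_N\le 2^{-N_1}$, so I would choose $N'$ to be the largest integer with $2^{-N'}\ge \bar d_N$, giving $N_1\le N'\le N$ and $2^{-N'}\le 2\bar d_N$. The sets $Z(\cdot,n,K,\beta)$ are monotone non-increasing in the first argument (decreasing $N$ enlarges the allowed radius $2^{-N}$ and relaxes the constraint $|u|\le a_n\wedge(a_n^{1-\al}2^{-N})$), so $(t,x)\in Z(N',n,K,\beta)$. Applying $(P_m)$ at level $N'$ to the point $(s,y)$ then yields
\[
|u(s,y)| \le a_n^{-\eps_0}\, 2^{-N'\xi}\bigl[(a_n^\al\vee 2^{-N'})^{\tilde\gamma_m-1}+a_n^\beta \mathds{1}_{\{m>0\}}\bigr],
\]
and substituting $2^{-N'}\le 2\bar d_N$, together with $\xi\le 1$ and $\tilde\gamma_m-1\le\eta<1$, collapses the resulting powers of $2$ into a constant factor bounded by $4$; since $e^{|y-x|}\ge 1$, this gives the claimed bound on the $4a_n^{-\eps_0}$ part of $\sqrt{C_{\ref{LemmaBound-u}}}$.

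In the far regime $d((s,y),(t,x))>2^{-N_1}$, the crude bound combined with $e^{|y|}\le e^K e^{|y-x|}$ reduces the task to checking the pointwise inequality
\[
2Ke^K \le \sqrt{C_{\ref{LemmaBound-u}}}\,\bar d_N^\xi\bigl[(a_n^\al\vee\bar d_N)^{\tilde\gamma_m-1}+a_n^\beta\mathds{1}_{\{m>0\}}\bigr].
\]
Since $a_n^\al\vee\bar d_N\ge\bar d_N>2^{-N_1}$ and $\tilde\gamma_m-1\in[0,\eta]$, a short case split on whether $\bar d_N\ge 1$ shows that the bracket is at least $2^{-\eta N_1}$ and $\bar d_N^\xi\ge 2^{-\xi N_1}$. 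The summand $2^{2N_1}\cdot 2Ke^K$ of $\sqrt{C_{\ref{LemmaBound-u}}}$ then clears the target because $\xi+\eta<2$, which gives $2^{2N_1}\cdot 2^{-N_1(\xi+\eta)}\ge 1$. The only step requiring any thought is this final matching of exponents of $2$; everything else is bookkeeping and does not invoke any stochastic analysis.
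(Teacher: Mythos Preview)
Your proposal is correct and follows essentially the same approach as the paper: both split into a near regime $d\le 2^{-N_1}$ (where $(P_m)$ is applied at an intermediate level $N'\in[N_1,N]$ and the monotonicity of $Z(\cdot,n,K,\beta)$ is used) and a far regime $d>2^{-N_1}$ (where the crude bound $|u(s,y)|\le 2Ke^{|y|}$ from $s\le T_K$ is converted via $|x|\le K$ into the desired form). The only cosmetic difference is that the paper picks $N'$ directly from $d$ rather than from $\bar d_N$, and in the far case writes $\bar d_N^{\xi+\tilde\gamma_m-1}$ in one piece instead of separating $\bar d_N^{\xi}$ from the bracket; the exponent matching $\xi+\eta<2$ against $2^{2N_1}$ is identical.
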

\begin{proof}
The proof of Lemma \ref{LemmaBound-u} is similar to the proof of Lemma 5.2 in \cite{MP09}. Assume $N,\omega,t,x$ are as in (\ref{rd1}). \\
Case 1. $d\equiv d((s,y),(t,x))\leq 2^{-N_1}$.
If $d>2^{-N}$ choose $N_1\leq N' <N$ so that $2^{-N'-1}\leq d\leq 2^{-N'}$, and if $d\leq 2^{-N}$ set $N'=N$. Then $(t,x)\in Z(N',n,K,\beta),\ d\leq 2^{-N'}\leq 2^{-N}\vee 2d \leq 2\bar{d}_N$. We get from $(P_m)$ for $s\leq T_K$,
\bn \label{rd2}
|u(s,y)| &\leq& a_n^{-\eps_0}2^{-N'\xi}\big[(a_n^{\al}\vee 2^{-N'})^{\tilde{\gamma}_m-1}+\mathds{1}_{\{m>0\}}a_n^{\beta}\big] \nonumber \\
&\leq & 4a_n^{-\eps_0}(\bar{d}_N)^{\xi}\big[(a_n^{\al}\vee \bar{d}_N)^{\tilde{\gamma}_m-1}+\mathds{1}_{\{m>0\}}a_n^{\beta}\big].
\en
Case 2.  $d> 2^{-N_1}$. Since $K \geq K_1$, we get for $s\leq T_k$
\bn \label{rd3}
|u(s,y)| \leq 2Ke^{|y|} &\leq& 2Ke^{|y|}(d 2^{N_1})^{\xi+\tilde{\gamma}_m-1} \nonumber \\
&\leq& 2Ke^{K}e^{|y-x|}2^{2N_1}(\bar{d}_N)^{\xi+\tilde{\gamma}_m-1}.
\en
From (\ref{rd2}) and (\ref{rd3}) we get (\ref{rd4}).
\end{proof}
\begin{remark} \label{Remark5.3}
If $m=0$ we may set $\eps_0=0$ in the above and $N_1$ does not depend on $(n,\eps_0,\beta)$ by the proof of $(P_0)$.
\end{remark}
\begin{lemma} \label{Lemma-Q1}
For all $K\in \mathds{N}^{\geq K_1}, R>2/\eta$ there exists $C_{\ref{Lemma-Q1}}(K,R,R_1,\nu_0,\eta) > 0$ and an $N_{\ref{Lemma-Q1}}=N_2(K,\omega) \in \mathds{N}$ a.s. such that for all $\nu_0,\nu_1 \in (1/R,1/2), \dl\in(0,1],\beta\in[0,\frac{\eta}{1+\eta}]$ and $N,n\in \mathds{N}$, for any $(t,x)\in \re_{+}\times \re$, on
\bd
\{\omega:(t,x)\in Z(N,n,K,\beta), N>N_{\ref{Lemma-Q1}}\},
\ed
\bd
Q_{S,1,\dl,\nu_0}(s,t,x,t',x')\leq C_{\ref{Lemma-Q1}}2^{4N_{\ref{Lemma-Q1}}} \big[d^{2-\nu_1}+(d\wedge\sqrt{\dl})^{2-\nu_1}\dl^{-2+\eta/2-\eps_0}(d\wedge1)^{4\gamma}\big], \ \ \forall s\leq t \leq t', \ x'\in \re.
\ed
Here $d=d((t',x'),(t,x))$.
\end{lemma}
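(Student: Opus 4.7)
\emph{Plan of proof.} The strategy is to combine a pointwise Hölder-type bound on $|u|^{2\gamma}$ coming from property $(P_0)$ via Lemma~\ref{LemmaBound-u}, with the weighted gradient-moment estimate Lemma~\ref{Lemma-MP4.4}(b) on the region $\{|x-y|>(t'-r)^{1/2-\nu_0}\vee 2|x'-x|\}$, followed by a time integration using Lemma~\ref{Lem4.1MP}. Since $Z(N,n,K,\beta)\subset Z(N,n,K,0)$, the derivative condition on $u'_{1,a_n^{2\al}}$ is irrelevant here. Invoking property $(P_0)$ with $\eps_0=0$ (as permitted by Remark~\ref{Remark5.3}) and with $N_1=N_1(0,\xi,K)$, which is independent of $(n,\beta)$, Lemma~\ref{LemmaBound-u} with $m=0$, $\beta=0$, $\eps_0=0$ yields, on $\{\omega:(t,x)\in Z(N,n,K,\beta),\;N>N_1\}$ and for any $\xi\in(0,1)$,
\begin{equation*}
|u(r,y)|^{2\gamma}\leq C\cdot 2^{4N_1(\omega)}\,e^{2\gamma|y-x|}\,\bar d_N^{2\gamma\xi},\qquad \bar d_N:=d((r,y),(t,x))\vee 2^{-N}.
\end{equation*}
Accordingly I would set $N_{\ref{Lemma-Q1}}:=N_1(0,\xi,K)$ for $\xi$ chosen sufficiently close to $1$.

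Next, I would split into two cases based on $d$. For $d\geq 1$ the trivial bound $|u(r,y)|^{2\gamma}\leq Ce^{2\gamma|y|}$ valid on $[0,T_K]$, combined with Lemma~\ref{Lemma-MP4.4}(b) (taking $p=0$, $\lambda=2R_1+2\gamma$) and the time integral via Lemma~\ref{Lem4.1MP}(c), gives $Q_{S,1,\delta,\nu_0}\leq C\leq Cd^{2-\nu_1}$, which supplies the first summand in the stated bound. For $d<1$, substitute the pointwise bound on $|u|^{2\gamma}$ and decompose
\begin{equation*}
\bar d_N^{2\gamma\xi}\leq C\big((t-r)^{\gamma\xi}+|x-y|^{2\gamma\xi}\big)+2^{-2N\gamma\xi};
\end{equation*}
the constant piece $2^{-2N\gamma\xi}$ is absorbed since $2^{-N}\leq d\wedge 1$ for large $N$. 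For the other two pieces I would absorb $e^{2R_1|y|}\leq e^{2R_1K}e^{2R_1|y-x|}$ (using $|x|\leq K$) and apply Lemma~\ref{Lemma-MP4.4}(b) with $p\in\{0,2\gamma\xi\}$, $\lambda=2R_1+2\gamma$, to bound the $\mu$-integral by
\begin{equation*}
C|t-r|^{\eta/2-2-\eps_0}\exp\!\big\{-\nu_1(t'-r)^{-2\nu_0}/64\big\}\Big[1\wedge \tfrac{d^2}{t-r}\Big]^{1-\nu_1/2}.
\end{equation*}

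Finally, because $r\leq (s-\delta)_+$ forces $t'-r\geq\delta$, the damping factor $\exp\{-\nu_1(t'-r)^{-2\nu_0}/64\}$ is harmlessly bounded by a constant depending only on $\nu_0,\nu_1$ and the time horizon. The remaining $r$-integration has the form $J_{1-\nu_1/2,\,q}(t,\delta,d^2)$ with $q=\eta/2-2-\eps_0$ or $q=\eta/2-2-\eps_0+\gamma\xi$; in both cases $q<-1$, so Lemma~\ref{Lem4.1MP}(c) produces a factor $(d\wedge\sqrt\delta)^{2-\nu_1}\delta^{q+1-(1-\nu_1/2)}$. Choosing $\eps_0$ large enough to absorb $\nu_1/2$ and letting $\xi\uparrow 1$ one collects $(d\wedge\sqrt\delta)^{2-\nu_1}\delta^{-2+\eta/2-\eps_0}(d\wedge 1)^{4\gamma}$, where the $(d\wedge 1)^{4\gamma}$ factor arises from combining the $(t-r)^{\gamma\xi}$ and $|x-y|^{2\gamma\xi}$ contributions via the Gaussian tail of the kernel restricted to $|x-y|>2|x'-x|$. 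The main technical obstacle is precisely this last extraction: Lemma~\ref{Lemma-MP4.4}(b) encodes $d$-dependence only through the factor $[1\wedge d^2/(t-r)]^{1-\nu_1/2}$, so a finer decomposition of the integration region---splitting on whether $|x-y|$ is of order $d$ or of order $(t'-r)^{1/2-\nu_0}$---is required to turn the pointwise bound $|u|^{2\gamma}\leq C\bar d_N^{2\gamma\xi}$ into a genuine $d$-factor rather than merely a $\bar d_N$-factor. This, together with careful bookkeeping of the $2^{4N_1(\omega)}$ constants across the two regimes, is where the proof becomes delicate.
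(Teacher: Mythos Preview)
Your overall strategy---invoke $(P_0)$ through Lemma~\ref{LemmaBound-u} with $m=0$ (so that $N_1$ depends only on $K$ and $\xi$), feed the resulting pointwise bound on $|u|^{2\gamma}$ into Lemma~\ref{Lemma-MP4.4}(b), then integrate in $r$ via Lemma~\ref{Lem4.1MP}---is exactly the route the paper takes; the paper simply says ``follow Lemma~5.4 of \cite{MP09}, replacing Lemma~4.4(b) there by Lemma~\ref{Lemma-MP4.4}(b) here, which changes the $\delta$-exponent from $-3/2$ to $-2+\eta/2-\eps_0$.''

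There is, however, a genuine gap in your execution. You write that because $t'-r\ge\delta$, the factor $\exp\{-\nu_1(t'-r)^{-2\nu_0}/64\}$ is ``harmlessly bounded by a constant,'' and you then proceed with Lemma~\ref{Lem4.1MP}(c). Doing so yields at best
\[
C\,2^{4N_1}\,(d\wedge\sqrt{\delta})^{\,2-\nu_1}\,\delta^{-2+\eta/2-\eps_0},
\]
with no $(d\wedge1)^{4\gamma}$ factor and no separate $d^{2-\nu_1}$ term. This is strictly weaker than the stated bound: for small $d$ and small $\delta$ it is \emph{not} dominated by $d^{2-\nu_1}+(d\wedge\sqrt{\delta})^{2-\nu_1}\delta^{-2+\eta/2-\eps_0}(d\wedge1)^{4\gamma}$, and the missing $(d\wedge1)^{4\gamma}$ is precisely what produces the $\bar d_N^{\gamma\tilde\gamma_m}$-type factors when the lemma is fed into Proposition~\ref{probIncFdl}. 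The exponential in Lemma~\ref{Lemma-MP4.4}(b) is not a throwaway: it is the mechanism that converts the singular power $(t-r)^{\eta/2-2-\eps}$ into a positive power (compare the computation in the proof of Lemma~\ref{Lemma7.1}, where $e^{-\nu_1(t-s)^{-2\nu_0}/64}(t-s)^{\eta/2-1-\eps_0}\le C(t-s)^{1+\eta/2-\eps_0}$, after which Lemma~\ref{Lem4.1MP}(a), not (c), is applied). It is from this conversion, together with the $\bar d_N^{2\gamma\xi}$ bound on $|u|^{2\gamma}$, that both terms in the stated estimate arise; no ``finer decomposition of the integration region'' is needed.

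A small side remark: your line ``choosing $\eps_0$ large enough to absorb $\nu_1/2$'' is backwards on two counts. First, $\eps_0$ is the fixed small constant from (\ref{eps1}) and cannot be adjusted. Second, no absorption is needed: the $+\nu_1/2$ you obtain in the $\delta$-exponent makes the bound \emph{smaller} (since $\delta\le1$), so $\delta^{\eta/2-2-\eps_0+\nu_1/2}\le\delta^{\eta/2-2-\eps_0}$ already.
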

\begin{proof}
The proof is almost similar to the proof of Lemma 5.4 in \cite{MP09}. The only difference is that we use Lemma \ref{Lemma-MP4.4}(b) instead of Lemma 4.4(b) from \cite{MP09}. Therefore, we get the exponent $-2+\eta/2-\eps_0$ instead of $-3/2$ for $\dl$.
\end{proof}
\begin{lemma} \label{Lemma-Q2}
Let $0\leq m \leq \bar{m}+1$ and assume $(P_m)$. For any $K\in \mathds{N}^{\geq K_1}, R>2/\eta, n\in \mathds{N}, \eps_0\in (0,1)$, and $\beta\in[0,\frac{\eta}{\eta+1}]$, there exists $C_{\ref{Lemma-Q2}}(K,R,R_1,\nu_0,\eta) > 0$ and an \\ $N_{\ref{Lemma-Q2}}=N_{\ref{Lemma-Q2}}(m,n,\eps_0,K,\beta,\eta,\omega) \in \mathds{N}$ a.s. such that for any $\nu_1 \in (1/R,1/2), \nu_0\in(0,\eta_1/32), \dl\in[a_n^{2\al},1], N\in \mathds{N}$ and $(t,x)\in \re_{+}\times \re$, on
\bd
\{\omega:(t,x)\in Z(N,n,K,\beta), N>N_{\ref{Lemma-Q2}}\},
\ed
\bn
Q_{S,2,\dl,\nu_0}(s,t,x,t',x')&\leq& C_{\ref{Lemma-Q2}}\big[a_n^{-2\eps_0}+2^{4N_{\ref{Lemma-Q2}}}\big] \big[d^{2-\nu_1}(\bar{\dl}_N^{(\gamma\tilde{\gamma}_m-2+\eta/2-\eps_0)\wedge 0}+a_n^{2\beta\gamma}\bar{\dl}_N^{\gamma-2+\eta/2-\eps_0}\big) \nonumber \\
&&+(d\wedge\sqrt{\dl})^{2-\nu_1}\dl^{-2+\eta/2-\eps_0}(\bar{d}_N^{2\gamma\tilde{\gamma}_m}+a_n^{2\beta\gamma}\bar{d}_N^{2\gamma}\big)\big], \nonumber \\
&& \ \forall s\leq t \leq t', \ |x'|\leq K+1.\nonumber
\en
Here $d=d((t',x'),(t,x)), \bar{d}_N=d\vee2^{-N}$ and $\bar{\dl}_N=\dl\vee\bar{d}_N^2$. Moreover, $N_{\ref{Lemma-Q2}}$ is stochastically bounded uniformly in $(n,\beta)$.
\end{lemma}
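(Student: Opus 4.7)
The plan is to mirror the proof of Lemma 5.5 in \cite{MP09}, adapting the homogeneous-noise estimates to the present setting via the $\mu$-integration bounds in Lemma \ref{condmu}(d) together with the kernel-difference estimates that come from Lemmas \ref{Lem4.2MP} and \ref{heatest}(b). The main structural difference is that the Gaussian $L^{2}(\mu)$ integrals produce the exponent $\eta/2-\eps$ instead of $1/2$, which is the source of the $-2+\eta/2-\eps_{0}$ appearing in the $\dl$-power of the conclusion.

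First, I would fix $\xi$ close to $1$, set $N_{\ref{Lemma-Q2}}=N_{1}(m,n,\xi,\eps_{0},K,\beta,\eta)\vee N^{\ast}(K)$ where $N^{\ast}(K)$ is the a.s.\ finite random variable controlling $\sup_{y\in\re,\,s\leq T_{K}}|u^{i}(s,y)|e^{-|y|}$, and substitute the pointwise bound of Lemma \ref{LemmaBound-u} into $Q_{S,2,\dl,\nu_{0}}$: on the event in question, writing $\bar d_{N}(r,y)=d((r,y),(t,x))\vee 2^{-N}$, for every $r\leq T_{K}$ and $y\in\re$,
\[
|u(r,y)|^{2\gamma}\leq C\,e^{2\gamma|y-x|}\,\bar d_{N}(r,y)^{2\gamma\xi}\bigl[(a_{n}^{\al}\vee\bar d_{N}(r,y))^{2\gamma(\tilde\gamma_{m}-1)}+\mathds{1}_{\{m>0\}}\,a_{n}^{2\gamma\beta}\bigr].
\]
This splits the integrand into two pieces, one of size $\bar d_{N}(r,y)^{2\gamma\tilde\gamma_{m}}$ and one of size $a_{n}^{2\gamma\beta}\bar d_{N}(r,y)^{2\gamma}$, which are responsible for the two families of terms in the conclusion.

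Next, on the indicator region $\{|x-y|\leq (t'-r)^{1/2-\nu_{0}}\vee 2|x'-x|\}$ I would control the kernel difference by interpolating the trivial triangle bound $|G'_{t-r}|^{2}+|G'_{t'-r}|^{2}$ (via Lemma \ref{Lem4.2MP}) with the spatial H\"older estimate of Lemma \ref{heatest}(b) and its time analogue, producing
\[
|G'_{t'-r}(x'-y)-G'_{t-r}(x-y)|^{2}\leq C\Bigl(1\wedge\tfrac{d^{2}}{t-r}\Bigr)^{1-\nu_{1}/2}(t-r)^{-1}\bigl[G_{2(t-r)}(x-y)^{2}+G_{2(t'-r)}(x'-y)^{2}\bigr],
\]
with $d=d((t',x'),(t,x))$. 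Using the elementary split $\bar d_{N}(r,y)\leq (t-r)^{1/2}+|x-y|+2^{-N}$ and expanding the resulting powers reduces the $y$-integration to $\mu$-integrals of the form $\int|x-y|^{2a}G_{2(t-r)}(x-y)^{2}\mu(dy)\leq C(t-r)^{a-1+\eta/2-\eps}$ furnished by Lemma \ref{condmu}(d), with $a\in\{2\gamma\tilde\gamma_{m},2\gamma\}$ and subdominant values, plus a residual piece of size $2^{-2N\gamma\tilde\gamma_{m}}$ (respectively $2^{-2N\gamma}$) that contributes the $\bar d_{N}^{2\gamma\tilde\gamma_{m}}$ (respectively $\bar d_{N}^{2\gamma}$) factor. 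The remaining time integral over $r\in[0,(s-\dl)^{+}]$ then fits into the framework of Lemma \ref{Lem4.1MP} with $p=1-\nu_{1}/2$, $\Delta=d^{2}$, $\Delta_{2}=\dl$, and $q$ equal to the aggregate exponent $\gamma\tilde\gamma_{m}-2+\eta/2-\eps_{0}$ or $\gamma-2+\eta/2-\eps_{0}$; its parts (a) and (c) produce the two regimes $\bar\dl_{N}^{(\gamma\tilde\gamma_{m}-2+\eta/2-\eps_{0})\wedge 0}$ and $(d\wedge\sqrt\dl)^{2-\nu_{1}}\dl^{-2+\eta/2-\eps_{0}}\bar d_{N}^{2\gamma\tilde\gamma_{m}}$, together with their $a_{n}^{2\beta\gamma}$ counterparts.

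The most delicate bookkeeping will be the dichotomy in the sign of the aggregate exponent $q$: when $q\geq 0$ (possible for large $m$, since $\tilde\gamma_{m}$ can reach $\eta+1$) the time integral is uniformly bounded and only the $\bar d_{N}$ factor survives through $\bar\dl_{N}^{0}=1$, whereas when $q<-1$ Lemma \ref{Lem4.1MP}(c) produces the $\dl^{-2+\eta/2-\eps_{0}}$ singularity; the intermediate case $-1\leq q<p-1$ is absorbed by a small increase of $\eps_{0}$. One must also check that the condition $\dl\geq a_{n}^{2\al}$ combined with $\nu_{0}\ll\nu_{1}$ keeps the indicator region compatible with the $\mu$-integration (in particular that $(t'-r)^{1/2-\nu_{0}}$ is not so large as to spoil Lemma \ref{condmu}(d)), and that the expansion $\bar d_{N}\leq (t-r)^{1/2}+|x-y|+2^{-N}$ correctly allocates the $(t-r)^{1/2}$ piece to the $\mu$-integral (raising its exponent) while the $2^{-N}$ piece becomes the external $\bar d_{N}$ prefactor. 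Uniformity of $N_{\ref{Lemma-Q2}}$ in $(n,\beta)$ is inherited from the corresponding uniformity of $N_{1}$ in $(P_{m})$ together with the fact that $N^{\ast}(K)$ depends only on $K$.
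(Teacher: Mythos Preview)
Your approach is essentially the same as the paper's: bound $|u(r,y)|$ via Lemma \ref{LemmaBound-u}, integrate the kernel difference against $\mu$ to produce the exponent $\eta/2-\eps_0$ in place of $1/2$, and finish with the $J_{p,q}$-type time integral. Two small points where your outline differs from the paper's:

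\medskip
\noindent\textbf{Packaging of the $\mu$-integral.} The paper does not re-derive the kernel bound from Lemmas \ref{Lem4.2MP}, \ref{heatest}(b) and \ref{condmu}(d) as you propose; it simply invokes Lemma \ref{Lemma-MP4.4}(a), which already packages the estimate $\int_{\re}(G'_{t'-r}(x'-y)-G'_{t-r}(x-y))^{2}\mu(dy)\leq C(t-r)^{\eta/2-2-\eps}[1\wedge d^{2}/(t-r)]$. On the indicator region $\{|x-y|\leq (t'-r)^{1/2-\nu_0}\vee 2|x'-x|\}$ the weight $|x-y|^{2a}$ coming from $\bar d_N(r,y)^{2\gamma\tilde\gamma_m}$ is bounded by a power of $(t'-r)+d$, so the weighted $\mu$-integral reduces to the unweighted one. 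Your more elaborate route via Lemma \ref{condmu}(d) works too, but is slightly longer.

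\medskip
\noindent\textbf{Dropping the $a_n^{\al}$ maximum.} You pass silently from $(a_n^{\al}\vee\bar d_N(r,y))^{2\gamma(\tilde\gamma_m-1)}$ to $\bar d_N(r,y)^{2\gamma(\tilde\gamma_m-1)}$. The paper flags this explicitly: since $r\leq (s-\dl)^{+}$ and $\dl\geq a_n^{2\al}$, one has $\sqrt{t-r}\geq\sqrt{\dl}\geq a_n^{\al}$, hence $\bar d_N(r,y)\geq a_n^{\al}$ and the maximum is redundant. This is the reason the hypothesis $\dl\in[a_n^{2\al},1]$ is there, and it should be stated rather than absorbed.

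\medskip
\noindent\textbf{Minor.} Your $N^{\ast}(K)$ is unnecessary: on the event in question $t\leq T_K$ (from the definition of $Z(N,n,K,\beta)$), so $|u^{i}(s,y)|\leq Ke^{|y|}$ holds deterministically for $s\leq T_K$ and is already built into $C_{\ref{LemmaBound-u}}$.
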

\begin{proof}
The proof is similar to the proof of Lemma 5.5 in \cite{MP09}. First we use Lemma \ref{LemmaBound-u} to bound $|u(r,y)|$ in the integral defining $Q_{S,2,\dl,\nu_0}$. Since we may assume $s\geq \dl$, and from the assumptions we know that $\dl \geq a_n^{2\al}$, therefore we have $d((r,y),(t,x)))\geq a_n^{\al}$. Hence, when we use (\ref{rd4}) to bound $|u(r,y)|$, we may drop the max with $a_n^{\al}$ in (\ref{rd4}). Then we proceed as in Lemma 5.5 in \cite{MP09}.  The only difference is that we use Lemma \ref{Lemma-MP4.4}(a) instead of Lemma 4.4(a) from \cite{MP09}. Therefore, we get the power $-2+\eta/2-\eps_0$ instead of $-3/2$ for $\dl$.
\end{proof}

\begin{lemma} \label{Lemma-QT}
Let $0\leq m \leq \bar{m}+1$ and assume $(P_m)$. For any $K\in \mathds{N}^{\geq K_1}$, $R>2/\eta$, $n\in\mathds{N}, \eps_0\in(0,1)$, and $\beta\in[0,\frac{\eta}{\eta+1}]$, there is a $C_{\ref{Lemma-QT}}(K,R_1,\mu(\re),\eta)>0$ and \\ $N_{\ref{Lemma-QT}}=N_{\ref{Lemma-QT}}(m,n,R,\eps_0,K,\beta)(\omega)\in \mathds{N}$ a.s. such that for any $\nu_1\in(1/R,1/2),\dl\in[a_n^{2\al},1], N\in \mathds{N}$ and $(t,x)\in \re_{+}\times \re$, on
\bn \label{QTDL1}
\{\omega:(t,x)\in Z(N,n,K,\beta), N>N_{\ref{Lemma-QT}}\},
\en
\bn \label{QTDL}
Q_{T,\dl}(s,s',t',x')&\leq& C_{\ref{Lemma-QT}}\big[a_n^{-2\eps_0}+2^{4N_{\ref{Lemma-Q2}}}\big]|s-s'|^{1-\nu_1/2} \big[\bar{\dl}_N^{(\gamma\tilde{\gamma}_m-2+\eta/2-\eps_0)\wedge 0}+a_n^{2\beta\gamma}\bar{\dl}_N^{\gamma-2+\eta/2-\eps_0} \nonumber \\
&&+\mathds{1}_{\{\dl<\bar{d}_N^2\}}\dl^{-2+\eta/2-\eps_0}(\bar{d}_N^{2\gamma\tilde{\gamma}_m}+a_n^{2\beta\gamma}\bar{d}_N^{2\gamma}\big)\big], \nonumber \\
&& \ \forall s\leq t \leq t', \ s'\leq t' \leq T_K,  \ |x'|\leq K+1.
\en
Here $d=d((t',x'),(t,x)), \bar{d}_N=d\vee2^{-N}$ and $\bar{\dl}_N=\dl\vee\bar{d}_N^2$. Moreover, $N_{\ref{Lemma-QT}}$ is stochastically bounded uniformly in $(n,\beta)$.
\end{lemma}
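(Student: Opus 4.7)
The plan is to follow the same strategy as in Lemma \ref{Lemma-Q2}, adapted to the $Q_T$ setting. The main structural simplification is that $Q_T$ involves only a single $G'_{t'-r}$ rather than a difference of two heat kernel derivatives, so Lemma \ref{Lemma-MP4.4} is not needed and Lemma \ref{Lem4.2MP} together with Lemma \ref{condmu}(d) suffices for the spatial integration. The main new ingredient, which has no direct analogue in Lemma \ref{Lemma-Q2}, is extracting the prefactor $|s-s'|^{1-\nu_1/2}$ via an interpolation on the outer time integral.

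First, on the event \ref{QTDL1}, apply Lemma \ref{LemmaBound-u} with $\xi$ taken close to $1$ (so that the error from $\xi$ can be absorbed into $\eps_0$) to bound
\begin{equation*}
|u(r,y)|^{2\gamma} \leq C\bigl(a_n^{-\eps_0}+2^{2N_1}\bigr)^{2\gamma} e^{2\gamma|y-x|}\bar{d}_N(r,y)^{2\gamma\xi}\Bigl[\bigl(a_n^{\al}\vee \bar{d}_N(r,y)\bigr)^{2\gamma(\tilde{\gamma}_m-1)}+a_n^{2\beta\gamma}\mathds{1}_{\{m>0\}}\Bigr],
\end{equation*}
where $\bar{d}_N(r,y)=d((r,y),(t,x))\vee 2^{-N}$. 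Define $N_{\ref{Lemma-QT}}$ to dominate $N_1$ from $(P_m)$ together with any other finite stochastic thresholds; $N_{\ref{Lemma-QT}}$ will be stochastically bounded uniformly in $(n,\beta)$ by the corresponding property of $N_1$. Via the triangle inequality and $t\leq t'$, bound $\bar{d}_N(r,y) \leq \sqrt{t'-r}+|y-x'|+\bar{d}_N$, and expand the factor $\bar{d}_N(r,y)^{2\gamma\xi}(a_n^{\al}\vee \bar{d}_N(r,y))^{2\gamma(\tilde{\gamma}_m-1)}$ into a finite sum of monomials in $\sqrt{t'-r}$, $|y-x'|$, and $\bar{d}_N$ (and similarly for the $a_n^{\beta}$ branch).

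Second, perform the spatial integration. Using Lemma \ref{Lem4.2MP}, replace $|G'_{t'-r}(y-x')|^2$ by $C(t'-r)^{-1}G_{2(t'-r)}(y-x')^2$, and absorb $e^{2R_1|y|}e^{2\gamma|y-x|}$ into $e^{\theta|y-x'|}$ with a new constant (since $|x|,|x'|\leq K+1$). Lemma \ref{condmu}(d) then integrates each monomial $|y-x'|^{2a}$ against $G_{2(t'-r)}(y-x')^2\,\mu(dy)$, producing pure powers $(t'-r)^{a-1+\eta/2-\eps_0}$; combined with the outer $(t'-r)^{-1}$ from Lemma \ref{Lem4.2MP}, each term contributes a factor $(t'-r)^{a-2+\eta/2-\eps_0}$. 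Then integrate in $r\in[(s\wedge s'-\dl)^+,(s\vee s'-\dl)^+]$: since $t'-r\geq \dl$ on this interval and its length is at most $|s-s'|$, the elementary estimate
\begin{equation*}
\int_{(s\wedge s'-\dl)^+}^{(s\vee s'-\dl)^+}\!(t'-r)^{-\theta}\,dr \;\leq\; (|s-s'|\wedge \dl)\,\dl^{-\theta} \;\leq\; |s-s'|^{1-\nu_1/2}\dl^{\nu_1/2-\theta},
\end{equation*}
valid for $\theta>1$, delivers the desired $|s-s'|^{1-\nu_1/2}$ prefactor, with the extra $\dl^{\nu_1/2}$ harmlessly absorbed into $\eps_0$. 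Finally, a case split according to whether $\dl\geq \bar{d}_N^{2}$ or $\dl<\bar{d}_N^{2}$ determines whether the $\bar{d}_N$ contribution can be dominated by $\sqrt{t'-r}\geq \sqrt{\dl}$ (giving the $\bar{\dl}_N^{\,\cdot}$ terms) or survives as an independent factor (giving the $\mathds{1}_{\{\dl<\bar{d}_N^{2}\}}\dl^{-2+\eta/2-\eps_0}(\bar{d}_N^{2\gamma\tilde{\gamma}_m}+a_n^{2\beta\gamma}\bar{d}_N^{2\gamma})$ piece).

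The main obstacle is the bookkeeping: expanding the polynomial in $\bar{d}_N(r,y)$, tracking which monomial contributes which exponent of $(t'-r)$ after Lemma \ref{condmu}(d), and reassembling the pieces into the two regimes appearing in \ref{QTDL}. The interpolation that produces $|s-s'|^{1-\nu_1/2}$ is the only genuine departure from the Lemma \ref{Lemma-Q2} argument.
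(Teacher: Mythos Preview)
Your overall architecture matches the paper: bound $|u(r,y)|$ via Lemma~\ref{LemmaBound-u}, replace $G'$ by $G$ through Lemma~\ref{Lem4.2MP}, integrate in $y$ with Lemma~\ref{condmu}(d), and finish with a time-integral estimate. The gap is in that last step.

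After the spatial integration, each monomial leaves a factor $(t'-r)^{p}$ with $p=(a_1+a_2)/2-2+\eta/2-\eps_0$, where $a_1,a_2$ are the $\sqrt{t'-r}$ and $|y-x'|$ exponents in your expansion. The paper shows (and you implicitly need) that the \emph{combined} exponent after absorbing all factors lies in $(10\eps_1-1,\tfrac12)$; in particular $\theta=-p<1$ for every monomial once you collapse $\bar d_N$ into $\sqrt{t'-r}$ as you propose in your ``$\dl\geq\bar d_N^2$'' case. Your displayed estimate
\[
\int_{(\underline s-\dl)^+}^{\bar s-\dl}(t'-r)^{-\theta}\,dr\leq(|s-s'|\wedge\dl)\,\dl^{-\theta}
\]
genuinely requires $\theta>1$: the ``$\wedge\,\dl$'' comes from direct integration of $(t'-r)^{-\theta}$, which only yields $C\dl^{1-\theta}$ when $1-\theta<0$. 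For $\theta\in(0,1)$ direct integration gives $C|s-s'|^{1-\theta}$ instead, and the two bounds must be interpolated differently. So your time estimate simply does not cover the regime that actually occurs, and the argument stalls.

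The paper's remedy is twofold. First, it splits the $r$-integral at $r=t'-\bar d_N^2$ rather than splitting on the global relation $\dl\lessgtr\bar d_N^2$. On $J_1=\{r\leq t'-\bar d_N^2\}$ one has $t'-r\geq\bar\dl_N$ (not merely $\geq\dl$), so $\bar d_N$ is absorbed and the integrand becomes $(t'-r)^{p}$ with $p\in(-1,\tfrac12)$; the paper then uses the interpolation
\[
\int\mathds{1}_{J_1}(t'-r)^{p}\,dr\leq C\min\bigl\{|s-s'|\,\bar\dl_N^{p\wedge0},\,|s-s'|^{(p\wedge0)+1}\bigr\}\leq C|s-s'|^{1-\eps}\bar\dl_N^{\eps+(p\wedge0)},
\]
where the second bound comes from $\int_0^{|s-s'|}u^{p\wedge0}\,du$ and crucially needs $p>-1$, not $p<-1$. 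Choosing $\eps\leq\gamma(1-\xi)\leq\nu_1/2$ then gives the $\bar\dl_N^{q\wedge0}$ terms. On $J_2=\{r>t'-\bar d_N^2\}$ the exponent is $-2+\eta/2-\eps_0$, so $\theta>1$ and your type of estimate does apply; the indicator $\mathds{1}_{\{\dl<\bar d_N^2\}}$ arises automatically because $J_2$ is empty when $\dl\geq\bar d_N^2$. Replace your single estimate and your case split on $\dl$ by this $r$-split and two-sided interpolation, and the proof goes through.
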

The proof of Lemma \ref{Lemma-QT} follows the same lines as the proof of Lemma 5.6 in \cite{MP09}. \\\\
\begin{proof}
Let $\xi=1-((2R)^{-1}\wedge \frac{\eta^2}{2(\eta+1)})$ and define $N_{\ref{Lemma-QT}}=N_1(m,n,\xi,\eps_0,K,\beta)$ so that $N_{\ref{Lemma-QT}}$ is stochastically bounded uniformly in $(n,\beta)$, immediately from $(P_m)$. Assume that $s\vee s'\equiv \bar{s} \geq \dl$, otherwise, $Q_{T,\dl}(s,s',t',x')\equiv 0$. Denote by $\underline{s}=s\wedge s'$. We use Lemma \ref{LemmaBound-u} to bound $|u(r,y)|$ in the integrand of $Q_{T,\dl}$ and the maximum with $a_n^{\al}$ can be ignored since $a_n^{\al} \leq \sqrt{t'-r}$ in the calculations below. We argue as in the proof of Lemma 5.6 in \cite{MP09}, that for $\omega$ as in (\ref{QTDL1}) and $s,t,s',t',x'$ as in (\ref{QTDL}) we have
\bn \label{ds0}
Q_{T,\dl}(s,s',t',x') &\leq& C_{\ref{LemmaBound-u}}\int_{(\underline{s}-\dl)^{+}}^{\bar{s}-\dl}\int_{\re}G'_{t'-r}(y-x')^2e^{2R_1K}e^{2(R_1+1)|x-y|}
[2^{-N}\vee(\sqrt{t'-r}+|y-x|)]^{2\gamma \xi} \nonumber \\
&&  \times \big[2^{-N}\vee(\sqrt{t'-r}+|y-x|)^{\tilde{\gamma}_{m}-1}+a_n^{\beta}\big]^{2\gamma}\mu(dy)dr.
\en
Note that
\bn \label{ds1}
2^{-N}\vee(\sqrt{t'-r}+|y-x|)&\leq& \big(2^{-N}\vee|x-x'|\big) + \sqrt{t'-r} +|y-x'| \nonumber \\
&\leq& \bar{d}_N + \sqrt{t'-r} +|y-x'|,
\en
and
\bn \label{ds2}
e^{(2R_1+1)|y-x|}\leq C(K,R_1)e^{(2R_1+1)|y-x'|}.
\en
Apply Lemma \ref{Lem4.2MP}, (\ref{ds1}) and (\ref{ds2}) to (\ref{ds0}) to get
\bn \label{ds3}
&&Q_{T,\dl}(s,s',t',x')  \nonumber \\
&&\leq C_{\ref{LemmaBound-u}} C(K,R_1)\int_{(\underline{s}-\dl)^{+}}^{\bar{s}-\dl}\int_{\re}(t'-r)^{-1}G_{2(t'-r)}(y-x')^2e^{2(R_1+1)|y-x'|}
[\bar{d}_N^{2\gamma\xi}+(t'-r)^{\gamma\xi}+|y-x'|^{2\gamma \xi}] \nonumber \\
&&\quad \times \big[\bar{d}_N^{2\gamma(\tilde{\gamma}_{m}-1)}+(t'-r)^{\gamma(\tilde{\gamma}_{m}-1)}+|y-x'|^{2\gamma(\tilde{\gamma}_{m}-1)}+a_n^{2\beta\gamma}\big]\mu(dy)dr.
\en
Apply Lemma \ref{condmu}(d) with $a=\gamma\xi$ and $a=\gamma(\tilde \gamma_m-1)$ to (\ref{ds3}) to get
\bn \label{ds6}
&&Q_{T,\dl}(s,s',t',x')  \nonumber \\ \nonumber \\
&&\leq C(K,R_1,\eta,K)\int_{(\underline{s}-\dl)^{+}}^{\bar{s}-\dl}(t'-r)^{-2+\eta/2-\eps_0}
[\bar{d}_N^{2\gamma\xi}+(t'-r)^{\gamma\xi}] \nonumber \\ \nonumber \\
&&\quad\times \big[\bar{d}_N^{2\gamma(\tilde{\gamma}_{m}-1)}+(t'-r)^{\gamma(\tilde{\gamma}_{m}-1)}+a_n^{2\beta\gamma}\big]dr \nonumber \\ \nonumber \\
&&\leq C(K,R_1,\eta)\int_{(\underline{s}-\dl)^{+}}^{\bar{s}-\dl}\mathds{1}_{\{r\leq t'-\bar{d}_N^2\}}(t'-r)^{-2+\eta/2-\eps_0}
\big[(t'-r)^{\gamma(\tilde{\gamma}_{m}+\xi-1)}+a_n^{2\beta\gamma}(t'-r)^{\gamma\xi}\big]dr \nonumber \\  \nonumber \\
&&\quad+ C(K,R_1,\eta)\int_{(\underline{s}-\dl)^{+}}^{\bar{s}-\dl}\mathds{1}_{\{r> t'-\bar{d}_N^2\}}(t'-r)^{-2+\eta/2-\eps_0}dr
[\bar{d}_N^{2\gamma(\tilde{\gamma}_{m}+\xi-1)}+a_n^{2\beta\gamma}\bar{d}_N^{2\gamma\xi}] \nonumber \\ \nonumber \\
&&=: C(K,R_1,\eta)(J_1+J_2).
\en
Note that
\bn \label{ds7}
&&\int_{(\underline{s}-\dl)^{+}}^{\bar{s}-\dl}\mathds{1}_{\{r> t'-\bar{d}_N^2\}}(t'-r)^{-2+\eta/2-\eps_0}dr \nonumber \\
&&\leq
\mathds{1}_{\{\dl<\bar{d}_N^2\}}[(t'-\bar{s}+\dl)^{-2+\eta/2-\eps_0}|s'-s|\wedge \frac{1}{1-\eta/2+\eps_0}(t'-\bar{s}+\dl)^{-1+\eta/2-\eps_0}]  \nonumber \\
&&\leq  \frac{1}{1-\eta/2-\eps_0}\mathds{1}_{\{\dl<\bar{d}_N^2\}}\dl^{-2+\eta/2-\eps_0}(|s'-s|\wedge \dl).
\en
From (\ref{ds6}) and (\ref{ds7}) we get
\bn \label{ds8}
J_2 &\leq & (K,R_1,\eta)\mathds{1}_{\{\dl<\bar{d}_N^2\}}\dl^{-2+\eta/2-\eps_0}(|s'-s|\wedge \dl)\bar{d}_N^{(-2\gamma(1-\xi))}[\bar{d}_N^{2\gamma\tilde{\gamma}_{m}}+a_n^{2\beta\gamma}\bar{d}_N^{2\gamma}]\nonumber \\
&\leq & (K,R_1,\eta)\mathds{1}_{\{\dl<\bar{d}_N^2\}}\dl^{-2+\eta/2-\eps_0}(|s'-s|\wedge \dl)^{1-\nu_1/2}[\bar{d}_N^{2\gamma\tilde{\gamma}_{m}}+a_n^{2\beta\gamma}\bar{d}_N^{2\gamma}],
\en
where we have used the fact that $\gamma(1-\xi)\leq 1-\xi \leq (2R)^{-1} \leq \nu_1/2$. \medskip \\
For $J_1$, let $p=\gamma(\tilde{\gamma}_m+\xi-1)-(2-\eta/2+\eps_0)$ or $\gamma\xi-(2-\eta/2+\eps_0)$ for $0\leq m-1 \leq \bar{m}$. Recall that $\tilde{\gamma}_m \in [1,1+\eta]$, and $\eta\in(0,1)$. By our choice of $\xi$ and from the bounds on $\gamma,\tilde{\gamma}_m,\eta$ we get
\bn \label{b1}
p&\geq& \gamma\xi-(2-\eta/2+\eps_0) \nonumber \\
&\geq & -2+\frac{\eta}{2}-\eps_0+\gamma\big(1-\frac{\eta^2}{2(\eta+1)}\big) \nonumber \\
&\geq & -2+\frac{\eta}{2}-\eps_0+\gamma-\frac{\eta^2}{2(\eta+1)} \nonumber \\
&> & -1+10\eps_1,
\en
where we have used (\ref{eps1}) in the last inequality. From the same bounds on $\gamma,\tilde{\gamma}_m,\eta,\xi,$ we also get
\bn  \label{b2}
p &\leq& \gamma(\tilde{\gamma}_m+\xi-1)-(2-\eta/2+\eps_0)  \nonumber \\
&\leq & \gamma(1+\eta) -2+\frac{\eta}{2}-\eps_0   \nonumber \\
&\leq & -1+\frac{3}{2}\eta-\eps_0 \nonumber  \\
&< & \frac{1}{2}.
\en
From (\ref{b1}) and (\ref{b2}) we get
\bd
p\in \bigg(10\eps_1-1,\frac{1}{2}\bigg).
\ed
Denote by $p'=p\wedge 0$ and let $0\leq \eps \leq -p'$. Since $t'\leq t_0$ and $p'\in[0,1-10\eps_1)$, we get
\bn \label{ds9}
I(p)&:=&\int_{(\underline{s}-\dl)^{+}}^{\bar{s}-\dl}\mathds{1}_{\{r\leq t'-\bar{d}_N^2\}}(t'-r)^p dr \nonumber \\
&= &\int_{(\underline{s}-\dl)^{+}}^{\bar{s}-\dl}\mathds{1}_{\{r\leq t'-\bar{d}_N^2\}}(t'-r)^{p\wedge0} (t'-r)^{p\vee 0} dr \nonumber \\
&\leq &\int_{(\underline{s}-\dl)^{+}}^{\bar{s}-\dl}\mathds{1}_{\{r\leq t'-\bar{d}_N^2\}}(t'-r)^{p'} (1+\sqrt{t'}) dr \nonumber \\
&\leq &C(K)\int_{(\underline{s}-\dl)^{+}}^{\bar{s}-\dl}\mathds{1}_{\{r\leq t'-\bar{d}_N^2\}}(t'-r)^{p'}  dr \nonumber \\
&\leq &C(K)\min\bigg\{|s'-s|(\bar{d}_N^{2p'}\wedge(t'-\bar s +\dl)^{p'}),\int_{0}^{|s'-s|}u^{p'}du\bigg\} \nonumber \\
&\leq &C(K,\eps_1)\min\bigg\{|s'-s|(\bar{d}_N^{2p'}\wedge(t'-\bar s +\dl)^{p'}),|s'-s|^{p'+1}\bigg\} \nonumber \\
&\leq &C(K,\eps_1)\min\bigg\{|s'-s|\bar{\dl}_N^{p'},|s'-s|^{p'+1}\bigg\} \nonumber \\
&\leq &C(K,\eps_1)|s'-s|^{p'+1}\min\bigg\{\bigg(\frac{|s'-s|}{\bar{\dl}_N}\bigg)^{-p'},1\bigg\} \nonumber \\
&\leq &C(K,\eps_1)|s'-s|^{p'+1}\bigg(\frac{|s'-s|}{\bar{\dl}_N}\bigg)^{-p'-\eps}\nonumber \\
&\leq &C(K,\eps_1)|s'-s|^{1-\eps}(\bar{\dl}_N)^{\eps+p'}.
\en
Define $q=p+\gamma(1-\xi)$, so that $q=\gamma\tilde{\gamma}_m-2+\eta/2-\eps_0$ or $q=\gamma-(2-\eta/2+\eps_0)$. We distinct between two cases as follows. \\\\
The first case is $q\leq 0$. Then $p'=p<0$. Choose $\eps=\gamma(1-\xi)\leq(2R)^{-1}<\nu_1/2$, then $\eps+p'=q\leq 0$. Thus we can use (\ref{ds9}) with this $\eps$ to get
\bn \label{ds10}
I(p)&\leq & C(K,\eps_1)|s'-s|^{1-\eps}(\bar{\dl}_N)^{q}\nonumber \\
&\leq&  C(K,\eps_1)|s'-s|^{1-\nu_1/2}(\bar{\dl}_N)^{q}.
\en
The second case is $q>0$. Then $p'=(q-\gamma(1-\xi))\wedge 0\geq -\gamma(1-\xi)$. Choose $\eps=-p'\leq \gamma(1-\xi)\leq(2R)^{-1}<\nu_1/2$ and again we can apply (\ref{ds9}) to get
\bn \label{ds11}
I(p)&\leq & C(K,\eps_1)|s'-s|^{1-\eps}\nonumber \\
&\leq&  C(K,\eps_1)|s'-s|^{1-\nu_1/2}.
\en
From (\ref{ds10}) and (\ref{ds11}) we get
\bn \label{ds12}
I(p)&\leq&  C(K,\eps_1)|s'-s|^{1-\nu_1/2}(\bar{\dl}_N)^{q\wedge 0}.
\en
From (\ref{ds6}) and (\ref{ds12}) we get
\bn\label{ds13}
J_1\leq C(K,\eps_1)|s'-s|^{1-\nu_1/2}\big[\bar{\dl}_N^{(\gamma\tilde{\gamma}_m-2+\eta/2-\eps_0)\wedge 0} + a_n^{2\beta\gamma} \bar{\dl}_N^{(\gamma -2+\eta/2-\eps_0)\wedge 0}\big].
\en
From (\ref{ds6}), (\ref{ds8}) and (\ref{ds13}), (\ref{QTDL}) follows.
\end{proof}
\paragraph{Notation:}
Denote
\bd
d((s,t,x),(s',t',x'))=\sqrt{|s-s'|}+\sqrt{|t-t'|}+|x-x'|,
\ed
and
\bn \label{Delta-u1-tag}
&&\bar{\Delta}_{u_1'}(m,n,\alpha,\eps_0,2^{-N})  \\
&&= a_n^{-2\eps_0}\big[a_n^{-\alpha(1-\eta/4)}2^{-N\gamma \tilde{\gamma}_m}+(a_n^{\alpha/2}\vee 2^{-N})^{(\gamma_{m}\gamma-2+\eta/2)\wedge 0}+a_n^{-\alpha(1-\eta/4)+\beta\gamma}(a_n^{\alpha/2}\vee 2^{-N})^\gamma\big]. \nonumber
\en
\begin{proposition} \label{probIncFdl}
Let $0 \leq m \leq \bar{m}+1$ and assume $(P_m)$. For any $n\in \mathds{N},\nu_1\in(0,\eta/2), \eps_0 \in (0,1), \\ K\in \mathds{N}^{\geq K_1}, \alpha \in [0,2\al]$, and $\beta\in [0,\frac{\eta}{\eta+1}]$, there exists $N_{\ref{probIncFdl}}=N_{\ref{probIncFdl} }(m,n,\nu_1,\eps_0,K,\alpha, \beta, \eta, \mu(\re))(\omega)$  in $\mathds{N}^{\geq2}$  a.s. such that for all $N\geq N_{\ref{probIncFdl}}, (t,x)\in Z(N,n,K,\beta),$ $\ s\leq t, \ s'\leq t' \leq  T_K$, if $d((s,t,x),(s',t',x'))\leq 2^{-N}$,
then,
\bn
|F_{a_n^{\alpha}}(s,t,x)-F_{a_n^{\alpha}}(s',t',x')|\leq 2^{-86} d((s,t,x),(s',t',x'))^{1-\nu_1}\bar{\Delta}_{u_1'}(m,n,\alpha,\eps_0,2^{-N}). \nonumber
\en
Moreover $N_{\ref{probIncFdl}}$ is stochastically bounded uniformly in $(n,\alpha,\beta)$.
\end{proposition}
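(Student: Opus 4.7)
\textbf{Proof sketch of Proposition \ref{probIncFdl}.}
The plan is to parallel the proof of the corresponding statement (Proposition 5.7) in \cite{MP09}, but with the heat-kernel/measure estimates of Lemmas \ref{Lemma-MP4.3}, \ref{Lemma-MP4.4} and \ref{condmu} replacing the homogeneous-noise bounds. Starting from the representation \eqref{Fdl2} of $F_{a_n^{\alpha}}$, one writes
\begin{equation*}
F_{a_n^{\alpha}}(s,t,x)-F_{a_n^{\alpha}}(s',t',x') = T_1 + T_2 + T_3,
\end{equation*}
where $T_1$ is the ``temporal'' stochastic integral over the strip $r\in[(s\wedge s'-a_n^{\alpha})_+,(s\vee s'-a_n^{\alpha})_+]$, and $T_2,T_3$ are the ``spatial'' stochastic integrals of the difference $G'_{t'-r}(\cdot-x')-G'_{t-r}(\cdot-x)$ over $r\in[0,(s\wedge s'-a_n^{\alpha})_+]$ restricted respectively to $\{|x-y|>(t'-r)^{1/2-\nu_0}\vee 2|x'-x|\}$ and its complement. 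By \eqref{Dbound} the conditional second moments of $T_1,T_2,T_3$ are dominated by $Q_{T,a_n^{\alpha}}$, $Q_{S,1,a_n^{\alpha},\nu_0}$ and $Q_{S,2,a_n^{\alpha},\nu_0}$ respectively, so Lemmas \ref{Lemma-Q1}, \ref{Lemma-Q2} and \ref{Lemma-QT} furnish pointwise bounds on these square functions on the event $(t,x)\in Z(N,n,K,\beta)$, $N$ large.

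Given these square-function bounds, the next step is to promote them to a pathwise modulus of continuity. First I would apply the Dubins--Schwarz representation (or a Burkholder--Davis--Gundy inequality) to replace the three stochastic integrals by time-changed Brownian motions with clocks equal to the $Q$-functions. A simple Brownian modulus of continuity then yields, for all $(s,t,x)$, $(s',t',x')$ in a suitable compact set with $d((s,t,x),(s',t',x'))\leq 2^{-N}$, the pointwise bound
\begin{equation*}
|T_i|\le C\sqrt{Q_i\cdot \log(1/Q_i)}\le C\,d^{-\nu_1/4}\sqrt{Q_i},
\end{equation*}
after absorbing the log factor into a small power of $d$. Substituting the $Q$-bounds (with $\nu_0$ chosen small so that the exponential factor in Lemmas \ref{Lemma-Q1}(b) and \ref{Lemma-Q2} is absorbed) and collecting the dominant terms, I would use $a_n^{\alpha(\eta/2-\eps_0)}$ from Lemma \ref{condmu}(d) together with the algebraic inequalities $\bar d_N,\bar\delta_N\ge a_n^{\alpha}$ and $\tilde\gamma_m\le 1+\eta$ to verify term by term that the resulting estimate is bounded by $2^{-86}\,d^{1-\nu_1}\bar\Delta_{u_1'}(m,n,\alpha,\eps_0,2^{-N})$, with the three summands in $\bar\Delta_{u_1'}$ in \eqref{Delta-u1-tag} coming respectively from the $(\gamma\tilde\gamma_m)$-, $(\gamma\tilde\gamma_m-2+\eta/2)\wedge 0$- and $\beta$-dependent contributions in Lemmas \ref{Lemma-Q2} and \ref{Lemma-QT}.

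Upgrading the pointwise bound to a uniform one over the grid $(s,t,x),(s',t',x')$ with $d\le 2^{-N}$ is then a standard Kolmogorov--\v Centsov argument: one discretizes space-time on a dyadic grid of mesh $2^{-N}$ inside $[0,T_K]^2\times[-K-1,K+1]$, uses Chebyshev and a high enough moment (in fact $p$-th moment for $p$ large, via BDG applied to $T_i$) against each pair in the grid, and sums the resulting probabilities in $N$; the Borel--Cantelli lemma then gives an a.s. finite $N_{\ref{probIncFdl}}$ beyond which the desired inequality holds at every grid pair, and chaining takes care of off-grid pairs. To extract the uniform stochastic boundedness in $(n,\alpha,\beta)$, I would observe that the constants in the $Q$-bounds depend on $(n,\alpha,\beta)$ only through the stochastic bounds on $N_{\ref{Lemma-Q1}}, N_{\ref{Lemma-Q2}}, N_{\ref{Lemma-QT}}$ (which are already uniform in $(n,\beta)$ by hypothesis) and through explicit powers of $a_n$ that are already absorbed into $\bar\Delta_{u_1'}$.

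The main obstacle, and the place where the argument genuinely departs from \cite{MP09}, is matching the three summands in \eqref{Delta-u1-tag} with the correct powers of $a_n^{\alpha}$ coming from Lemma \ref{condmu}(d): the exponent $-2+\eta/2-\eps_0$ replaces the $-3/2$ of the homogeneous case and has to be tracked through both the temporal clock $Q_{T,a_n^\alpha}$ and the spatial clock $Q_{S,2,a_n^\alpha,\nu_0}$ without loss. A careful bookkeeping of the two regimes $\bar\delta_N=a_n^\alpha$ vs $\bar\delta_N=\bar d_N^2$, combined with the hypothesis $\alpha\le 2\al$ to guarantee $a_n^{\alpha/2}\ge a_n^{\al}$ and the comparison $\gamma\tilde\gamma_m\le 1+\eta$ from \eqref{gamma-m-up-lim}, is what makes $\bar\Delta_{u_1'}$ absorb all error terms with the prefactor $2^{-86}$; the uniform grid argument then finishes the proof.
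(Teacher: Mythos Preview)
Your proposal is correct and follows essentially the same route as the paper: bound the total square function $Q_{a_n^{\alpha}}=Q_{T,a_n^{\alpha}}+Q_{S,1,a_n^{\alpha},\nu_0}+Q_{S,2,a_n^{\alpha},\nu_0}$ via Lemmas \ref{Lemma-Q1}--\ref{Lemma-QT}, apply Dubins--Schwarz to convert this to a Brownian modulus, and then run the Kolmogorov chaining from \cite{MP09} to get the uniform bound with the correct $\bar\Delta_{u_1'}$. Two small corrections: the relevant reference in \cite{MP09} is Proposition~5.8 (not 5.7), and the exponential factor you mention lives in Lemma~\ref{Lemma-MP4.4}(b) (which feeds into the proof of Lemma~\ref{Lemma-Q1}), not in Lemma~\ref{Lemma-Q1} itself; also note that the paper treats the whole increment as a single martingale with bracket $Q_{a_n^{\alpha}}$ rather than three separate pieces, but this is an immaterial repackaging of your $T_1,T_2,T_3$.
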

\begin{proof}
The proof of Proposition \ref{probIncFdl} is similar to the proof of Proposition 5.8 in \cite{MP09}. Let $R=33/(\nu_1\eta)$ and choose $\nu_0\in(R^{-1},\nu_1/32)$. Let $d=d((t,x),(t',x'))$ and $\bar d_N=d\vee2^{-N}$, and
\bn
Q_{a_n^{\alpha}}(s,t,x,s',x',t')  =Q_{T,a_n^{\alpha}}(s,s',t',x')+\sum_{i=1}^{2}Q_{S,i,a_n^{\alpha},\nu_0}(s,t,x,t',x').
\en
We use Lemmas \ref{Lemma-Q1}-\ref{Lemma-QT} to bound $Q_{a_n^{\alpha}}(s,t,x,s',x',t')$. We get that there exists $C(K,\nu_1)$ and
$N_2(m,n,\nu_1,\eps_0,K,\beta,\eta)$ stochastically bounded uniformly in $(n,\beta)$, such that for all $N\in \mathds{N}$ and $(t,x)$, on \bn
\{\omega:(t,x)\in Z(N,n,K+1,\beta), N\geq N_2\},  \nonumber
\en
\bn
&&R_0^{\gamma}Q_{a_n^{\alpha}}(s,t,x,s',x',t') ^{1/2} \nonumber \\
&&\leq C(K,\nu_1)\big[a_n^{-\eps_0}+2^{2N_2}\big] (d+\sqrt{|s'-s|})^{1-\nu_1/2}\big\{(a_n^{\alpha/2}\vee\bar d_N)^{(\gamma\tilde{\gamma}_m-2+\eta/2-\eps_0)\wedge 0}+a_n^{\beta\gamma}(a_n^{\alpha/2}\vee\bar d_N)^{\gamma-2+\eta/2-\eps_0}\big) \nonumber \\
&&\quad +a_n^{-1+\eta/4-\eps_0/2}(\bar{d}_N^{\gamma\tilde{\gamma}_m}+a_n^{\beta\gamma}\bar{d}_N^{\gamma}\big)\big\},  \ \forall s\leq t \leq t' , \ s'\leq t'\leq T_K, \ |x'|\leq K+2. \nonumber
\en
The rest of the proof follows the proof of Proposition 5.8 in \cite{MP09} after (5.37) there.
Briefly, we use Dubins-Schwarz theorem to bound $|F_{a_n^{\alpha}}(s,t,x)-F_{a_n^{\alpha}}(s',t',x')|$.
We get that the power of $\alpha_n$ in $\bar{\Delta}_{u_1'}(m,n,\alpha,\eps_0,2^{-N})$ changes from $-3/4\alpha$ in \cite{MP09} to $-(1-\eta/4)\alpha$ and the power of $(a_n^{\alpha/2}\vee 2^{-N})$ changes to $(\gamma\gamma_m-2+\eta/2)\wedge 0$ instead of $(\gamma_{m+1}-2)\wedge 0$ in \cite{MP09}.
\end{proof}
\\\\
From Remark \ref{remark32} we have $F_{\dl}(t,t,x)=-u'_{1,\dl}(t,x)$. Hence the following corollary follows immediately from Proposition \ref{probIncFdl}.
\begin{corollary} \label{corollary-u-tag-reg}
Let $0\leq m \leq \bar{m} +1$ and assume $(P_m)$. Let $n,\nu_1,\eps_0,K,\alpha$ and $\beta$ as in Proposition \ref{probIncFdl}. For all $N\geq N_{\ref{probIncFdl}}$, $(t,x)\in Z(N,n,K,\beta)$ and $t'\leq T_K$,  if $d((t,x),(t',x'))\leq 2^{-N}$ then
\bn
|u'_{1,a_n^\alpha}(t,x)-u'_{1,a_n^\alpha}(t',x')|\leq 2^{-85}d((t,x),(t',x'))^{1-\nu_1}\bar{\Delta}_{u_1'}(m,n,\alpha,\eps_0,2^{-N}).
\en
\end{corollary}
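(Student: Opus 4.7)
The plan is to reduce the corollary directly to Proposition~\ref{probIncFdl} by specializing to the diagonal $s=t$, $s'=t'$. Recall from Remark~\ref{remark32} that $F_{\dl}(t,t,x) = -u'_{1,\dl}(t,x)$, so with $\dl = a_n^{\alpha}$ the left-hand side of the desired inequality equals $|F_{a_n^{\alpha}}(t,t,x) - F_{a_n^{\alpha}}(t',t',x')|$. This is precisely the quantity controlled by Proposition~\ref{probIncFdl}, so no fresh stochastic analysis is required; the argument is a matter of matching the two notions of distance.

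Concretely, I would take $N_{\ref{corollary-u-tag-reg}} := N_{\ref{probIncFdl}}$ and, given $(t,x) \in Z(N,n,K,\beta)$ and $(t',x')$ with $d((t,x),(t',x')) \le 2^{-N}$, set $s=t$ and $s'=t'$ in Proposition~\ref{probIncFdl}. Then the three-variable distance is
\[
d((t,t,x),(t',t',x')) \;=\; 2|t-t'|^{1/2} + |x-x'| \;\le\; 2\, d((t,x),(t',x')),
\]
so in particular $d((t,t,x),(t',t',x')) \le 2^{-N+1} \le 2^{-N_{\ref{probIncFdl}}+1}$, and the hypothesis of the proposition is still satisfied (if one wishes to keep the same $N$, one simply invokes the monotonicity $Z(N,n,K,\beta) \subset Z(N-1,n,K,\beta)$, which is immediate from the definition of $Z$). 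The proposition then gives
\[
|F_{a_n^{\alpha}}(t,t,x) - F_{a_n^{\alpha}}(t',t',x')| \;\le\; 2^{-86}\, d((t,t,x),(t',t',x'))^{1-\nu_1}\, \bar{\Delta}_{u_1'}(m,n,\alpha,\eps_0,2^{-N}).
\]

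Finally, using $d((t,t,x),(t',t',x'))^{1-\nu_1} \le 2^{1-\nu_1} d((t,x),(t',x'))^{1-\nu_1} \le 2\, d((t,x),(t',x'))^{1-\nu_1}$ absorbs the extra factor of $2$ into the constant, turning $2^{-86}$ into $2^{-85}$, which yields the stated bound. The stochastic boundedness of $N_{\ref{corollary-u-tag-reg}}$ uniformly in $(n,\alpha,\beta)$ is inherited verbatim from that of $N_{\ref{probIncFdl}}$. There is no real obstacle here; the only mild subtlety is bookkeeping of the distance functions and the harmless factor of $2$, which is why the corollary is stated as an immediate consequence of the proposition rather than as a separate theorem.
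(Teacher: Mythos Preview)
Your proposal is correct and follows exactly the approach the paper takes: the paper's entire justification is the sentence ``From Remark~\ref{remark32} we have $F_{\dl}(t,t,x)=-u'_{1,\dl}(t,x)$. Hence the following corollary follows immediately from Proposition~\ref{probIncFdl},'' and your write-up simply makes explicit the specialization $s=t$, $s'=t'$ and the harmless factor-of-two bookkeeping between the two distance functions. If anything, you have supplied more detail than the paper does; the only residual nitpick is that applying the proposition at level $N-1$ also shifts $\bar{\Delta}_{u_1'}$ from $2^{-N}$ to $2^{-(N-1)}$, but this costs at most another bounded constant which is absorbed in the passage from $2^{-86}$ to $2^{-85}$ (these constants are not meant to be sharp).
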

We would like to bound $|u'_{1,\dl}-u'_{1,a^{2\al}_n}|$. Let $\dl\geq a_n^{2\al}$ and $s=t-\dl+a_n^{2\al}$. It is easy to check that
\bn \label{u-tag-F-an}
u'_{1,\dl}(t,x)=-F_{a^{2\al}_n}(t-\dl+a_n^{2\al},t,x).
\en
The following lemmas will be used to bound $|F_{a^{2\al}_n}(s,t,x)-F_{a^{2\al}_n}(t,t,x)|$, where the hypothesis $\sqrt{t-s}\leq 2^{-N}$  from Proposition \ref{probIncFdl} is weakened considerably (see Proposition \ref{Prop5.11}).
\begin{lemma} \label{Lemma-QTan}
Let $0\leq m \leq \bar{m}+1$ and assume $(P_m)$. For any $K\in \mathds{N}^{\geq K_1}$, $R>2/\eta$, $n\in\mathds{N}, \eps_0\in(0,1)$, and $\beta\in[0,\frac{\eta}{\eta+1}]$, there is a $C_{\ref{Lemma-QTan}}(K,R_1,\mu(\re),\eta)>0$ and \\ $N_{\ref{Lemma-QTan}}=N_{\ref{Lemma-QTan}}(m,n,R,\eps_0,K,\beta)(\omega)\in \mathds{N}$ a.s. such that for any $\nu_1\in(1/R,\eta/2), N\in \mathds{N}$ and $(t,x)\in \re_{+}\times \re$, on
\bd \label{Z-SET2}
\{\omega:(t,x)\in Z(N,n,K,\beta), N>N_{\ref{Lemma-QTan}}\},
\ed
\bn \label{QTan}
Q_{T,a_n^{2\al}}(s,t,t,x)&\leq& C_{\ref{Lemma-QTan}}a_n^{-\eps_0}\big[a_n^{-2\eps_0}+2^{4N_{\ref{Lemma-QTan}}}\big] \big\{|t-s|^{1-\nu_1/4} \big[((t-s)\vee a_n^{2\al})^{\gamma \tilde{\gamma}_m-2+\eta/2-\eps_0}\nonumber \\
&&+a_n^{2\beta\gamma}((t-s)\vee a_n^{2\al})^{\gamma-2+\eta/2-\eps_0}\big]\nonumber \\
&&+\mathds{1}_{\{a_n^{2\al}<2^{-2N}\}}((t-s)\wedge a_n^{2\al}) a_n^{-2+\eta/2-\eps_0}2^{N\nu_1/2}(2^{-2N\gamma\tilde{\gamma}_m}+a_n^{2\beta\gamma}2^{-2N\gamma}\big)\big\}, \nonumber \\
&& \ \forall s\leq t.
\en
Moreover $N_{\ref{Lemma-QTan}}$ is stochastically bounded uniformly in $(n,\beta)$.
\end{lemma}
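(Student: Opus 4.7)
\textbf{Proof proposal for Lemma \ref{Lemma-QTan}.} The plan is to follow the structure of the proof of Lemma \ref{Lemma-QT}, specialized to $s'=t$, $t'=t$, $x'=x$, $\dl=a_n^{2\al}$, while keeping careful track of the fact that now $|t-s|$ is \emph{not} constrained to be $\leq 2^{-2N}$. Fix $R=33/(\nu_1\eta)$, choose $\xi = 1-((2R)^{-1}\wedge\frac{\eta^2}{2(\eta+1)})$, and set $N_{\ref{Lemma-QTan}}=N_1(m,n,\xi,\eps_0,K,\beta)$, which by $(P_m)$ is stochastically bounded uniformly in $(n,\beta)$. On the event $\{(t,x)\in Z(N,n,K,\beta),\ N\geq N_{\ref{Lemma-QTan}}\}$, Lemma \ref{LemmaBound-u} gives the pointwise bound on $|u(r,y)|$ in terms of $\bar d_N=d((r,y),(t,x))\vee 2^{-N}$; crucially, for $r\leq t-a_n^{2\al}$ we have $\sqrt{t-r}\geq a_n^\al$, so $\bar d_N\geq a_n^\al$ and the maximum with $a_n^\al$ inside the Lemma \ref{LemmaBound-u} bound may be dropped.

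The next step is standard: apply Lemma \ref{Lem4.2MP} to replace $G'_{t-r}(y-x)^2$ by $C(t-r)^{-1}G_{2(t-r)}(y-x)^2$, absorb the factor $e^{2R_1|y|}$ into $e^{2(R_1+1)|y-x|}e^{2R_1 K}$ (using $|x|\le K$), and split $\bar d_N^{2\gamma\xi}$ and $\bar d_N^{2\gamma(\tilde\gamma_m-1)}$ via the triangle inequality into contributions of size $(2^{-N})^\bullet$, $(t-r)^{\bullet/2}$ and $|y-x|^\bullet$. Then Lemma \ref{condmu}(d), applied separately with $a=\gamma\xi$ and $a=\gamma(\tilde\gamma_m-1)$, kills the $y$-integral at the cost of a factor $(t-r)^{a-1+\eta/2-\eps_0}$. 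This reduces the problem to an $r$-integral of the form
\[
\int_{(s-a_n^{2\al})^+}^{(t-a_n^{2\al})^+}(t-r)^{-2+\eta/2-\eps_0}\bigl[(t-r)^{\gamma\xi}+\bar d_N^{2\gamma\xi}\bigr]\bigl[(t-r)^{\gamma(\tilde\gamma_m-1)}+\bar d_N^{2\gamma(\tilde\gamma_m-1)}+a_n^{2\beta\gamma}\bigr]\,dr,
\]
analogous to (\ref{ds6}).

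The main technical point — and the place where the statement diverges from Lemma \ref{Lemma-QT} — is the $r$-integration, because the outer interval $[(s-a_n^{2\al})^+,(t-a_n^{2\al})^+]$ has length $|t-s|$ which may be arbitrarily large. I would split this interval into $\{r: t-r\geq 2^{-2N}\}$ and its complement. On the first piece $\bar d_N\approx\sqrt{t-r}$, so the integrand becomes a pure power of $(t-r)$; using the exponent bookkeeping from (\ref{b1})--(\ref{b2}) together with the interpolation trick of (\ref{ds9}) (multiply out one factor of $|t-s|^{\nu_1/4}$), this piece yields the $|t-s|^{1-\nu_1/4}\bigl[((t-s)\vee a_n^{2\al})^{\gamma\tilde\gamma_m-2+\eta/2-\eps_0}+a_n^{2\beta\gamma}((t-s)\vee a_n^{2\al})^{\gamma-2+\eta/2-\eps_0}\bigr]$ term. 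On the second piece $\bar d_N=2^{-N}$, which is nonempty only when $a_n^{2\al}<2^{-2N}$; integrating $(t-r)^{-2+\eta/2-\eps_0}$ over this piece of length at most $((t-s)\wedge a_n^{2\al})$ and collecting the extra $2^{-N\nu_1/2}$ loss from the interpolation produces the indicator term.

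The hardest part, as in the Mytnik--Perkins proof, is organizing the power bookkeeping so that the singular $r$-integral produces exactly the advertised exponents without spurious losses; the rest is a routine modification of (\ref{ds6})--(\ref{ds13}), with Lemma \ref{condmu}(d) replacing the Lebesgue-measure computation used for homogeneous white noise and supplying the $\eta/2$ improvement that appears everywhere in the conclusion.
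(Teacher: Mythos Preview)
Your overall architecture matches the paper's proof: specialize the machinery of Lemma \ref{Lemma-QT} to $s'=t'=t$, $x'=x$, $\dl=a_n^{2\al}$, reduce via Lemmas \ref{Lem4.2MP} and \ref{condmu}(d) to a one-dimensional $r$-integral, and split at $t-r=2^{-2N}$ into pieces $J_1$ (far) and $J_2$ (near). That is exactly what the paper does; see (\ref{sa2})--(\ref{sa5}).

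There are, however, two concrete problems with your parameter choices that prevent you from landing on the stated exponents.

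First, you write ``Fix $R=33/(\nu_1\eta)$'', but in the statement $R>2/\eta$ is \emph{given} and $\nu_1$ ranges over $(1/R,\eta/2)$; moreover $N_{\ref{Lemma-QTan}}$ is required to depend on $R$ and \emph{not} on $\nu_1$. If your $\xi$ (and hence $N_1$) is built from $\nu_1$, the uniformity in $\nu_1$ is lost. The paper instead takes $\xi=1-\bigl((4\gamma R)^{-1}\wedge\theta\bigr)$ for a fixed $\theta\in(0,\eta/2-\eps_0)$ with $\gamma-\theta>1-\eta/2-\eps_0$; this depends only on $R$, and gives $\gamma(1-\xi)\le (4R)^{-1}<\nu_1/4$ for every admissible $\nu_1$.

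Second, with your $\xi=1-\bigl((2R)^{-1}\wedge\frac{\eta^2}{2(\eta+1)}\bigr)$ (the choice from Lemma \ref{Lemma-QT}) one only gets $\gamma(1-\xi)\le(2R)^{-1}<\nu_1/2$. This yields $|t-s|^{1-\nu_1/2}$ and $2^{N\nu_1}$ in place of the claimed $|t-s|^{1-\nu_1/4}$ and $2^{N\nu_1/2}$, which is strictly weaker than (\ref{QTan}). The sharper $\xi$ above is what produces the $\nu_1/4$ exponent.

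A smaller point: for the far piece $J_1$ the paper does \emph{not} use the interpolation of (\ref{ds9}). It integrates the pure power directly,
\[
\int_{(s-a_n^{2\al})^{+}}^{t-a_n^{2\al}}(t-r)^{p}\,dr\ \le\ C(\eta)\,(t-s)\bigl((t-s)\vee a_n^{2\al}\bigr)^{p},
\]
treating $p\ge0$ and $p<0$ separately (this is (\ref{sa4})), and then trades one factor $(t-s)^{\gamma(1-\xi)}$ against $((t-s)\vee a_n^{2\al})^{-\gamma(1-\xi)}$ to obtain (\ref{sa5}). The trick of (\ref{ds9}) is not needed here because there is no second timescale $\Delta_2$ competing with $\bar d_N$; the lower limit of integration already forces $t-r\ge a_n^{2\al}$.
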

\begin{proof}
The proof follows the same lines as the proof of Lemma 5.10 in \cite{MP09}. Fix $\theta\in (0,\eta/2-\eps_0)$ such that $\gamma-\theta>1-\eta/2-\eps_0$.
Let $\xi=1-((4\gamma R)^{-1}\wedge \theta)$ and define $N_{\ref{Lemma-QTan}}=N_1(m,n,\xi(R),\eps_0,K,\beta)$. Then, we get from $(P_m)$ that $N_{\ref{Lemma-QTan}}$ is stochastically bounded uniformly in $(n,\beta)$. Assume that $t\geq a_n^{2\al}$, otherwise $Q_{T,a_n^{2\al}}(s,t,t,x)\equiv 0$. Now repeat the same steps as in Lemma \ref{Lemma-QT}, here we have $x'=x$, $s'=t'=t$ and $\dl=a_n^{2\al}$.
We get that
\bn \label{sa2}
J_1&=&\int_{(s-a_n^{2\al})^{+}}^{t-a_n^{2\al}}\mathds{1}_{\{r< t-2^{-2N}\}}\big[(t-r)^{\gamma(\tilde{\gamma}_{m}+\xi-1)-2+\eta/2-\eps_0}
+a_n^{2\beta\gamma}(t-r)^{\gamma\xi-2+\eta/2-\eps_0}\big]dr,  \nonumber  \\
J_2&=&\int_{(s-a_n^{2\al})^{+}}^{t-a_n^{2\al}}\mathds{1}_{\{r\geq t-2^{-2N}\}}(t-r)^{-2+\eta/2-\eps_0}dr2^{-2N\gamma\xi}[2^{-2N\gamma(\tilde{\gamma}_m-1)}+a_n^{2\beta\gamma}],
\en
and
\be \label{sa2.2}
Q_{T,a_n}(s,t,t,x)\leq C_{\ref{LemmaBound-u}} C(K,R_1,\mu(\re),\eta)[J_1+J_2].
\ee
Repeat the same steps as in (\ref{ds7}) and (\ref{ds8}) with $d_N=2^{-N}, \dl=a_n^{2\al}, s'=t$, and use the fact that \\ $\gamma(1-\xi(R))\leq (4R)^{-1}\wedge \theta\leq \nu_1/4$ to get,
\bn \label{sa3}
J_2 &\leq & 2\cdot\mathds{1}_{\{a_n^{2\al}<2^{-2N}\}}a_n^{-2+\eta/2-\eps_0}(|t-s|\wedge a_n^{2\al})2^{2N\gamma(1-\xi)}[2^{-2N\gamma\tilde{\gamma}_{m}}+a_n^{2\beta\gamma}2^{-2N\gamma}] \nonumber\\
&\leq& 2\cdot\mathds{1}_{\{a_n^{2\al}<2^{-2N}\}}a_n^{-2+\eta/2-\eps_0}(|t-s|\wedge a_n^{2\al})2^{N\nu_1/2}[2^{-2N\gamma\tilde{\gamma}_{m}}+a_n^{2\beta\gamma}2^{-2N\gamma}]. \nonumber\\
\en
For $J_1$, let $p=\gamma(\tilde{\gamma}_m+\xi-1)-2+\eta/2-\eps_0$ or $p=\gamma\xi-2+\eta/2-\eps_0$. Recall that $\eta\in(0,1),\gamma \in (0,1-\eta/2-\eps_0+\theta)$ and $R>2/\eta$. Therefore $\xi=1-((4\gamma R)^{-1}\wedge \theta) \geq 1- \theta $ and
\bn
p\in \bigg( \theta(\eta/2-\eps_0-\theta)-1, \frac{1}{2}\bigg).
\en
If we consider the case of $p\geq 0$ and $p<0$ separately we get, as in the proof of Lemma 5.10 in \cite{MP09}, that
\bn \label{sa4}
\int_{(s-a_n^{2\al})^{+}}^{(t-a_n^{2\al})}(t-r)^{p}dr \leq C(\eta)(t-s)\big((t-s)\vee a_n^{2\al})^p.
\en
Use (\ref{sa4}) and the fact that $\gamma(1-\xi(R))\leq(4R)^{-1}\leq \nu_1/4$ to get
\bn  \label{sa5}
J_1&\leq& C(\eta)a_n^{-\eps_0}(t-s)\big[((t-s)\vee a_n^{2\al})^{\gamma(\tilde{\gamma}_{m}+\xi-1)-2+\eta/2}
+a_n^{2\beta\gamma}((t-s)\vee a_n^{2\al})^{\gamma\xi-2+\eta/2-\eps_0}\big] \nonumber  \\
&\leq& C(\eta)a_n^{-2\eps_0}(t-s)^{1-\gamma(1-\xi)}\big[((t-s)\vee a_n^{2\al})^{\gamma\tilde{\gamma}_{m}-2+\eta/2-\eps_0}
+a_n^{2\beta\gamma}((t-s)\vee a_n^{2\al})^{\gamma-2+\eta/2-\eps_0}\big]  \nonumber  \\
&\leq& C(\eta,K)a_n^{-\eps_0}(t-s)^{1-\nu_1/4}\big[((t-s)\vee a_n^{2\al})^{\gamma\tilde{\gamma}_{m}-2+\eta/2-\eps_0}
+a_n^{2\beta\gamma}((t-s)\vee a_n^{2\al})^{\gamma-2+\eta/2-\eps_0}\big].   \nonumber  \\
\en
From (\ref{sa2}), (\ref{sa2.2}), (\ref{sa3}) and (\ref{sa5}) we get (\ref{QTan}).
\end{proof}
\begin{proposition}\label{Prop5.11}
Let $0\leq m\leq \bar{m}+1$ and assume $(P_m)$. For any $n\in \mathds{N}, \nu_1\in(0,\eta/2), \eps_0\in (0,1),K\in \mathds{N}^{\geq K_1}$ and $\beta\in[0,\frac{\eta}{\eta+1}]$, there is an $N_{\ref{Prop5.11}}=N_{\ref{Prop5.11}}(m.n,\nu_1,\eps_0,K,\beta,\eta, \mu(\re))(\omega)\in \mathds{N}$ a.s. such that for all $N\geq N_{\ref{Prop5.11}}, (t,x)\in Z(N,n,K,\beta), s\leq t$ and $\sqrt{t-s}\leq N^{-4/\nu_1}$ implies that
\bn \label{5.11res}
|F_{a_n^{2\al}}(s,t,x)-F_{a_n^{2\al}}(t,t,x)|&\leq& 2^{-81}a_n^{-3\eps_0}\bigg\{2^{-N(1-\nu_1)}(a_n^{\al}\vee2^{-N})^{(\gamma\gamma_m-2+\eta/2)\wedge 0}  \nonumber  \\
&&+2^{N\nu_1}a_n^{-1+\eta/4+\al}\bigg(\frac{2^{-N}}{a_n^{3\al-\al\eta/2-1+\eta/4}}+1\bigg)\big(2^{-N\gamma\tilde{\gamma}_m}+a_n^{\beta\gamma}
(a_n^{\al}\vee2^{-N})^{\gamma}\big)\nonumber  \\
&& +(t-s)^{(1-\nu_1)/2}\big((\sqrt{t-s}\vee a_n^{\al})^{\gamma\tilde{\gamma}_m-2+\eta/2}\nonumber  \\
&&+a_n^{\beta\gamma}(\sqrt{t-s}\vee a_n^{\al})^{\gamma-2+\eta/2}\big)\bigg\}.
\en
Moreover $N_{\ref{Prop5.11}}$ is stochastically bounded, uniformly in $(n,\beta)$.
\end{proposition}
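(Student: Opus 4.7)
\textbf{Proof plan for Proposition \ref{Prop5.11}.}

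The quantity to be estimated is a pure time increment of the martingale $s \mapsto F_{a_n^{2\alpha}}(s,t,x)$. By Lemma \ref{Lem.Fdl},
\[
F_{a_n^{2\alpha}}(s,t,x) - F_{a_n^{2\alpha}}(t,t,x) \;=\; -\int_{(s-a_n^{2\alpha})_{+}}^{(t-a_n^{2\alpha})_{+}}\!\int_{\re} G'_{t-r}(y-x)\,D(r,y)\,W(dr,dy),
\]
whose quadratic variation is bounded above, via (\ref{Dbound}), by $Q_{T,a_n^{2\alpha}}(s,t,t,x)$. Lemma \ref{Lemma-QTan} provides an $\omega$-wise bound on this quadratic variation on the event $\{(t,x)\in Z(N,n,K,\beta),\,N>N_{\ref{Lemma-QTan}}\}$, and Proposition \ref{probIncFdl} controls fine-scale increments of $F_{a_n^{2\alpha}}$. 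The strategy is to combine these via a dyadic chaining in the $s$ variable.

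\textbf{Chaining.} Set $t_j = t - 2^{-2j}$ for $j\ge 0$, and let $J_{s} = \min\{j\ge 0 : t_j \ge s\}$, so that $t_{J_s} - s \le 4(t-s)$. Telescope:
\[
F_{a_n^{2\alpha}}(s,t,x) - F_{a_n^{2\alpha}}(t,t,x)
= \bigl[F_{a_n^{2\alpha}}(s,t,x)- F_{a_n^{2\alpha}}(t_{J_s},t,x)\bigr]
+ \sum_{j \ge J_s}\bigl[F_{a_n^{2\alpha}}(t_j,t,x)- F_{a_n^{2\alpha}}(t_{j+1},t,x)\bigr].
\]
Split the tail sum at $j = N$.

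\textbf{Fine-scale regime $j\ge N$.} Here $\sqrt{t_j - t_{j+1}} \le 2^{-j} \le 2^{-N}$, so Proposition \ref{probIncFdl} applies (with $s=t_{j+1}$, $s'=t_j$, $t'=t$, $x'=x$, $\alpha$ replaced by $2\alpha$) and yields
\[
|F_{a_n^{2\alpha}}(t_j,t,x)-F_{a_n^{2\alpha}}(t_{j+1},t,x)| \le 2^{-86}\,2^{-j(1-\nu_1)}\,\bar\Delta_{u_1'}(m,n,2\alpha,\eps_0,2^{-N}).
\]
Geometric summation over $j \ge N$ produces the first term of (\ref{5.11res}) plus the $a_n^{-1+\eta/4+\alpha}$-prefactor in the second term (from the structure of $\bar\Delta_{u_1'}$ in (\ref{Delta-u1-tag})).

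\textbf{Coarse-scale regime $J_s\le j < N$ and the boundary piece.} For these finitely many increments, as well as for $F_{a_n^{2\alpha}}(s,t,x) - F_{a_n^{2\alpha}}(t_{J_s},t,x)$, appeal to Dubins--Schwarz: each increment equals $B_{\tau_2}-B_{\tau_1}$ for a standard Brownian motion $B$ and stopping times $\tau_1<\tau_2$ with $\tau_2-\tau_1 \le Q_{T,a_n^{2\alpha}}(t_{j+1},t,t,x) - Q_{T,a_n^{2\alpha}}(t_j,t,t,x)$ (and analogously for the boundary term, bounded by the full $Q_{T,a_n^{2\alpha}}(s,t,t,x)$). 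A Gaussian tail bound at level $\sqrt{2\log(C 2^{4N})}\cdot \sqrt{\tau_2-\tau_1}$, combined with a union bound over a $2^{-N}$-grid of centers $(t,x) \in [0,T_K]\times[-K-1,K+1]$, followed by Borel--Cantelli, yields a pathwise estimate absorbing the $2^{N\nu_1/2}\cdot 2^{N\nu_1/4}$ factors into the exponent $\nu_1$ appearing in (\ref{5.11res}). Plugging in the bound of Lemma \ref{Lemma-QTan}, the summand with the $\mathds{1}_{\{a_n^{2\alpha}<2^{-2N}\}}$ term feeds the second contribution of (\ref{5.11res}), while the remaining summands feed the third ($\sqrt{t-s}$) contribution. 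Summing the finitely many (i.e.\ $\le N$) coarse-scale terms, under the assumption $\sqrt{t-s}\le N^{-4/\nu_1}$ (so that $N\le (t-s)^{-\nu_1/4}$ is absorbed into $(t-s)^{-\nu_1/2}$ in the exponent), completes the proof with $N_{\ref{Prop5.11}} := N_{\ref{probIncFdl}}(m,n,\nu_1/2,\eps_0,K+1,2\alpha,\beta) \vee N_{\ref{Lemma-QTan}}(m,n,R,\eps_0,K,\beta) \vee N_{\mathrm{gauss}}$, where $N_{\mathrm{gauss}}$ is the Borel--Cantelli threshold.

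\textbf{Main obstacle.} The central difficulty is not a single delicate calculation but rather the combinatorial bookkeeping: one must simultaneously track three regimes (the boundary piece $[s,t_{J_s}]$, the $O(N)$ coarse-scale increments, and the tail of fine-scale increments) and verify that each contributes to exactly the term of (\ref{5.11res}) advertised, while the polynomial $N^{4/\nu_1}$ slack in the hypothesis $\sqrt{t-s}\le N^{-4/\nu_1}$ is wide enough to be absorbed into the $(t-s)^{(1-\nu_1)/2}$ in the conclusion. The essential new ingredient relative to \cite{MP09} is that the quadratic-variation exponent $-2+\eta/2-\eps_0$ of Lemma \ref{Lemma-QTan} replaces the $-3/2$ of the homogeneous case, and the verification that $N_{\ref{Prop5.11}}$ remains stochastically bounded uniformly in $(n,\beta)$ follows from the corresponding uniformity in Lemma \ref{Lemma-QTan} and Proposition \ref{probIncFdl}.
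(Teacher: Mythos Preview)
Your proposal is correct and follows essentially the same route as the paper, which in turn defers to the proof of Proposition 5.11 in \cite{MP09}: bound the quadratic variation $Q_{T,a_n^{2\al}}(s,t,t,x)$ via Lemma~\ref{Lemma-QTan} (yielding the $\Delta_1,\Delta_2$ contributions you identify with the third and second terms of (\ref{5.11res})), invoke Dubins--Schwarz together with a grid/Borel--Cantelli argument for the coarse scales, and then use Proposition~\ref{probIncFdl} with $\alpha=2\al$ to handle the fine-scale increments and to pass from grid points to arbitrary $(t,x)$, producing the $2^{-N(1-\nu_1)}\bar\Delta_{u_1'}(m,n,2\al,\eps_0,2^{-N})$ piece that the paper records in (\ref{nb1}). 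Your bookkeeping of how the $N^{-4/\nu_1}$ hypothesis absorbs the at-most-$N$ coarse terms is exactly the mechanism intended (minor arithmetic aside: $\sqrt{t-s}\le N^{-4/\nu_1}$ gives $N\le (t-s)^{-\nu_1/8}$ rather than $(t-s)^{-\nu_1/4}$, but either exponent suffices).
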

\begin{proof}
The proof is similar to the proof of Proposition 5.11 in \cite{MP09}. First we bound $R_0^\gamma Q_{T,a^{2\al}_n}(s,t,t,x)^{1/2}$ as follows. Let $N_2(m,n,\nu_1,\eps_0,K,\beta)=\frac{8}{\nu_1}[N_{\ref{Lemma-QTan}}+\tilde N_0(K)]$, where $\tilde N_0(K)\in \mathds{N}$ is large enough such that
\bn
C_{\ref{Lemma-QTan}}R_0^\gamma\big[a_n^{-2\eps_0}+2^{4N_{\ref{Lemma-QTan}}}\big] 2^{-\nu_1 N_2/4}&\leq & C_{\ref{Lemma-QTan}}R_0^\gamma \big[a_n^{-2\eps_0}+2^{4N_{\ref{Lemma-QTan}}}\big] 2^{-2N_{\ref{Lemma-QTan}}-2\tilde N_0(K)}  \nonumber \\
&\leq & 2^{-100}a_n^{-\eps_0}.
\en
On $$\{\omega : (t,x)\in Z(N,n,K,\beta), N\geq N_{\ref{Lemma-QTan}}(m,n,2/\nu_1,\eps_0,K,\beta)\}, $$
we have
\bn
R_0^\gamma Q_{T,a^{2\al}_n}(s,t,t,x)^{1/2} &\leq& (\sqrt{t-s})^{1-\nu_1/2}\Delta_1(m,n,\sqrt{t-s}\vee a_n^{\al})+2^{N\nu_1/2}\Delta_2(m,n,2^{-N}), \nonumber \\  &&  s\leq t, \ \sqrt{t-s}\leq 2^{-N_2},
\en
where
\bn
\Delta_1(m,n,\sqrt{t-s}\vee a_n^{\al})&:=&2^{-100}a_n^{-3\eps_0}\big\{(\sqrt{t-s}\vee a_n^{\al})^{\gamma\tilde{\gamma}_m-2+\eta/2}+a_n^{\beta\gamma}(\sqrt{t-s}\vee a_n^{\al})^{\gamma-2+\eta/2}\big\}, \nonumber  \\ \nonumber  \\
\Delta_2(m,n,2^{-N})&:=&2^{-100}a_n^{-3\eps_0-1+\eta/4+\al}\big(2^{-N\gamma\tilde{\gamma}_m}+a_n^{\beta\gamma}
2^{-N\gamma}\big). \nonumber
\en
The only difference from the proof in \cite{MP09} is that we use Lemma \ref{Lemma-QTan} instead of Lemma 5.10 in \cite{MP09}. This gives the values $\Delta_1,\Delta_2$ above.
The rest of the proof is similar to the proof of Proposition 5.11 in \cite{MP09}. We use Dubins-Schwarz theorem and
Proposition \ref{probIncFdl} with
\bn
&&\bar{\Delta}_{u_1'}(m,n,2\al,\eps_0,2^{-(N-1)}) \nonumber \\
&&= a_n^{-3\eps_0}\big[a_n^{-2\al(1-\eta/4)}2^{-N\gamma \tilde{\gamma}_m}+(a_n^{\al}\vee 2^{-N})^{(\tilde{\gamma}_{m}\gamma-2+\eta/2)\wedge 0}+a_n^{-2\al(1-\eta/4)+\beta\gamma}(a_n^{\al}\vee 2^{-N})^\gamma\big]. \nonumber
\en
to get
\bn \label{nb1}
&&|F_{a_n^{2\al}}(s,t,x)-F_{a_n^{2\al}}(t,t,x)| \nonumber \\
&& \leq 2^{-81}2^{-N(1-\nu_1)}a_n^{-3\eps_0}\big[a_n^{-2\al(1-\eta/4)}2^{-N\gamma \tilde{\gamma}_m}+(a_n^{\al}\vee 2^{-N})^{(\tilde{\gamma}_{m}\gamma-2+\eta/2)\wedge 0}+a_n^{-2\al(1-\eta/4)+\beta\gamma}(a_n^{\al}\vee 2^{-N})^\gamma\big]. \nonumber \\
&&\quad+2^{-99}a_n^{-3\eps_0}(t-s)^{1-\nu_1}\big\{(\sqrt{t-s}\vee a_n^{\al})^{\gamma\tilde{\gamma}_m-2+\eta/2}+a_n^{\beta\gamma}(\sqrt{t-s}\vee a_n^{\al})^{\gamma-2+\eta/2}\big\} \nonumber  \\
&&\quad+2^{-98}a_n^{-3\eps_0-1+\eta/4+\al}2^{N\nu_1}\big(2^{-N\gamma\tilde{\gamma}_m}+a_n^{\beta\gamma}
2^{-N\gamma}\big).
\en
Note that
\bn \label{nb2}
2^{-N(1-\nu_1)}a_n^{-2\al(1-\eta/4)}+2^{N\nu_1}a_n^{-1+\eta/4+\al}\leq 2^{N\nu_1}a_n^{-1+\eta/4+\al}\bigg(\frac{2^{-N}}{a_n^{3\al-\al\eta/2-1+\eta/4}}+1\bigg).
\en
From (\ref{nb1}) and (\ref{nb2}) we get (\ref{5.11res}).
\end{proof}
\\\\
We would like to bound the increment of $\mathbb{G}_{\alpha_n}$ with $\alpha\in [0,2\al]$. We argue, like in  Lemma \ref{Lem.Fdl} for $F_{\dl}$, to get
\bn
\mathbb{G}_{\dl}(s,t,x)=\int_{0}^{(s-\dl)^{+}}\int_{\re}G_{(t\vee s)-r}(y-x)D(r,y)W(dr,dy), \ \ \textrm{for all} \ s  \ \textrm{a.s.} \  \textrm{for all} \  (t,x). \nonumber
\en
We need to bound $\mathbb{G}_{a_n^{\alpha}}(s,t,x)-\mathbb{G}_{a_n^{\alpha}}(t,t,x)$ with $\alpha\in [0,2\al]$, so we repeat the same process that led to Proposition \ref{probIncFdl}. The difference is that now we deal with the Gaussian densities $G_{t-r}$ instead of the derivatives $G'_{t-r}$. Recall that Proposition \ref{probIncFdl} followed from Lemmas \ref{Lemma-Q1}--\ref{Lemma-QT}. In order to bound the increment of $G_{a_n^{\al}}$, one needs analogues to the above lemmas with Gaussian densities $G_{t-r}$ instead of the derivative $G'_{t-r}$. The proofs of these lemmas and the proposition follows the same lines as the proof of Lemmas \ref{Lemma-Q1}--\ref{Lemma-QT} and Proposition \ref{probIncFdl}, therefore they are omitted. Here is the final statement.
%\footnote{This is the same proof like \ref{Lemma-Q1}--\ref{Lemma-QT} and Proposition \ref{probIncFdl}. Because we deal with $G_{t-r}$ instead of the derivative $p'_{t-r}$, the regularity increases times $a_n^{\alpha/2}$. The reason for above is that the heat kernel bounds are more regular $a_n^{\alpha}$ times (See Section \ref{Sec.Heat-Kernel-Bounds}) and so is the quadratic variation. Hence the regularity of $\mathbb{G}_{a_n^{\alpha}}(s,t,x)-\mathbb{G}_{a_n^{\alpha}}(t,t,x)$ which is the square root of the quadratic variation increases $a_n^{\alpha/2}$ times, relatively to $F_\dl$ in Proposition \ref{probIncFdl}.}.
\begin{proposition} \label{Prop5.12}
Let $0\leq m \leq \bar{m}+1$ and assume $(P_m)$. For any $n\in \mathds{N}, \nu_1\in (0,\eta/2), \\ \eps_0\in (0,1), K\in \mathds{N}^{\geq K_1}, \alpha \in [0,2\al]$, and $\beta\in [0,\frac{\eta}{\eta+1}]$, there is an \\ $N_{\ref{Prop5.12}}=N_{\ref{Prop5.12}}(m,n,\nu_1,\eps_0,K,\alpha, \beta, \eta, \mu(\re))(\omega)$ in $\mathds{N}$ a.s.
such that for all $N\geq N_{\ref{Prop5.12}}, \\ (t,x)\in Z(N,n,K,\beta), s\leq t$ and $\sqrt{t-s}\leq 2^{-N}$,
\bn
|\mathbb{G}_{a_n^{\alpha}}(s,t,x)-\mathbb{G}_{a_n^{\alpha}}(t,t,x)|&\leq& 2^{-92}(t-s)^{(1-\nu_1)/2}a_n^{-3\eps_0}a_n^{-\alpha(1/2-\eta/4)} \nonumber \\
&&\times \big[(a_n^{\alpha/2}\vee2^{-N})^{\gamma\tilde{\gamma}_m}+a_n^{\beta\gamma}(a_n^{\alpha/2} \vee 2^{-N})^{\gamma}\big]. \nonumber
\en
\end{proposition}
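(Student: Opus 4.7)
The strategy is to repeat the proof template of Proposition \ref{probIncFdl} (and hence of Lemmas \ref{Lemma-Q1}--\ref{Lemma-QT}) with the heat kernel $G_{t-r}$ replacing the derivative $G'_{t-r}$ throughout. Because the hypothesis $\sqrt{t-s}\le 2^{-N}$ already matches the hypothesis of Proposition \ref{probIncFdl}, no multiscale chaining in $\sqrt{t-s}$ (as was needed in Proposition \ref{Prop5.11}) is required here, so the overall structure is closer to Proposition \ref{probIncFdl} than to Proposition \ref{Prop5.11}.

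Concretely, for $s\le t$ arguments as in Lemma \ref{Lem.Fdl} give
\begin{equation*}
\mathbb{G}_{a_n^\alpha}(t,t,x)-\mathbb{G}_{a_n^\alpha}(s,t,x)=\int_{(s-a_n^\alpha)_+}^{(t-a_n^\alpha)_+}\int_{\re} G_{t-r}(y-x)\,D(r,y)\,W(dr,dy),
\end{equation*}
so I would define the Gaussian-kernel analogues $Q^{\mathbb{G}}_{T,\delta}$, $Q^{\mathbb{G}}_{S,1,\delta,\nu_0}$, $Q^{\mathbb{G}}_{S,2,\delta,\nu_0}$ of the three quadratic-variation quantities in (\ref{Qdef}) (with $G_{t-r}$ in place of $G'_{t-r}$), bound $|D|^2$ via (\ref{Dbound}), and control $|u(r,y)|^{2\gamma}$ through Lemma \ref{LemmaBound-u} under $(P_m)$. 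The decisive difference from Lemmas \ref{Lemma-Q1}--\ref{Lemma-QT} is that Lemma \ref{Lem4.2MP} is never invoked: instead of the chain $|G'_{t-r}|^2\le C(t-r)^{-1}G_{2(t-r)}^2$ followed by Lemma \ref{condmu}(d), I would apply the Gaussian analogue of Lemma \ref{condmu}(d) directly to $G_{t-r}^2$, which yields
\begin{equation*}
\int_{\re}e^{\theta|x-y|}|x-y|^{2a}G_{t-r}^2(y-x)\mu(dy)\le C(t-r)^{a-1+\eta/2-\eps_0}.
\end{equation*}
Thus every negative power of $(t-r)$ appearing in the intermediate bounds improves by one.

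Performing the time integration as in the proof of Lemma \ref{Lemma-QT}, splitting on $\{r\le t-\bar{d}_N^{2}\}$ versus $\{r>t-\bar{d}_N^{2}\}$ and using $\sqrt{t-s}\le 2^{-N}$, the sum of the three Gaussian QV pieces will be bounded on $\{(t,x)\in Z(N,n,K,\beta),\ N\ge N_2\}$ by
\begin{equation*}
C\,a_n^{-2\eps_0}\,a_n^{-\alpha(1-\eta/2)}(t-s)^{1-\nu_1/2}\Big[(a_n^{\alpha/2}\vee 2^{-N})^{2\gamma\tilde{\gamma}_m}+a_n^{2\beta\gamma}(a_n^{\alpha/2}\vee 2^{-N})^{2\gamma}\Big],
\end{equation*}
whose square root produces exactly the $a_n^{-\alpha(1/2-\eta/4)}$ and $(a_n^{\alpha/2}\vee 2^{-N})^{\gamma\tilde{\gamma}_m}$-type factors of the statement. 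The closing step is a Dubins--Schwarz representation of the stochastic integral as a time-changed Brownian motion, combined with the standard modulus of continuity of Brownian motion on a dyadic grid in $(s,x)$ and a Borel--Cantelli argument, exactly as at the end of the proof of Proposition \ref{probIncFdl}. This upgrades the $L^2$ quadratic-variation bound into the claimed a.s.\ pointwise bound, the extra $(t-s)^{\nu_1/4}$ absorbed into the polylogarithmic penalty being the reason why the final exponent is $(1-\nu_1)/2$ rather than $1/2-\nu_1/4$; the resulting stopping index $N_{\ref{Prop5.12}}$ is stochastically bounded uniformly in $(n,\alpha,\beta)$ because all $\omega$-dependence enters only through the $N_1$ of Lemma \ref{LemmaBound-u}, which has the same property under $(P_m)$. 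The main ``obstacle'' is purely the bookkeeping of the powers of $a_n$, $a_n^{\alpha/2}\vee 2^{-N}$ and $(t-s)$ through the chain of estimates, since the analytic content is identical to that of Lemmas \ref{Lemma-Q1}--\ref{Lemma-QT} and Proposition \ref{probIncFdl}.
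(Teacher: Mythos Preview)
Your proposal is correct and matches the paper's own approach: the paper explicitly states (just before the proposition) that one repeats the process leading to Proposition \ref{probIncFdl} with $G_{t-r}$ in place of $G'_{t-r}$, establishing Gaussian-kernel analogues of Lemmas \ref{Lemma-Q1}--\ref{Lemma-QT} and then finishing with the Dubins--Schwarz argument. Your observation that dropping Lemma \ref{Lem4.2MP} improves each $(t-r)$-power by one, thereby converting $a_n^{-\alpha(1-\eta/4)}$ into $a_n^{-\alpha(1/2-\eta/4)}$ after taking square roots, is exactly the mechanism behind the stated bound.
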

We will use the modulus of continuity of $u'_{1,a_n^{\alpha}}$ from Corollary \ref{corollary-u-tag-reg} to get the modulus of continuity for $u_{1,a_n^{\alpha}}$.
\paragraph{Notation.} Define
\bn
\bar{\Delta}_{u_1}(m,n,\alpha,\eps_0,2^{-N},\eta)&=&a_n^{-3\eps_0-(1-\eta/4)\alpha}\big[a_n^{\beta}a_n^{(1-\eta/4)\alpha}
+a_n^{\beta \gamma} (a_n^{\alpha/2}\vee 2^{-N})^{\gamma+1}   \nonumber \\
&&+(a_n^{\alpha/2}\vee 2^{-N})^{\gamma\tilde{\gamma}_m+1}+\mathds{1}_{\{m\geq \bar{m}\}}a_n^{\alpha(1-\eta/4)}(a_n^{\alpha/2}\vee 2^{-N})^{\eta}\big].  \nonumber
\en
If $\nu>0$ let $N'_{\ref{Prop5.13}}(\nu)$ be the smallest natural number such that $2^{1-N}\leq N^{-4/\nu}$ whenever $N>N'_{\ref{Prop5.13}}(\nu)$.
\begin{proposition} \label{Prop5.13}
Let $0\leq m \leq \bar{m}+1$ and assume $(P_m)$. For any $n\in \mathds{N}, \nu_1\in (0,\gamma-1+\eta/2),$ \\ $\eps_0,\eps_1\in (0,1), K\in \mathds{N}^{\geq K_1}, \alpha \in [0,2\al]$, and $\beta\in [0,\frac{\eta}{\eta+1}]$, there is an $N_{\ref{Prop5.13}}=N_{\ref{Prop5.13}}(m,n,\nu_1,\eps_0,K,\alpha, \beta, \eta, \mu(\re))(\omega)$ in $\mathds{N}$ a.s.
such that for all $N\geq N_{\ref{Prop5.13}}$, $n,\alpha$ satisfying
\bn \label{lm1}
a_n^{2\al}\leq 2^{-2(N_{\ref{Prop5.11}}(m,n,\nu_1/2,\eps_0,K,\beta,\eta,\mu(\re))+1)}\wedge 2^{-2(N'_{\ref{Prop5.13}}(\frac{\nu_1\eps_1}{2\al})+1)}, \textrm{ and } \alpha \geq \eps_1,
\en
$(t,x)\in Z(N,n,K,\beta), \ t'\leq T_K,$ if $d((t,x),(t',x'))\leq 2^{-N}$, then,
\bn
|u_{1,a_n^{\alpha}}(t,x)-u_{1,a_n^{\alpha}}(t',x')|\leq 2^{-90}d((t,x),(t',x'))^{1-\nu_1}\bar{\Delta}_{u_1}(m,n,\alpha,\eps_0,2^{-N},\eta).
\en
Moreover $N_{\ref{Prop5.13}}$ is stochastically bounded uniformly in $n\in \mathds{N}, \alpha \in[0,2\al]$ and $\beta \in [0,\frac{\eta}{\eta+1}]$.
\end{proposition}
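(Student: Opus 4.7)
My plan is to decompose
\[u_{1,a_n^{\alpha}}(t,x)-u_{1,a_n^{\alpha}}(t',x')=\bigl[u_{1,a_n^{\alpha}}(t,x)-u_{1,a_n^{\alpha}}(t,x')\bigr]+\bigl[u_{1,a_n^{\alpha}}(t,x')-u_{1,a_n^{\alpha}}(t',x')\bigr]\]
and to control each bracket using Corollary \ref{corollary-u-tag-reg} (for $u'_{1,a_n^{\alpha}}$) together with Proposition \ref{Prop5.12} (for the time-change of $\mathbb{G}_{a_n^{\alpha}}$). The essential preliminary step will be an absolute pointwise bound on $|u'_{1,a_n^{\alpha}}(t,x)|$. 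Since $(t,x)\in Z(N,n,K,\beta)$, there exists $(\hat t_0,\hat x_0)$ with $d((t,x),(\hat t_0,\hat x_0))\leq 2^{-N}$ and $|u'_{1,a_n^{2\al}}(\hat t_0,\hat x_0)|\leq a_n^{\beta}$, so the triangle inequality gives
\[|u'_{1,a_n^{\alpha}}(t,x)|\leq a_n^{\beta}+|u'_{1,a_n^{2\al}}(t,x)-u'_{1,a_n^{2\al}}(\hat t_0,\hat x_0)|+|u'_{1,a_n^{\alpha}}(t,x)-u'_{1,a_n^{2\al}}(t,x)|.\]
The second summand will be handled by Corollary \ref{corollary-u-tag-reg} applied with $\alpha$ replaced by $2\al$. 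For the third, I would use (\ref{u-tag-F-an}) to rewrite it as the $s$-increment $|F_{a_n^{2\al}}(t,t,x)-F_{a_n^{2\al}}(t-a_n^{\alpha}+a_n^{2\al},t,x)|$ over a span of length $a_n^{\alpha}-a_n^{2\al}\leq a_n^{\alpha}$; Proposition \ref{Prop5.11} then applies provided $\sqrt{a_n^{\alpha}-a_n^{2\al}}\leq a_n^{\alpha/2}$ lies below the threshold $N^{-4/\nu_1}$ of that proposition, which is precisely what the hypotheses $\alpha\geq\eps_1$ and (\ref{lm1}) jointly ensure.

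Next I would treat the spatial bracket by integrating the derivative. By Remark \ref{remark32} $u'_{1,a_n^{\alpha}}$ is continuous, so
\[u_{1,a_n^{\alpha}}(t,x)-u_{1,a_n^{\alpha}}(t,x')=\int_{x'}^{x}u'_{1,a_n^{\alpha}}(t,y)\,dy,\]
and the integrand is bounded by $|u'_{1,a_n^{\alpha}}(t,x)|+|u'_{1,a_n^{\alpha}}(t,y)-u'_{1,a_n^{\alpha}}(t,x)|$, with the first factor controlled by the pointwise bound from the previous paragraph and the second by Corollary \ref{corollary-u-tag-reg}. Multiplying by $|x-x'|\leq d\leq 2^{-N}$, extracting a factor $d^{1-\nu_1}$, and pulling out the common prefactor $a_n^{-3\eps_0-(1-\eta/4)\alpha}$ should recover precisely the four summands of $\bar\Delta_{u_1}$: the $a_n^{\beta}\,a_n^{(1-\eta/4)\alpha}$ piece from the pointwise derivative bound, the $a_n^{\beta\gamma}(a_n^{\alpha/2}\vee 2^{-N})^{\gamma+1}$ and $(a_n^{\alpha/2}\vee 2^{-N})^{\gamma\tilde\gamma_m+1}$ pieces from the non-saturated parts of $\bar\Delta_{u_1'}$, and the $\mathds{1}_{\{m\geq\bar m\}}$ piece produced by the saturated regime in which $(\gamma\gamma_m-2+\eta/2)\wedge 0=0$.

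For the temporal bracket I would assume $t'\leq t$ without loss of generality and insert the intermediate point $\mathbb{G}_{a_n^{\alpha}}(t',t,x')$, obtaining
\[u_{1,a_n^{\alpha}}(t,x')-u_{1,a_n^{\alpha}}(t',x')=\bigl[\mathbb{G}_{a_n^{\alpha}}(t,t,x')-\mathbb{G}_{a_n^{\alpha}}(t',t,x')\bigr]+\bigl[\mathbb{G}_{a_n^{\alpha}}(t',t,x')-\mathbb{G}_{a_n^{\alpha}}(t',t',x')\bigr].\]
The first bracket is a pure $s$-change with $(t,x')$ fixed, to which Proposition \ref{Prop5.12} applies directly; using $\sqrt{t-t'}\leq d\leq 2^{-N}$ and absorbing the leftover factor $a_n^{\alpha/2}$ into $(a_n^{\alpha/2}\vee 2^{-N})$ produces bounds of the same shape as the spatial part and fits into the template of $\bar\Delta_{u_1}$. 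By (\ref{u11}) and the semigroup identity the second bracket equals the stochastic integral of $G_{t-r}(x'-z)-G_{t'-r}(x'-z)$ against $D(r,z)\,W(dr,dz)$ on $[0,(t'-a_n^{\alpha})_{+}]\times\re$, which is the direct analogue of the $Q_{S,1}+Q_{S,2}$-expression behind the proof of Proposition \ref{Prop5.12} with $G$ replacing $G'$ and specialised to $x=x'$; its second moment will be controlled by Lemma \ref{Lemma-MP4.3}(a), after which Dubins--Schwarz yields a bound of the same order $d^{1-\nu_1}\bar\Delta_{u_1}$.

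The principal difficulty, as in the proof of Proposition 5.13 of \cite{MP09}, is combinatorial: every error term generated by the three steps above must be matched against one of the four summands of $\bar\Delta_{u_1}$, all while keeping track of the prefactor $a_n^{-3\eps_0-(1-\eta/4)\alpha}$ and the interplay between $a_n^{\alpha/2}$ and $2^{-N}$. The $\mathds{1}_{\{m\geq\bar m\}}$-term is the most delicate: by (\ref{m-bar-def}) and (\ref{gamma-m-up-lim}) we have $\gamma\gamma_m>1+\eta/2$ precisely when $m\geq\bar m$, so the exponent $(\gamma\gamma_m-2+\eta/2)\wedge 0$ in $\bar\Delta_{u_1'}$ saturates at $0$ and leaves an undecayed contribution that, after integration in $y$, must be recorded separately as $T_4$ with the residual $(a_n^{\alpha/2}\vee 2^{-N})^{\eta}$ factor expressing the remaining heat-kernel regularity. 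Apart from this bookkeeping, the argument parallels that of \cite{MP09}, with the inhomogeneous heat-kernel estimates of Section \ref{Sec.Heat-Kernel-Bounds} substituted throughout.
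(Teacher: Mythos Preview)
Your pointwise bound on $|u'_{1,a_n^{\alpha}}|$ via Corollary \ref{corollary-u-tag-reg} (applied twice) and Proposition \ref{Prop5.11}, and your treatment of the spatial increment by integrating the derivative, match the paper's argument for its term $T_1$ almost verbatim (the paper integrates at time $t'$ rather than $t$, which is immaterial).

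The divergence is in the temporal increment. The paper writes, at the point $x$,
\[u_{1,a_n^{\alpha}}(t,x)-u_{1,a_n^{\alpha}}(t',x)=\bigl[\mathbb{G}_{a_n^{\alpha}}(t,t,x)-\mathbb{G}_{a_n^{\alpha}}(t',t,x)\bigr]+\bigl[G_{t-t'}(u_{1,a_n^{\alpha}}(t',\cdot))(x)-u_{1,a_n^{\alpha}}(t',x)\bigr]=:T_3+T_2,\]
the same split you use. It bounds $T_3$ by Proposition \ref{Prop5.12} as you do. For $T_2$, however, it does \emph{not} go back to stochastic analysis: it writes $T_2=\big|E_x[u_{1,a_n^{\alpha}}(t',B(t-t'))]-u_{1,a_n^{\alpha}}(t',x)\big|$ via the Brownian representation of the heat semigroup and applies the Mean Value Theorem \emph{inside the expectation}, reusing the pointwise bound on $|u'_{1,a_n^{\alpha}}(t',y)|$ already established for $T_1$ at the points $y$ between $x$ and $B(t-t')$. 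This requires choosing a random scale $N'\in\{N_{\ref{probIncFdl}},\dots,N\}$ along the Brownian path and a separate tail estimate on the exponentially rare event $\{|B(t-t')-x|>2^{-3N_{\ref{probIncFdl}}/2}\}$, but once the derivative bound is in hand it is a purely deterministic computation.

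Your alternative---representing the second bracket as a stochastic integral of $G_{t-r}-G_{t'-r}$ over $[0,(t'-a_n^{\alpha})_+]$ and controlling it by quadratic variation plus Dubins--Schwarz---is viable in principle, but ``Dubins--Schwarz yields a bound of the same order'' glosses over the essential step: a \emph{pathwise} bound with a stochastically bounded threshold $N_{\cdot}(\omega)$ requires a full chaining argument of exactly the kind that underlies Propositions \ref{probIncFdl} and \ref{Prop5.12}. What you would need is the $\mathbb{G}$-analogue of Proposition \ref{probIncFdl} in all three variables $(s,t,x)$; Proposition \ref{Prop5.12} as stated covers only the $s$-increment. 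Filling in the $(t,t')$-increment is routine (the relevant square functions are the $G$-versions of $Q_{S,1},Q_{S,2}$, bounded via Lemma \ref{Lemma-MP4.3}), but it is genuinely additional work that the paper's Brownian-motion trick sidesteps by recycling the derivative estimate already obtained for $T_1$.
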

\begin{remark}
Although $n$ appears in both sides of (\ref{lm1}), the fact that $N_{\ref{Prop5.11}}$ is stochastically bounded ensures that (\ref{lm1})
holds for infinitely many $n$.
\end{remark}
\begin{proof}
The proof of Proposition \ref{Prop5.13} follows the same lines as the proof of Proposition 5.13 in \cite{MP09}.\\
Let
\bn  \label{lm-1}
N''_{\ref{Prop5.13}}(m,n,\nu_1,\eps_0,K,\alpha,\beta,\eta,\mu(\re)) &=& ((2N_{\ref{probIncFdl}})(m,n,\nu_1/2,\eps_0,K+1,\alpha,\beta,\eta,\mu(\re)) \nonumber \\
&&\vee N_{\ref{Prop5.12}}(m,n,\nu_1,\eps_0,K+1,\alpha,\beta,\eta,\mu(\re))+1.
\en
Hence $N''_{\ref{Prop5.13}}$ is stochastically bounded in $(n,\alpha,\beta)$. Assume (\ref{lm1}) and \
\bn \label{lm0}
N\geq N''_{\ref{Prop5.13}}, (t,x)\in Z(N,n,K,\beta), \ t'\leq T_K, \textrm{ and } d((t,x),(t',x'))\leq 2^{-N}.
\en
As in the proof of Proposition 5.13 in \cite{MP09}, $(t',x')\in Z(N-1,n,K+1,\beta)$ and we may assume $t'\leq t$. \medskip \\
Recall that
\bn \label{lm2}
\mathbb{G}_{a_n^{\alpha}}(t',t,x)=G_{t-t'+a_n^{\alpha}}(u(t'-a_n^{\alpha},\cdot)(x)=G_{t-t'}(u_{1,a_n^{\alpha}}(t',\cdot))(x).
\en
From (\ref{lm2}) we get that the increment of $u_{1,a_n^{\alpha}}$ can be bounded by
\bn \label{lm-22}
|u_{1,a_n^{\alpha}}(t',x')-u_{1,a_n^{\alpha}}(t,x)| &\leq&
|u_{1,a_n^{\alpha}}(t',x')-u_{1,a_n^{\alpha}}(t',x)| \nonumber \\
&&+ |u_{1,a_n^{\alpha}}(t',x)-G_{t-t'}(u_{1,a_n^{\alpha}}(t',\cdot))(x)| \nonumber \\
&&+|\mathbb{G}_{a_n^{\alpha}}(t',t,x)- \mathbb{G}_{a_n^{\alpha}}(t,t,x)| \nonumber \\
&&\equiv T_1+T_2+T_3.
\en
For $T_1$ let $(\hat{t}_0,\hat{x}_0)$ to be as in the definition of $(t,x)\in Z(N,n,K,\beta)$. Let $y$ be between $x'$ and $x$. Therefore, $d((t',y),(t,x))\leq 2^{-N}$ and $d((\hat{t}_0,\hat{x}_0),(t,x))\leq 2^{-N}$.
Use Corollary \ref{corollary-u-tag-reg} twice, with $\nu_1/2$ in place of $\nu_1$, (\ref{u-tag-F-an})  and the definition of $(\hat{t}_0,\hat{x}_0)$ in $Z(N,n,K,\beta)$, to get
\bn \label{lm3}
|u'_{1,a_n^{\alpha}}(t',y)| &\leq& |u'_{1,a_n^{\alpha}}(t',y)-u'_{1,a_n^{\alpha}}(t,x)| \nonumber \\
&&+|u'_{1,a_n^{\alpha}}(t,x)-u'_{1,a_n^{\alpha}}(\hat{t}_0,\hat{x}_0)| \nonumber \\
&&+|u'_{1,a_n^{\alpha}}(\hat{t}_0,\hat{x}_0)-u'_{1,a_n^{2\al}}(\hat{t}_0,\hat{x}_0)| \nonumber \\
&&+|u'_{1,a_n^{2\al}}(\hat{t}_0,\hat{x}_0)| \nonumber \\
&\leq& 2^{-84}2^{-N(1-\nu_1/2)}\bar{\Delta}_{u_1'}(m,n,\alpha,\eps_0,2^{-N}) \nonumber \\
&&+|F_{a_n^{2\al}}(\hat{t}_0-a_n^{\alpha}+a_n^{2\al},\hat{t}_0,\hat{x}_0 )-F_{a_n^{2\al}}(\hat{t}_0,\hat{t}_0,\hat{x}_0)| \nonumber \\
&&+a_n^{\beta}.
\en
We would like to bound the increment of $F_{a_n^{2\al}}$ in (\ref{lm3}) with the use of Proposition \ref{Prop5.11}, but we need some adjustments first.
Choose $N'$ such that
\bn \label{lm4}
2^{-N'-1}\leq a_n^{\al} \leq 2^{-N'}.
\en
From (\ref{lm1}) we have
\bn \label{lm5}
a_n^{\al} \leq 2^{-N_{\ref{Prop5.11}}(m,n,\nu_1/2,\eps_0,K,\beta,\eta,\mu(\re))-1}.
\en
From (\ref{lm4}), (\ref{lm5}) we get
\bn  \label{lm6}
N'\geq N_{\ref{Prop5.11}}(m,n,\nu_1/2,\eps_0,K,\beta,\eta,\mu(\re)).
\en
Also from  (\ref{lm1}) and (\ref{lm4}) we have
\bn\label{lm7}
2^{-N'-1}\leq a_n^{\al} \leq 2^{-N'_{\ref{Prop5.13}}(\frac{\nu_1\eps_1}{2\al})-1}.
\en
Hence $N' \geq N'_{\ref{Prop5.13}}(\frac{\nu_1\eps_1}{2\al})$ and by (\ref{lm4}), our choice of $\alpha \geq \eps_1$ and the perviously introduced notation for $N'_{\ref{Prop5.13}}(\frac{\nu_1\eps_1}{2\al})$ we have
\bn \label{lm8}
a_n^{\alpha/2}\leq 2^{-\frac{N'\alpha}{2\al}}\leq  2^{-\frac{N' \eps_1}{2\al}}\leq N'^{-\frac{4 \eps_1}{2\al}\frac{2\al}{\nu_1 \eps_1}}= N'^{-\frac{4}{\nu_1}}.
\en
By the definition of $(\hat{t}_0,\hat{x}_0)$ in $Z(N,n,K,\beta)$ and (\ref{lm4}) we have
\bn \label{lm9}
|u(\hat{t}_0,\hat{x}_0)|\leq a_n=a_n\wedge (a_n^{1-\al} 2^{-N'}),
\en
and therefore, $(\hat{t}_0,\hat{x}_0)\in Z(N',n,K,\beta)$. From (\ref{lm6}) and (\ref{lm8}) we get that the assumptions of Proposition \ref{Prop5.11} hold with $N'$ instead of $N$, $(\hat{t}_0,\hat{x}_0)$ instead of $(t,x)$, $\nu_1/2$ instead of
$\nu_1$ and $s=\hat{t}_0-a_n^{\alpha}+a_n^{2\al}$. Hence from Proposition \ref{Prop5.11} and the simple inequality $a_n^{\al}\leq a_n^{\alpha/2},\ $
one easily gets
\bn \label{lm10}
&&|F_{a_n^{2\al} }(\hat{t}_0-a_n^{\alpha}+a_n^{2\al} ,\hat{t}_0,\hat{x}_0)-F_{a_n^{2\al} }(\hat{t}_0,\hat{t}_0,\hat{x}_0)| \nonumber  \\
&&\leq 2^{-81}a_n^{-3\eps_0}\bigg\{2^{-N'(1-\nu_1/2)}(a_n^{\al} \vee2^{-N'})^{(\gamma{\gamma}_{m}-2+\eta/2)\wedge 0}  \nonumber  \\
&&\quad+2^{N'\nu_1/2}a_n^{-1+\eta/4+\al}\bigg(\frac{2^{-N'}}{a_n^{3\al-\al\eta/2-1+\eta/4}}+1\bigg)\big(2^{-N'\gamma\tilde{\gamma}_m}+a_n^{\beta\gamma}
(a_n^{\al} \vee2^{-N'})^{\gamma}\big)\nonumber  \\
&& \quad +(a_n^{\alpha}-a_n^{2\al} )^{(1-\nu_1/2)/2}\big((\sqrt{a_n^{\alpha}-a_n^{2\al} }\vee a_n^{\al} )^{\gamma\tilde{\gamma}_m-2+\eta/2}+a_n^{\beta\gamma}(\sqrt{a_n^{\alpha}-a_n^{2\al} }\vee a_n^{\al} )^{\gamma-2+\eta/2}\big)\bigg\} \nonumber  \\
&&\leq 2^{-78}a_n^{-3\eps_0}\big\{a_n^{\al(1-\frac{\nu_1}{2})}a_n^{\al[(\gamma{\gamma}_m-2+\eta/2)\wedge 0]}  \quad +a_n^{-\al(1-\eta/2+\nu_1/2)}\big(a_n^{\al\gamma\tilde{\gamma}_m}+a_n^{\beta\gamma}
a_n^{\al\gamma}\big)\nonumber  \\
&&\quad +a_n^{\frac{\alpha}{2}(1-\frac{\nu_1}{2})}\big(a_n^{\frac{\alpha}{2}(\gamma\tilde {\gamma}_m-2+\frac{\eta}{2})}+a_n^{\beta\gamma}a_n^{\frac{\alpha}{2}(\gamma-2+\frac{\eta}{2})}\big)\big\}
%&&\leq 2^{-78}a_n^{-3\eps_0}\big\{a_n^{\al(1-\frac{\nu_1}{2})}a_n^{\al[(\gamma{\gamma}_m-2+\eta/2)\wedge 0]}  +a_n^{\al\frac{-\nu_1-1}{2}}\big(a_n^{\al\gamma\tilde{\gamma}_m}+a_n^{\beta\gamma}
%a_n^{\al\gamma}\big)\nonumber  \\
%&& +a_n^{-\frac{\alpha}{2}(1-\eta/2+\nu_1/2)}\big(a_n^{\frac{\alpha}{2}\gamma\tilde{\gamma}_m}+a_n^{\beta\gamma}a_n^{\frac{\alpha}{2}\gamma}\big)\big\}.
\en
Since $a_n^{\al}\leq a_n^{\alpha/2}$, the middle term in (\ref{lm10}) is bounded by the third term and we get
\bn \label{lm11}
&&|F_{a_n^{2\al}}(\hat{t}_0-a_n^{\alpha}+a_n^{2\al},\hat{t}_0,\hat{x}_0)-F_{a_n^{2\al}}(\hat{t}_0,\hat{t}_0,\hat{x}_0)|  \\
&&= 2^{-77}a_n^{-3\eps_0}\big\{a_n^{\al(1-\frac{\nu_1}{2})}a_n^{\al[(\gamma{\gamma}_m-2+\frac{\eta}{2})\wedge 0]} +a_n^{-\frac{\alpha}{2}(1-\eta/2+\nu_1/2)}\big(a_n^{\frac{\alpha}{2}\gamma\tilde{\gamma}_m}
+a_n^{\beta\gamma}a_n^{\frac{\alpha}{2}\gamma}\big)\big\}. \nonumber
\en
Recall from (\ref{gamma-rec}) that $ \gamma_m \geq 1$. Hence by our assumptions on $\nu_1$ ($\nu_1\in(0,\gamma-1+\eta/2)$) and (\ref{gamma-rec}) we get that
\bn \label{lm12.11}
\gamma\gamma_m\geq 1-\frac{\eta}{2}+\nu_1.
\en
From (\ref{lm12.11}) we immediately get
\bn \label{lm12}
1-\frac{\nu_1}{2}+\gamma\gamma_{m}-2+\frac{\eta}{2}\geq \frac{\nu_1}{2}>0.
\en
From (\ref{lm12}) we have
\bn\label{lm13}
a_n^{\al[1-\frac{\nu_1}{2}+(\gamma{\gamma}_{m}-2+\frac{\eta}{2})\wedge0]} \leq (a_n^{\alpha/2}\vee 2^{-N})^{1-\frac{\nu_1}{2}+(\gamma\gamma_m-2+\eta/2)\wedge0}.
\en
Recall the definition of $\bar{\Delta}_{u_1'}(m,n,\alpha,\eps_0,2^{-N})$ given in (\ref{Delta-u1-tag}).
Apply (\ref{lm11}) and (\ref{lm13}) to (\ref{lm3}) to get
\bn \label{lm14}
|u'_{1,a_n^{\alpha}}(t',y)|
&\leq& 2^{-84}2^{-N(1-\nu_1/2)}a_n^{-2\eps_0}\big[a_n^{-\alpha(1-\eta/4)}2^{-N\gamma \tilde{\gamma}_m}\nonumber \\
&&+(a_n^{\alpha/2}\vee 2^{-N})^{(\gamma\gamma_m-2+\eta/2)\wedge 0}+a_n^{-\alpha(1-\eta/4)+\beta\gamma}(a_n^{\alpha/2}\vee 2^{-N})^\gamma\big] \nonumber \\
&&+2^{-77}a_n^{-3\eps_0}\big\{(a_n^{\alpha/2}\vee 2^{-N})^{1-\frac{\nu_1}{2}+(\gamma\gamma_m-2+\eta/2)\wedge0} +a_n^{-\frac{\alpha}{2}(1-\eta/2+\nu_1/2)}\big(a_n^{\frac{\alpha}{2}\gamma\tilde{\gamma}_m}
+a_n^{\beta\gamma}a_n^{\frac{\alpha}{2}\gamma}\big)\big\} \nonumber \\
&&+a_n^{\beta} \nonumber \\
&\leq & 2^{-77}a_n^{-3\eps_0} \bigg\{2^{-N(1-\nu_1/2)}(a_n^{\alpha/2}\vee 2^{-N})^{(\gamma\gamma_m-2+\eta/2)\wedge 0}\nonumber \\
&&+2^{-N(1-\nu_1/2)}a_n^{-\alpha(1-\eta/4)}\big[2^{-N\gamma \tilde{\gamma}_m}+a_n^{\beta\gamma}(a_n^{\alpha/2}\vee 2^{-N})^\gamma\big] \nonumber \\
&&+(a_n^{\alpha/2}\vee 2^{-N})^{1-\frac{\nu_1}{2}+(\gamma\gamma_m-2+\eta/2)\wedge0} +a_n^{\frac{\alpha}{2}(1-\frac{\nu_1}{2})}a_n^{\alpha(-1+\frac{\eta}{4})}\big(a_n^{\frac{\alpha}{2}\gamma\tilde{\gamma}_m}
+a_n^{\beta\gamma}a_n^{\frac{\alpha}{2}\gamma}\big)\bigg\} \nonumber \\
&&+a_n^{\beta} \nonumber \\
&\leq & 2^{-77}a_n^{-3\eps_0}(a_n^{\alpha/2}\vee2^{-N})^{(1-\nu_1/2)} \bigg\{(a_n^{\alpha/2}\vee 2^{-N})^{(\gamma\gamma_m-2+\eta/2)\wedge 0}\nonumber \\
&&+a_n^{-\alpha(1-\eta/4)}\big[2^{-N\gamma \tilde{\gamma}_m}+a_n^{\beta\gamma}(a_n^{\alpha/2}\vee 2^{-N})^\gamma\big] \nonumber \\
&&+(a_n^{\alpha/2}\vee 2^{-N})^{(\gamma\gamma_m-2+\eta/2)\wedge0} +a_n^{\alpha(-1+\frac{\eta}{4})}\big(a_n^{\frac{\alpha}{2}\gamma\tilde{\gamma}_m}
+a_n^{\beta\gamma}a_n^{\frac{\alpha}{2}\gamma}\big)\bigg\} \nonumber \\
&&+a_n^{\beta} \nonumber \\
&\leq & 2^{-77}a_n^{-3\eps_0}(a_n^{\alpha/2}\vee2^{-N})^{(1-\nu_1/2)} \bigg\{(a_n^{\alpha/2}\vee 2^{-N})^{(\gamma\gamma_m-2+\eta/2)\wedge 0}\nonumber \\
&&+a_n^{-\alpha(1-\eta/4)}\big[(a_n^{\alpha/2}\vee2^{-N})^{ \gamma\tilde{\gamma}_m}+a_n^{\beta\gamma}(a_n^{\alpha/2}\vee 2^{-N})^\gamma\big] \nonumber \\
&&+(a_n^{\alpha/2}\vee 2^{-N})^{(\gamma\gamma_m-2+\eta/2)\wedge0} +a_n^{-\alpha(1-\frac{\eta}{4})}\big((a_n^{\alpha/2}\vee2^{-N})^{ \gamma\tilde{\gamma}_m}
+a_n^{\beta\gamma}(a_n^{\alpha/2}\vee2^{-N})^{ \gamma}\big)\bigg\} \nonumber \\
&&+a_n^{\beta} \nonumber \\
&\leq & 2^{-76}a_n^{-3\eps_0}(a_n^{\alpha/2}\vee2^{-N})^{(1-\nu_1/2)} \bigg\{(a_n^{\alpha/2}\vee 2^{-N})^{(\gamma\gamma_m-2+\eta/2)\wedge 0}\nonumber \\
&&+a_n^{-\alpha(1-\eta/4)}(a_n^{\alpha/2}\vee2^{-N})^{ \gamma\tilde{\gamma}_m}+a_n^{-\alpha(1-\eta/4)}a_n^{\beta\gamma}(a_n^{\alpha/2}\vee 2^{-N})^\gamma \bigg\} \nonumber \\
&&+a_n^{\beta} \nonumber \\
&\equiv& 2^{-76} \tilde{\Delta}_{u_1}(m,n,\alpha,\eps_0,\nu_1,a_n^{\alpha/2}\vee 2^{-N},\eta)+a_n^\beta.
\en
From (\ref{lm12}) and the assumption $\nu_1<\gamma-1+\eta/2$ we get that $1-\nu_1/2>0$ and hence $\tilde{\Delta}_{u_1}$ is monotone increasing in $a_n^{\alpha/2}\vee 2^{-N}$. From the Mean Value Theorem and (\ref{lm14}) we get
\bn \label{lm15}
T_1\leq \big[2^{-76} \tilde{\Delta}_{u_1}(m,n,\alpha,\eps_0,\nu_1,a_n^{\alpha/2}\vee 2^{-N},\eta)+a_n^\beta\big]|x-x'|.
\en
Recall that $t'\leq t $. Form (\ref{lm-1}) and (\ref{lm0}) we get that $N>N_{\ref{Prop5.12}}$ and $\sqrt{t'-t}\leq 2^{-N}$. Apply Proposition \ref{Prop5.12} to $T_3$ to get
\bn \label{lm16}
T_3\leq 2^{-92}(t'-t)^{(1-\nu_1)/2}a_n^{-3\eps_0}a_n^{-\alpha(1/2-\eta/4)} \big[(a_n^{\alpha/2}\vee2^{-N})^{\gamma\tilde{\gamma}_m}+a_n^{\beta\gamma}(a_n^{\alpha/2} \vee 2^{-N})^{\gamma}\big].
\en
The last term that we have to bound is $T_2$. Let $\{B(s):s\geq 0 \}$ be a one dimensional Brownian motion, starting at $x$ under $P_x$. We assume first that
\bn \label{lm17}
|B(t-t')-x|\leq 2^{-\frac{3}{2}N_{\ref{probIncFdl}}}.
\en
Recall that $N_{\ref{probIncFdl}}\geq2$. From (\ref{lm-1}) we get that that $N>2N_{\ref{probIncFdl}}$ and
\bn \label{lm18}
d((t',B(t-t')),(t,x))&\leq& \sqrt{t-t'}+2^{-\frac{3}{2}N_{\ref{probIncFdl}}} \nonumber \\
&\leq& 2^{-N}+2^{-\frac{3}{2}N_{\ref{probIncFdl}}} \nonumber \\
&\leq& 2^{-2N_{\ref{probIncFdl}}}+2^{-\frac{3}{2}N_{\ref{probIncFdl}}} \nonumber \\
&\leq& 2^{-N_{\ref{probIncFdl}}}.
\en
Define a random $N'\in \{N_{\ref{probIncFdl}},...,N\}$ by
\begin{enumerate}
  \item if $d((t',B(t-t')),(t,x)) \leq 2^{-N}$ \ then $N'=N$;
  \item if $d((t',B(t-t')),(t,x)) > 2^{-N}$ \  then $2^{-N'-1}<d((t',B(t-t')),(t,x))\leq 2^{-N'}$.
\end{enumerate}
In case (2) we have $2^{-N'-1}\leq 2^{-N}+|B(t-t')-x|$ and so
\bn \label{lm19}
2^{-N'}\leq 2^{1-N}+|B(t-t')-x|.
\en
(\ref{lm19}) is trivial in case (1). If $y$ is between $x$ and $B(t-t')$, argue as in (\ref{lm14}), this time using \\
$(t,x)\in Z(N',n,K,\beta)$, to see that
\bn \label{lm19.1}
|u'_{1,a_n^{\alpha}}(t',y)|\leq  2^{-76} \tilde{\Delta}_{u_1}(m,n,\alpha,\eps_0,\nu_1,a_n^{\alpha/2}\vee 2^{-N'},\eta)+a_n^\beta.
\en
Use (\ref{lm19}) and the monotonicity of $\tilde{\Delta}_{u_1}$ again to get
\bn \label{lm20}
&& a_n^{3\eps_0}\tilde{\Delta}_{u_1}(m,n,\alpha,\eps_0,\nu_1,a_n^{\alpha/2}\vee 2^{-N'},\eta)  \\
&&\leq 8(a_n^{\alpha/2}+2^{-N}+|B(t-t')-x|)^{(1-\nu_1/2)} \bigg\{(a_n^{\alpha/2}+2^{-N}+|B(t-t')-x|)^{(\gamma\tilde\gamma_m-2+\eta/2)\wedge 0}\nonumber \\
&&\quad +a_n^{-\alpha(1-\eta/4)}\bigg[(a_n^{\alpha/2}+2^{-N}+|B(t-t')-x|)^{ \gamma\tilde{\gamma}_m}+a_n^{\beta\gamma}(a_n^{\alpha/2}+2^{-N}+|B(t-t')-x|)^\gamma \bigg]\bigg\}. \nonumber
\en
 Substitute (\ref{lm20}) in (\ref{lm19.1}) and use the Mean Value Theorem (the expectation is on $B$ alone, $N_{\ref{probIncFdl}}$ remains fixed) to get
\bn \label{lm21}
&&E_x\big(\mathds{1}_{\{|B(t-t')-x|\leq 2^{-\frac{3}{2}N_{\ref{probIncFdl}}}\}}|u_{1,a_n^\alpha}(t',B(t-t'))-u_{1,a_n^\alpha}(t',x)|\big) \nonumber \\
&&\leq E_0\bigg(|B(t-t')|\bigg\{a_n^{\beta}+a_n^{-3\eps_0}(a_n^{(\alpha/2)(1-\nu_1/2)}+2^{-N(1-\nu_1/2)}+|B(t-t')-x|^{1-\nu_1/2})\nonumber \\
&&\quad \times \bigg[a_n^{-\alpha(1-\eta/4)}(a_n^{\alpha/2}+2^{-N}+|B(t-t')|)^{
\gamma\tilde{\gamma}_m}+a_n^{-\alpha(1-\eta/4)}a_n^{\beta\gamma}(a_n^{\alpha/2}+2^{-N}+|B(t-t')|)^\gamma  \nonumber \\
&&\quad +(a_n^{\alpha/2}\vee 2^{-N})^{(\gamma\tilde\gamma_m-2+\eta/2)\wedge 0}\bigg]\bigg\} \nonumber \\
&&\leq C\sqrt{t-t'}\bigg\{a_n^{\beta}+a_n^{-3\eps_0}((a_n^{\alpha/2}\vee 2^{-N})^{(1-\nu_1/2)}+(t-t')^{1/2-\nu_1/4})\nonumber \\
&&\quad \times\bigg[a_n^{-\alpha(1-\eta/4)}(a_n^{\alpha/2}\vee2^{-N}+\sqrt{t-t'})^{
\gamma\tilde{\gamma}_m}+a_n^{-\alpha(1-\eta/4)}a_n^{\beta\gamma}(a_n^{\alpha/2} \vee 2^{-N}+\sqrt{t-t'})^\gamma \nonumber \\
&&\quad +(a_n^{\alpha/2}\vee 2^{-N})^{(\gamma\tilde\gamma_m-2+\eta/2)\wedge 0}\bigg] \bigg\} \nonumber \\
&&\leq C\sqrt{t-t'}\bigg\{a_n^{\beta}+a_n^{-3\eps_0}((a_n^{\alpha/2}\vee 2^{-N})^{(1-\nu_1/2)})\nonumber \\
&&\quad \times \bigg[a_n^{-\alpha(1-\eta/4)}(a_n^{\alpha/2}\vee2^{-N})^{
\gamma\tilde{\gamma}_m}+a_n^{-\alpha(1-\eta/4)}a_n^{\beta\gamma}(a_n^{\alpha/2} \vee 2^{-N})^\gamma \nonumber \\
&&\quad +(a_n^{\alpha/2}\vee 2^{-N})^{(\gamma\tilde\gamma_m-2+\eta/2)\wedge 0}\bigg] \bigg\} \ (\textrm{since } \sqrt{t-t'}\leq 2^{-N}) \nonumber \\
&&=C\sqrt{t-t'}[a_n^{\beta}+\tilde{\Delta}_{u_1}(m,n,\alpha,\eps_0,\nu_1,a_n^{\alpha/2}\vee 2^{-N},\eta)].
\en
Assume now that
\bn \label{lm22}
|B(t-t')-x| > 2^{-\frac{3}{2}N_{\ref{probIncFdl}}}.
\en
Note that for $K\geq K_1$ and $t'\leq T_K$,therefore from (\ref{u1}) we get
\bn \label{lm23}
|u_{1,a_n^{\alpha}}(t',y)|&\leq& E_y(|u((t'-a_n^{\alpha})^{+},B(a_n^{\alpha}))|)\nonumber \\
&\leq & 2KE_y(e^{|B(a_n^{\alpha})|})\nonumber \\
&\leq& 2Ke^{1+|y|}.
\en
From (\ref{lm23}) and the fact that $\sqrt{t-t'}\leq 2^{-2N_{\ref{probIncFdl}}}$ we get
\bn \label{lm24}
&&E_x\big(\mathds{1}_{\{|B(t-t')-x|> 2^{-3N_{\ref{probIncFdl}/2}}\}}|u_{1,a_n^\alpha}(t',B(t-t'))-u_{1,a_n^\alpha}(t',x)|\big) \nonumber \\
&&\leq P_0\big(|B(t-t')|> 2^{-3N_{\ref{probIncFdl}}/2}\big)^{1/2}8Ke E_x(e^{2|B(t'-t)|}+e^{2|x|})^{1/2} \nonumber \\
&&\leq C(K)P_0(|B(1)|>(t-t')^{-1/8})^{1/2} \qquad  \qquad  \qquad \  \textrm{( since } |x|\leq K \textrm{ by (\ref{lm0}))}  \nonumber \\
&&\leq C(K)(t-t') \nonumber \\
&&=C(K)\sqrt{t-t'}[\tilde{\Delta}_{u_1}(m,n,\alpha,\eps_0,\nu_1,a_n^{\alpha/2}\vee 2^{-N},\eta)],
\en
where in the last line we have used
\bn \label{lm25}
\tilde{\Delta}_{u_1}(m,n,\alpha,\eps_0,\nu_1,a_n^{\alpha/2}\vee 2^{-N},\eta) &\geq & (a_n^{\alpha/2}\vee 2^{-N})^{1-\nu_1/2} \nonumber \\
&\geq & 2^{-N} \nonumber \\
&\geq & \sqrt{t'-t}.
\en
From (\ref{lm21}) and  (\ref{lm24}) we get
\bn \label{lm26}
T_2\leq C(K)\sqrt{t-t'}[a_n^{\beta}+\tilde{\Delta}_{u_1}(m,n,\alpha,\eps_0,\nu_1,a_n^{\alpha/2}\vee 2^{-N},\eta)].
\en
Use (\ref{lm15}), (\ref{lm16}) and (\ref{lm26}) in (\ref{lm-22}), then use $d((t,x),(t',x'))\leq 2^{-N}$, to get
\bn \label{lm27}
&&|u_{1,a_n^{\alpha}}(t',x')-u_{1,a_n^{\alpha}}(t,x)|  \nonumber \\ \nonumber \\
&&\leq  C(K)d((t,x),(t',x'))[a_n^{\beta}+\tilde{\Delta}_{u_1}(m,n,\alpha,\eps_0,\nu_1,a_n^{\alpha/2}\vee 2^{-N},\eta)] \nonumber \\ \nonumber \\
&&\quad +2^{-92}(t'-t)^{(1-\nu_1)/2}a_n^{-\alpha(1/2-\eta/4)-3\eps_0}a_n^{-\alpha(1/2-\eta/4)} \big[(a_n^{\alpha/2}\vee2^{-N})^{\gamma\tilde{\gamma}_m}+a_n^{\beta\gamma}(a_n^{\alpha/2} \vee 2^{-N})^{\gamma}\big] \nonumber \\ \nonumber \\
&&=  C(K)d((t,x),(t',x'))\bigg[a_n^{\beta}+a_n^{-3\eps_0}(a_n^{\alpha/2}\vee2^{-N})^{(1-\nu_1/2)} \big\{(a_n^{\alpha/2}\vee 2^{-N})^{(\gamma\tilde\gamma_m-2+\eta/2)\wedge 0}\nonumber \\ \nonumber \\
&&\quad +a_n^{-\alpha(1-\eta/4)}(a_n^{\alpha/2}\vee2^{-N})^{ \gamma\tilde{\gamma}_m}+a_n^{-\alpha(1-\eta/4)}a_n^{\beta\gamma}(a_n^{\alpha/2}\vee 2^{-N})^\gamma \big\} \bigg]\nonumber \\ \nonumber \\
&&\quad +2^{-92}(t'-t)^{(1-\nu_1)/2}a_n^{\alpha/2}a_n^{-\alpha(1-\eta/4)-3\eps_0} \big[(a_n^{\alpha/2}\vee2^{-N})^{\gamma\tilde{\gamma}_m}+a_n^{\beta\gamma}(a_n^{\alpha/2} \vee 2^{-N})^{\gamma}\big] \nonumber \\  \nonumber \\
&&\leq  C(K)d((t,x),(t',x'))^{1-\nu_1}2^{-N\nu_1}a_n^{-\alpha(1-\eta/4)-3\eps_0} \bigg[a_n^{\alpha(1-\eta/4)} a_n^{\beta}\nonumber \\ \nonumber \\
&&\quad +2^{N\nu_1/2}(a_n^{\alpha/2}\vee2^{-N})a_n^{\alpha(1-\eta/4)} (a_n^{\alpha/2}\vee 2^{-N})^{(\gamma\tilde\gamma_m-2+\eta/2)\wedge 0}\nonumber \\ \nonumber \\
&&\quad +(a_n^{\alpha/2}\vee2^{-N})^{ \gamma\tilde{\gamma}_m}+a_n^{\beta\gamma}(a_n^{\alpha/2}\vee 2^{-N})^\gamma \big\} \bigg]\nonumber \\ \nonumber \\
&&\quad +2^{-92}d((t,x),(t',x'))^{(1-\nu_1)}a_n^{\alpha/2}a_n^{-\alpha(1-\eta/4)-3\eps_0} \big[(a_n^{\alpha/2}\vee2^{-N})^{\gamma\tilde{\gamma}_m}+a_n^{\beta\gamma}(a_n^{\alpha/2} \vee 2^{-N})^{\gamma}\big] \nonumber \\  \nonumber \\
&&\leq (C_{\ref{lm27}}(K)2^{-N\nu_1/2}+2^{-92})d((t,x),(t',x'))^{1-\nu_1}a_n^{-\alpha(1-\eta/4)-3\eps_0} \bigg[a_n^{\alpha(1-\eta/4)} a_n^{\beta}\nonumber \\ \nonumber \\
&&\quad +(a_n^{\alpha/2}\vee2^{-N}) \big\{a_n^{\alpha(1-\eta/4)} (a_n^{\alpha/2}\vee 2^{-N})^{(\gamma\tilde\gamma_m-2+\eta/2)\wedge 0}\nonumber \\ \nonumber \\
&&\quad  +(a_n^{\alpha/2}\vee2^{-N})^{ \gamma\tilde{\gamma}_m}+a_n^{\beta\gamma}(a_n^{\alpha/2}\vee 2^{-N})^\gamma \big\} \bigg].
\en
Choose $N_1(K,\nu_1)$ such that
\bn
2^{-N_1\nu_1/2}C_{\ref{lm27}}(K)\leq2^{-92},
\en
and define $N_{\ref{Prop5.13}}=N''_{\ref{Prop5.13}}\vee N_1$ which is stochastically bounded uniformly in $(n,\alpha,\beta)\in \mathds{N}\times [0,2\al]\times [0,\frac{\eta}{\eta+1}]$. Assume $N\geq N_{\ref{Prop5.13}}$. Note that if $m<\bar{m}$, from (\ref{gamma-rec}) and (\ref{m-bar-def}) we get
\bn \label{lm28}
a_n^{\alpha(1-\eta/4)}(a_n^{\alpha/2}\vee 2^{-N})^{(\gamma\tilde\gamma_m-2+\eta/2)\wedge0}&=&a_n^{\alpha(1-\eta/4)}(a_n^{\alpha/2}\vee 2^{-N})^{\gamma\gamma_{m}-2+\eta/2}\nonumber \\ \nonumber \\
&\leq& (a_n^{\alpha/2}\vee 2^{-N})^{\gamma\gamma_{m}} \nonumber \\ \nonumber \\
&\leq& (a_n^{\alpha/2}\vee 2^{-N})^{\gamma\tilde{\gamma}_{m}}.
\en
If $m\geq \bar{m}$ we have
\bn \label{lm29}
a_n^{\alpha(1-\eta/4)}(a_n^{\alpha/2}\vee 2^{-N})^{(\gamma\tilde\gamma_m-2+\eta/2)\wedge 0}&=&a_n^{\alpha(1-\eta/4)}(a_n^{\alpha/2}\vee 2^{-N})^{\eta-1}.
\en
From (\ref{lm27})--(\ref{lm29}) we get
\bn \label{lm30}
&&|u_{1,a_n^{\alpha}}(t',x')-u_{1,a_n^{\alpha}}(t,x)|  \nonumber \\ \nonumber \\
&&\leq 2^{-90}d((t,x),(t',x'))^{1-\nu_1}a_n^{-\alpha(1-\eta/4)-3\eps_0} \bigg[a_n^{\alpha(1-\eta/4)+\beta}+\mathds{1}_{\{m\geq\bar{m}\}}(a_n^{\alpha/2}\vee2^{-N})^{\eta}a_n^{\alpha(1-\eta/4)}\nonumber \\ \nonumber \\
&&\quad +(a_n^{\alpha/2}\vee2^{-N})^{ \gamma\tilde{\gamma}_m+1}+a_n^{\beta\gamma}(a_n^{\alpha/2}\vee 2^{-N})^{\gamma+1} \bigg]\nonumber \\ \nonumber \\
&&= 2^{-90}d((t,x),(t',x'))^{1-\nu_1}\bar{\Delta}_{u_1}(m,n,\alpha,\eps_0,2^{-N},\eta). \nonumber
\en
\end{proof}
\\\\
We need to bound the increments of $u_{2,a_n^{\alpha}}$ as we did for $u_{1,a_n^{\alpha}}$ in Proposition \ref{Prop5.13}. First we introduce the following notation.
\paragraph{Notation.}
\bn \label{notat-Del12}
\bar{\Delta}_{1,u_2}(m,n,\eps_0,2^{-N},\eta)&=&a_n^{-3\eps_0}2^{-N\gamma}\big[(2^{-N}\vee a_n^{\alpha})^{\gamma(\tilde{\gamma}_m-1)}+a_n^{\gamma\beta}\big], \nonumber \\
\bar{\Delta}_{2,u_2}(m,n,\eps_0,\eta)&=&a_n^{-3\eps_0}\big[a_n^{(\alpha/2)(\gamma\tilde{\gamma}_m-1+\eta/2)}+a_n^{(\alpha/2)(\gamma-1+\eta/2)}a_n^{\gamma\beta}\big].
\en
\begin{proposition} \label{Prop5.14}
Let $0\leq m \leq \bar{m}+1$ and assume $(P_m)$. Let $\theta\in (0,\gamma-1+\eta/2)$. then for any $n\in \mathds{N}, \nu_1\in (0,\theta), \eps_0 \in(0,1), K\in \mathds{N}^\mathds{\geq K_1}, \alpha\in[0,2\al]$, and $\beta\in [0,\frac{\eta}{\eta+1}]$, there is an $N_{\ref{Prop5.14} }=N_{\ref{Prop5.14} }(m,n,\nu_1,\eps_0,K,\alpha,\beta,\eta)$ a.s. such that for all $N\geq N_{\ref{Prop5.14}}, (t,x)\in Z(N,n,K,\beta)$, and
$t'\leq T_K$,
\bn
&&d\equiv d((t,x),(t',x'))\leq 2^{-N} \textrm{ implies that } \nonumber \\
&&|u_{2,a_n^\alpha}(t,x)- u_{2,a_n^\alpha}(t',x')| \leq 2^{-89}\bigg[d^{\frac{\eta-\nu_1}{2}}\bar{\Delta}_{1,u_2}(m,n,\eps_0,2^{-N},\eta)+d^{1-\nu_1}\bar{\Delta}_{2,u_2}(m,n,\eps_0,\eta)\bigg]. \nonumber
\en
Moreover, $N_{\ref{Prop5.14}}$ is stochastically bounded, uniformly in $(n,\alpha,\beta)$.
\end{proposition}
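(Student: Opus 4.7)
The plan is to mirror the argument used for $F_{a_n^{2\al}}$ in Proposition \ref{probIncFdl}, but with the ordinary heat kernel $G_{t-s}$ in place of $G'_{t-s}$ and on the short time window $[(t-a_n^{\alpha})_+,t]$ characteristic of $u_{2,a_n^{\alpha}}$. We may assume $t'\le t$. Using (\ref{u2exp}) we decompose
\[
u_{2,a_n^{\alpha}}(t,x)-u_{2,a_n^{\alpha}}(t',x')=T_1+T_2+T_3,
\]
where $T_1$ is the stochastic integral over $[(t'-a_n^{\alpha})_+,(t-a_n^{\alpha})_+]\times\re$ of $G_{t'-s}(y-x')D(s,y)$, $T_2$ is the integral over $[(t-a_n^{\alpha})_+,t']\times\re$ of $(G_{t-s}(y-x)-G_{t'-s}(y-x'))D(s,y)$, and $T_3$ is the integral over $[t',t]\times\re$ of $G_{t-s}(y-x)D(s,y)$. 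For each $T_i$, Dubins--Schwarz reduces the bound to controlling the associated quadratic variation $Q_i$.

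For $Q_2$, I would split the $y$-integration according to $\{|x-y|>(t-s)^{1/2-\nu_0}\vee 2|x-x'|\}$ and its complement, obtaining analogues $Q_{S,1}$ and $Q_{S,2}$ exactly as in (\ref{Qdef}) but with $G$-differences. On the far region I apply Lemma \ref{Lemma-MP4.3}(b) to produce the exponentially small factor, and on the near region I apply Lemma \ref{Lemma-MP4.3}(a) together with Lemma \ref{LemmaBound-u} (a consequence of $(P_m)$) to replace $|u(s,y)|^{2\gamma}$ by $\bar d_N^{\,2\gamma\xi}[\bar d_N^{\,\gamma(\tilde\gamma_m-1)}+a_n^{\beta\gamma}]^{2}$ up to the $e^{|x-y|}$ weight already absorbed into Lemma \ref{condmu}. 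The $r$-integral is then estimated by Lemma \ref{Lem4.1MP}. The terms $T_1$ and $T_3$ are endpoint pieces on intervals of length $\leq a_n^{\alpha}$; for them I use Lemma \ref{LemmaBound-u} to bound $|u(r,y)|^{2\gamma}$ and Lemma \ref{condmu}(d) with exponent $2$ on $G_{t-r}^{2}$ to obtain an integrand of order $(t-r)^{\eta/2-1-\eps_0}$, then integrate in $r$ over an interval of length at most $a_n^{\alpha}\wedge|t-t'|$.

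Summing the three contributions yields, via Dubins--Schwarz and a choice $\xi=1-((4\gamma R)^{-1}\wedge\theta)$ for a suitable $R$, a bound of the form
\[
C\,d^{(1-\nu_1)/2}\Bigl[d^{\,(\eta-\nu_1)/2-(1-\nu_1)/2}\bar\Delta_{1,u_2}+\bar\Delta_{2,u_2}\Bigr],
\]
which, after absorbing the constant into the exponent $a_n^{-3\eps_0}$ by taking $N$ large (using that $N_{\ref{Prop5.14}}$ inherits stochastic boundedness in $(n,\alpha,\beta)$ from Lemma \ref{LemmaBound-u}), gives the desired estimate. The two explicit terms $\bar\Delta_{1,u_2}$ and $\bar\Delta_{2,u_2}$ arise naturally as the kernel-increment contribution on the common interval (producing the $2^{-N\gamma}$ factor together with the $d^{(\eta-\nu_1)/2}$ regularity furnished by $\mu\in M_f^{\eta}$) and as the endpoint contribution (producing the $a_n^{\alpha/2}$ scale from the interval length $a_n^{\alpha}$ together with $d^{1-\nu_1}$).

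The main obstacle will be the same arithmetic-of-exponents bookkeeping that makes Proposition \ref{probIncFdl} delicate: one must keep track of how $\xi$ is chosen, how $\bar d_N$ versus $a_n^{\alpha/2}$ dominates in each regime, and ensure that the $\eta/2$ appearing in Lemma \ref{condmu}(d) propagates correctly through the $r$-integral so that the final exponent of $a_n^{\alpha/2}$ in $\bar\Delta_{2,u_2}$ is $\gamma\tilde\gamma_m-1+\eta/2$ (respectively $\gamma-1+\eta/2$), rather than the $\gamma\tilde\gamma_m-1/2$ one would get in the homogeneous case of \cite{MP09}. The stochastic boundedness of $N_{\ref{Prop5.14}}$ uniformly in $(n,\alpha,\beta)$ follows from the corresponding uniformity of $N_1$ in $(P_m)$ and hence of the constant $C_{\ref{LemmaBound-u}}(\omega)$ in Lemma \ref{LemmaBound-u}.
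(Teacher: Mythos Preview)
Your proposal is essentially correct and matches the approach the paper defers to. The paper itself omits the proof, stating only that it ``follows the same lines of the proof of Proposition \ref{Prop5.13}'' and referring to Section 7 of \cite{MP09}; the detailed argument the paper does spell out (in Section \ref{proof-u2}, for the closely related Proposition \ref{Prop5.14mod} concerning $\tilde u_{2,a_n^{\alpha}}$) follows exactly the template you describe: split the quadratic variation according to $\{|x-y|>(t-s)^{1/2-\nu_0}\vee 2|x-x'|\}$ and its complement, invoke Lemma \ref{Lemma-MP4.3}(b) on the far region and Lemma \ref{Lemma-MP4.3}(a) together with Lemma \ref{LemmaBound-u} on the near region, integrate in time via Lemma \ref{Lem4.1MP}, then apply Dubins--Schwarz.

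One small remark: the paper's pointer to Proposition \ref{Prop5.13} should not be read too literally. That proof exploits the differentiability of $u_{1,a_n^{\alpha}}$ via Corollary \ref{corollary-u-tag-reg} and a Brownian-expectation/mean-value decomposition, machinery that is not available for $u_{2,a_n^{\alpha}}$. Your direct quadratic-variation route (mirroring Proposition \ref{probIncFdl} with $G$ in place of $G'$, and hence Lemma \ref{Lemma-MP4.3} in place of Lemma \ref{Lemma-MP4.4}) is the right one here, and it is precisely what Section 7 of \cite{MP09} does; the gain of one power of $(t-s)^{1/2}$ from dropping the derivative is what turns the exponent $-2+\eta/2$ into $-1+\eta/2$ and ultimately produces the $\gamma\tilde\gamma_m-1+\eta/2$ in $\bar\Delta_{2,u_2}$ and the $d^{(\eta-\nu_1)/2}$ prefactor on $\bar\Delta_{1,u_2}$, exactly as you anticipate.
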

The proof of Proposition \ref{Prop5.14} follows the same lines of the proof of Proposition \ref{Prop5.13}, and hence we omit it.
Also see Section 7 in \cite{MP09}.\medskip \\
The following lemma is crucial to the proof of Proposition \ref{Prop-Induction}.
\begin{lemma}  \label{Lemma5.15}
Let $\gamma$ satisfy (\ref{optsol}), that is $\gamma>1-\frac{\eta}{2(\eta+1)}$. For all $n,m\in \mathds{N}, 0\leq \beta \leq \frac{\eta}{\eta+1} $ and $0<d\leq 1$,
\bn \label{lem-new}
a_n^{\beta\gamma}(a_n^{\al}\vee d)^{\gamma_1-1}\leq a_n^{\beta}+(d\vee a_n^{\al})^{\tilde{\gamma}_{m+1}-1}.
\en
\end{lemma}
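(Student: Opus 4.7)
The plan is to control the left-hand side by a single weighted arithmetic-geometric mean (Young-type) inequality. Set $L:=d\vee a_n^{\al}\in(0,1]$. Since $\gamma\in(0,1)$ I can factor
\[
a_n^{\beta\gamma}L^{\gamma_1-1}=(a_n^{\beta})^{\gamma}\bigl(L^{(\gamma_1-1)/(1-\gamma)}\bigr)^{1-\gamma},
\]
where the exponent $(\gamma_1-1)/(1-\gamma)$ is strictly positive because $\gamma_1-1=\gamma-1+\eta/2>0$ (a consequence of (\ref{optsol}), since $1-\eta/(2(\eta+1))>1-\eta/2$). Weighted AM-GM with weights $\gamma$ and $1-\gamma$ then gives
\[
a_n^{\beta\gamma}L^{\gamma_1-1}\le \gamma\, a_n^{\beta}+(1-\gamma)L^{(\gamma_1-1)/(1-\gamma)}\le a_n^{\beta}+L^{(\gamma_1-1)/(1-\gamma)}.
\]

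The second step is to replace the exponent $(\gamma_1-1)/(1-\gamma)$ by $\tilde{\gamma}_{m+1}-1$. Because $L\le 1$ and both exponents are positive (one checks $\tilde\gamma_{m+1}>1$ using again $\gamma>1-\eta/2$), it suffices to show
\[
\frac{\gamma_1-1}{1-\gamma}\ge \tilde{\gamma}_{m+1}-1.
\]
By the cap $\tilde{\gamma}_{m+1}\le 1+\eta$ in (\ref{gamma-m}), the right-hand side is at most $\eta$, so I only need
\[
\frac{\gamma-1+\eta/2}{1-\gamma}\ge \eta.
\]

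The final, and conceptual, step is the algebraic identification: cross-multiplying and simplifying, this inequality is equivalent to $\gamma(1+\eta)\ge 1+\eta/2$, i.e.\ to $\gamma\ge 1-\eta/(2(\eta+1))$, which is precisely the threshold (\ref{optsol}). I would emphasize in the exposition that this matching is not accidental: the limit exponent $\gamma_\infty-1=\eta/(2(1-\gamma))-1$ produced by iterating the Young step equals exactly $\eta$ on the boundary $\gamma=1-\eta/(2(\eta+1))$, and this is the very reason the cap $1+\eta$ appears in the definition of $\tilde\gamma_{m+1}$.

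There is essentially no serious obstacle beyond this bookkeeping; the only care required is to choose the correct split in the AM-GM step, namely the weights $\gamma$ and $1-\gamma$ (rather than, say, $1/2$ and $1/2$), so that the exponent of $a_n$ on one side lands exactly on $\beta$ while the exponent of $L$ on the other side lands exactly on the critical quantity $\eta/(2(1-\gamma))-1=\gamma_\infty-1$, which the hypothesis (\ref{optsol}) then compares to $\tilde\gamma_{m+1}-1$.
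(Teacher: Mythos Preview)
Your proof is correct and takes a genuinely different route from the paper. The paper proceeds by a four-case analysis depending on the relative sizes of $d$, $a_n^{\al}$, $a_n^{\beta}$ and $d^{\eta}$, and in each regime bounds the left-hand side by just one of the two terms $a_n^{\beta}$ or $(d\vee a_n^{\al})^{\tilde\gamma_{m+1}-1}$, verifying the required exponent inequalities separately in each case. Your argument instead applies a single weighted AM--GM step with weights $\gamma$ and $1-\gamma$, which splits the left-hand side into the sum of the two terms in one stroke, and then reduces everything to the single exponent comparison $(\gamma_1-1)/(1-\gamma)\ge\eta$; as you note, this is algebraically equivalent to the threshold (\ref{optsol}). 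This is cleaner and more conceptual: it makes transparent that the critical H\"older exponent is exactly the value at which the limiting regularity gain $\gamma_\infty-1=\eta/(2(1-\gamma))-1$ matches the cap $\eta$, whereas in the paper's case analysis this equivalence is distributed across several separate exponent checks. The paper's approach, on the other hand, has the minor advantage of identifying in each regime which of the two right-hand terms actually dominates.
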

\begin{proof}
The proof of Lemma \ref{Lemma5.15} follows that same lines of Lemma 5.15 in \cite{MP09}. \medskip \\
From (\ref{gamma-rec}) we have
\bn \label{ins00}
\gamma_1-1=\gamma-1+\eta/2.
\en
From (\ref{optsol}) we have
\bn \label{001}
\gamma+\frac{\eta}{2}-1>\frac{\eta^2}{2(\eta+1)}.
\en
From (\ref{alpha_0}), (\ref{001}), (\ref{optsol}) and (\ref{beta-up-lim}) we have
\bn \label{ins111}
\beta(\gamma-1)+\al(\gamma+\eta/2-1) &\geq&  \beta(\gamma-1)+\frac{1}{\eta+1}\bigg(\frac{\eta^2}{2(\eta+1)}\bigg) \nonumber \\
&\geq& -\beta\frac{\eta}{2(\eta+1)}+\frac{1}{\eta+1}\bigg(\frac{\eta^2}{2(\eta+1)}\bigg) \nonumber \\
&\geq& -\frac{\eta}{\eta+1}\frac{\eta}{2(\eta+1)}+\frac{1}{\eta+1}\bigg(\frac{\eta^2}{2(\eta+1)}\bigg) \nonumber \\
&\geq& 0.
\en
From (\ref{ins111}) we have
\bn \label{ins1}
\beta\gamma+\al(\gamma+\eta/2-1) &\geq& \beta.
\en
\textrm{Case 1.} $d\leq a_n^{\al}$.
From (\ref{ins00}) and (\ref{ins1}) we get
\bn \label{case1}
a_n^{\beta\gamma}(a_n^{\al}\vee d)^{\gamma_1-1} &=& a_n^{\beta\gamma+\al(\gamma+\eta/2-1)} \nonumber \\
&\leq&a_n^{\beta}. \nonumber \\
\en
Case 2. $a_n^{\al}<d \leq a_n^{\beta}$ and $a_n^{\beta}\leq d^\eta$. From (\ref{optsol}) we have,
\bn \label{case2}
a_n^{\beta\gamma}(a_n^{\al}\vee d)^{\gamma_1-1} &\leq & d^{(1+\eta)\gamma+\eta/2-1} \nonumber \\
&\leq& d^{\eta}\nonumber \\
&=& d^{\tilde \gamma_{\tilde m +1}-1}\nonumber \\
&\leq& d^{\tilde{\gamma}_{m+1}-1},
\en
where we have used (\ref{gamma-m-up-lim}). \medskip \\
Case 3. $a_n^{\al}<d \leq a_n^{\beta}$ and $d^\eta < a_n^{\beta}$. From (\ref{optsol}) we have
\bn \label{case0}
\beta\gamma+(\gamma+\eta/2-1)\frac{\beta}{\eta} &\geq& \beta \bigg(\gamma\frac{\eta+1}{\eta}+\frac{1}{2}-\frac{1}{\eta}\bigg)\nonumber \\
&\geq&\beta.
\en
From (\ref{case0}), (\ref{ins00}) and the assumption of this case we have
\bn \label{case3}
a_n^{\beta\gamma}(a_n^{\al}\vee d)^{\gamma_1-1} &\leq & a_n^{\beta\gamma+(\gamma+\eta/2-1)\beta/\eta} \nonumber \\
&\leq& a_n^{\beta}.
\en
Case 4. $a_n^{\beta}<d $. The following inequity follows directly from (\ref{optsol}),
\bn \label{ox-in}
2\gamma+\frac{\eta}{2}-1 \geq \eta.
\en
Note that by (\ref{beta-up-lim}) and (\ref{alpha_0}),  $\beta<\al$. In this case we have in particular $a_n^{\al}<d $. This, (\ref{optsol}) and (\ref{ox-in}) imply
\bn \label{case4}
a_n^{\beta\gamma}(a_n^{\al}\vee d)^{\gamma_1-1} &\leq & d^{2\gamma+\eta/2-1} \nonumber \\
&\leq& d^\eta \nonumber \\
&\leq& d^{\tilde\gamma_{m+1}-1},
\en
where for the last inequality we have used the same argument as in (\ref{case2}). \medskip \\
From (\ref{case1}), (\ref{case2}), (\ref{case3}) and (\ref{case4}), (\ref{lem-new}) follows.
\end{proof}
\paragraph{Proof of Proposition \ref{Prop-Induction}}
The proof follows the same lines as the proof of Proposition 5.1 in \cite{MP09}. \\\\
Let $0\leq m \leq \bar{m}$ and assume $(P_m)$. Our goal is to derive $(P_{m+1})$. Let $\eps_0\in (0,1), M=\lceil\frac{2}{\eta\eps_0}\rceil, \eps_2=\frac{2\al}{M}\leq \al\eps_0\eta $ and set $\alpha_i=i\eps_2$ for $i=0,1,...,M$, so that
$\alpha_i\in[\eps_2,2\al]$ for $i\geq 1$. Recall that $\gamma$ satisfies (\ref{optsol}) is fixed. From (\ref{optsol}) we get that $\gamma>1-\eta/2$. Let $n,\xi,K$ be as in $(P_m)$ where we may assume $\xi>2-\gamma-\eta/2$ without loss of generality. Define $\nu_1=1-\xi\in(0,\gamma-1+\eta/2)$, $\xi'=\xi+(1-\xi)/2 \in (\xi,1)$,
\bn
N_2(m,n,\xi,\eps_0,K,\beta,\eta)(\omega) &=& \vee_{i=1}^{M}N_{\ref{Prop5.13}}(m,n,\nu_1,\eps_0/6,K+1,\alpha_i,\beta,\eta)(\omega),  \nonumber \\
N_3(m,n,\xi,\eps_0,K,\beta,\eta)(\omega) &=& \vee_{i=1}^{M}N_{\ref{Prop5.14}}(m,n,\nu_1,\eps_0/6,K+1,\alpha_i,\beta,\eta)(\omega), \nonumber
\en
\bn \label{N4-N5-def}
N_4(m,n,\xi,\eps_0,K,\beta)&=&\lceil \frac{2}{1-\xi}((N_{\ref{Prop5.11}}(m,n,\nu_1/2,\eps_0 /6,K+1,\beta,\eta)\vee N'_{\ref{Prop5.13}}(\nu_1\eps_2/(2\al)))+1)\rceil \nonumber \\
&\equiv& \lceil \frac{1}{1-\xi}N_5(m,n,\nu_1,\eps_0,K,\beta)\rceil,
\en
\bq
N_6(\xi,K,\beta)=N_1(0,\xi',K,\eta),
\eq
and
\bn
N_1(m,n,\xi,\eps_0,K,\beta,\eta)=(N_2\vee N_3\vee N_4(m,n,\xi,\eps_0,K,\beta,\eta))\vee N_6(\xi,K,\beta)+1 \in \mathds{N}, \  P -\textrm{a.s.} \nonumber
\en
Recall that in the verification of $(P_0)$, we chose $\eps_0=0$, and $N_1=N_1(0,\xi',K,\eta)$ was independent of $n$ and $\beta$. Therefore $N_1$ is stochastically bounded uniformly in $(n,\beta)$ because $N_{\ref{Prop5.11}},N_{\ref{Prop5.13}},N_{\ref{Prop5.14}}$ are.  \\\\
Assume $N\geq N_1, \ (t,x)\in Z(N,n,K,\beta), \ t^{'}\leq T_K \textrm{ and } d((t,x),(t',x')) \leq 2^{-N}$.
Consider first the case of
\bn \label{vx1}
a^{2\al}_n>2^{-N_5(m,n,\nu_1,\eps_0,K,\beta,\eta)}.
\en
Recall that $\tilde{\gamma}_{m+1}-1\leq \eta $. Since $N\geq N_1(0,\xi',K)$, we get from $(P_0)$ with $\eps_0=0$ and $\xi'$ in place of $\xi$ and then, (\ref{vx1}) and our choice of $\xi'$,
\bn \label{vx2}
|u(t',x')|&\leq & 2^{-N\xi'} \nonumber \\
&\leq &  2^{-N\xi'}\big[(a_n^{\al}\vee2^{-N})^{\tilde{\gamma}_{m+1}-1}\big]2^{N_5/2} \nonumber \\
&\leq &  2^{-N(1-\xi)/2}2^{N_5/2}2^{-N\xi}\big[(a_n^{\al}\vee2^{-N})^{\tilde{\gamma}_{m+1}-1}+a_n^{\beta}\big] \nonumber \\
&\leq & 2^{-N\xi}\big[(a_n^{\al}\vee2^{-N})^{\tilde{\gamma}_{m+1}-1}+a_n^{\beta}\big],
\en
where we have used $N\geq N_{4}\geq (1-\xi)^{-1}N_5$ in the last line. From (\ref{vx2}), $(P_{m+1})$ follows.  \\\\
Next, we deal with the complement of (\ref{vx1}). Assume that
\bn \label{vx3}
a_n^{2\al} \leq 2^{-N_5(m,n,\nu_1,\eps_0,K,\beta,\eta)}.
\en
Let $N'=N-1\geq N_2\vee N_3$. Note that $(\hat{t}_0,\hat{x}_0)$, which is defined as the point near $(t,x)$ in the set $Z(N,n,K,\beta)$, satisfies $(\hat{t}_0,\hat{x}_0)\in Z(N,n,K+1,\beta) \subset Z(N',n,K+1,\beta)$. We also get from the triangle inequality that $d((\hat{t}_0,\hat{x}_0),(t',x'))\leq 2^{-N'}$. From (\ref{N4-N5-def}) and (\ref{vx3}) we get that (\ref{lm1}) holds with $(\eps_0/6,K+1)$ instead of $(\eps_0,K)$. Therefore, from inequality $N'\geq N_2$ we can apply Proposition \ref{Prop5.13} to $\alpha = \alpha_i \geq \eps_2, \ i=1,...,M$, with $(\hat{t}_0,\hat{x}_0)$ instead of $(t,x)$, $\eps_0/6$ instead of $\eps_0$, and $N'$ instead of $N$. Since $N'\geq N_3$ we may apply Proposition \ref{Prop5.14} with the same parameters. \\
Choose $i\in \{1,...,M\}$ such that
\bn \label{vx4}
&&(i) \textrm{ if } 2^{-N'}>a_n^{\al}, \textrm{ then } a_n^{\alpha_i/2}<2^{-N'}\leq a_n^{\alpha_{i-1}/2}=a_n^{\alpha_i/2}a_n^{-\eps_2/2}, \nonumber \\ \nonumber \\
&&(ii) \textrm{ if } 2^{-N'}\leq a_n^{\al}, \textrm{ then } i=M \textrm{ and so } a_n^{\alpha_i/2}=a_n^{\al} \leq 2^{-N'}.
\en
In either case we have
\bn \label{vx5}
a_n^{\alpha_i/2}\vee 2^{-N'}\leq a_n^{\al}\vee 2^{-N'},
\en
and
\bn \label{vx6}
a_n^{-(1-\eta/4)\alpha_i}(a_n^{\al}\vee 2^{-N'})^{2-\eta/2}\leq a_n^{-(1-\eta/4)\eps_2}.
\en
Apply Proposition \ref{Prop5.13} as described above with $1-\nu_1=\xi$ and use the facts that \\ $d((\hat{t}_0,\hat{x}_0)),(t',x')\leq 2^{-N'}$, $\tilde{\gamma}_m=\gamma_m$ for $m\leq \bar{m}$ and $\gamma_{m+1}=\gamma\gamma_m+\eta/2$. Then use (\ref{vx5}), (\ref{vx6}) to get
\bn \label{vx7}
&&|u_{1,a_n^{\alpha_i}}(\hat{t}_0,\hat{x}_0)-u_{1,a_n^{\alpha_i}}(t',x')| \nonumber \\
 &&\leq 2^{-90}d((\hat{t}_0,\hat{x}_0),(t',x'))^{\xi}a_n^{-\eps_0/2-(1-\eta/4)\alpha_i}\big[a_n^{\beta}a_n^{(1-\eta/4)\alpha_i}
\quad +a_n^{\beta \gamma} (a_n^{\alpha_i/2}\vee 2^{-N'})^{\gamma+1}   \nonumber \\
&&\quad+(a_n^{\alpha_i/2}\vee 2^{-N'})^{\gamma\tilde{\gamma}_m+1}+\mathds{1}_{\{m\geq \bar{m}\}}a_n^{\alpha_i(1-\eta/4)}(a_n^{\alpha_i/2}\vee 2^{-N'})^{\eta}\big].  \nonumber \\
&&\leq 2^{-89}2^{-N'\xi}a_n^{-\eps_0/2}\big[a_n^{\beta}
\quad a_n^{-(1-\eta/4)\alpha_i}(a_n^{\al}\vee 2^{-N'})^{2-\eta/2}a_n^{\beta \gamma} (a_n^{\al}\vee 2^{-N'})^{\gamma-1+\eta/2}   \nonumber \\
&&\quad +a_n^{-(1-\eta/4)\alpha_i}(a_n^{\al}\vee 2^{-N'})^{2-\eta/2}(a_n^{\al}\vee 2^{-N'})^{\gamma\gamma_{m}-1+\eta/2}+\mathds{1}_{\{m\geq \bar{m}\}}(a_n^{\al}\vee 2^{-N'})^{\eta}\big].  \nonumber \\
&&\leq 2^{-89}2^{-N'\xi}a_n^{-\eps_0/2}\big[a_n^{\beta}
+a_n^{-(1-\eta/4)\eps_2}a_n^{\beta \gamma} (a_n^{\al}\vee 2^{-N'})^{\gamma-1+\eta/2}   \nonumber \\
&&\quad+a_n^{-(1-\eta/4)\eps_2}(a_n^{\al}\vee 2^{-N'})^{\gamma\gamma_{m}-1+\eta/2}+\mathds{1}_{\{m\geq \bar{m}\}}(a_n^{\al}\vee 2^{-N'})^{\eta}\big], \ i=1,...,M.  \nonumber \\
\en
Apply Proposition \ref{Prop5.14} with $\alpha=\alpha_i\geq \eps_2$, $(\hat t_0,\hat x_0)$ instead of $(t,x)$, $\eps_0/6$ instead of $\eps_0$, $N'$ instead of $N$, $1-\nu_1=\xi$. Use the facts that $d((\hat{t}_0,\hat{x}_0),(t',x'))\leq 2^{-N'}$, $\tilde{\gamma}_m=\gamma_m$ for $m\leq \bar{m}$ and $\gamma_{m+1}=\gamma\gamma_m+\eta/2$, and use (\ref{vx5}), to get
\bn \label{vx8}
&&|u_{2,a_n^{\alpha_i}}(\hat{t}_0,\hat{x}_0)-u_{2,a_n^{\alpha_i}}(t',x')| \nonumber \\
 &&\leq
2^{-89}\bigg[d((\hat{t}_0,\hat{x}_0),(t',x'))^{\xi/2-(1-\eta)/2}a_n^{-\eps_0/2}2^{-N'\gamma}\big[(a_n^{\al}\vee 2^{-N'})^{\gamma(\tilde\gamma_m-1)}+a_n^{\beta\gamma} \big] \nonumber \\
&&\quad+d((\hat{t}_0,\hat{x}_0),(t',x'))^{\xi}a_n^{-\eps_0/2}\big[a_n^{\frac{\alpha_i}{2}(\gamma \tilde{\gamma}_m-1+\frac{\eta}{2})}+a_n^{\beta\gamma}a_n^{\frac{\alpha_i}{2}(\gamma -1+\frac{\eta}{2})} \big]\bigg] \nonumber \\
&&\leq 2^{-89}a_n^{-\eps_0/2}\bigg[2^{-N'(\xi/2+\gamma-(1-\eta)/2)}\big[(a_n^{\al}\vee 2^{-N'})^{\gamma(\tilde\gamma_m-1)}+a_n^{\beta\gamma} \big] \nonumber \\
&&\quad+2^{-N'\xi}\big[a_n^{\frac{\alpha_i}{2}(\gamma \tilde{\gamma}_m-1+\frac{\eta}{2})}+a_n^{\beta\gamma}a_n^{\frac{\alpha_i}{2}(\gamma -1+\frac{\eta}{2})} \big]\bigg] \nonumber \\
&&\leq 2^{-89}a_n^{-\eps_0/2}\bigg[2^{-N'(\xi/2+1/2)}2^{-N'(\gamma-1+\eta/2)}\big[(a_n^{\al}\vee 2^{-N'})^{\gamma(\gamma_{m}-1)}+a_n^{\beta\gamma} \big] \nonumber \\
&&
\quad+2^{-N'\xi}\big[(a_n^{\al}\vee2^{-N'})^{(\gamma\gamma_m-1+\eta/2)}+a_n^{\beta\gamma}(a_n^{\al}\vee2^{-N'})^{(\gamma -1+\frac{\eta}{2})} \big]\bigg] \nonumber \\
&&\leq 2^{-89}a_n^{-\eps_0/2}2^{-N'\xi}
\bigg[\big[(a_n^{\al}\vee 2^{-N'})^{\gamma\gamma_m-1+\eta/2}+a_n^{\beta\gamma}(a_n^{\al}\vee 2^{-N'})^{\gamma-1+\eta/2} \big] \nonumber \\
&&\quad+\big[(a_n^{\al}\vee2^{-N'})^{\gamma\gamma_{m}-1+\eta/2}+a_n^{\beta\gamma}(a_n^{\al}\vee2^{-N'})^{\gamma -1+\frac{\eta}{2}} \big]\bigg] \nonumber \\
&&\leq 2^{-88}a_n^{-\eps_0/2}2^{-N'\xi}
\big[(a_n^{\al}\vee 2^{-N'})^{\gamma\gamma_m-1+\eta/2}+a_n^{\beta\gamma}(a_n^{\al}\vee 2^{-N'})^{\gamma-1+\eta/2} \big],  \ \ i=1,...,M. \nonumber \en
From (\ref{vx7}), (\ref{vx8}) we get
\bn \label{vx9}
&& |u(\hat{t}_0,\hat{x}_0)-u(t',x')|  \nonumber \\
&&\leq|u_{2,a_n^{\alpha_i}}(\hat{t}_0,\hat{x}_0)-u_{2,a_n^{\alpha_i}}(t',x')|+|u_{1,a_n^{\alpha_i}}(\hat{t}_0,\hat{x}_0)-u_{1,a_n^{\alpha_i}}(t',x')| \nonumber \\
&&\leq 2^{-89}2^{-N'\xi}a_n^{-\eps_0/2}\big[a_n^{\beta}
+a_n^{-(1-\eta/4)\eps_2}a_n^{\beta \gamma} (a_n^{\al}\vee 2^{-N'})^{\gamma-1+\eta/2}   \nonumber \\
&&\quad +a_n^{-(1-\eta/4)\eps_2}(a_n^{\al}\vee 2^{-N'})^{\gamma\gamma_{m}-1+\eta/2}+\mathds{1}_{\{m= \bar{m}\}}(a_n^{\al}\vee 2^{-N'})^{\eta}\big] \nonumber \\
&&\quad +2^{-88}a_n^{-\eps_0/2}2^{-N'\xi}
\big[(a_n^{\al}\vee2^{-N'})^{\gamma\gamma_{m}-1+\eta/2}+a_n^{\beta\gamma}(a_n^{\al}\vee2^{-N'})^{\gamma -1+\frac{\eta}{2}} \big]  \nonumber \\
&&\leq 2^{-87}2^{-N'\xi}a_n^{-\eps_0/2}a_n^{-(1-\eta/4)\eps_2}\big[a_n^{\beta}
\quad +a_n^{\beta \gamma} (a_n^{\al}\vee 2^{-N'})^{\gamma-1+\eta/2}   \nonumber \\
&&\quad+(a_n^{\al}\vee 2^{-N'})^{\gamma\gamma_{m}-1+\eta/2}+\mathds{1}_{\{m= \bar{m}\}}(a_n^{\al}\vee 2^{-N'})^{\eta}\big].  \nonumber \\
\en
Consider $m=\bar{m}$ and $m<\bar{m}$ separately to get
\bn \label{vx10}
(a_n^{\al}\vee 2^{-N'})^{\gamma\gamma_{m}-1+\eta/2}+\mathds{1}_{\{m= \bar{m}\}}(a_n^{\al}\vee 2^{-N'})^{\eta}\leq 2(a_n^{\al}\vee 2^{-N'})^{\tilde{\gamma}_{m+1}-1}.
\en
Recall that $\eps_2\leq \eta \al \eps_0$ and therefore $(1-\eta/4)\eps_2\leq \eps_0/2$. Use this and (\ref{vx10}) on (\ref{vx9}) to get
\bn \label{vx11}
&& |u(\hat{t}_0,\hat{x}_0)-u(t',x')|  \nonumber \\
&&\leq 2^{-86}2^{-N'\xi}a_n^{-\eps_0}\big[a_n^{\beta}
+a_n^{\beta \gamma} (a_n^{\al}\vee 2^{-N'})^{\gamma-1+\eta/2}+(a_n^{\al}\vee 2^{-N'})^{{\tilde{\gamma}_{m+1}-1}}\big]  \nonumber \\
&&\leq2^{-84}2^{-N\xi}a_n^{-\eps_0}\big[a_n^{\beta}
+a_n^{\beta \gamma} (a_n^{\al}\vee 2^{-N})^{\gamma-1+\eta/2}+(a_n^{\al}\vee 2^{-N})^{{\tilde{\gamma}_{m+1}-1}}\big].  \nonumber \\
\en
From (\ref{vx11}) and the facts that $|u(\hat{t}_0,\hat{x}_0)|<a_n^{1-\al}2^{-N}$ and $\gamma_0=1, \ \gamma_1=\gamma+\eta/2$, we get
\bn \label{vx12}
&&|u(t',x')| \nonumber \\
&&\leq a_n^{1-\al}2^{-N}+ 2^{-84}2^{-N\xi}a_n^{-\eps_0}\big[a_n^{\beta}+a_n^{\beta \gamma}(a_n^{\al}\vee 2^{-N})^{\gamma_1-1}+(a_n^{\al}\vee 2^{-N})^{\tilde{\gamma}_{m+1}-1}\big].  \nonumber \\
&&\leq a_n^{-\eps_0}2^{-N\xi}\big[a_n^{1-\al}2^{-N(1-\xi)}+ 2^{-84}\big[a_n^{\beta}+a_n^{\beta \gamma}(a_n^{\al}\vee 2^{-N})^{\gamma_1-1}+(a_n^{\al}\vee 2^{-N})^{\tilde{\gamma}_{m+1}-1}\big]\big].  \nonumber \\
\en
By our choice of $N_1$ and $N_4$ we have $N(1-\xi)\geq 1$. Recall that $\beta\in [0,\frac{\eta}{1+\eta}]=[0,1- \al]$, therefore
\bn \label{vx13}
a_n^{1-\al}2^{-N(1-\xi)}\leq \frac{a_n^{1-\al}}{2}\leq \frac{a_n^{\beta}}{2}.
\en
From Lemma \ref{Lemma5.15} we have
\bn \label{vx14}
a_n^{\beta\gamma}(a_n^{\al}\vee 2^{-N})^{\gamma_1-1}\leq a_n^{\beta}+(2^{-N}\vee a_n^{\al})^{\tilde{\gamma}_{m+1}-1}.
\en
Apply (\ref{vx13}), (\ref{vx14}) to (\ref{vx12}) to get
\bn \label{vx15}
|u(t',x')|&\leq& a_n^{-\eps_0}2^{-N\xi}\big[a_n^{\beta}+(2^{-N}\vee a_n^{\al})^{\tilde{\gamma}_{m+1}-1}\big],
\en
which implies $(P_{m+1})$. \qed

\section{Proof of Proposition \ref{PropStopTimes}} \label{Sec-Proof-Prop3.3}
This section is dedicated to the proof of Proposition \ref{PropStopTimes}. The proof follows the same lines as the proof of Proposition 3.3 in \cite{MP09}. Before we start with the proof we will need the following Proposition.
\bn \label{aaa}
\hat{\Delta}_{1,u_2}(m,n,\eps_0,2^{-N},\eta)&=&2^{-N\gamma(1-\eps_0)}\big[(2^{-N}\vee a_n^{\al})^{\gamma(\bar{\gamma}_m-1)}+a_n^{\gamma\beta}\big], \nonumber \\
\hat{\Delta}_{2,u_2}(m,n,\eps_0,\eta)&=&a_n^{\alpha(1+\eta/2-\eps_0)/2}.
\en
\begin{proposition} \label{Prop5.14mod}
Let $0\leq m \leq \bar{m}+1$ and assume $(P_m)$. Let $\theta\in (0,\gamma-1+\eta/2)$. Then for any $n\in \mathds{N},$ \\ $ \eps_0 \in(0,1), \nu_1\in (0,\theta\wedge \eps_0], K\in \mathds{N}^\mathds{\geq K_1}, \alpha\in[0,2\al]$, and $\beta\in [0,\frac{\eta}{\eta+1}]$, there is an $N_{\ref{Prop5.14mod} }=N_{\ref{Prop5.14mod} }(m,n,\nu_1,\eps_0,K,\alpha,\beta,\eta)$ a.s. such that for all $N\geq N_{\ref{Prop5.14mod}}, (t,x)\in Z(N,n,K,\beta)$,
\bn \label{rtt}
&&d\equiv d((t,x),(t,x'))\leq 2^{-N} \textrm{ implies that } \nonumber \\
&&|\tilde u_{2,a_n^\alpha}(t,a_n^{2\al+2\eps_0},x)- \tilde u_{2,a_n^\alpha}(t,a_n^{2\al+2\eps_0},x')|\nonumber \\
&&\leq 2^{-89}a_n^{-\eps_0}\big[( a_n^{-\al(1-\eta/2)-3\eps_0}d)\wedge d^{\eta/2-\eps_0}\big] \bar{\Delta}_{1,u_2}(m,n,\eps_,2^{-N},\eta)+d^{1-\nu_1}\bar{\Delta}_{2,u_2}(m,n,\eps_,\eta).
\en
Moreover, $N_{\ref{Prop5.14}}$ is stochastically bounded, uniformly in $(n,\alpha,\beta)$.
\end{proposition}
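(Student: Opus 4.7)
The plan is to mimic the proof of Proposition \ref{Prop5.14} (which itself parallels Proposition \ref{Prop5.13}), exploiting one crucial new feature: in the stochastic integral defining $\tilde u_{2,a_n^\alpha}(t,a_n^{2\al+2\eps_0},\cdot)$, the heat kernel $G_{t+a_n^{2\al+2\eps_0}-s}$ always has temporal parameter at least $a_n^{2\al+2\eps_0}$ throughout the integration range. This means that in contrast to the $u_{2,a_n^\alpha}$ of Proposition \ref{Prop5.14}, here we can legitimately differentiate in space and obtain a genuine Lipschitz estimate in $x$ in addition to a Hölder estimate of exponent $\eta/2-\eps_0$; taking the pointwise minimum of the two produces the $[a_n^{-\al(1-\eta/2)-3\eps_0}d\wedge d^{\eta/2-\eps_0}]$ bracket in the statement.

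Concretely, I would write the spatial increment as the stochastic integral
$$\tilde u_{2,a_n^\alpha}(t,a_n^{2\al+2\eps_0},x)-\tilde u_{2,a_n^\alpha}(t,a_n^{2\al+2\eps_0},x')=\int_{s_0}^{t}\int_{\re}\bigl[G_{\tau_s}(y-x)-G_{\tau_s}(y-x')\bigr]D(s,y)\,W(ds,dy),$$
with $s_0=(t+a_n^{2\al+2\eps_0}-a_n^\alpha)_+$ and $\tau_s:=t+a_n^{2\al+2\eps_0}-s\in[a_n^{2\al+2\eps_0},a_n^\alpha]$, and apply a Dubins--Schwarz time change together with a stochastic-modulus argument (as in the proofs of Propositions \ref{probIncFdl} and \ref{Prop5.14}) to reduce the pathwise bound to an estimate on the quadratic variation $Q(x,x')$ obtained by squaring the integrand against $\mu(dy)\,ds$. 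After inserting $\mathds{1}_{\{|x-y|>\tau_s^{1/2-\nu_0}\vee 2|x-x'|\}}$ and its complement, the far part of $Q$ is absorbed into the exponential factor $\exp(-c\,a_n^{-2\nu_0(\al+\eps_0)})$ supplied by Lemma \ref{Lemma-MP4.3}(b), which dominates every polynomial loss in $a_n$ for $N$ large.

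For the near part I would estimate $|G_{\tau_s}(y-x)-G_{\tau_s}(y-x')|$ in two independent ways: via Lemma \ref{heatest}(a) with exponent $\kappa=\eta/2-\eps_0$, and via the mean value theorem combined with Lemma \ref{Lem4.2MP} (which bounds $|G'_{\tau}|$ by $C\tau^{-1/2}G_{2\tau}$). I would then use Lemma \ref{LemmaBound-u} to replace $|u(s,y)|^{2\gamma}$ by its space-time modulus (and, as in the proof of Lemma \ref{Lemma-Q2}, drop the maximum with $a_n^\al$ since $\sqrt{\tau_s}\ge a_n^{\al+\eps_0}>a_n^\al$), and apply Lemma \ref{condmu}(d) to the $\mu(dy)$-integral. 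Substituting $\tau=\tau_s$, the Lipschitz choice produces $\int_{a_n^{2\al+2\eps_0}}^{a_n^\alpha}\tau^{-2+\eta/2-\eps}\,d\tau\le C\,a_n^{(2\al+2\eps_0)(-1+\eta/2)-\eps}$, so $Q^{1/2}_{\text{Lip}}\lesssim|x-x'|\,a_n^{-\al(1-\eta/2)-3\eps_0}$ after absorbing the $\eps_0$ slack; the Hölder choice produces $Q^{1/2}_{\text{H\"ol}}\lesssim|x-x'|^{\eta/2-\eps_0}a_n^{-\eps_0}$. In each case the remaining $|u|^{2\gamma}$ factor delivers exactly the $\hat\Delta_{1,u_2}$ prefactor of the statement, and retaining whichever of the two estimates is smaller gives the claimed minimum.

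The second term $d^{1-\nu_1}\hat\Delta_{2,u_2}$ originates, just as in Proposition \ref{Prop5.14}, from the boundary regime of the Dubins--Schwarz stochastic modulus, i.e.\ from the gap between $Q(x,x')^{1/2}$ and $Q(x,x')^{(1-\nu_1)/2}$ on the short scale $\tau_s\approx d^{\,2}$, where the integration window is shorter than the spatial scale. I expect the main technical obstacle to be the precise bookkeeping of the $\eps_0$ losses coming from Lemma \ref{condmu}(d) and from the stochastic-modulus step, so that the Lipschitz exponent of $a_n$ ends up as exactly $-\al(1-\eta/2)-3\eps_0$ and the Hölder exponent of $d$ as exactly $\eta/2-\eps_0$, together with verifying that the two competing estimates can indeed be pasted together into a genuine pointwise minimum rather than a sum, which requires choosing the split between the Lipschitz and Hölder bounds based on the ratio of $d$ to $a_n^{\al(1-\eta/2)+\text{corrections}}$.
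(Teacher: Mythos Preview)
Your overall plan matches the paper's proof in Section \ref{proof-u2} closely: write the increment as a stochastic integral over $s\in[(t+a_n^{2\al+2\eps_0}-a_n^\alpha)_+,t]$, split the quadratic variation into far and near pieces via the indicator $\mathds{1}_{\{|x-y|>\tau_s^{1/2-\nu_0}\vee 2|x-x'|\}}$, and finish with Dubins--Schwarz. For the near piece the paper uses Lemma \ref{Lemma-MP4.3}(a), which already packages the Lipschitz and H\"older behaviour into the single factor $\tau_s^{\eta/2-1-\eps_0}[1\wedge d^2/\tau_s]$, and then extracts the two competing bounds by integrating in time in two different ways (Lemma \ref{Lem4.1MP}(b) for $d^{\eta-2\eps_0}$, and a direct $d^2\int\tau^{\eta/2-2-\eps_0}d\tau$ for the Lipschitz side); your alternative of applying Lemma \ref{heatest}(a) and the mean value theorem plus Lemma \ref{Lem4.2MP} separately, and then Lemma \ref{condmu}, is a legitimate equivalent. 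One small slip: you write ``$\sqrt{\tau_s}\ge a_n^{\al+\eps_0}>a_n^\al$'', but since $a_n<1$ the second inequality is reversed, so you cannot drop the maximum with $a_n^{\al}$ in the $|u|$-bound as in Lemma \ref{Lemma-Q2}; this is harmless because $\hat\Delta_{1,u_2}$ retains that maximum anyway.

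The real gap is your attribution of the term $d^{1-\nu_1}\hat\Delta_{2,u_2}$. It does \emph{not} arise from the Dubins--Schwarz modulus (that step only costs a $\nu_1$ in the exponent of $d$), and the far piece is \emph{not} exponentially absorbed in general. In the paper the second term is exactly the contribution of the far piece $\hat Q_{S,1}$ (Lemma \ref{Lemma7.1}): the factor $\exp\{-\nu_1\tau_s^{-2\nu_0}/32\}$ from Lemma \ref{Lemma-MP4.3}(b) is used only to regularise $\tau_s^{\eta/2-1-\eps_0}$ into $\tau_s^{1+\eta/2-\eps_0}$, after which Lemma \ref{Lem4.1MP}(a) yields $(d^2\wedge\delta)^{1-\nu_1/2}\delta^{1+\eta/2-\eps_0}$ with $\delta=a_n^\alpha$; the square root of this is precisely $(d\wedge a_n^{\alpha/2})^{1-\nu_1/2}\hat\Delta_{2,u_2}$. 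Your exponential-absorption heuristic would only make the far piece negligible when the upper limit $\tau_s\le a_n^\alpha$ forces $\tau_s^{-2\nu_0}\ge a_n^{-2\alpha\nu_0}$ to be large, i.e.\ for $\alpha$ bounded away from $0$, whereas the proposition is stated uniformly for $\alpha\in[0,2\al]$; at $\alpha=0$ the exponential is merely $O(1)$ and the far piece genuinely contributes $d^{1-\nu_1}$.
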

The proof of Proposition \ref{Prop5.14mod} is given in Section \ref{proof-u2}. \medskip \\
In Proposition \ref{Prop-Induction} we established the bound $(P_{\bar{m}+1})$. Therefore, we can use the conclusions of Corollary \ref{corollary-u-tag-reg} and  Propositions \ref{Prop5.11}, \ref{Prop5.14mod} with $\bar{m}+1$. We will use these conclusions to derive the modulus of continuity for $u_{1,a_n^{\alpha}}$ and $\tilde u_{2,a_n^{\alpha}}$.\\\\ We will construct a few sequences of stopping times $\{U^{(i)}_{M,n,\beta_j}\}_{M,n}, \ \ i=1,2,\dots ,5\ ,j=1,2,...$ and, roughly speaking, we will show that their minimum is the required sequence of stopping times $\{U_{M,n}\}_{M,n}$. We fix a $K_0\in \mathds{N}^{\geq K_1}$ and positive constants $\eps_0, \eps_1$ as in (\ref{eps1}). For $0< \beta \leq \frac{\eta}{\eta+1}- \eta\eps_1$, define
\bn \label{alpha-def}
\alpha = \alpha(\beta) = 2(\beta/\eta+\eps_1) \in [0,2\al],
\en
and
\bn \label{U1-def}
U^{(1)}_{M,n,\beta}&=&\inf\bigg\{t: \textrm{ there are } \eps \in [a_n^{\al+\eps_0},2^{-M}],|x|\leq K_0+1, \ \hat{x}_0, x' \in \re,  \textrm{ s.t. } |x-x'|\leq 2^{-M}, \nonumber \\
&& |x- \hat{x}_0| \leq \eps, \ | u(t,\hat{x}_0) |\leq a_n\wedge (a_n^{1-\al}\eps), \ |u'_{1,a^{2\al}_n}(t, \hat{x}_0)|\leq a_n^{\beta}, \nonumber \\
&&\textrm{ and }
\ |u'_{1,a_n^{\alpha}}(t+a_n^{2\al+2\eps_0},x)-u'_{1,a_n^{\alpha}}(t+a_n^{2\al+2\eps_0},x')| >  2^{-76}a_n^{-\eps_0-(2-\eta/2)\eps_1}(|x-x'|\vee a_n^{\al+\eps_0})^{1-\eps_0}\nonumber \\
&&\times\big[a_n^{-(2-\eta/2)\beta/\eta}(\eps\vee |x'-x|)^{(\eta+1)\gamma}
+a_n^{\beta(\eta-1)/\eta}
+a_n^{\beta\gamma-\beta(2-\eta/2)/\eta} (\eps\vee |x'-x|)^{\gamma}\big] \bigg\}\wedge T_{K_0}. \nonumber \\
\en
Define $U^{(1)}_{M,n,\beta}$ by the same expression with $\beta=0$, but without the condition on $|u'_{1,a^{2\al}_n}(t, \hat{x}_0)|$. Just as in Section 6 of \cite{MP09}, $U^{(1)}_{M,n,\beta}$ is an $\mathcal{F}_t$-stopping time.
\begin{lemma} \label{Lemma6.1}
For each $n\in \mathds{N}$ and $\beta$ as in (\ref{beta-up-lim}), $U^{(1)}_{M,n,\beta} \uparrow T_{K_0}$ as $M\uparrow \infty$ and
\bn \label{u1-lem}
\lim_{M\rr \infty } \sup_{n,0\leq \beta\leq \frac{\eta}{\eta+1}-\eta\eps_1} P(U^{(1)}_{M,n,\beta}<T_{K_0})=0.
\en
\end{lemma}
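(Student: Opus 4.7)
The strategy is to reduce violation of the inequality defining $U^{(1)}_{M,n,\beta}$ before time $T_{K_0}$ to the event $\{N_{\ref{probIncFdl}}\ge M-c\}$ for some small constant $c$, and then invoke the uniform stochastic boundedness of $N_{\ref{probIncFdl}}$ over $(n,\alpha,\beta)$ established in Corollary~\ref{corollary-u-tag-reg}. Monotonicity $U^{(1)}_{M,n,\beta}\uparrow$ (in $M$) is immediate from the definition, since enlarging $M$ shrinks the range of admissible $\eps$ and $|x-x'|$ and hence the set of configurations that can stop the process; once the uniform tail estimate (\ref{u1-lem}) is proved, the almost-sure limit is $T_{K_0}$.

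Suppose $\omega\in\{U^{(1)}_{M,n,\beta}<T_{K_0}\}$ and fix $t=U^{(1)}_{M,n,\beta}(\omega)$ together with witnesses $\eps\in[a_n^{\al+\eps_0},2^{-M}]$, $|x|\le K_0+1$, and $\hat x_0,x'$ satisfying the conditions listed in (\ref{U1-def}). Set $\bar\eps:=\eps\vee |x-x'|$ and choose $N\in\mathds{N}$ with $2^{-N-1}<4\bar\eps\le 2^{-N}$, so $N\ge M-3$. Since $|x-\hat x_0|\le \eps\le 2^{-N}$ and $|u(t,\hat x_0)|\le a_n\wedge(a_n^{1-\al}\eps)\le a_n\wedge(a_n^{1-\al}2^{-N})$, while $|u'_{1,a_n^{2\al}}(t,\hat x_0)|\le a_n^\beta$ (the $\beta=0$ case requires no bound on $u'_{1,a_n^{2\al}}$), we have $(t,\hat x_0)\in Z(N,n,K_0+1,\beta)$. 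By the triangle inequality, both $(t+a_n^{2\al+2\eps_0},x)$ and $(t+a_n^{2\al+2\eps_0},x')$ are at $d$-distance at most $a_n^{\al+\eps_0}+\eps+|x-x'|\le 3\bar\eps\le 2^{-N}$ from $(t,\hat x_0)$.

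I would then apply Corollary~\ref{corollary-u-tag-reg} with $m=\bar m+1$ (so $\tilde\gamma_m=\eta+1$), $\nu_1:=\eps_0$, $K=K_0+1$, and $\alpha=\alpha(\beta)$, twice --- to $(t,\hat x_0)\rightsquigarrow(t+a_n^{2\al+2\eps_0},x)$ and to $(t,\hat x_0)\rightsquigarrow(t+a_n^{2\al+2\eps_0},x')$ --- which is legal provided $N\ge N_{\ref{probIncFdl}}(\bar m+1,n,\eps_0,\eps_0,K_0+1,\alpha,\beta,\eta,\mu(\re))$. Triangle inequality yields
\[
|u'_{1,a_n^{\alpha}}(t+a_n^{2\al+2\eps_0},x)-u'_{1,a_n^{\alpha}}(t+a_n^{2\al+2\eps_0},x')|\le 2\cdot 2^{-85}(3\bar\eps)^{1-\eps_0}\bar\Delta_{u_1'}(\bar m+1,n,\alpha,\eps_0,2^{-N}).
\]
Substituting $\tilde\gamma_{\bar m+1}=\eta+1$, $\alpha/2=\beta/\eta+\eps_1$ and $\alpha(1-\eta/4)=(2-\eta/2)\beta/\eta+(2-\eta/2)\eps_1$, and using $2^{-N}\asymp\bar\eps$, the three terms of $\bar\Delta_{u_1'}$ match, term by term, the three summands in the threshold of (\ref{U1-def}): the first produces $a_n^{-(2-\eta/2)\beta/\eta}\bar\eps^{(\eta+1)\gamma}$, the third produces $a_n^{\beta\gamma-(2-\eta/2)\beta/\eta}\bar\eps^{\gamma}$, and the wedge term $(a_n^{\alpha/2}\vee 2^{-N})^{(\gamma\gamma_{\bar m+1}-2+\eta/2)\wedge 0}$ produces the middle summand $a_n^{\beta(\eta-1)/\eta}$. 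Thus the corollary's output is smaller than the threshold in (\ref{U1-def}), contradicting the assumed violation.

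The main obstacle is this last matching of powers, in particular the middle term: one has to verify the inequality $(\beta/\eta+\eps_1)[(\gamma\gamma_{\bar m+1}-2+\eta/2)\wedge 0]\ge \beta(\eta-1)/\eta-(2-\eta/2)\eps_1$, which uses the hypothesis $\gamma>1-\eta/(2(\eta+1))$ together with the inequality $\gamma\gamma_{\bar m+1}>1+\eta/2$ built into the definition of $\bar m$ (see (\ref{m-bar-def})). Given this, the contradiction yields
\[
\{U^{(1)}_{M,n,\beta}<T_{K_0}\}\subset \{N_{\ref{probIncFdl}}(\bar m+1,n,\eps_0,\eps_0,K_0+1,\alpha(\beta),\beta,\eta,\mu(\re))\ge M-3\}.
\]
By Corollary~\ref{corollary-u-tag-reg} (and Proposition~\ref{probIncFdl}), the right-hand side has probability tending to zero as $M\to\infty$ uniformly in $(n,\alpha,\beta)$, which establishes (\ref{u1-lem}); combined with monotonicity, this gives $U^{(1)}_{M,n,\beta}\uparrow T_{K_0}$.
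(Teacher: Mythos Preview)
Your overall strategy---reduce to Corollary~\ref{corollary-u-tag-reg} with $m=\bar m+1$ and invoke the uniform stochastic boundedness of $N_{\ref{probIncFdl}}$---is exactly the paper's, and your matching of the three terms of $\bar\Delta_{u_1'}$ to the bracket in the threshold is correct. There is, however, a genuine gap in how you handle the distance prefactor.

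You apply the corollary with base point $(t,\hat x_0)$, so the distance from the base to each of $(t+a_n^{2\al+2\eps_0},x)$ and $(t+a_n^{2\al+2\eps_0},x')$ is of order $\bar\eps=\eps\vee|x-x'|$, and your triangle-inequality bound carries the prefactor $(3\bar\eps)^{1-\eps_0}$. The threshold in (\ref{U1-def}), however, has prefactor $(|x-x'|\vee a_n^{\al+\eps_0})^{1-\eps_0}$. When $\eps$ is much larger than $|x-x'|\vee a_n^{\al+\eps_0}$ (which is allowed: $\eps$ ranges up to $2^{-M}$ while $|x-x'|$ may be arbitrarily small), the ratio $(\bar\eps/(|x-x'|\vee a_n^{\al+\eps_0}))^{1-\eps_0}$ is unbounded, and your bound no longer dominates the threshold. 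So the claimed contradiction does not follow.

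The fix is to take $(t,x)$, not $(t,\hat x_0)$, as the base point: since $\hat x_0$ witnesses $(t,x)\in Z(N,n,K_0+1,\beta)$ for the same $N$, you can still apply the corollary, but now the distances to $(t+a_n^{2\al+2\eps_0},x)$ and $(t+a_n^{2\al+2\eps_0},x')$ are $a_n^{\al+\eps_0}$ and $a_n^{\al+\eps_0}+|x-x'|$, each bounded by $2(|x-x'|\vee a_n^{\al+\eps_0})$. This is precisely how the paper proceeds (see (\ref{mj1})): $\eps$ then enters only through $2^{-N}$ inside $\bar\Delta_{u_1'}$, matching the $\eps\vee|x-x'|$ in the bracket, while the outer prefactor matches $(|x-x'|\vee a_n^{\al+\eps_0})^{1-\eps_0}$.
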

\begin{proof}
From the monotonicity in $M$ and (\ref{u1-lem}) the first assertion is trivial. Let us consider the second assertion. Recall that $n_M$ was defined before (\ref{n1}).
By Proposition \ref{Prop-Induction} we can use Corollary \ref{corollary-u-tag-reg} with $\eps_0/2$ instead of $\eps_0$, $m=\bar{m}+1$, $\nu_1=\eps_0, K=K_0+1$, and $\alpha,\beta,\al$ as in (\ref{alpha-def}), (\ref{u1-lem}) and (\ref{beta-up-lim}) respectively.
Therefore, there exists $N_0(n,\eps_0,\eps_1,K_0+1, \beta) \in \mathds{N}$ a.s., stochastically bounded uniformly in $(n,\beta)$ (where $\beta$ is as in (\ref{beta-up-lim})), such that if
\bn \label{mj0}
N\geq N_0(\omega), \ (t,x)\in Z(N,n,K_0+1,\beta), \ a_n^{\al+\eps_0}+|x-x'|\leq2^{-N},
\en
then,
\bn \label{mj1}
&&|u'_{1,a_n^\alpha}(t+a_n^{2\al+2\eps_0},x)-u'_{1,a_n^\alpha}(t+a_n^{2\al+2\eps_0},x')| \nonumber \\
&&\leq |u'_{1,a_n^\alpha}(t+a_n^{2\al+2\eps_0},x)-u'_{1,a_n^\alpha}(t,x)|
+ |u'_{1,a_n^\alpha}(t+a_n^{2\al+2\eps_0},x')-u'_{1,a_n^\alpha}(t,x)| \nonumber \\
&&\leq 2^{-83}(|x-x'|\vee a_n^{\al+\eps_0})^{1-\eps_0}a_n^{-\eps_0}\big[a_n^{-2(\beta/\eta+\eps_1)(1-\eta/4)}2^{-N\gamma(\eta+1)}+(2^{-N}\vee a_n^{(\beta/\eta+\eps_1)})^{(\eta+1)\gamma-2+\eta/2} \nonumber \\
&&\quad +a_n^{-2(\beta/\eta+\eps_1)(1-\eta/4)+\beta\gamma}(a_n^{\beta/\eta+\eps_1}\vee 2^{-N})^\gamma\big].
\en
Since $\gamma>1-\frac{\eta}{2(\eta+1)}+100\eps_1$ (see (\ref{eps1})) we have
\bn \label{plp1}
(\eta+1)\gamma-2+\eta/2\geq \eta-1.
\en
 and
\bn  \label{mj2}
a_n^{-\beta(2-\eta/2)/\eta+\beta \gamma +(\beta/\eta+\eps_1)\gamma } &\leq& a_n^{\beta(\gamma(\eta+1)/\eta-2/\eta+1/2)} \nonumber \\
&\leq& a_n^{\beta((1+\eta/2)/\eta-2/\eta+1/2)}\nonumber \\
&\leq& a_n^{(\eta-1)\beta/\eta}.
\en
Note that
\bn \label{mj2.3}
a_n^{(\beta/\eta+\eps_1)(\eta-1)}\leq a_n^{\beta(\eta-1)/\eta}a_n^{-(2-\eta/2)\eps_1}.
\en
From (\ref{mj1}), (\ref{plp1}), (\ref{mj2}) and (\ref{mj2.3}) and  we have
\bn \label{mj3}
&&|u'_{1,a_n^\alpha}(t+a_n^{2\al+2\eps_0},x)-u'_{1,a_n^\alpha}(t+a_n^{2\al+2\eps_0},x')| \nonumber \\
&&= 2^{-83}(|x-x'|\vee a_n^{\al+\eps_0})^{1-\eps_0}a_n^{-\eps_0-(2-\eta/2)\eps_1}\big[a_n^{-\beta(2-\eta/2)/\eta}2^{-N\gamma(\eta+1)}+(2^{-N}\vee a_n^{\beta/\eta})^{\eta-1}  \nonumber \\
&&\quad+ a_n^{-\beta(2-\eta/2)/\eta+\beta\gamma} 2^{-\gamma N}\big].
\en
Assume that $\beta>0$ (if $\beta=0$ we can omit the bound on $|u'_{1,a_n^{\alpha}}(t,\hat{x}_0)|$ in what follows). Assume \\ $M\geq  N_0(n,\eps_0,\eps_1,K_0+1,\beta)+2$. Suppose for some $t<T_{K_0}(\leq T_{K_0+1})$ there are $\eps\in[a_n^{\al+\eps_0},2^{-M}], |x|\leq K_0+1, \hat{x}_0,x'\in \re$ satisfying $|x-x'|\leq 2^{-M}, \ |\hat{x}_0-x|\leq \eps, \ |u(t,\hat{x}_0)|\leq a_n \wedge (a_n^{1-\al}\eps)$ and $|u'_{1,a_n^\alpha}(t+a_n^{2\al},\hat{x}_0)|\leq a_n^{\beta}$.
If $x\not=x'$, then
\bn \label{se1}
0 < (|x-x'|+ a_n^{\al+\eps})\vee \eps \leq 2^{-M+1} \leq 2^{-N_0-1}.
\en
%From (\ref{nM}) we have $n>n_M(\eps_1)$, and this together with (\ref{eps1}) and (\ref{alpha_0}) gives
%\bn \label{se2}
%a_n^{\al+\eps_0}\leq a_n^{\eps_1}\leq 2^{-M-8}\leq 2^{-N_0-1}.
%\en
By (\ref{se1}) we may choose $N>N_0$ such that $2^{-N-1}<\eps\vee (a_n^{\al+\eps_0}+ |x-x'|)\leq 2^{-N}$. Then (\ref{mj0}) holds and we get from (\ref{mj3})
\bn \label{mj4}
&&|u'_{1,a_n^\alpha}(t+a_n^{2\al+2\eps_0},x)-u'_{1,a_n^\alpha}(t+a_n^{2\al+2\eps_0},x')| \nonumber \\
&&\leq 2^{-78}(|x-x'|\vee a_n^{\al+\eps_0})^{1-\eps_0}a_n^{-\eps_0-(2-\eta/2)\eps_1}\big[a_n^{-\beta(2-\eta/2)/\eta}(|x-x'|\vee a_n^{\al+\eps_0}\vee \eps)^{\gamma(\eta+1)}+a_n^{\beta(\eta-1)/\eta} \nonumber \\
&&\quad+ a_n^{-\beta(2-\eta/2)/\eta+\beta\gamma} (|x-x'|\vee a_n^{\al+\eps_0}\vee \eps)^{\gamma }\big]  \nonumber \\
&&\leq 2^{-76}(|x-x'|\vee a_n^{\al+\eps_0})^{1-\eps_0}a_n^{-\eps_0-(2-\eta/2)\eps_1}\big[a_n^{-\beta(2-\eta/2)/\eta}(|x-x'|\vee \eps)^{\gamma(\eta+1)}+a_n^{\beta(\eta-1)/\eta} \nonumber \\
&&\quad+ a_n^{-\beta(2-\eta/2)/\eta+\beta\gamma} (|x-x'|\vee \eps)^{\gamma }\big].
\en
If $x=x'$ the bound in (\ref{mj4}) is trivial. From (\ref{mj4}) we get that if $M\geq  N_0(n,\eps_0,\eps_1,K_0+1,\beta)+2$, then $U^{(1)}_{M,n,\beta}=T_{K_0}$. Therefore, we have shown that
\bn
P(U^{(1)}_{M,n,\beta}<T_{K_0}) = P(M<N_0+2). \nonumber
\en
This completes the proof because $N_0(n,\eps_0,\eps_1,K_0+1, \beta)$ is stochastically bounded uniformly in $(n,\beta)$, where $\beta$ satisfies (\ref{beta-up-lim}).
\end{proof}
\\\\
Next we define a stopping time related to the increment of $u_{2,a_n^{\alpha}}$. For $0<\beta < \frac{\eta}{\eta+1}-\eps_1$ define
\bn \label{U2-def}
U^{(2)}_{M,n,\beta}&=&\inf\bigg\{t: \textrm{ there are } \eps \in [0,2^{-M}],|x|\leq K_0+1, \ \hat{x}_0, x' \in \re,  \textrm{ s.t. } |x-x'|\leq 2^{-M}, \nonumber \\
&& |x- \hat{x}_0| \leq \eps, \ | u(t,\hat{x}_0) |\leq a_n\wedge (a_n^{1-\al}\eps), \nonumber \\
&& \ |u'_{1,a^{2\al}_n}(t, \hat{x}_0)|\leq a_n^{\beta}, \textrm{ and }
 \ |\tilde u_{2,a_n^{\alpha}}(t,a_n^{2\al+2\eps_0},x)-\tilde u_{2,a_n^{\alpha}}(t,a_n^{2\al+2\eps_0},x')| \nonumber \\
&& > 2^{-87}a_n^{-\eps_0}\bigg[((a_n^{-\al(1-\eta/2)-3\eps_0}|x-x'|)\wedge|x-x'|^{\eta/2-\eps_0})( \eps\vee |x'-x|)^{\gamma} \nonumber \\
&&\quad \times\big[((a_n^{\al}\vee \eps\vee |x'-x|)^{\eta\gamma}
 +a_n^{\beta\gamma}\big]+|x-x'|^{1-\eps_0}a_n^{\beta+99\eps_1 \beta / \eta + \eps_1\eta^2/4}\big)\bigg] \bigg\}\wedge T_{K_0}.\nonumber \\
\en
Define $U^{(2)}_{M,n,0}$ by the same expression with $\beta=0$, but without the condition on $|u'_{1,a^{2\al}_n}(t, \hat{x}_0)|$. Just as in Section 6 of \cite{MP09}, $U^{(2)}_{M,n,\beta}$ is an $\mathcal{F}_t$-stopping time.
\begin{lemma} \label{Lemma6.2}
For each $n\in \mathds{N}$ and $\beta$ as in (\ref{beta-up-lim}), $U^{(2)}_{M,n,\beta} \uparrow T_{K_0}$ as $M\uparrow \infty$ and
\bn
\lim_{M\rr \infty } \sup_{n,0\leq \beta\leq \frac{\eta}{\eta+1}-\eta\eps_1} P(U^{(2)}_{M,n,\beta}<T_{K_0})=0. \nonumber
\en
\end{lemma}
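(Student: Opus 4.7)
\noindent\textbf{Proof plan for Lemma \ref{Lemma6.2}.} The monotonicity $U^{(2)}_{M,n,\beta}\uparrow T_{K_0}$ as $M\uparrow\infty$ follows immediately from the defining formula (\ref{U2-def}), since enlarging $M$ shrinks the search region for the triggering configuration. The substantive claim is the uniform probability bound, and my approach will mirror the strategy already used for $u'_1$ in Lemma \ref{Lemma6.1}, replacing Corollary \ref{corollary-u-tag-reg} by Proposition \ref{Prop5.14mod}. Since $\gamma$ satisfies (\ref{optsol}), Proposition \ref{Prop-Induction} yields $(P_{\bar m+1})$, so Proposition \ref{Prop5.14mod} applies with $m=\bar m+1$, $\nu_1=\eps_0$, $K=K_0+1$, $\alpha=\alpha(\beta)=2(\beta/\eta+\eps_1)$ as in (\ref{alpha-def}), and its $\eps_0$-parameter replaced by $\eps_0/C$ for a fixed constant $C$ large enough to absorb the extra $a_n^{-3\eps_0'}$ factors appearing in $\bar\Delta_{1,u_2}$ and $\bar\Delta_{2,u_2}$. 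This produces a random threshold $N_0=N_0(n,\eps_0,\eps_1,K_0+1,\beta)(\omega)$ that is stochastically bounded uniformly in $(n,\beta)$.

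The next step is the standard reduction. Suppose $M\geq N_0+2$ and there exist $\eps\in[0,2^{-M}]$, $|x|\leq K_0+1$, $\hat x_0,x'\in\re$ with $|x-x'|\leq 2^{-M}$, $|x-\hat x_0|\leq\eps$, $|u(t,\hat x_0)|\leq a_n\wedge (a_n^{1-\al}\eps)$, and $|u'_{1,a_n^{2\al}}(t,\hat x_0)|\leq a_n^{\beta}$, for some $t<T_{K_0}$. If $\eps\vee|x-x'|=0$ the increment vanishes; otherwise pick $N\in\mathds{N}$ with $2^{-N-1}<\eps\vee|x-x'|\leq 2^{-N}$, so $N\geq M\geq N_0+2$. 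The witness $(t,\hat x_0)$ certifies $(t,x)\in Z(N,n,K_0+1,\beta)$, and $|x-x'|\leq 2^{-N}$ by construction. Proposition \ref{Prop5.14mod} therefore gives
\[
|\tilde u_{2,a_n^\alpha}(t,a_n^{2\al+2\eps_0},x)-\tilde u_{2,a_n^\alpha}(t,a_n^{2\al+2\eps_0},x')|\leq 2^{-89}a_n^{-\eps_0/C}\bigl\{[(a_n^{-\al(1-\eta/2)-3\eps_0/C}d)\wedge d^{\eta/2-\eps_0/C}]\bar\Delta_{1,u_2}+d^{1-\eps_0}\bar\Delta_{2,u_2}\bigr\},
\]
with $d=|x-x'|$, $\bar\Delta_{i,u_2}$ evaluated at $m=\bar m+1$ (so $\tilde\gamma_m=1+\eta$ by (\ref{gamma-m-up-lim})).

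The main algebraic step is to show this bound is dominated by the RHS in the defining inequality of $U^{(2)}_{M,n,\beta}$, which will then force the infimum to equal $T_{K_0}$ and allow the stochastic boundedness of $N_0$ uniformly in $(n,\beta)$ to deliver the conclusion. For the contribution of $\bar\Delta_{1,u_2}$, I substitute $\tilde\gamma_m-1=\eta$ and $2^{-N}\leq 2(\eps\vee|x-x'|)$, and split $(2^{-N}\vee a_n^\alpha)^{\gamma\eta}$ into the cases $a_n^\alpha\leq 2^{-N}$ (which gives $\leq 2^{\gamma\eta}(\eps\vee|x-x'|)^{\gamma\eta}\leq 2^{\gamma\eta}(a_n^\al\vee\eps\vee|x-x'|)^{\gamma\eta}$) and $a_n^\alpha> 2^{-N}$ (which gives $a_n^{\alpha\gamma\eta}\leq a_n^{\gamma\beta}$ since $\alpha\gamma\eta=2\gamma\beta+2\gamma\eta\eps_1\geq\gamma\beta$). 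The two sub-cases assemble into the bracket $[(a_n^\al\vee\eps\vee|x-x'|)^{\gamma\eta}+a_n^{\gamma\beta}]$ appearing in (\ref{U2-def}), up to multiplicative constants that are absorbed between $2^{-89}$ and $2^{-87}$. For the $\bar\Delta_{2,u_2}$-contribution, I use $\alpha/2=\beta/\eta+\eps_1$ together with the inequality $\gamma>1-\frac{\eta}{2(\eta+1)}+100\eps_1$ from (\ref{eps1}) to compute
\[
\frac{\gamma(1+\eta)-1+\eta/2}{\eta}\geq 1+\frac{100\eps_1(1+\eta)}{\eta}\geq 1+\frac{99\eps_1}{\eta},\qquad \frac{\gamma-1+\eta/2}{\eta}+\gamma\geq 1+\frac{99\eps_1}{\eta},
\]
and $\min\{\eps_1(\gamma(1+\eta)-1+\eta/2),\,\eps_1(\gamma-1+\eta/2)\}\geq\eps_1\eta^2/(2(\eta+1))\geq\eps_1\eta^2/4$. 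Hence each summand in $\bar\Delta_{2,u_2}$ is at most $a_n^{\beta(1+99\eps_1/\eta)+\eps_1\eta^2/4-3\eps_0/C}$, and the $a_n^{-3\eps_0/C}$ slack is absorbed using $\eps_0<\eta^2\eps_1/100$ from (\ref{eps1}) for all sufficiently large $n$ (a threshold that we fold into $N_0$). The hardest part of the whole argument is making these algebraic comparisons simultaneously compatible with the two competing factors $a_n^{-\al(1-\eta/2)-3\eps_0}|x-x'|$ and $|x-x'|^{\eta/2-\eps_0}$ inside the minimum, and with the constant $2^{-87}$ versus the $2^{-89}$ supplied by Proposition \ref{Prop5.14mod}; the bookkeeping is tedious but routine, and once it is carried out the conclusion $U^{(2)}_{M,n,\beta}=T_{K_0}$ on $\{M\geq N_0+2\}$ yields $\sup_{n,\beta}P(U^{(2)}_{M,n,\beta}<T_{K_0})\leq \sup_{n,\beta}P(N_0>M-2)\to 0$ as $M\to\infty$.
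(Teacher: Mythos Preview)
Your proposal is correct and follows essentially the same route as the paper: invoke Proposition~\ref{Prop5.14mod} at level $m=\bar m+1$ with $\nu_1=\eps_0$, $K=K_0+1$, $\alpha=\alpha(\beta)$, simplify the $\bar\Delta_{2,u_2}$ contribution via the algebraic inequalities coming from (\ref{eps1}) (the paper records these as (\ref{mj35}), (\ref{mj35.5}) and combines them into (\ref{mj13})), then pick $N$ dyadically from $\eps\vee|x-x'|$ and finish exactly as in Lemma~\ref{Lemma6.1}. The only cosmetic differences are that the paper applies Proposition~\ref{Prop5.14mod} with the original $\eps_0$ rather than $\eps_0/C$ (the slack in (\ref{mj35})--(\ref{mj35.5}) already absorbs the extra $a_n^{-3\eps_0}$), and it does not spell out your case-split on $a_n^{\alpha}\lessgtr 2^{-N}$ for the $\bar\Delta_{1,u_2}$ term, writing $(2^{-N}\vee a_n^{\al})^{\eta\gamma}$ directly in (\ref{mj12}); your explicit case analysis is a harmless clarification of that passage.
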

\begin{proof}
As before, we only need to show the second assertion. By Proposition \ref{Prop-Induction} we can use Proposition \ref{Prop5.14mod} with $m=\bar{m}+1$, $\nu_1=\eps_0, K=K_0+1$ and $\alpha,\beta$ as in (\ref{alpha-def}) and (\ref{beta-up-lim}) respectively. Therefore, there exists $N_0(n,\eps_0,\eps_1,K_0+1, \beta) \in \mathds{N}$ a.s., stochastically bounded uniformly in $(n,\beta)$ as in (\ref{beta-up-lim}),
such that if
\bn \label{mj11}
N\geq N_0(\omega), \ (t,x)\in Z(N,n,K_0+1,\beta), \ |x-x'|\leq2^{-N},
\en
\bn \label{mj12}
&&|\tilde u_{2,a_n^\alpha}(t,a_n^{2\al+2\eps_0},x)- \tilde u_{2,a_n^\alpha}(t,a_n^{2\al+2\eps_0},x')| \nonumber \\
&&\leq 2^{-89}a_n^{-\eps_0}\bigg[((|x-x'|a_n^{-\al(1-\eta/2)-3\eps_0})\wedge |x-x'|^{\eta/2-\eps_0})2^{-N\gamma}\big[(2^{-N}\vee a_n^{\al})^{\gamma \eta}+a_n^{\gamma\beta}\big]\nonumber \\
&&\quad +|x-x'|^{1-\eps_0}\big[a_n^{(\beta/\eta+\eps_1)(\gamma(\eta+1)-1+\eta/2)}+a_n^{(\beta/\eta+\eps_1)(\gamma-1+\eta/2)}a_n^{\gamma\beta}\big] \bigg]. \nonumber\\
\en
Use (\ref{eps1}), (\ref{beta-def}) and (\ref{beta-up-lim}), to get the following inequalities,
\bn\label{mj35}
\gamma(\eta+1)-1+\frac\eta 2 -l\eps_0 \geq \eta +99\eps_1, \ \ \forall l=0,1,2.
\en
\bn\label{mj35.5}
\bigg(\frac{\beta}{\eta}+\eps_1\bigg)(\gamma-1+\eta/2-l\eps_0)+\beta\gamma  &\geq & \bigg(\frac{\beta}{\eta}+\eps_1\bigg)\bigg(\frac{\eta^2}{2(\eta+1)}+99\eps_1\bigg)+\beta\gamma \nonumber \\
&\geq&\beta\bigg(\frac{\eta}{2(\eta+1)}+\gamma\bigg)  +99\eps_1\frac \beta \eta  + \eps_1\frac{\eta^2}{4} \nonumber \\
&\geq& \beta+ +99\eps_1\frac \beta \eta  + \eps_1\frac{\eta^2}{4},  \ \ \forall l=0,1.
\en
From (\ref{mj35}), (\ref{mj35.5}) and $\beta \in [0,\frac{\eta}{\eta+1}]$ we have,
\bn \label{mj13}
a_n^{(\beta/\eta+\eps_1)(\gamma(\eta+1)-1+\eta/2)}+a_n^{(\beta/\eta+\eps_1)(\gamma-1+\eta/2)}a_n^{\gamma\beta}\leq 2a_n^{\beta+99\eps_1 \beta / \eta + \eps_1\eta^2/4}
\en
Apply (\ref{mj13}) to (\ref{mj12}) to get
\bn \label{mj14}
&&|u_{2,a_n^\alpha}(t+a_n^{2\al+2\eps_0},x)- u_{2,a_n^\alpha}(t+a_n^{2\al+2\eps_0},x')|  \\
&&\leq 2^{-89}a_n^{-\eps_0}\bigg[((|x-x'|a_n^{-\al(1-\eta/2)-3\eps_0})\wedge |x-x'|^{\eta/2-\eps_0})2^{-\gamma N}\big[(2^{-N}\vee a_n^{\al})^{\eta\gamma}+a_n^{\gamma\beta}\big] \nonumber \\
&&\quad +2|x-x'|^{1-\eps_0}a_n^{\beta+99\eps_1 \beta / \eta + \eps_1\eta^2/4} \bigg]. \nonumber
\en
The rest of the proof is similar to the proof of Lemma \ref{Lemma6.1}, where (\ref{mj14}) is used instead of (\ref{mj4}).
\end{proof}
\paragraph{Notation.}
\bn
\tilde{\Delta}_{u'_1}(n,\eps,\eps_0,\beta,\eta) = a_n^{-\eps_0}\eps^{-\eps_0}\big\{\eps (\eps\vee a_n^{\al})^{\eta-1} +(\eps a_n^{-\al(2-\eta/2)}+a_n^{-1+\eta/4+\al})\big(\eps^{(\eta+1)\gamma}+a_n^{\beta \gamma}(\eps\vee a_n^{\al})^{\gamma}\big)\big\}.\nonumber \\
\en
For $0<\beta < \frac{\eta}{\eta+1}-\eps_1$ define
\bn \label{U3-def}
U^{(3)}_{M,n,\beta}&=&\inf\bigg\{t: \textrm{ there are } \eps \in [2^{-a_n^{-(\beta/\eta+\eps_1)\eps_0/4}},2^{-M}],|x|\leq K_0+1, \ \hat{x}_0 \in \re,  \textrm{ s.t. } |x-\hat{x}_0|\leq \eps, \nonumber \\
&&  \ | u(t,\hat{x}_0) |\leq a_n\wedge (a_n^{1-\al}\eps), \ |u'_{1,a^{2\al}_n}(t, \hat{x}_0)|\leq a_n^{\beta}, \textrm{ and } \nonumber \\
&&\ |u'_{1,a_n^{2\al}}(t,x)-u'_{1,a_n^{\alpha}}(t+a_n^{2\al+2\eps_0},x)| >  2^{-78}(\tilde{\Delta}_{u'_1}(n,\eps,\eps_0,\beta,\eta)+a_n^{\beta+\eps_1\eta^2/4})  \bigg\}\wedge T_{K_0}. \nonumber \\
\en
Define $U^{(3)}_{M,n,\beta}$ by the same expression with $\beta=0$, but without the condition on $|u'_{1,a^{2\al}_n}(t, \hat{x}_0)|$. Just as in Section 6 of \cite{MP09}, $U^{(3)}_{M,n,\beta}$ is an $\mathcal{F}_t$-stopping time.
\begin{lemma} \label{Lemma6.3}
For each $n\in \mathds{N}$ and $\beta$ as in (\ref{beta-up-lim}), $U^{(3)}_{M,n,\beta} \uparrow T_{K_0}$ as $M\uparrow \infty$ and
\bn
\lim_{M\rr \infty } \sup_{n,0\leq \beta\leq \frac{\eta}{\eta+1}-\eta\eps_1} P(U^{(3)}_{M,n,\beta}<T_{K_0})=0. \nonumber
\en
\end{lemma}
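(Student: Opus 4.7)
The plan is to follow the template of the proofs of Lemmas \ref{Lemma6.1} and \ref{Lemma6.2}; monotonicity in $M$ yields $U^{(3)}_{M,n,\beta}\uparrow T_{K_0}$ automatically, so only the uniform probability estimate requires work. The quantity to bound is the ``change-of-mollifier'' increment $|u'_{1,a_n^{2\al}}(t,x)-u'_{1,a_n^{\alpha}}(t',x)|$ with $t':=t+a_n^{2\al+2\eps_0}$. Combining Remark \ref{remark32} with (\ref{u-tag-F-an}), I would insert the pivot $F_{a_n^{2\al}}(t',t',x)=-u'_{1,a_n^{2\al}}(t',x)$ to obtain the telescoping decomposition
\begin{align*}
u'_{1,a_n^{2\al}}(t,x)-u'_{1,a_n^{\alpha}}(t',x)
&=\bigl[u'_{1,a_n^{2\al}}(t,x)-u'_{1,a_n^{2\al}}(t',x)\bigr]\\
&\quad+\bigl[F_{a_n^{2\al}}(t'-a_n^{\alpha}+a_n^{2\al},\,t',\,x)-F_{a_n^{2\al}}(t',t',x)\bigr].
\end{align*}
The first bracket is the pure time-increment of $u'_{1,a_n^{2\al}}$ over a step of length $a_n^{2\al+2\eps_0}$ and is controlled by Corollary \ref{corollary-u-tag-reg} with $\alpha=2\al$; the second bracket fixes the outer argument at $t'$ and moves the inner argument by $a_n^{\alpha}-a_n^{2\al}\in[0,a_n^{\alpha}]$, so it is in the exact scope of Proposition \ref{Prop5.11}, with $\sqrt{t'-s}\le a_n^{\alpha/2}$.

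For the quantitative estimates I would invoke $(P_{\bar m+1})$ (provided by Proposition \ref{Prop-Induction}) together with the identity $\tilde\gamma_{\bar m+1}=\eta+1$ from (\ref{gamma-m-up-lim}) and with (\ref{plp1}) to convert the exponents in the conclusions of Corollary \ref{corollary-u-tag-reg} and Proposition \ref{Prop5.11} into exponents matching $\tilde\Delta_{u'_1}$. Choosing $\nu_1=\eps_0$ throughout and applying Corollary \ref{corollary-u-tag-reg} at a reference point $(t,\hat x_0)\in Z(N,n,K_0+1,\beta)$ with $N$ such that $2^{-N-1}<\eps\le 2^{-N}$ bounds the first bracket by $a_n^{\beta+\eps_1\eta^2/4}$ together with a piece of $\tilde\Delta_{u'_1}$ (the slack $\eps_1\eta^2/4$ being supplied by (\ref{mj35.5})); applying Proposition \ref{Prop5.11} to the second bracket and rewriting via
\[
a_n^{-1+\eta/4+\al}\cdot\frac{2^{-N}}{a_n^{3\al-\al\eta/2-1+\eta/4}}=2^{-N}\,a_n^{-\al(2-\eta/2)}
\]
together with $2^{-N\gamma\tilde\gamma_m}=2^{-N(\eta+1)\gamma}$ then matches its output term by term to the three summands in the large bracket of $\tilde\Delta_{u'_1}$. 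The lower-bound $\eps\ge 2^{-a_n^{-(\beta/\eta+\eps_1)\eps_0/4}}$ in (\ref{U3-def}) is calibrated precisely so that $2^{N\nu_1}=2^{N\eps_0}\le\eps^{-\eps_0}$ is absorbed by the $a_n^{-\eps_0}\eps^{-\eps_0}$ prefactor of $\tilde\Delta_{u'_1}$.

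With these pointwise estimates in place, the remaining argument is identical to that of Lemma \ref{Lemma6.1}. Corollary \ref{corollary-u-tag-reg} and Proposition \ref{Prop5.11} furnish an a.s.\ finite random integer $N_0=N_0(n,\eps_0,\eps_1,K_0+1,\beta)$, stochastically bounded uniformly in $(n,\beta)\in\mathds{N}\times[0,\tfrac{\eta}{\eta+1}-\eta\eps_1]$. For $M\ge N_0+2$ and $\eps\in[2^{-a_n^{-(\beta/\eta+\eps_1)\eps_0/4}},2^{-M}]$, picking $N$ with $2^{-N-1}<\eps\le 2^{-N}$ places $(t,\hat x_0)\in Z(N,n,K_0+1,\beta)$ and, by the previous paragraph, contradicts the inequality defining $U^{(3)}_{M,n,\beta}$. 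Consequently $P(U^{(3)}_{M,n,\beta}<T_{K_0})\le P(M<N_0+2)$, which goes to $0$ uniformly in $(n,\beta)$ as $M\to\infty$ because $N_0$ is stochastically bounded uniformly in $(n,\beta)$.

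The hard part will be the careful term-by-term bookkeeping in the second paragraph. Proposition \ref{Prop5.11} contributes four qualitatively different summands (a $2^{-N(1-\nu_1)}$ one, a $2^{N\nu_1}$ one multiplied by the bracket $\bigl(\tfrac{2^{-N}}{a_n^{3\al-\al\eta/2-1+\eta/4}}+1\bigr)$, and two of the form $(t-s)^{(1-\nu_1)/2}(\sqrt{t-s}\vee a_n^{\al})^{q}$ with different $q$), and each has to be dominated by the appropriate piece of $\tilde\Delta_{u'_1}$ on the entire range of admissible $(\eps,\beta)$. The lower bound on $\eps$ in (\ref{U3-def}) and the $a_n^{-\eps_0}\eps^{-\eps_0}$ prefactor in $\tilde\Delta_{u'_1}$ are the only two degrees of freedom available to absorb the problematic $2^{N\nu_1}$-factor, and verifying that they suffice uniformly for $\beta\in[0,\tfrac{\eta}{\eta+1}-\eta\eps_1]$ under the hypothesis $\gamma>1-\tfrac{\eta}{2(\eta+1)}+100\eps_1$ is the heart of the computation.
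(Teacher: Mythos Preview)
Your proposal is correct and follows the paper's strategy closely, with one structural difference worth noting. The paper does \emph{not} telescope; it writes the whole quantity as a single $F_{a_n^{2\al}}$-increment,
\[
|u'_{1,a_n^{2\al}}(t,x)-u'_{1,a_n^{\alpha}}(t+a_n^{2\al+2\eps_0},x)|
=|F_{a_n^{2\al}}(t,t,x)-F_{a_n^{2\al}}(t-a_n^{\alpha}+a_n^{2\al+2\eps_0}+a_n^{2\al},\,t+a_n^{2\al+2\eps_0},\,x)|,
\]
and then applies Proposition \ref{Prop5.11} to it in one stroke, absorbing the tiny shift $a_n^{2\al+2\eps_0}$ in the outer argument into the existing bound. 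Your two-step decomposition (Corollary \ref{corollary-u-tag-reg} for the pure time shift of $u'_{1,a_n^{2\al}}$, then Proposition \ref{Prop5.11} at the fixed outer time $t'$) is slightly more scrupulous, since Proposition \ref{Prop5.11} as stated only moves the first argument. The extra piece you pick up from Corollary \ref{corollary-u-tag-reg} is of order $a_n^{\al-O(\eps_0)}$, which is indeed dominated by $a_n^{\beta+\eps_1\eta^2/4}$ because $\beta\le\eta\al-\eta\eps_1$; so nothing is lost. The bookkeeping you flag in the last paragraph---matching the four summands of Proposition \ref{Prop5.11} to $\tilde\Delta_{u'_1}$ via (\ref{plp1}), (\ref{mj35}), (\ref{mj35.5}) and the identity $a_n^{-1+\eta/4+\al}\cdot 2^{-N}/a_n^{3\al-\al\eta/2-1+\eta/4}=2^{-N}a_n^{-\al(2-\eta/2)}$---is exactly what the paper does in (\ref{mj34})--(\ref{mj38}), and the final conclusion $P(U^{(3)}_{M,n,\beta}<T_{K_0})\le P(N_0>M)$ is identical.
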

\begin{proof}
We only need to show the second assertion. By Proposition \ref{Prop-Induction} we can use Proposition \ref{Prop5.11} with  $m=\bar{m}+1$, $\eps_0/3$ instead of $\eps_0$, $\nu_1=\eps_0, K=K_0+1$ and $\alpha,\beta$ as in (\ref{alpha-def}) and (\ref{beta-up-lim}) respectively. Note that if $s=t-2a_n^{2\alpha+2\eps_0}+a_n^{2\al}$, then
\bn \label{mj31}
\sqrt{t-s}\leq a_n^{\alpha}
=a_n^{\beta/\eta+\eps_1},
\en
and
\bn \label{mj32}
a_n^{\beta/\eta+\eps_1}\leq N^{-4/\eps_0} \Leftrightarrow 2^{-N}> 2^{-a_n^{-(\beta/\eta+\eps_1)\eps_0/4}}.
\en
Therefore, by Proposition \ref{Prop5.11}, (\ref{u-tag-F-an}) and (\ref{mj31}), there exists $N_0(n,\eps_0,\eps_1,K_0+1, \beta) \in \mathds{N}$ a.s., stochastically bounded uniformly in $(n,\beta)$ as in (\ref{beta-up-lim}),
such that if
\bn \label{mj33}
N\geq N_0(\omega), \ (t,x)\in Z(N,n,K_0+1,\beta), \ 2^{-N}\geq 2^{-a_n^{-(\beta+\eps_1)\eps_0/4}},
\en
\bn \label{mj34}
&&|u'_{1,a_n^{2\al}}(t,x)-u'_{1,a_n^{\alpha}}(t+a_n^{2\al+2\eps_0},x)|  \nonumber  \\
&&=|F_{a_n^{2\al}}(t,t,x)-F_{a_n^{2\al}}(t-a_n^{\alpha}+a_n^{2\al+2\eps_0}+a_n^{2\al},t+a_n^{2\al+2\eps_0},x)|  \nonumber  \\
&&\leq 2^{-81}a_n^{-\eps_0}\bigg\{2^{-N(1-\eps_0)}( a_n^{\al}\vee 2^{-N})^{\eta-1}  \nonumber  \\ &&\quad +2^{N\eps_0}a_n^{-1+\eta/4+\al}\bigg(\frac{2^{-N}}{a_n^{3\al-\al\eta/2-1+\eta/4}}+1\bigg)\big(2^{-N\gamma(\eta+1)}+a_n^{\beta\gamma}
(a_n^{\al}\vee2^{-N})^{\gamma}\big)\nonumber  \\
&&\quad +a_n^{(\beta/\eta+\eps_1)(1-\eps_0)}\big((a_n^{\beta/\eta+\eps_1}\vee a_n^{\al})^{\gamma(\eta+1)-2+\eta/2}+a_n^{\beta\gamma}(a_n^{\beta/\eta+\eps_1}\vee a_n^{\al})^{\gamma-2+\eta/2}\big)\bigg\}.
%&&\leq 2^{-81}a_n^{-\eps_0}\bigg\{2^{N\eps_0}\bigg[2^{-N}a_n^{(\eta-1)/2}  +a_n^{-\eta/4}\big(2^{-N}a_n^{-1+\eta/2}+1\big)\big(2^{-N\gamma(\eta+1)}+a_n^{\beta\gamma}
%(a_n^{\al}\vee2^{-N})^{\gamma}\bigg]\nonumber  \\
%&&+a_n^{(\beta/\eta+\eps_1)(1-\eps_0)}\bigg[(a_n^{\al}\vee a_n^{\beta/\eta+\eps_1})^{(\gamma(\eta+1)-2+\eta/2)}+a_n^{\beta\gamma}(a_n^{\al}\vee a_n^{\beta/\eta+\eps_1})^{\gamma-2+\eta/2}\bigg]\bigg\}.
\en
%From (\ref{optsol}) we have
%\bn \label{id1}
%\gamma-1+\eta/2\geq \eta^2/4.
%\en
%Use (\ref{id1}),(\ref{eps1}),(\ref{beta-up-lim}) and (\ref{id1}) to get
%\bn \label{mj36}
%&&-\eps_0+2(\beta+\eps_1)(1-\eps_0)+\beta\gamma\eta+\beta(\gamma-2+\eta/2)+\eps_1(\gamma-2+\eta/2) \nonumber  \\
%&&= \beta((\eta+1)\gamma-1-\eps_0+\eta/2)+\eps_1(\gamma-1+\eta/2)-\eps_0 -\eps_0\eps_1\nonumber  \\
%&&\geq \beta\eta-\beta\eps_0+100\beta\eps_1+\eps_1\eta^2/4-\eps_0-\eps_0\eps_1 \nonumber  \\
%&&\geq \beta\eta+\eps_1\eta^2/5.
%\en
%From (\ref{mj35}) and (\ref{mj36}) we get
%\bn \label{mj37}
%&&a_n^{-\eps_0}a_n^{(\beta+\eps_1)(1-\eps_0)}
%\bigg[a_n^{(\beta+\eps_1)(\gamma(\eta+1)-2+\eta/2)}+a_n^{\beta\gamma}a_n^{(\beta+\eps_1)(\gamma-2+\eta/2)}\bigg]\nonumber  \\
%&&\leq 2a_n^{-\eps_0+(\beta+\eps_1)(1-\eps_0)+\beta\gamma\eta+\beta(\gamma-2+\eta/2)+\eps_1(\gamma-2+\eta/2)}  \nonumber  \\
%&&\leq 2a_n^{\beta\eta+\eps_1\eta^2/5}.
%\en
From (\ref{alpha_0}) and our constrains on $\beta$ we have $\beta/\eta+\eps_1\leq \al$. Use the last inequality with  (\ref{mj35}), (\ref{mj35.5}) and (\ref{mj34}) to get
\bn \label{mj38}
&&|u'_{1,a_n^{2\al}}(t,x)-u'_{1,a_n^\alpha}(t+a_n^{2\al+2\eps_0},x)|  \nonumber  \\
&&\leq 2^{-81}\bigg\{a_n^{-\eps_0}2^{N\eps_0}\bigg[2^{-N}( a_n^{\al}\vee 2^{-N})^{\eta-1}  +a_n^{-1+\eta/4+\al}\bigg(\frac{2^{-N}}{a_n^{3\al-\al\eta/2-1+\eta/4}}+1\bigg) \nonumber  \\
&&\quad\times \big(2^{-N\gamma(\eta+1)}+a_n^{\beta\gamma}
( a_n^{\al}\vee2^{-N})^{\gamma}\bigg]
+a_n^{\beta+\eps_1\eta^{2}/4}\bigg\} \nonumber  \\
&&\leq 2^{-81}\big[\tilde{\Delta}_{u'_1}(n,2^{-N},\eps_0,\beta,\eta)+a_n^{\beta+\eps_1\eta^2/4}\big].
\en
The rest of the proof follows that same lines as the proof of Lemma \ref{Lemma6.1}. Assume that $\beta>0$ (if $\beta=0$ we can omit the bound $|u'_{1,a_n^{\alpha}}(t,\hat{x}_0)|$ in what follows). Assume $M\geq  N_0(n,\eps_0,\eps_1,K_0+1,\beta)$. Suppose for some $t<T_{K_0}$ there are  $\eps \in [2^{-a_n^{-(\beta+\eps_1)\eps_0/4}},2^{-M}]$, $|x|\leq K_0+1, \hat{x}_0\in \re$ satisfying $|x-\hat{x}_0|\leq \eps, |u(t,\hat{x}_0)|\leq a_n \wedge (a_n^{1-\al}\eps)$ and $|u'_{1,a_n^{2\al}}(t,\hat{x}_0)|\leq a_n^{\beta}$.
We may choose $N\geq M \geq N_0(\omega)$ such that $2^{-N-1}<\eps \leq 2^{-N}$. Then (\ref{mj33}) holds and we get from (\ref{mj34}) and the fact that $\tilde{\Delta}_{u'_1}(n,2\eps,\eps_0,\beta,\eta)\leq \frac{1}{16}\tilde{\Delta}_{u'_1}(n,\eps,\eps_0,\beta,\eta)$,
\bn \label{mj39}
|u'_{1,a^{2\al}_n}(t,x)-u'_{1,a_n^{\alpha}}(t+a_n^{2\al+2\eps_0},x)| \leq 2^{-77}\big[\tilde{\Delta}_{u'_1}(n,\eps,\eps_0,\beta,\eta)+a_n^{\beta+\eps_1\eta^2/4}\big].
\en
From (\ref{mj39}) we get that if $M\geq  N_0(n,\eps_0,\eps_1,K_0+1,\beta)$,  $U^{(1)}_{M,n,\beta}=T_{K_0}$. We have shown that
\bn
\sup_{n,0\leq \beta \leq 1/2 -\eps_1}P(U^{(3)}_{M,n,\beta}<T_{K_0}) \leq  \sup_{n,0\leq \beta \leq 1/2 -\eps_1} P(N_0>M). \nonumber
\en
This completes the proof because $N_0(n,\eps_0,\eps_1,K_0+1, \beta)$ is stochastically bounded uniformly in $(n,\beta)$, where $\beta$ satisfies (\ref{beta-up-lim}).
\end{proof}
\\\\
%For $M\in\mathds{N}$, define
%\bn \label{U4-def}
%U^{(4)}_{M}&=&\inf\bigg\{t: \textrm{ there are } \eps \in [0,2^{-M}],|x|\leq K_0+1, \ \hat{x}_0,x' \in \re,  \textrm{ s.t. } |x-x'|\leq 2^{-M}, \nonumber \\
%&&  |x-\hat{x}_0|\leq \eps, \ | \< u(t),G_{a_n^{2\al+2\eps_0}}(\hat{x}_0-\cdot) \> |\leq \eps, \textrm{ and } |u(t,x)-u(t,x')|>(\eps\vee|x'-x|)^{1-\eps_0}  \bigg\}\wedge T_{K_0}. \nonumber
%\en
%\begin{lemma} \label{Lemma6.4}
%$U^{(4)}_{M,\beta} \uparrow T_{K_0}$ as $M\uparrow \infty$ and
%\bn
%\lim_{M\rr \infty } P(U^{(4)}_{M,n,\beta}<T_{K_0})=0. \nonumber
%\en
%\end{lemma}
%The proof of Lemma \ref{Lemma6.4} is similar to the proof of Lemma 6.4 in \cite{MP09} hence it is omitted. \medskip \\
For $0<\beta < \frac{\eta}{\eta+1}-\eps_1$ define
\bn \label{U5-def}
U^{(4)}_{M,n,\beta}&=&\inf\bigg\{t: \textrm{ there are } \eps \in [0,2^{-M}],|x|\leq K_0+1, \ \hat{x}_0 \in \re,  \textrm{ s.t. } |x-x'|\leq 2^{-M}, \nonumber \\
&& |x- \hat{x}_0| \leq \eps, \ | u(t, \hat{x}_0) |\leq a_n\wedge (a_n^{1-\al}\eps), \ |u'_{1,a^{2\al}_n}(t, \hat{x}_0)|\leq a_n^{\beta}, \nonumber \\
&&\textrm{ and }
|u(s,x')|>2a_n^{-\eps_0/8}\eps^{1-\eps_0/8}\big[(a_n^{\al}\vee\eps)^{\eta}+a_n^{\beta}\big]  \bigg\}\wedge T_{K_0}.
\en
Define $U^{(4)}_{M,n,0}$ by the same expression with $\beta=0$, but without the condition on $|u'_{1,a^{2\al+2\eps_0}_n}(t, \hat{x}_0)|$. Just as in Section 6 of \cite{MP09}, $U^{(4)}_{M,n,\beta}$ is an $\mathcal{F}_t$-stopping time.
\begin{lemma} \label{Lemma601}
For each $n\in \mathds{N}$ and $\beta$ as in (\ref{beta-up-lim}), $U^{(4)}_{M,n,\beta} \uparrow T_{K_0}$ as $M\uparrow \infty$ and
\bn \label{u5-lem}
\lim_{M\rr \infty } \sup_{n,0\leq \beta\leq \frac{\eta}{\eta+1}-\eta\eps_1} P(U^{(4)}_{M,n,\beta}<T_{K_0})=0.
\en
\end{lemma}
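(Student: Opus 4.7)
The plan is to apply property $(P_{\bar m +1})$ of Proposition \ref{Prop-Induction} directly, in the same spirit as Lemmas \ref{Lemma6.1}--\ref{Lemma6.3} but using the pointwise bound on $u$ itself rather than the regularity of its mollifications. The monotonicity assertion $U^{(4)}_{M,n,\beta}\uparrow T_{K_0}$ is immediate from the definition, so only the uniform convergence in probability requires work.

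First I would fix $\xi=1-\eps_0/8$ and invoke $(P_{\bar m+1})$ with $\xi$, $\eps_0/8$ in the role of the parameter $\eps_0$ of $(P_m)$, $K=K_0+1$, and $\beta$ as in the statement. Since $\tilde\gamma_{\bar m+1}-1=\eta$ by (\ref{gamma-m-up-lim}), this produces $N_1=N_1(\bar m+1,n,\xi,\eps_0/8,K_0+1,\beta,\eta)\in\mathds{N}$ a.s., stochastically bounded uniformly in $(n,\beta)$ with $0\leq\beta\leq\frac{\eta}{\eta+1}-\eta\eps_1$, such that for every $N\geq N_1$, whenever $(t,\hat x_0)\in Z(N,n,K_0+1,\beta)$, $t\leq T_{K_0+1}$ and $|\hat x_0-x'|\leq 2^{-N}$,
\[
|u(t,x')|\leq a_n^{-\eps_0/8}\,2^{-N(1-\eps_0/8)}\bigl[(a_n^{\al}\vee 2^{-N})^{\eta}+a_n^{\beta}\mathds{1}_{\{\beta>0\}}\bigr].
\]
When $\beta=0$ the indicator disappears and the bound on $|u'_{1,a_n^{2\al}}(t,\hat x_0)|$ is not required, matching the corresponding omission in the $\beta=0$ version of the definition of $U^{(4)}_{M,n,0}$.

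Next I would assume $M\geq N_1+3$ and, for $t<T_{K_0}\leq T_{K_0+1}$, consider any $\eps\in[0,2^{-M}]$, $|x|\leq K_0+1$, $\hat x_0,x'\in\re$ meeting the first four conditions in (\ref{U5-def}). Choose $N\in\mathds{N}$ with $2^{-N-1}<(\eps\vee|x-x'|)+2^{-M}\leq 2^{-N}$; then $N\geq M-2\geq N_1$ and $2^{-N}\geq\eps$. The inequalities $|u(t,\hat x_0)|\leq a_n\wedge (a_n^{1-\al}\eps)\leq a_n\wedge(a_n^{1-\al}2^{-N})$ and (when $\beta>0$) $|u'_{1,a_n^{2\al}}(t,\hat x_0)|\leq a_n^{\beta}$ place $(t,\hat x_0)$ in $Z(N,n,K_0+1,\beta)$, while $|\hat x_0-x'|\leq\eps+2^{-M}\leq 2^{-N}$ supplies the distance hypothesis. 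Plugging into the display above, bounding $2^{-N}\leq 4\eps\vee 2^{-M+2}$, and using the slack $a_n^{-\eps_0/8}2^{N\eps_0/8}$ that was built into the choice $\xi=1-\eps_0/8$ to absorb all numerical constants, one obtains
\[
|u(t,x')|\leq 2a_n^{-\eps_0/8}\eps^{1-\eps_0/8}\bigl[(a_n^{\al}\vee\eps)^{\eta}+a_n^{\beta}\bigr],
\]
so the set in the infimum defining $U^{(4)}_{M,n,\beta}$ is empty before $T_{K_0}$. Hence $U^{(4)}_{M,n,\beta}=T_{K_0}$ whenever $M\geq N_1+3$, giving $P(U^{(4)}_{M,n,\beta}<T_{K_0})\leq P(N_1>M-3)$, and the uniform stochastic boundedness of $N_1$ in $(n,\beta)$ finishes the proof.

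No genuine obstacle is anticipated, since the statement is essentially a direct restatement of $(P_{\bar m+1})$ specialised to the ``small $u$, small $u'_{1,a_n^{2\al}}$'' regime. The only delicate point is the constant bookkeeping needed to match the factor $2$ in front of the threshold: this is why I build the slack into $\xi=1-\eps_0/8$ and take $\eps_0/8$ in the role of $\eps_0$, and why I use $\eps\vee|x-x'|$ rather than $\eps$ in the choice of $N$.
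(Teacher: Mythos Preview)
Your approach mirrors the paper's exactly: apply $(P_{\bar m+1})$ from Proposition \ref{Prop-Induction} with $\xi=1-\eps_0/8$, with $\eps_0/8$ in place of $\eps_0$, $K=K_0+1$, and then use the uniform stochastic boundedness of $N_1$ in $(n,\beta)$ to conclude $P(U^{(4)}_{M,n,\beta}<T_{K_0})\leq P(N_1>M-c)$.

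There is, however, a real bookkeeping gap in your choice of $N$. You take $N$ so that $2^{-N-1}<(\eps\vee|x-x'|)+2^{-M}\leq 2^{-N}$; this does secure the distance hypothesis $|\hat x_0-x'|\leq 2^{-N}$, but it also forces $2^{-N}\geq 2^{-M}$ no matter how small $\eps$ is. The $(P_{\bar m+1})$ bound then yields only
\[
|u(t,x')|\leq a_n^{-\eps_0/8}\,2^{-N(1-\eps_0/8)}\bigl[(a_n^{\al}\vee 2^{-N})^{\eta}+a_n^{\beta}\bigr],
\]
which is of size $\sim 2^{-M(1-\eps_0/8)}$, and this cannot be pushed down to the required target $2a_n^{-\eps_0/8}\eps^{1-\eps_0/8}[(a_n^{\al}\vee\eps)^{\eta}+a_n^{\beta}]$ when $\eps\ll 2^{-M}$ (in the extreme case $\eps=0$ the target is $0$). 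The ``slack'' $2^{N\eps_0/8}$ you invoke is multiplicative in $2^{N}$, not in $\eps$, so it cannot close this gap.

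The paper's fix is to choose $N$ with $2^{-N-1}<\eps\leq 2^{-N}$, so that $2^{-N}\leq 2\eps$ and the $\eps$-scale inequality follows immediately (up to harmless constants). The paper applies $(P_{\bar m+1})$ at $(t,x)$ rather than at $(t,\hat x_0)$ and is admittedly somewhat casual about the constraint $|x-x'|\leq 2^{-M}$ versus $|x-x'|\leq 2^{-N}$; in the only place the stopping time is used (Lemma \ref{Lemma666}) one has $\eps=a_n^{\al}$ and $|x-x'|\leq 2\eps$, so the relevant scale is indeed $\eps$. That is the adjustment you need as well: tie $N$ to $\eps$, not to $2^{-M}$.
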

\begin{proof}
From the monotonicity in $M$ and (\ref{u1-lem}) the first assertion is trivial. Let us consider the second assertion. By Proposition \ref{Prop-Induction} with $\eps_0/8$ instead of $\eps_0$, $1-\eps_0/8$ instead of $\xi$, $m=\bar{m}+1$, $ K=K_0+1$, and $\beta,\alpha$ as in (\ref{beta-up-lim}) and (\ref{alpha-def}), respectively. Therefore, there exists $N_1(n,\eps_0/8,\eps_1,K_0+1, \beta,\eta) \in \mathds{N}$ a.s., stochastically bounded uniformly in $(n,\beta)$ as in (\ref{beta-up-lim}), such that if
\bn \label{rj0}
N\geq N_1(\omega), \ (t,x)\in Z(N,n,K_0+1,\beta),
\en
then,
\bn \label{rj1}
|u(t,x')| \leq a_n^{-\eps_0/8} 2^{-N(1-\eps_0/8)}\big[(a_n^{\al}\vee 2^{-N}])^{\eta}+a_n^{\beta}\big], \  \forall |x-x'|\leq 2^{-N}.
\en
Assume that $\beta>0$ (if $\beta=0$ we can omit the bound $|u'_{1,a_n^{\alpha}}(t,\hat{x}_0)|$ in what follows). Assume \\ $M\geq  N_1(n,\eps_0/8,\eps_1,K_0+1,\beta,\eta)$. Suppose for some $t<T_{K_0}(\leq T_{K_0+1})$ there are $\eps\in[0,2^{-M}], |x|\leq K_0+1, \hat{x}_0\in \re$ satisfying $|x-\hat{x}_0|\leq \eps, |u(t,\hat{x}_0)|\leq a_n \wedge (a_n^{1-\al}\eps)$ and $|u'_{1,a^{2\al}_n}(t,\hat{x}_0)|\leq a_n^{\beta}$.
We may choose $N\geq N_1$ such that $2^{-N-1}<\eps\leq 2^{-N}$. Then (\ref{rj0}) holds and we get from (\ref{rj1})
\bn \label{rmj1}
|u(t,x')| \leq 2a_n^{-\eps_0/8} \eps^{1-\eps_0/8}\big[(a_n^{\al}\vee \eps)^{\eta}+a_n^{\beta}\big], \  \forall |x-x'|\leq 2^{-N}.
\en
We have shown that
\bn
P(U^{(4)}_{M,n,\beta}<T_{K_0}) = P(M<N_1). \nonumber
\en
This completes the proof because $N_1(n,\eps_0/8,\eps_1,K_0+1, \beta)$ is stochastically bounded uniformly in $(n,\beta)$, where $\beta$ satisfies (\ref{beta-up-lim}).
\end{proof} \\\\
Let
\bn
U_{M,n,\beta} = \wedge_{j=1}^{4}U^{(j)}_{M,n,\beta},
\en
and
\bn
U_{M,n} = \wedge_{i=0}^{L(\eps_0,\eps_1)}U_{M,n,\beta_i},  % \wedge U_{M}^{(4)},
\en
where $\{\beta_i\}_{i=0}^{L(\eps_0,\eps_1)}$ are defined in (\ref{beta-def}). Recall that $U_{M,n}$ is dependent in the fixed values of $K_0,\eps_0,\eps_1$. Note that $\beta_i\in [0,\frac{\eta}{\eta+1}-\eps_1]$, for $i=0,...,L$ by (\ref{beta-up-lim}) and $\alpha_i=\alpha(\beta_i)$ by (\ref{alpha-def}). By Lemmas \ref{Lemma6.1}--\ref{Lemma601}, $\{U_{M,n}\}$ satisfy (\ref{H1}) in Proposition \ref{PropStop}. \\\\
To complete the proof of Proposition \ref{PropStopTimes}, we need to prove that the sets $\tilde{J}_{n,i}(s)$ are compact for all $s\geq 0$ and to show that $\tilde{J}_{n,i}(s)$ contains $J_{n,i}(s)$, for all $0\leq s \leq U_{M,n}$ and $i=0,...,L$.
In the next lemmas we will prove the inclusion $J_{n,i}(s)\subset\tilde{J}_{n,i}(s)$. We assume that $M,n$ satisfy (\ref{nM}) throughout the rest of the section. \medskip \\
We will also need the following lemma. Recall that $n_1$ is defined in (\ref{n1}).
\begin{lemma}  \label{Lemma-cont-u}
Let $ s\in  [0,T_K]$ and $x\in \re$. Assume that $n\geq n_1(\eps_0,K)$. If
\be \label{mol-hyp}
| \< u(s,\cdot),G_{a_n^{2\al+2\eps_0}}(x-\cdot) \> |\leq \frac{a_n}{2},
\ee
then,
\bd
|u(s,\hat{x}_n(s,x))|\leq a_n.
\ed
\end{lemma}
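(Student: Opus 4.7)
The plan is to argue by contradiction: suppose $|u(s, \hat{x}_n(s,x))| > a_n$. Since $\hat{x}_n(s,x)$ is defined as a minimizer of $|u(s,\cdot)|$ on $[x - a_n^\al, x + a_n^\al]$, this forces $|u(s,y)| > a_n$ for every $y$ in that interval. By continuity of $u(s,\cdot)$ and the Intermediate Value Theorem, $u(s,\cdot)$ cannot change sign on $[x-a_n^{\al},x+a_n^{\al}]$, since any sign change would produce a zero contradicting $|u|>a_n$. Replacing $u$ by $-u$ if needed, we may therefore assume $u(s,y) > a_n$ throughout the interval.

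Next I would decompose the mollified value
\[
\langle u(s,\cdot), G_{a_n^{2\al+2\eps_0}}(x-\cdot) \rangle = I_1 + I_2,
\]
where $I_1$ integrates over $\{|y-x|\leq a_n^{\al}\}$ and $I_2$ over its complement. The Gaussian rescaling $z = (x-y)/a_n^{\al+\eps_0}$ converts $G_{a_n^{2\al+2\eps_0}}(x-y)\, dy$ into $G_1(z)\, dz$ and sends the cutoff $|y-x|= a_n^{\al}$ to $|z|=a_n^{-\eps_0}$. The sign assumption gives the lower bound
\[
I_1 > a_n \int_{-a_n^{-\eps_0}}^{a_n^{-\eps_0}} G_1(z)\, dz.
\]
On the outer region I would invoke the a priori bound $|u(s,y)| \leq Ke^{|y|}$ (valid since $s\leq T_K$), together with $|y| \leq |x| + a_n^{\al+\eps_0}|z|$. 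Absorbing the (bounded) factor $e^{|x|}$ into $K$ and using $a_n^{\al+\eps_0}|z|\leq |z|$ for $n$ large, the symmetry of $G_1$ then yields
\[
|I_2| \leq 2K \int_{a_n^{-\eps_0}}^{\infty} e^{|y|} G_1(y)\, dy.
\]

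Combining the two estimates,
\[
|\langle u_s, G_{a_n^{2\al+2\eps_0}}(x-\cdot)\rangle| > a_n \int_{-a_n^{-\eps_0}}^{a_n^{-\eps_0}} G_1(y)\, dy - 2K \int_{a_n^{-\eps_0}}^{\infty} e^{|y|} G_1(y)\, dy,
\]
and by the very definition of $n_1(\eps_0,K)$ in~(\ref{n1}), the right-hand side exceeds $a_n/2$ whenever $n\geq n_1(\eps_0,K)$. This would contradict the hypothesis~(\ref{mol-hyp}), completing the proof.

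The main obstacle is essentially bookkeeping around the exponential factor $e^{|x|}$ emerging from the crude a priori bound $|u(s,y)|\leq Ke^{|y|}$. This factor must be legitimately absorbed into the constant $K$, which works because the lemma is only ever invoked at points $x$ lying in a bounded window (compare the ranges $|x|\leq K_0$ in the definitions of $J_{n,i}(s)$ and $\tilde{J}_{n,i}(s)$); every remaining step reduces to routine manipulation with the Gaussian kernel.
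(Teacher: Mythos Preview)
Your proof is correct and follows essentially the same approach as the paper's: contradiction via the minimizer property of $\hat{x}_n$, the Gaussian change of variable $z=(x-y)/a_n^{\al+\eps_0}$, the inner/outer split, and the definition of $n_1$. If anything, you are slightly more careful than the paper, which writes the bound in (\ref{ex2}) without tracking the $e^{|x|}$ factor; your observation that this is harmless because the lemma is only invoked at $|x|\le K_0$ is exactly the right justification.
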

\begin{proof}
Let $(s,x)\in [0,T_K] \times \re $. Suppose that (\ref{mol-hyp}) is satisfied for some $n>n_1(\eps_0,K)$.
%Note that for $\dl\in(0,1]$ and $|x-y|>\dl$ we have
%\bn \label{ex1}
%e^{|y|}G_{\dl}(x-y) &\leq&  e^{|x|}e^{|x-y|}G_{\dl}(x-y)  \nonumber \\
%&\leq& \frac{1}{\sqrt{2\pi\dl}}e^{\frac{-(|x-y|-\dl)^2}{2\dl}}e^{|K_1|}e^{\dl/2} \nonumber \\
%&\leq& e^{|K_1|+1} \frac{1}{\sqrt{2\pi\dl}}e^{\frac{-(x-y)^2}{2\dl}}.
%\en
By a simple change of variable we get
\bn \label{ex2}
\int_{-\infty}^{x-a_n^{\al}}e^{|y|}G_{a_n^{2\al+2\eps_0}}(x-y)dy+\int_{x+a_n^{\al}}^{\infty}e^{|y|}G_{a_n^{2\al+2\eps_0}}(x-y)dy
\leq 2\int_{a_n^{-\eps_0}}^{\infty}e^{|y|}G_1(y)dy .
\en
Assume that $|u(s,\hat{x}_n(s,x))|> a_n$.
From (\ref{ex2}) and the continuity of $u$ and our choice of $s\leq T_K$ follows that
\bn
\bigg|\int_{\re}u(s,x)G_{a_n^{2\al+2\eps_0}}(x-y)dy\bigg|&\geq & a_n\int_{x-a_n^{\al}}^{x+a_n^{\al}}G_{a_n^{2\al+2\eps_0}}(x-y)dy \nonumber \\
&&-K\int_{-\infty}^{x-a_n^{\al}}e^{|y|}G_{a_n^{2\al+2\eps_0}}(x-y)dy \nonumber \\
&&-K\int_{x+a_n^{\al}}^{\infty}e^{|y|}G_{a_n^{2\al+2\eps_0}}(x-y)dy \nonumber \\
&\geq & a_n\int_{-a_n^{-\eps_0}}^{a_n^{-\eps_0}}G_{1}(y)dy
-2K\int_{a_n^{-\eps_0}}^{\infty}e^{|y|}G_{1}(y)dy \nonumber \\
&> & \frac{a_n}{2}.
\en
where the last inequality follows from the our assumption that $n>n_1$. We get the contradiction with (\ref{mol-hyp}) and the result follows.
\end{proof}
\paragraph{Notation.} Denote by
\bn
n_{2}(\eps_0,\al,\gamma,\eta,K,R_1)=\inf{\bigg\{n\in \mathds{N}:\frac{2C_{(\ref{mu1cond})}(1,2R_1,K,\eta,\eps_0/4) }{KC_{\ref{LemmaNewBound2}}(\eta,\eps_0,2R_1+1)e^{(2R_1+1)K_1}}\geq a_n^{-2\gamma(\al+\frac{\eta}{\eta+1})+\al(1-\eta)+2\eps_0}e^{-\frac{a_n^{-2\eps_0}}{32}}\bigg\}}. \nonumber
\en
\begin{lemma} \label{Lemma666}
If $i\in\{0,...,L\}, \ 0\leq s \leq U_{M,n}$, and $x\in J_{n,i}(s)$, then for all $n\geq n_2(\eps_0,\al,\gamma,\eta,K,K_1)$.
\bn
&&\int_{\re}e^{2R_1|y|}|u(s,y)|^{2\gamma}G_{a_n^{2\al+2\eps_0}}(x-y)\mu(dy) \leq 8C_{(\ref{mu1cond})}(1,2R_1,K,\eta,\eps_0/4) e^{2K_1}a_n^{2\gamma(\al+\beta_i)-\al(1-\eta)-2\eps_0}. \nonumber
\en
\end{lemma}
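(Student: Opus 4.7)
My plan is to split the integration region at $|y-x|=a_n^{\al}$ and estimate each piece separately: near $x$, invoke the stopping time $U^{(4)}_{M,n,\beta_i}$ to get a polynomial pointwise bound on $|u(s,\cdot)|$; away from $x$, exploit Gaussian decay via Lemma~\ref{LemmaNewBound2} and absorb the resulting exponentially small factor into the polynomial factor of $a_n$ using the very definition of $n_2$.

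To activate the stopping time, I would set $\hat{x}_0=\hat{x}_n(s,x)$ and $\eps=a_n^{\al}$. Since $n\ge n_M(\eps_1)\vee n_0(\eps_0,\eps_1)$, $\eps\le 2^{-M}$, and since $n\ge n_1(\eps_0,K)$, Lemma~\ref{Lemma-cont-u} applied to the defining condition $|\langle u_s,G_{a_n^{2\al+2\eps_0}}(x-\cdot)\rangle|\le a_n/2$ of $J_{n,i}(s)$ yields $|u(s,\hat x_0)|\le a_n=a_n\wedge(a_n^{1-\al}\eps)$. For $i\ge1$, the definition of $J_{n,i}(s)$ also gives $|u'_{1,a_n^{2\al}}(s,\hat x_0)|\le a_n^{\beta_i}/4\le a_n^{\beta_i}$, while for $i=0$ the derivative condition is absent from $U^{(4)}_{M,n,0}$. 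All configuration conditions defining the event $\{t\ge U^{(4)}_{M,n,\beta_i}\}$ are thus met, and since $0\le s\le U_{M,n}\le U^{(4)}_{M,n,\beta_i}$,
\[
|u(s,y)|\le 2a_n^{-\eps_0/8}(a_n^{\al})^{1-\eps_0/8}\bigl[(a_n^{\al})^{\eta}+a_n^{\beta_i}\bigr]\le 4\,a_n^{\al+\beta_i-\eps_0(\al+1)/8}\quad\text{for all }|y-x|\le a_n^{\al},
\]
where I use $\al\eta=\eta/(\eta+1)\ge\beta_i$. Raising to the $2\gamma$ power gives $|u(s,y)|^{2\gamma}\le C a_n^{2\gamma(\al+\beta_i)-\eps_0/2}$ on this region. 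For the integral over $|y-x|\le a_n^{\al}$, I would then pull out this pointwise bound and apply Lemma~\ref{condmu}(a) with $r=1$, $\lam=2R_1$, $\varpi=\eps_0/4$, giving $\int_{\re}e^{2R_1|y|}G_{a_n^{2\al+2\eps_0}}(x-y)\mu(dy)\le C_{(\ref{mu1cond})}\,e^{2R_1|x|}\,a_n^{-(\al+\eps_0)(1-\eta+\eps_0/4)}$. Multiplying and collecting the $\eps_0$ corrections (the $-\eps_0/2$ from the pointwise bound and at most $3\eps_0/2$ from the $\varpi$ plus lower-order terms in the exponent) yields the target bound $\le C_{(\ref{mu1cond})}e^{2R_1 K_0}a_n^{2\gamma(\al+\beta_i)-\al(1-\eta)-2\eps_0}$.

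For the tail $|y-x|>a_n^{\al}$, I would use the crude bound $|u(s,y)|^{2\gamma}\le K^{2\gamma}e^{2\gamma|y|}$ (valid since $s\le T_K$), factor $e^{(2R_1+2\gamma)|x|}\le e^{(2R_1+2\gamma)K_0}$ out of the integral, and apply Lemma~\ref{LemmaNewBound2} with $\lam=2R_1+2\gamma$, $t=a_n^{2\al+2\eps_0}$, and $\nu_1=\eps_0/(2\al+2\eps_0)$, chosen precisely so that $t^{1/2-\nu_1}=a_n^{\al}$ and $t^{-2\nu_1}=a_n^{-2\eps_0}$. This produces a tail bound $\le 2C_{\ref{LemmaNewBound2}}e^{-a_n^{-2\eps_0}/32}$. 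The definition of $n_2(\eps_0,\al,\gamma,\eta,K,R_1)$ is engineered exactly so that for $n\ge n_2$ this exponential is dominated by a constant times $a_n^{2\gamma(\al+\eta/(\eta+1))-\al(1-\eta)-2\eps_0}$, which since $\beta_i\le\eta/(\eta+1)$ and $a_n<1$ is in turn $\le a_n^{2\gamma(\al+\beta_i)-\al(1-\eta)-2\eps_0}$. Summing the two regions yields the stated bound.

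The main obstacle is not conceptual but bookkeeping: verifying that the $\eps_0$ corrections from $U^{(4)}$, from Lemma~\ref{condmu}(a), and from the exponential decay in Lemma~\ref{LemmaNewBound2} combine to give \emph{exactly} the $-2\eps_0$ loss appearing in the final exponent rather than, say, $-3\eps_0$ or only $-\eps_0$; this is what forces the choice $\varpi=\eps_0/4$ above and explains the form in which $n_2$ is defined.
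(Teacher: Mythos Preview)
Your proposal is correct and follows essentially the same route as the paper's proof: both split at $|y-x|=a_n^{\al}$, invoke $U^{(4)}_{M,n,\beta_i}$ (via Lemma~\ref{Lemma-cont-u} to verify the smallness hypothesis on $u(s,\hat x_n)$) for the near piece together with Lemma~\ref{condmu}(a), use the crude $|u(s,y)|\le Ke^{|y|}$ bound with Lemma~\ref{LemmaNewBound2} for the far piece, and finally absorb the exponentially small tail into the polynomial bound by the definition of $n_2$. Your computation of $\nu_1=\eps_0/(2\al+2\eps_0)$ so that $t^{1/2-\nu_1}=a_n^{\al}$ and $t^{-2\nu_1}=a_n^{-2\eps_0}$ is in fact cleaner than the paper's ``$\nu_1=\eps_0$,'' which appears to be a typo there.
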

\begin{proof}
Assume $(n,i,s,x)$ are as above and set $\eps= a_n^{\al}$. We have $|\langle u_s,G_{a_n^{2\al+2\eps_0}}(x-\cdot) \rangle|\leq a_n/2$. By Lemma \ref{Lemma-cont-u} we get
\bn \label{lmj42}
|u(s,\hat{x}_n(s,x))| \leq a_n\leq a_n \wedge (a_n^{1-\al}\eps),  \ |\hat{x}_n(s,x)-x|\leq \eps.
\en
The definition of $J_{n,i}$ implies
\bn \label{lmj425}
|u'_{1,a^{2\al}_n}(s,\hat{x}_n(s,x))|\leq a_n^{\beta_{i+1}}/4.
\en
From (\ref{eps1}) and (\ref{alpha_0}) we note that $\eps_1<\al$. Use this fact and (\ref{nM}) to get
\bn \label{lmj43}
a_n^{\al}\leq 2^{-M}.
\en
Use (\ref{lmj42}) -- (\ref{lmj43}), with $|\hat{x}_n(s,x)|\leq K_0+1$ and $s<U_{M,n}\leq U^{(4)}_{M,n,\beta_i}$, and take $x=\hat x_0=\hat{x}_n(s,x)$ in the definition of $U^{(4)}_{M,n,\beta_i}$ to get
\bn \label{lmj44}
|u(s,y)| &\leq& 2a_n^{-\eps_0/8} a_n^{\al(1-\eps_0/8)}\big[a_n^{\eta\al}+a_n^{\beta_i}\big] \nonumber \\
&\leq & 4a_n^{-\eps_0/8}  a_n^{\al(1-\eps_0/8)+\beta_i} \nonumber \\
&\leq & 4 a_n^{\al + \beta_i -\eps_0/4}
, \forall y \in[x-a_n^{\al},x+a_n^{\al}],
\en
where we have used (\ref{alpha_0}) and (\ref{beta-up-lim}) to get $\beta_i\leq \al\eta$ in the second inequity. \medskip   \\
From Lemma \ref{condmu}(a) with $r=1$, $\lam=2R_1$, $s=0$, $t=a_n^{2\al+2\eps_0}$ and $\varpi=\eps_0/4$ we have
\bn \label{LemmaNewBound22}
\int_{\re} e^{2R_1|y|}G_{a_n^{2\al+2\eps_0}}(x-y)\mu(dy)
&\leq & C_{(\ref{mu1cond})}(1,2R_1,K,\eta,\eps_0/4) a_n^{-(\al+\eps_0)(1-\eta+\eps_0/4)}\nonumber \\
&\leq& C_{(\ref{mu1cond})}(1,2R_1,K,\eta,\eps_0/4) a_n^{-\al(1-\eta)-5\eps_0/4},  \ \forall  x\in [-K_1,K_1].
\en
From Lemma \ref{LemmaNewBound2} with $t=a_n^{2\al+2\eps_0}$ and $\nu_1=\eps_0$ we have
\bn \label{LemmaNewBound23}
&&\int_{\re}e^{(2R_1+1)|y|}G_{a_n^{2\al+2\eps_0}}(x-y)\mathds{1}_{\{|x-y|\geq a_n^{\al}\}}\mu(dy) \nonumber \\
 &&\leq   e^{(2R_1+1)|x|}\int_{\re}e^{(2R_1+1)|y-x|}G_{a_n^{2\al+2\eps_0}}(x-y)\mathds{1}_{\{|x-y|\geq a_n^{\al}\}}\mu(dy)  \nonumber \\ && \leq C_{\ref{LemmaNewBound2}}(\eta,\eps_0,2R_1+1)e^{(2R_1+1)K_1}e^{\frac{a_n^{2\al+2\eps_0}}{4}}e^{-\frac{a_n^{-2\eps_0}}{32}}  \nonumber \\
&&\leq  2C_{\ref{LemmaNewBound2}}(\eta,\eps_0,2R_1+1)e^{(2R_1+1)K_1}e^{-\frac{a_n^{-2\eps_0}}{32}}, \ \forall  x\in [-K_1,K_1].
\en
From (\ref{lmj44})--(\ref{LemmaNewBound23})
and the fact that $(x,s)\in [-K_1,K_1]\times[0,T_K]$ we have
\bn
\int_{\re}e^{2R_1|y|}|u(s,y)|^{2\gamma}G_{a_n^{2\al+2\eps_0}}(x-y)\mu(dy)
&=&\int_{x-a_n^{\al}}^{x+a_n^{\al}}e^{2R_1|y|}|u(s,y)|^{2\gamma}G_{a_n^{2\al+2\eps_0}}(x-y)\mu(dy)  \\
&&+\int_{\re}e^{2R_1|y|}|u(s,y)|^{2\gamma}G_{a_n^{2\al+2\eps_0}}(x-y)\mathds{1}_{\{|x-y|\geq a_n^{\al}\}}\mu(dy) \nonumber \\
&\leq& a_n^{2\gamma(\al+\beta_i)-\eps_0/2} \int_{x-a_n^{\al}}^{x+a_n^{\al}}e^{2R_1|y|}G_{a_n^{2\al+2\eps_0}}(x-y)\mu(dy) \nonumber \\
&&+K\int_{\re}e^{|y|}G_{a_n^{2\al+2\eps_0}}(x-y)\mathds{1}_{\{|x-y|\geq a_n^{\al}\}}\mu(dy) \nonumber \\
&\leq & 4C_{(\ref{mu1cond})}(1,2R_1,K,\eta,\eps_0/4)a_n^{2\gamma(\al+\beta_i)-\eps_0/2}a_n^{-\al(1-\eta)-5\eps_0/4}  \nonumber \\
&&+2KC_{\ref{LemmaNewBound2}}(\eta,\eps_0,2R_1+1)e^{(2R_1+1)K_1}e^{-\frac{a_n^{-2\eps_0}}{32}} \nonumber \\
&\leq &  8C_{(\ref{mu1cond})}(1,2R_1,K,\eta,\eps_0/4)a_n^{2\gamma(\al+\beta_i)-\al(1-\eta)-2\eps_0}, \ \forall i=1,...,L,   \nonumber
\en
where the last inequality follows from our choice $n_2$ and $(\ref{beta-up-lim})$.
\end{proof}
\begin{lemma} \label{Lemma6.5}
If $i\in\{0,...,L\}, \ 0\leq s \leq U_{M,n}$, and $x\in J_{n,i}(s)$, then
\begin{itemize}
  \item [\bf{(a)}] $|u'_{1,a_n^{2\al}}(s,\hat{x}_n(s,x))-u'_{1,a_n^{\alpha_i}}(s+a_n^{2\al+2\eps_0},\hat{x}_n(s,x))|\leq 2^{-73}a_n^{\beta_i+\eps_1\eta^2/4}$,
\item [\bf{(b)}] for $i>0$, $|u'_{1,a_n^{\alpha_i}}(s+a_n^{2\al+2\eps_0},\hat{x}_n(s,x))|\leq a_n^{\beta_i}/2$,
\item [\bf{(c)}] for $i<L$, $u'_{1,a_n^{\alpha_i}}(s+a_n^{2\al+2\eps_0},\hat{x}_n(s,x))>a_n^{\beta_{i+1}}/8$.
\end{itemize}
\end{lemma}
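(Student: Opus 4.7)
The plan is to extract (a) directly from the definition of the stopping time $U^{(3)}_{M,n,\beta_i}$, and then derive (b) and (c) as straightforward consequences of (a) combined with the pointwise bound on $u'_{1,a_n^{2\al}}(s,\hat{x}_n(s,x))$ that is built into the definition of $J_{n,i}(s)$. Since $s \leq U_{M,n} \leq U^{(3)}_{M,n,\beta_i}$, the event defining $U^{(3)}_{M,n,\beta_i}$ has not occurred, so it suffices to verify that the hypotheses in the infimum defining $U^{(3)}_{M,n,\beta_i}$ are met with $x = \hat{x}_0 = \hat{x}_n(s,x)$ and $\eps = a_n^{\al}$. The condition $|x-\hat{x}_0|\leq \eps$ is trivial; the bound $|u(s,\hat{x}_n(s,x))|\leq a_n \wedge (a_n^{1-\al}\eps)$ follows from Lemma~\ref{Lemma-cont-u} applied to the hypothesis $|\langle u_s,G_{a_n^{2\al+2\eps_0}}(x-\cdot)\rangle|\leq a_n/2$ from the definition of $J_{n,i}(s)$; the bound $|u'_{1,a_n^{2\al}}(s,\hat{x}_n(s,x))|\leq a_n^{\beta_i}$ is read off from $J_{n,i}(s)$ (for $i=0$ where $\beta_0=0$ no such bound is needed, since $U^{(3)}_{M,n,0}$ was defined without it). Finally, $\eps = a_n^{\al}\leq 2^{-M}$ holds for $n\geq n_M(\eps_1)$ since $\al\geq\eps_1$, and $\eps = a_n^{\al}\geq 2^{-a_n^{-(\beta_i/\eta+\eps_1)\eps_0/4}}$ holds for $n\geq n_0(\eps_0,\eps_1)$ by the double-exponential comparison in the definition of $n_0$.

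Applying the inequality encoded in $U^{(3)}_{M,n,\beta_i}$ then yields
\[
|u'_{1,a_n^{2\al}}(s,\hat{x}_n(s,x)) - u'_{1,a_n^{\alpha_i}}(s+a_n^{2\al+2\eps_0},\hat{x}_n(s,x))| \leq 2^{-78}\big(\tilde{\Delta}_{u'_1}(n,a_n^{\al},\eps_0,\beta_i,\eta) + a_n^{\beta_i + \eps_1\eta^2/4}\big).
\]
The main remaining task for (a) is to show the algebraic bound $\tilde{\Delta}_{u'_1}(n,a_n^{\al},\eps_0,\beta_i,\eta) \leq C\, a_n^{\beta_i + \eps_1\eta^2/4}$ for some harmless constant $C$, so that after enlarging $n_2$ a bit the combined bound is $\leq 2^{-73}a_n^{\beta_i + \eps_1\eta^2/4}$. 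This is the one piece that requires a genuine computation: substituting $\eps = a_n^{\al}$ into each term of $\tilde{\Delta}_{u'_1}$, every exponent must be shown to be at least $\beta_i + \eps_1\eta^2/4$. The key inputs are $\al = 1/(\eta+1)$, the inequality $\gamma+\eta/2-1 > \eta^2/(2(\eta+1))$ (from (\ref{optsol})), the smallness $\eps_0 < \eta^2\eps_1/100$ from (\ref{eps1}), and the range $\beta_i \leq \eta/(\eta+1) - 6\eta\eps_1$. For instance, the leading term $a_n^{-\eps_0(1+\al)+\al\eta}$ has exponent at least $\beta_i + \eps_1\eta^2/4$ because $\al\eta - \beta_i \geq 6\eta\eps_1$ and $\eps_0(1+\al)\leq \eta\eps_1/50$; the crossed terms $a_n^{\al(\eta/2-1)+\gamma}$ and $a_n^{\al(\eta/2-1+\gamma)+\beta_i\gamma}$ lead to the identity $\al(\gamma+\eta/2-1) - \beta_i(1-\gamma)$, which is non-negative by (\ref{optsol}); and the term involving $a_n^{-1+\eta/4+\al}$ is dealt with similarly. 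This part is the main obstacle in the sense that it is where all the parameter constraints get used tightly, but it is a purely deterministic verification.

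For (b), given $i\geq 1$, the definition of $J_{n,i}(s)$ yields $|u'_{1,a_n^{2\al}}(s,\hat{x}_n(s,x))| \leq a_n^{\beta_i}/4$, hence by the triangle inequality and (a),
\[
|u'_{1,a_n^{\alpha_i}}(s+a_n^{2\al+2\eps_0},\hat{x}_n(s,x))| \leq \tfrac{a_n^{\beta_i}}{4} + 2^{-73}a_n^{\beta_i + \eps_1\eta^2/4} \leq \tfrac{a_n^{\beta_i}}{2}
\]
for all sufficiently large $n$ (the implicit threshold can be absorbed into $n_2$). For (c), with $i<L$, the definition of $J_{n,i}(s)$ provides the lower bound $u'_{1,a_n^{2\al}}(s,\hat{x}_n(s,x)) \geq a_n^{\beta_{i+1}}/4$; subtracting the bound from (a) gives
\[
u'_{1,a_n^{\alpha_i}}(s+a_n^{2\al+2\eps_0},\hat{x}_n(s,x)) \geq \tfrac{a_n^{\beta_{i+1}}}{4} - 2^{-73}a_n^{\beta_i + \eps_1\eta^2/4} \geq \tfrac{a_n^{\beta_{i+1}}}{8},
\]
where in the last step we use that $\beta_{i+1} - \beta_i = \eps_0 < \eps_1\eta^2/4$ (by (\ref{eps1})), so $a_n^{\beta_{i+1}} \geq a_n^{\beta_i+\eps_1\eta^2/4}$ and the error is negligible compared to $a_n^{\beta_{i+1}}/4$. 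Thus (b) and (c) follow essentially for free once (a) is established, and the genuinely new work is confined to the parameter bookkeeping that verifies the exponent bound on $\tilde{\Delta}_{u'_1}(n,a_n^{\al},\eps_0,\beta_i,\eta)$.
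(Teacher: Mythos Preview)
Your proposal is correct and follows essentially the same route as the paper's proof: apply the definition of $U^{(3)}_{M,n,\beta_i}$ with $x=\hat x_0=\hat x_n(s,x)$ and $\eps=a_n^{\al}$, verify all hypotheses via Lemma~\ref{Lemma-cont-u} and the definition of $J_{n,i}(s)$, then reduce (a) to the algebraic estimate $\tilde\Delta_{u'_1}(n,a_n^{\al},\eps_0,\beta_i,\eta)\leq C\,a_n^{\beta_i+\eps_1\eta^2/4}$, and derive (b),(c) by the triangle inequality using $\beta_{i+1}-\beta_i=\eps_0<\eps_1\eta^2/4$. The paper carries out the exponent check for $\tilde\Delta_{u'_1}$ in slightly more detail (bounding it by $8a_n^{\beta_i+\eps_1\eta/2}\leq 8a_n^{\beta_i+\eps_1\eta^2/4}$), but your outline of that computation is accurate; one small point you should make explicit is that $|\hat x_n(s,x)|\leq K_0+1$, which is needed for the ``$|x|\leq K_0+1$'' hypothesis in $U^{(3)}$.
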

\begin{proof}
Assume $(n,i,s,x)$ are as above and set $\eps= a_n^{\al}$. We have $|\langle u_s,G_{a_n^{2\al+2\eps_0}}(x-\cdot) \rangle|\leq a_n/2$. By Lemma \ref{Lemma-cont-u} we get
\bn \label{mj42}
|u(s,\hat{x}_n(s,x))| \leq a_n\leq a_n\wedge(a_n^{1-\al}\eps),  \ |\hat{x}_n(s,x)-x|\leq \eps.
\en
The definition of $J_{n,i}$ implies
\bn \label{mj425}
|u'_{1,a^{2\al}_n}(s,\hat{x}_n(s,x))|\leq a_n^{\beta_{i+1}}/4.
\en
From (\ref{eps1}) and (\ref{alpha_0}) we note that $\eps_1<\al$. Use this fact and (\ref{nM}) to get
\bn \label{mj43}
2^{-M}\geq a_n^{\al+\eps_0}=\eps\geq 2^{-a_n^{-\eps_0\eps_1/4}}.
\en
Use (\ref{mj42}),(\ref{mj43}), with $|\hat{x}_n(s,x)|\leq K_0+1$ and $s<U_{M,n}\leq U^{(3)}_{M,n,\beta_i}$, and take $x=\hat x_0=\hat{x}_n(s,x)$ in the definition of $U^{(3)}_{M,n,\beta_i}$ to get
\bn \label{mj44}
&&|u'_{1,a^{2\al}_n}(s,\hat{x}_n(s,x))-u'_{1,a_n^{\alpha_i}}(s+a_n^{2\al+2\eps_0},\hat{x}_n(s,x))| \nonumber \\
 &&\leq  2^{-78}(\tilde{\Delta}_{u'_1}(n,a_n^{\al},\eps_0,\beta_i,\eta)+a_n^{\beta_i+\eps_1\eta^2/4}).
\en
Recall that $\eta\in(0,1),\ \al =\frac{1}{\eta+1}$ are fixed. From (\ref{beta-def}) we have
\bn \label{ne12.1}
\beta_i+\eps_1\eta \leq \eta\al, \ i=0,...,L+1.
\en
Note that for $\eta\in(0,1)$,
\bn \label{ne13.2}
\frac{1}{\eta+1}-1+\frac{\eta}{4}+\bigg(1-\frac{\eta}{2(\eta+1)}\bigg)\geq \frac{\eta}{\eta+1}.
\en
Recall that $\gamma\leq 1$. Use (\ref{alpha_0}), (\ref{eps1}), (\ref{beta-def}) and (\ref{beta-up-lim}) to get
\bn  \label{ne13.3}
&&\al-1+\frac{\eta}{4}+\gamma\big(\al+\beta_i\big)-\beta_i \nonumber \\
&&\geq  \frac{1}{\eta+1}-1+\frac{\eta}{4}+\bigg(1-\frac{\eta}{2(\eta+1)}+100\eps_1\bigg)\big(\frac{1}{\eta+1}+\beta_i\big)-\beta_i \nonumber \\
&&\geq \frac{1}{\eta+1}-1+\frac{\eta}{4}+\bigg(1-\frac{\eta}{2(\eta+1)}\bigg)\big(\frac{1}{\eta+1}+\frac{\eta}{\eta+1}\big)
-\frac{\eta}{\eta+1}+50\eps_1  \nonumber \\ \nonumber
&&= \frac{1}{\eta+1}-1+\frac{\eta}{4}+\bigg(1-\frac{\eta}{2(\eta+1)}\bigg)
-\frac{\eta}{\eta+1}+50\eps_1 \\
&&\geq 50\eps_1,
\en
Recall that $\gamma\leq 1$. Use (\ref{alpha_0}), (\ref{eps1}), (\ref{beta-def}) and (\ref{beta-up-lim}) to get
\bn  \label{ne13.4}
&&-\al+\al\frac{\eta}{2}+\gamma\big(\al+\beta_i\big)-\beta_i \nonumber \\
&&\geq  -\frac{1}{\eta+1}+\frac{\eta}{2(\eta+1)}+\bigg(1-\frac{\eta}{2(\eta+1)}+100\eps_1\bigg)\big(\frac{1}{\eta+1}+\beta_i\big)-\beta_i \nonumber \\
&&\geq -\frac{1}{\eta+1}+\frac{\eta}{2(\eta+1)}+\bigg(1-\frac{\eta}{2(\eta+1)}\bigg)\big(\frac{1}{\eta+1}+\frac{\eta}{\eta+1}\big)
-\frac{\eta}{\eta+1}+50\eps_1  \nonumber \\ \nonumber
&&=  -\frac{1}{\eta+1}+\frac{\eta}{2(\eta+1)}+\bigg(1-\frac{\eta}{2(\eta+1)}\bigg)
-\frac{\eta}{\eta+1}+50\eps_1 \\
&&\geq 50\eps_1.
\en
From (\ref{ne13.3}) and (\ref{ne13.4}) we immediately get
\bn  \label{ne13}
\al-1+\frac{\eta}{4}+\gamma\big(\al+\beta_i\big)&\geq& \beta_i+\eps_1,
\en
and
\bn  \label{ne13.1}
-\al\big(1-\frac{\eta}{2}\big)+\gamma\big(\al+\beta_i\big)&\geq& \beta_i+\eps_1,
\en
From (\ref{ne12.1}), (\ref{ne13}), (\ref{ne13.1}) and (\ref{eps1}) we get,
\bn \label{mj45}
&&\tilde{\Delta}_{u'_1}(n,a_n^{\al},\eps_0,\beta_i,\eta)\\ \nonumber
&&= a_n^{-\eps_0(1+\al)}\big\{a_n^{\eta\al}+(a_n^{\al-1+\eta/4}+a_n^{-\al(1-\eta/2)})\big(a_n^{\al(\eta+1)\gamma}+a_n^{\beta_i \gamma}a_n^{\gamma\al}\big)\big\}\\ \nonumber
&&\leq 2a_n^{-\eps_0(1+\al)}\big\{a_n^{\eta\al}+(a_n^{\al-1+\eta/4}+a_n^{-\al(1-\eta/2)})a_n^{\gamma(\al+\beta_i)}\big\} \\ \nonumber
&& \leq 4\big\{a_n^{\beta_i+\eps_1\eta-2\eps_0}+a_n^{\beta_i+\eps_1/2}\big\} \\ \nonumber
&& \leq 8a_n^{\beta_i+\eps_1\eta/2}.
\en
From (\ref{mj44}) and (\ref{mj45}) to get
\bn
&&|u'_{1,a_n^{2\al+2\eps_0}}(s,\hat{x}_n(s,x))-u'_{1,a_n^{\alpha_i}}(s+a_n^{2\al+2\eps_0},\hat{x}_n(s,x))|  \\ \nonumber
&&\leq 2^{-73}a_n^{\beta_i+\eps_1\eta^2/4}, \nonumber
\en
and we are done with claim (a) of the lemma.
(b) is immediate from (a) and the fact that \\ $|u'_{1,a_n^{2\al}}(s,\hat{x}_n(s,x))|\leq a_n^{\beta_i}/4$ (by the definition of $J_{n,i}$ for $i>0$). \\
(c) Since $\eps_0\leq \eps_1\eta^2/100$ by (\ref{eps1}), $a_n^{\beta_i+\eps_1\eta^2/4}\leq a_n^{\beta_{i+1}}$ by (\ref{beta-def}). For $i\leq L$ we have by the definition of $J_{n,i}$, $u'_{1,a_n^{2\al}}(s,\hat{x}_n(s,x))\geq a_n^{\beta_{i+1}}$. Then (c) follows from (a) and the  triangle inequality.
\end{proof}
\begin{lemma} \label{Lemma6.6}
If $i\in\{0,...,L\}, \ 0\leq s \leq U_{M,n}$, $x\in J_{n,i}(s)$, and $|x-x'|\leq 5\bar{l}_n(\beta_i)$, then
\begin{itemize}
  \item [\bf{(a)}] for $i>0$, $|u'_{1,a_n^{\alpha_i}}(s+a_n^{2\al+2\eps_0},x')|\leq a_n^{\beta_i}$,
  \item [\bf{(b)}] for $i<L$, $u'_{1,a_n^{\alpha_i}}(s+a_n^{2\al+2\eps_0},x')>a_n^{\beta_{i+1}}/16$.
\end{itemize}
\end{lemma}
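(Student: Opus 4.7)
To prove Lemma \ref{Lemma6.6} I would combine the pointwise bounds on $u'_{1,a_n^{\alpha_i}}(s+a_n^{2\al+2\eps_0},\hat{x}_n(s,x))$ provided by Lemma \ref{Lemma6.5}(b,c) with the uniform spatial regularity estimate built into the stopping time $U^{(1)}_{M,n,\beta_i}$. Fix $s,x,x',i$ as in the statement and set $\hat{x}_n=\hat{x}_n(s,x)$. First I would verify that all the hypotheses in the definition of $U^{(1)}_{M,n,\beta_i}$ are satisfied at time $s$ with center point $x$, scale $\eps=a_n^{\al}$, reference point $\hat{x}_0=\hat{x}_n$, and ``other endpoint'' equal either to $x'$ or to $\hat{x}_n$ itself. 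The relevant checks are: $\eps\in[a_n^{\al+\eps_0},2^{-M}]$ (from $\al\geq\eps_1$ and $a_n^{\eps_1}\leq 2^{-M-8}$ in (\ref{nM})); $|x|\leq K_0+1$ from the definition of $J_{n,i}$; $|x-\hat{x}_n|\leq a_n^{\al}=\eps$ by the definition of $\hat{x}_n$; $|u(s,\hat{x}_n)|\leq a_n=a_n\wedge(a_n^{1-\al}\eps)$ via Lemma \ref{Lemma-cont-u} applied to $|\langle u_s,G_{a_n^{2\al+2\eps_0}}(x-\cdot)\rangle|\leq a_n/2$; for $i\geq 1$, $|u'_{1,a_n^{2\al}}(s,\hat{x}_n)|\leq a_n^{\beta_i}/4\leq a_n^{\beta_i}$ from the definition of $J_{n,i}$ (the corresponding condition on $u'_{1,a_n^{2\al}}$ is dropped in $U^{(1)}_{M,n,0}$); and finally $|x-x'|\leq 5\bar{l}_n(\beta_i)\leq 2^{-M}$ and $|x-\hat{x}_n|\leq a_n^{\al}\leq 2^{-M}$ by (\ref{nM}).

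Since $s<U^{(1)}_{M,n,\beta_i}$, applying the bound from the definition of $U^{(1)}$ twice and adding via the triangle inequality yields an estimate on $|u'_{1,a_n^{\alpha_i}}(s+a_n^{2\al+2\eps_0},x')-u'_{1,a_n^{\alpha_i}}(s+a_n^{2\al+2\eps_0},\hat{x}_n)|$ of the form $2^{-74}a_n^{-\eps_0-(2-\eta/2)\eps_1}(5\bar{l}_n(\beta_i)\vee a_n^{\al+\eps_0})^{1-\eps_0}$ times a sum of three terms involving powers of $a_n$ and of $a_n^{\al}\vee 5\bar{l}_n(\beta_i)$. Substituting $\bar{l}_n(\beta_i)=a_n^{\beta_i/\eta+5\eps_1}$, $\alpha_i=2(\beta_i/\eta+\eps_1)$, $\al=1/(\eta+1)$, $\beta_i\leq\eta/(\eta+1)-\eta\eps_1$, and using the uniqueness threshold $\gamma>1-\eta/(2(\eta+1))+100\eps_1$ from (\ref{eps1}), I expect each of the three bracketed terms, after multiplication by the prefactor, to be bounded by a small multiple of $a_n^{\beta_i+\eps_1\eta^2/8}$ and hence by $a_n^{\beta_{i+1}}/16$, since $\beta_{i+1}=\beta_i+\eps_0$ with $\eps_0\leq\eps_1\eta^2/100$. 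The arithmetic is essentially the same as the exponent computations (\ref{ne12.1})--(\ref{mj45}) already carried out in the proof of Lemma \ref{Lemma6.5}.

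Combining this increment bound with Lemma \ref{Lemma6.5}(b) then gives, for $i\geq 1$,
$|u'_{1,a_n^{\alpha_i}}(s+a_n^{2\al+2\eps_0},x')|\leq a_n^{\beta_i}/2+a_n^{\beta_{i+1}}/16\leq a_n^{\beta_i}$, which is (a); combining with Lemma \ref{Lemma6.5}(c) gives, for $i<L$, $u'_{1,a_n^{\alpha_i}}(s+a_n^{2\al+2\eps_0},x')\geq a_n^{\beta_{i+1}}/8-a_n^{\beta_{i+1}}/16>a_n^{\beta_{i+1}}/16$, which is (b). The main obstacle is the exponent bookkeeping in the middle step: verifying that each of the three bracketed terms in the $U^{(1)}$ bound retains the $\eps_1\eta^2/8$ margin uniformly in $\beta_i\in[0,\eta/(\eta+1)-\eta\eps_1]$, and that this margin survives the competing cases $\bar{l}_n(\beta_i)\gtrless a_n^{\al+\eps_0}$. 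The constants $\eps_0$ and $\eps_1$ were chosen in (\ref{eps1}) precisely so that this margin is available, and the required inequalities are the spatial-increment analogues of the exponent inequalities already established in Lemma \ref{Lemma6.5}.
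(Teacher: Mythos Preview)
Your proposal is correct and follows essentially the same route as the paper. The only cosmetic difference is that the paper applies the $U^{(1)}_{M,n,\beta_i}$ bound \emph{once}, taking $\hat{x}_n(s,x)$ simultaneously in the roles of the center point and of $\hat{x}_0$ (with $\eps=|x-x'|+a_n^{\al}$), thereby bounding $|u'_{1,a_n^{\alpha_i}}(s+a_n^{2\al+2\eps_0},\hat{x}_n)-u'_{1,a_n^{\alpha_i}}(s+a_n^{2\al+2\eps_0},x')|$ directly; you instead keep $x$ as the center, set $\eps=a_n^{\al}$, apply the bound twice (endpoints $x'$ and $\hat{x}_n$), and combine via the triangle inequality. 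Either way the increment is controlled by the same quantity.

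One small correction: the exponent arithmetic you need here is not literally that of (\ref{ne12.1})--(\ref{mj45}) in Lemma~\ref{Lemma6.5}, since there the scale is $a_n^{\al}$ whereas here it is $\bar{l}_n(\beta_i)=a_n^{\beta_i/\eta+5\eps_1}\geq a_n^{\al}$. The paper carries out the needed computation separately (using $|x-x'|+a_n^{\al}\leq 6a_n^{\beta_i/\eta+5\eps_1}$, the inequality $(\eta+1)\gamma-2+\eta/2\geq\eta-1$ from (\ref{plp1}), and $\beta_{i+1}\leq\beta_i-2\eps_0-(2-\eta/2)\eps_1+5\eps_1(1-\eps_0)$), arriving at an increment bound of $2^{-75}a_n^{\beta_{i+1}}$. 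Your anticipated margin $a_n^{\beta_i+\eps_1\eta^2/8}$ is consistent with this, and the rest of your argument (combining with Lemma~\ref{Lemma6.5}(b),(c)) is exactly what the paper does.
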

\begin{proof}
Let $(n,i,s,x,x')$ as above and set $\eps=|x-x'|+a_n^{\al}$. Then from (\ref{beta-up-lim}) and (\ref{nM}) we have
\bn \label{mj51}
\eps\leq 5\bar{l}_n(\beta_i)+ a_n^{\al} = 5a_n^{\beta_i/\eta+5\eps_1}+a_n^{\al} \leq5a_n^{5\eps_1}+a_n^{\al} \leq 2^{-M}.
\en
From Definition \ref{Xn-def} we have
\bn \label{mj52}
|\hat{x}_n(s,x)-x'|\leq a_n^{\al}+|x-x'|\leq  \eps, \ |\hat{x}_n(s,x)|\leq |x|+1\leq K_0+1.
\en
By (\ref{mj52}), $s<U_{M,n}\leq U^{(1)}_{M,n,\beta_i}$ and the definition of $U^{(1)}_{M,n,\beta_i}$, with $\hat{x}_n(s,x)$ in the role of $x$ we conclude that
\bn\label{mj53}
&&|u'_{1,a_n^{\alpha_i}}(s+a_n^{2\al+2\eps_0},x')-u'_{1,a_n^{\alpha_i}}(s+a_n^{2\al+2\eps_0},\hat{x}_n(s,x))| \leq  2^{-81}a_n^{-\eps_0-(2-\eta/2)\eps_1}(|x-x'|+a_n^{\al})^{1-\eps_0}\nonumber \\
&&\quad \times\big[a_n^{-(2-\eta/2)\beta_i/\eta}(|x'-x|+a_n^{\al})^{(\eta+1)\gamma}
+a_n^{\beta_i(\eta-1)/\eta}+a_n^{\beta_i(\gamma\eta-2+\eta/2)/\eta} ( |x'-x|+a_n^{\al})^{\gamma}\big].
\en
From (\ref{eps1}) and (\ref{beta-def}) we have
\bn \label{mj56}
\beta_{i+1}=\beta_i+\eps_0\leq \beta_i -2\eps_0-(2-\eta/2)\eps_1+5\eps_1(1-\eps_0), \ i=0,...,L.
\en
Use  (\ref{alpha_0}) and (\ref{beta-def}) to deduce $\beta_i/\eta+5\eps_1\leq \al$, and therefore
\bn \label{mj503}
|x-x'|+a_n^{\al}\leq 6a_n^{\beta_i/\eta+5\eps_1}\leq a_n^{\beta_i/\eta}.
\en
%We use (\ref{mj53}), and
%%\bn
%%\gamma(\eta+1)-2+\eta/2>-1+\eta>\beta_i(\eta-1)/\eta, \nonumber \\
%%2\gamma-2+\eta/2>-1+\eta>\beta_i(\eta-1)/\eta, \nonumber
%%\en
From (\ref{alpha-def}) and (\ref{beta-def}) we have $\beta_i/\eta\leq \al$, for $i=0,\dots L$.  Use this and (\ref{plp1}), (\ref{mj56}), (\ref{mj503}), (\ref{mj53}) we get,
\bn\label{mj55}
&&|u'_{1,a_n^{\alpha_i}}(t+a_n^{2\al+2\eps_0},x')-u'_{1,a_n^{\alpha_i}}(t+a_n^{2\al+2\eps_0},\hat{x}_n(s,x))| \nonumber \\
&&\leq  2^{-78}a_n^{-\eps_0-(2-\eta/2)\eps_1}a_n^{(\beta_i/\eta+5\eps_1)(1-\eps_0)}
\big[a_n^{(\eta-1)\beta_i/\eta}+2a_n^{(\gamma(\eta+1)-2+\eta/2)\beta_i/\eta}\big] \nonumber \\
&&\leq  2^{-77}a_n^{-\eps_0-(2-\eta/2)\eps_1}a_n^{5\eps_1(1-\eps_0)}a_n^{(1-\eps_0)\beta_i/\eta}
a_n^{(\eta-1)\beta_i/\eta} \nonumber \\
&&\leq 2^{-77}a_n^{-\eps_0-(2-\eta/2)\eps_1+5\eps_1(1-\eps_0)}3a_n^{\beta_i-\eps_0}\nonumber \\
&&= 2^{-75}a_n^{\beta_{i+1}}.
\en
From (\ref{mj55}) and Lemma \ref{Lemma6.5}(b) and (c) we immediately get claims (a) and (b) of this lemma.
\end{proof}
\begin{lemma}\label{Lemma6.7}
If $i\in\{0,...,L\}, \ 0\leq s \leq U_{M,n}$, $x\in J_{n,i}(s)$, and $|x-x'|\leq 4a_n^{\al}$, then
\bn \label{hyp-Lemma6.7}
|\tilde u_{2,a_n^{\alpha_i}}(s,a_n^{2\al+2\eps_0},x')-\tilde u_{2,a_n^{\alpha_i}}(s,a_n^{2\al+2\eps_0},x'')|&\leq& 2^{-75}a_n^{\beta_{i+1}}(|x'-x''|^{}\vee a_n), \
 \forall |x'-x''|<\bar{l}_n(\beta_i). \nonumber \\
\en
\end{lemma}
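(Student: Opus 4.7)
The plan is to apply the definition of the stopping time $U^{(2)}_{M,n,\beta_i}$ from (\ref{U2-def}), which was designed precisely to control the spatial increment of $\tilde u_{2,a_n^{\alpha_i}}(s,a_n^{2\al+2\eps_0},\cdot)$ near the zero set of $u$. Since $s \leq U_{M,n} \leq U^{(2)}_{M,n,\beta_i}$ the bound in that definition is available, so the proof amounts to verifying the hypotheses and then simplifying the resulting estimate.

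I would set $\hat x_0 := \hat x_n(s,x)$ and choose $\eps := 5a_n^{\al}$, identifying the ``$x$'', ``$x'$'' and ``$\hat x_0$'' in (\ref{U2-def}) with $x'$, $x''$ and $\hat x_n(s,x)$ respectively. The hypotheses to verify are routine:
\begin{itemize}
\item $|x'| \leq |x|+4a_n^{\al} \leq K_0+1$ for $n$ large, since $J_{n,i}(s) \subset [-K_0-a_n^{\beta_i/\eta},K_0+a_n^{\beta_i/\eta}]$.
\item $|x'-x''| \leq \bar{l}_n(\beta_i) = a_n^{\beta_i/\eta+5\eps_1} \leq 2^{-M}$ and $\eps = 5a_n^{\al} \leq 2^{-M}$, both by (\ref{nM}).
\item $|x'-\hat x_n(s,x)| \leq |x'-x|+|x-\hat x_n(s,x)| \leq 4a_n^{\al}+a_n^{\al} = \eps$.
\item $|u(s,\hat x_n(s,x))| \leq a_n$ by Lemma \ref{Lemma-cont-u} (applicable since $n \geq n_1(\eps_0,K)$ by (\ref{nM})), and $a_n^{1-\al}\eps = 5a_n$, so $|u(s,\hat x_n(s,x))| \leq a_n \wedge (a_n^{1-\al}\eps)$.
\item $|u'_{1,a_n^{2\al}}(s,\hat x_n(s,x))| \leq a_n^{\beta_i}/4 \leq a_n^{\beta_i}$ by the definition of $J_{n,i}$ when $i > 0$; when $i=0$ the analogous condition is absent from the modified $U^{(2)}_{M,n,0}$.
\end{itemize}
With these hypotheses in hand, the defining inequality of $U^{(2)}_{M,n,\beta_i}$ gives, writing $h := |x'-x''|$,
\begin{align*}
&|\tilde u_{2,a_n^{\alpha_i}}(s,a_n^{2\al+2\eps_0},x')-\tilde u_{2,a_n^{\alpha_i}}(s,a_n^{2\al+2\eps_0},x'')| \\
&\leq 2^{-87}a_n^{-\eps_0}\bigg[\big((a_n^{-\al(1-\eta/2)-3\eps_0}h)\wedge h^{\eta/2-\eps_0}\big)(\eps\vee h)^{\gamma}\big[(a_n^{\al}\vee\eps\vee h)^{\eta\gamma}+a_n^{\beta_i\gamma}\big]\\
&\qquad\qquad\qquad + h^{1-\eps_0}a_n^{\beta_i+99\eps_1\beta_i/\eta+\eps_1\eta^2/4}\bigg].
\end{align*}

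The main obstacle is the exponent bookkeeping required to show this expression is at most $2^{-75}a_n^{\beta_{i+1}}(h\vee a_n) = 2^{-75}a_n^{\beta_i+\eps_0}(h\vee a_n)$. I would split according to whether $h \leq a_n$ or $h > a_n$, and to which branch of the minimum in the first factor is active. Using $\beta_i \leq \al\eta$ and $\beta_i/\eta+5\eps_1 < \al$ (from (\ref{beta-def}) and (\ref{eps1})), so that $\eps\vee h \leq \bar{l}_n(\beta_i)$ and $a_n^{\beta_i\gamma}$ dominates $(a_n^{\al}\vee\eps\vee h)^{\eta\gamma}$ up to constants, the verification reduces to two scalar inequalities: $\al(\gamma-1+\eta/2) \geq \beta_i(1-\gamma)+5\eps_0$ (controlling the first summand) and $99\eps_1\beta_i/\eta+\eps_1\eta^2/4 \geq 3\eps_0$ (controlling the second summand). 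The first follows from $\al=1/(\eta+1)$, $\gamma > 1-\eta/(2(\eta+1))+100\eps_1$, and $\beta_i \leq \eta/(\eta+1)$, since then $\al(\gamma-1+\eta/2) > \eta^2/(2(\eta+1)^2)+100\eps_1/(\eta+1)$ while $\beta_i(1-\gamma) \leq \eta^2/(2(\eta+1)^2)$; the second is immediate from $\eps_0 < \eta^2\eps_1/100$. The factor $2^{12}$ between $2^{-87}$ and $2^{-75}$ absorbs the numerical constants arising from the $5$'s in $\eps = 5a_n^{\al}$.
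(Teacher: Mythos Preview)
Your approach is essentially the paper's: verify the hypotheses of $U^{(2)}_{M,n,\beta_i}$ with $\hat x_0=\hat x_n(s,x)$, $\eps=5a_n^{\al}$, and then reduce to exponent bookkeeping. The hypothesis verification is correct and matches the paper's setup.

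Two points in the bookkeeping are compressed too far. First, the single inequality $\al(\gamma-1+\eta/2)\geq\beta_i(1-\gamma)+5\eps_0$ only handles the regime $h\leq a_n^{\al}$, where the linear branch of the minimum and $(\eps\vee h)=\eps$ are both active. When $a_n^{\al}<h\leq\bar l_n(\beta_i)$ the relevant branch is $h^{\eta/2-\eps_0}$ and $(\eps\vee h)\sim h$, and the resulting requirement is the distinct inequality $(\beta_i/\eta+5\eps_1)(\gamma+\eta/2-1-\eps_0)\geq 2\eps_0+\beta_i(1-\gamma)$ (this is the paper's Case~1, equations (\ref{mj66})--(\ref{mj69})); it holds for the same reasons but is not implied by your stated inequality. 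Second, your identity $\beta_{i+1}=\beta_i+\eps_0$ is only valid for $i<L$; by (\ref{beta-def}), $\beta_{L+1}=\frac{\eta}{\eta+1}-\eta\eps_1$, so $\beta_L-\beta_{L+1}\leq -5\eta\eps_1$, and the $i=L$ case of the second summand needs the full $99\eps_1\beta_L/\eta$ term to compensate (cf.\ the paper's (\ref{nn1}) versus (\ref{nn2})). Both gaps are routine to fill, and with them filled your argument coincides with the paper's.
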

\begin{proof}
The proof uses the ideas from the proofs of Lemma 6.7 in \cite{MP09}. \\\\
Let $(n,i,s,x,x')$ as above and set $\eps=5a_n^{\al}\leq2^{-M}$, by (\ref{nM}). Then
\bn \label{mj60}
|x'-\hat{x}_n(s,x)|\leq |x'-x|+a_n^{\al}\leq \eps, \ |x'|\leq |x|+1 \leq K_0+1.
\en
From Lemma \ref{Lemma-cont-u} we have
\bn
| u(s,\hat{x}_n(s,x))|\leq a_n  =a_n\wedge (a_n^{1-\al}\eps).
\en
From the definition of $(s,x)\in J_{n,i}$ we have for $i>0$,
\bn \label{mj61}
|  u'_{1,a^{2\al}_n}(s,\hat{x}_n(s,x))|\leq a_n^{\beta_i}/4 \leq a_n^{\beta_i}.
\en
Denote by
\bn \label{mj62}
Q(n,\eps_0,\beta_i,r) = a_n^{-\eps_0}\big[\big(a_n^{-\al(1-\eta/2)-3\eps_0}r\big)\wedge \big(r^{(\eta-2\eps_0)/2}\big)\big]\big[(r\vee a_n^{\al})^{(1+\eta)\gamma}
 +(r\vee a_n^{\al})^\gamma a_n^{\beta_i\gamma}\big]. \nonumber
\en
Assume $|x'-x''|\leq \bar{l}_n(\beta_i)\leq 2^{-M}$ where the last inequality is by (\ref{nM}).
From $s<U_{M,n}\leq U^{(2)}_{M,n,\beta_i}$, and the definition of $U^{(2)}_{M,n,\beta_i}$, with $(x',x'')$ replacing $(x,x')$ we get
\bn  \label{mj63}
&&|\tilde u_{2,a_n^{\alpha_i}}(s,a_n^{2\al+2\eps_0},x'')-\tilde u_{2,a_n^{\alpha_i}}(s,a_n^{2\al+2\eps_0},x')|  \nonumber \\
&&\leq 2^{-87}a_n^{-\eps_0}\big[\big(|x''-x'|a_n^{-\al(1-\eta/2)-3\eps_0}\big)\wedge\big(|x''-x'|^{(\eta-2\eps_0)/2}\big)\big]
\big[((5a_n^{\al})\vee |x''-x'|)^{(\eta+1)\gamma} \nonumber \\
&&\quad+((5a_n^{\al})\vee |x''-x'|)^{\gamma}a_n^{\beta_i\gamma} \big]
 +|x''-x'|^{1-\eps_0}a_n^{\beta_i+\eps_1}\big).\nonumber \\
&&\leq 2^{-82}(Q(n,\eps_0,\beta_i,|x''-x'|)+|x''-x'|^{1-\eps_0}a_n^{\beta_i+99\beta_i\eps_1/\eta+\eps_1\eta^2/4}\big).
\en
We show that
\bn \label{mj64}
Q(n,\eps_0,\beta_i,r)\leq 2a_n^{\beta_{i+1}}r^{}, \  \ \forall \ 0\leq r\leq \bar{l}_n(\beta_i).
\en
\textbf{Case 1.} $a_n^{\al}\leq r\leq \bar{l}_n(\beta_i)$. \\
\bn \label{mj65}
Q(n,\eps_0,\beta_i,r)&\leq& a_n^{-\eps_0}r^{(\eta-2\eps_0)/2}\big[r^{(\eta+1)\gamma}
 +a_n^{\beta_i\gamma} r^{\gamma}\big] \nonumber \\
&=&  a_n^{-\eps_0}\big[r^{(\eta+1)\gamma+\eta/2-\eps_0}
 +a_n^{\beta_i\gamma} r^{\gamma+\eta/2-\eps_0}\big].
\en
Therefore, (\ref{mj64}) holds if
\bn  \label{mj66}
r^{(\eta+1)\gamma+\eta/2-1-\eps_0} \leq a_n^{\beta_{i+1}+\eps_0},
\en
and
\bn \label{mj67}
a_n^{\beta_i\gamma} r^{\gamma+\eta/2-1-\eps_0} \leq a_n^{\beta_{i+1}+\eps_0}.
\en
From (\ref{eps1}) we have
\bn \label{rr1}
 (1+\eta)\gamma+\eta/2-1-\eps_0>\eta+\eps_1,
\en
and hence
\bn \label{mj68}
r^{(\eta+1)\gamma+\eta/2-1-\eps_0} \leq r^{\eta},  \nonumber
\en
Hence by the upper bound on $r$ in this case, it suffices to show that $a_n^{\eta(\beta_i/\eta+5\eps_1)} \leq a_n^{\beta_{i+1}+\eps_0}$, which is clear from (\ref{eps1}) and (\ref{beta-def}). Hence, (\ref{mj66}) follows. Turning to (\ref{mj67}), from the upper bound on $r$ we have
\bn\label{mj69}
a_n^{\beta_i\gamma} r^{\gamma+\eta/2-1-\eps_0}a_n^{-\beta_{i+1}-\eps_0}&\leq& a_n^{\beta_i\gamma+ (\gamma+\eta/2-1-\eps_0)(\beta_i/\eta+5\eps_1)-\beta_{i+1}-\eps_0} \nonumber \\
&\leq &
a_n^{\frac{\beta_i}{\eta}(\gamma(\eta+1)-1+\eta/2)-\beta_i+5\eps_1(\gamma-1+\eta/2-\eps_0)-2\eps_0} \nonumber \\
&\leq & a_n^{5\eps_1(\gamma-1+\eta/2-\eps_0)-2\eps_0} \nonumber \\
&\leq& 1,
\en
where we used (\ref{eps1}) and (\ref{rr1}) in the last two inequalities. From (\ref{mj69}) we get (\ref{mj67}). This proves (\ref{mj64}) in the first case. \\\\
\textbf{Case 2.} $0\leq r <a_n^{\al}$. \\
%We show that this interval is not empty, i.e. we have to show that
%\bn\label{mj80}
%\frac{2}{2-\eta}(\gamma-\beta_i-\eps_1)\geq \al.
%\en
%Use $\beta_i\leq \frac{\eta}{\eta+1}$, (\ref{optsol}), (\ref{alpha_0}) and the following inequality
%\bn\label{mj80.1}
%\frac{2}{2-\eta}\bigg(\frac{2+\eta}{2(\eta+1)}-\frac{\eta}{\eta+1}\bigg)\geq \frac{1}{\eta+1}, \ \forall \eta\in[0,1],
%\en
%to get (\ref{mj80}), hence Case 2 is not empty. \\
Now, let us show that (\ref{mj64}) is satisfied in this case. Recall that in this case we get from (\ref{alpha_0}),
\bn \label{mj70}
Q(n,\eps_0,\beta_i,r) &\leq & a_n^{-\eps_0-\al(1-\eta/2)-3\eps_0}r\big[a_n^{\al(\eta+1)\gamma}+a_n^{\gamma\al+\beta_i\gamma}\big]\nonumber \\
&= &a_n^{-4\eps_0-\al(1-\eta/2)}r\big[a_n^{\gamma}+a_n^{\gamma\al+\beta_i\gamma}\big] .
\en
From (\ref{beta-up-lim}) and (\ref{alpha_0}) we have
\bn \label{mj7011}
a_n^{\gamma}\leq a_n^{\gamma\al+\beta_i\gamma}.
\en
From (\ref{mj70}) and (\ref{mj7011}) we conclude that (\ref{mj64}) holds if
\bn \label{mj71}
r^{}a_n^{\beta_{i+1}}\geq ra_n^{\gamma\al+\beta_i\gamma-4\eps_0-\al(1-\eta/2)}.
\en
Note that from (\ref{eps1}) we have
\bn \label{mj7222}
1-\gamma\leq \frac{\eta}{2(\eta+1)}-10\eps_1,
\en
and therefore,
\bn \label{mj321}
\frac{\eta}{\eta+1}(1-\gamma)\leq \big(\gamma-1+\frac{\eta}{2}\big)\frac{1}{\eta+1}-10\eps_1.
\en
From (\ref{mj321}), (\ref{alpha_0}) and (\ref{beta-up-lim}) we have
\bn\label{mj322}
\beta_{i+1}(1-\gamma)
& \leq & \big( \gamma-1+\frac{\eta}{2}\big)\al-10\eps_1.
\en
From (\ref{mj322}) and (\ref{beta-def}) we get
\bn \label{mj72}
\beta_{i+1}\leq \gamma\beta_i+ \big(\gamma-1+\frac{\eta}{2}\big)\al-4\eps_0,
\en
and therefore (\ref{mj71}) is satisfied.
%Note that (\ref{mj71.1}) is satisfied if
%\bn \label{mj72.1}
%r^{1-\eta/2+\eps_0/2}&\geq& a_n^{\gamma-\beta_{i+1}-\eps_0} \nonumber \\
%&=& a_n^{\gamma-\beta_i-2\eps_0}.
%\en
%The lower bound on $r$, implies that,
%\bn \label{mj73}
%r^{1-\eta/2+\eps_0/2}&\geq& a_n^{\gamma-\beta_i-\eps_1} \nonumber \\
%&\geq&a_n^{\gamma-\beta_i-2\eps_0}.
%\en
%where we have used (\ref{eps1}) in the last two inequalitis.
From (\ref{mj70})-(\ref{mj71}) and (\ref{mj64}) follows for Case 2.  \\\\
%\textbf{Case 3.}  $r<a_n^{\frac{2}{2-\eta}(\gamma-\beta_i-\eps_1)}$. \\
%This case follows from Case 2 and the monotonicity of $Q(n,\eps_0,\beta_i,r)$ in $r$. Recall that we have shown that Case 2 in nonempty. \\\\
Consider the second term in (\ref{mj63}). From (\ref{eps1}) we have $\eps_1<1/200$, use this and (\ref{beta-def}) to get
\bn \label{nn1}
\beta_{L}-\beta_{L+1}+ 99\eps_1 \frac{\beta_L}{\eta} &=& -5\eta\eps_1+ 99\eps_1\bigg(\frac{1}{\eta+1} -6\eps_1\bigg) \nonumber \\
&\geq & -5\eta\eps_1+ 44\eps_1-3\eps_1  \nonumber \\
&\geq& 10\eps_1 \nonumber \\
&\geq& 10\eps_0.
\en
Use (\ref{eps1}) and (\ref{beta-def}) again to get
\bn \label{nn2}
\beta_{i}-\beta_{i+1}+ \eps_1\frac{\eta^2}{4} \geq  \eps_1\frac{\eta^2}{4} -\eps_0 \geq 10\eps_0, \  \forall  i=0,...,L-1.
\en
 If $r\geq a_n$, then from (\ref{nn1}) and (\ref{nn2}) we get,
\bn \label{mj74}
r^{1-\eps_0}a_n^{\beta_i+99\eps_1\beta_i/\eta+ \eps_1\eta^2/4}(a_n^{\beta_{i+1}}r^{})^{-1} &\leq & r^{-\eps_0}a_n^{10\eps_0}\nonumber \\
&\leq& a_n^{9\eps_0} \nonumber \\
&\leq& 1, \  \forall  i=0,...,L.
\en
From (\ref{mj74}) follows
\bn  \label{mj75}
r^{1-\eps_0}a_n^{\beta_i+99\eps_1\beta_i/\eta+\eps_1\eta^2/4}\leq a_n^{\beta_{i+1}}(r^{}\vee a_n), \  \forall  i=0,...,L.
\en
From (\ref{mj63}), (\ref{mj64}) and (\ref{mj75}) we get (\ref{hyp-Lemma6.7}).
\end{proof}
%\begin{lemma} \label{Lemma6.8}
%If $ \ 0\leq s \leq U_{M,n}$ and $x\in J_{n,i}(s)$, then
%\bn
%|u(s,x)-u(s,x')|\leq (a_n^{\al}\vee |x-x'|)^{1-\eps_0}, \  \forall |x-x'|\leq 2^{-M}, \nonumber
%\en
%and
%\bn
%|u(s,x')|\leq 3a_n^{\al(1-\eps_0)}, \  \forall |x-x'|\leq a_n^{\al}.  \nonumber
%\en
%\end{lemma}
%The proof of Lemma \ref{Lemma6.8} is similar to the proof of Lemma 6.8 in \cite{MP09}, hence it is omitted.
\paragraph{Proof of Proposition \ref{PropStopTimes}}
The proof of the compactness is similar to that in the proof of Proposition 3.3 in \cite{MP09}. The inclusions $J_{n,i}(s)\subset \tilde{J}_{n,i}(s)$ for $0\leq s \leq U_{M,n}$ follows directly from Lemmas \ref{Lemma-cont-u}, \ref{Lemma666} and \ref{Lemma6.6}, \ref{Lemma6.7}.

\section{Proof of Proposition \ref{Prop5.14mod}}  \label{proof-u2}
In this section we prove Proposition \ref{Prop5.14mod}. The proof follows the same lines as the proof of Proposition 5.14 in \cite{MP09}. From (\ref{tilde-u2exp}) we get for $\dl\in [a_n^{2\al+2\eps_0},1]$,
\bn \label{u2Dec}
&&|\tilde u_{2,\dl}(t,a_n^{2\al+2\eps_0},x')- \tilde u_{2,\dl}(t,a_n^{2\al+2\eps_0},x)|  \\ \nonumber
&&= \bigg|\int_{(t+a_n^{2\al+2\eps_0}-\dl)^{+}}^{t}\int_{\re}G_{t+a_n^{2\al+2\eps_0}-s}(y-x')D(s,y)W(ds,dy) \\ \nonumber
&& \quad -\int_{(t+a_n^{2\al+2\eps_0}-\dl)^{+}}^{t}\int_{\re}G_{t+a_n^{2\al+2\eps_0}-s}(y-x)D(s,y)W(ds,dy)\bigg| \\ \nonumber
&& = \bigg|\int_{(t-a_n^{2\al+2\eps_0}-\dl)^{+}}^{t}\int_{\re}(G_{t+a_n^{2\al+2\eps_0}-s}(y-x')-G_{t+a_n^{2\al+2\eps_0}-s}(y-x))D(s,y)W(ds,dy)\bigg|.
\en
From (\ref{Dbound}) and (\ref{u2Dec}) we conclude that we need to bound the following quadratic variations
\bn
\hat{Q}_{S,1,\dl,\nu_0}(t,x,x')&=&\int_{(t+a_n^{2\al+2\eps_0}-\dl)^{+}}^{t}\int_{\re}\mathds{1}_{\{|x-y|>(t+a_n^{2\al+2\eps_0}-s)^{1/2-\nu_0}\vee (2|x-x'|)\}}
 \\ \nonumber
&&\times (G_{t+a_n^{2\al+2\eps_0}-s}(y-x')-G_{t+a_n^{2\al+2\eps_0}-s}(y-x))^2e^{2R_1|y|}|u(s,y)|^{2\gamma}\mu(dy)ds  \\ \nonumber
\hat{Q}_{S,2,\dl,\nu_0}(t,x,x')&=&\int_{(t+a_n^{2\al+2\eps_0}-\dl)^{+}}^{t}\int_{\re}\mathds{1}_{\{|x-y|\leq(t+a_n^{2\al+2\eps_0}-s)^{1/2-\nu_0}\vee (2|x-x'|)\}}
 \\ \nonumber
&&\times (G_{t+a_n^{2\al+2\eps_0}-s}(y-x')-G_{t+a_n^{2\al+2\eps_0}-s}(y-x))^2e^{2R_1|y|}|u(s,y)|^{2\gamma}\mu(dy)ds
\en
\begin{lemma} \label{Lemma7.1}
For any $K\in \mathds{N}^{\geq K_1}$ and $R>2$ there is a $C_{\ref{Lemma7.1}}(K,R_1,\eta,\nu_0,\nu_1)>0$ and an $N_{\ref{Lemma7.1}}=N_{\ref{Lemma7.1}}(K,\omega,\eta)\in \mathds{N}$ a.s. such that for all $\nu_0,\nu_1\in(1/R,1/2), \dl\in (0,1], N,n\in \mathds{N},\beta \in [0,\frac{\eta}{\eta+1}]$, $\eps\in (0,\eta/2)$ and $(t,x) \in \re_+\times \re$, on
\bn \label{7.1w-set}
\{\omega:(t,x)\in Z(N,n,K,\beta), N\geq N_{\ref{Lemma7.1}}\},
\en
\bn
\hat{Q}_{S,1,\dl,\nu_0}(t,x,x')\leq 2^{4N_{\ref{Lemma7.1}}}C_{\ref{Lemma7.1}}(K,R_1,\eta,\nu_0,\nu_1)2^{4N_{\ref{Lemma7.1}}}[d((t,x),(t,x'))\wedge \sqrt{\dl}]^{2-\nu_1}\dl^{1+\eta/2-\eps}, \ \forall x'\in\re.
\en
\end{lemma}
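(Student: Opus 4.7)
The plan is to mirror the proof of Lemma \ref{Lemma-Q1}, with the Gaussian kernel $G$ in place of its derivative $G'$ and with the ``small time'' slice of integration $s\in[(t+a_n^{2\al+2\eps_0}-\dl)^{+},t]$ in place of $[0,(s-\dl)^{+}]$. The key input will be Lemma \ref{Lemma-MP4.3}(b), which bounds the spatial $\mu$-integral of $(G-G)^2$ restricted to the ``far'' set $\{|y-x|>\rho^{1/2-\nu_0}\vee 2|x-x'|\}$, carrying an exponential decay factor $\exp\{-\nu_1\rho^{-2\nu_0}/32\}$ in the time variable. I take $N_{\ref{Lemma7.1}}:=N_1(0,1-\nu_1,K,\eta)$, the stopping index associated with $(P_0)$. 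Invoking Lemma \ref{LemmaBound-u} at $m=0$ with $\eps_0=0$ (as permitted by Remark \ref{Remark5.3}) produces, on the event $\{(t,x)\in Z(N,n,K,\beta),\ N\geq N_{\ref{Lemma7.1}}\}$, a pointwise bound
\[
|u(s,y)|^{2\gamma}\leq C(K)\,2^{4N_{\ref{Lemma7.1}}}\,e^{2\gamma|y-x|},\qquad s\leq T_K,\ y\in\re,
\]
where I crudely discard the $\bar d_N^{\xi}$-factor, as the sought rate in $\dl$ will come entirely from the time integration.

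Combining this with $e^{2R_1|y|}\leq e^{2R_1 K}e^{2R_1|y-x|}$ (using $|x|\leq K$) and writing $\rho:=t+a_n^{2\al+2\eps_0}-s\in[a_n^{2\al+2\eps_0},\dl]$, the integrand of $\hat Q_{S,1,\dl,\nu_0}(t,x,x')$ is dominated by a constant multiple of
\[
2^{4N_{\ref{Lemma7.1}}}\,e^{(2R_1+2\gamma)|y-x|}\bigl(G_{\rho}(y-x')-G_{\rho}(y-x)\bigr)^{2}\mathds{1}_{\{|y-x|>\rho^{1/2-\nu_0}\vee 2|x-x'|\}}.
\]
Choosing $R\geq 2R_1+2\gamma+2$ and applying Lemma \ref{Lemma-MP4.3}(b) with $p=0$ and both ``times'' equal to $\rho$, the $\mu(dy)$-integral is bounded by
\[
C\,\rho^{\eta/2-1-\eps}\exp\{-\nu_1\rho^{-2\nu_0}/32\}\bigl[1\wedge|x-x'|^{2}/\rho\bigr]^{1-\nu_1/2}.
\]

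It remains to estimate the time integral $\int_{0}^{\dl}\rho^{\eta/2-1-\eps}\exp\{-\nu_1\rho^{-2\nu_0}/32\}[1\wedge|x-x'|^{2}/\rho]^{1-\nu_1/2}\,d\rho$. The exponential decay lets one absorb any negative power of $\rho$: for any $A>0$, $\sup_{\rho\in(0,1]}\rho^{-A}\exp\{-\nu_1\rho^{-2\nu_0}/32\}<\infty$. Selecting $A=2-\nu_1/2$ yields
\[
\rho^{\eta/2-1-\eps}\exp\{-\nu_1\rho^{-2\nu_0}/32\}\leq C\,\rho^{\eta/2+1-\nu_1/2-\eps}.
\]
Splitting the integration at $\rho=|x-x'|^{2}$ and treating the two regimes $|x-x'|\geq\sqrt\dl$ and $|x-x'|<\sqrt\dl$ separately, a direct computation bounds the integral by $C\,(d((t,x),(t,x'))\wedge\sqrt\dl)^{2-\nu_1}\dl^{1+\eta/2-\eps}$, which, combined with the $2^{4N_{\ref{Lemma7.1}}}$ prefactor, yields the claim.

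The main delicate point is the choice of the absorbed exponent $A$: it must be large enough for the time integrand to be integrable near $0$ yet not so large as to overshoot the desired rate in $\dl$. The value $A=2-\nu_1/2$ is forced by the regime $|x-x'|\geq\sqrt\dl$, where the bracketed factor is identically $1$ and integration yields $\dl^{\eta/2+2-\nu_1/2-\eps}=(\sqrt\dl)^{2-\nu_1}\dl^{1+\eta/2-\eps}$. Once this bookkeeping is in place, the remainder is a routine application of Lemma \ref{Lemma-MP4.3}(b) and Fubini's theorem.
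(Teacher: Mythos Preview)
Your approach is essentially the paper's: invoke Lemma~\ref{LemmaBound-u} at $m=0$, apply Lemma~\ref{Lemma-MP4.3}(b) to the $\mu$-integral on the far set, use the factor $\exp\{-\nu_1\rho^{-2\nu_0}/32\}$ to raise the power of $\rho$, and then integrate in time (the paper packages this last step as an application of Lemma~\ref{Lem4.1MP}(a), which is exactly your hand computation).

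One correction is needed: you cannot simply ``discard'' $\bar d_N^{\xi}$, since $\bar d_N=d((s,y),(t,x))\vee 2^{-N}$ can exceed $1$ when $|y-x|$ is large. Bound it instead by $C(K)(1+|y-x|)^{\xi}$ (using $\sqrt{t-s}\leq\sqrt{K}$ and $2^{-N}\leq 1$) and carry this polynomial weight into Lemma~\ref{Lemma-MP4.3}(b) via its $p$-parameter; this is precisely what the paper does. A second, minor point: the statement requires $N_{\ref{Lemma7.1}}$ to depend only on $(K,\omega,\eta)$, so take a fixed $\xi$ (the paper uses $\xi=1-\eta/4$) rather than $\xi=1-\nu_1$; since you throw the $\bar d_N^{\xi}$-factor into the exponential anyway, the particular value of $\xi$ is immaterial.
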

\begin{proof}
The proof follows the same lines as the proof of Lemma 7.1 in \cite{MP09}. Let $d=d((t,x),(t',x')$ and $N_{\ref{Lemma7.1}}=N_1(0,1-\eta/4,K)$, where $N_1$ is as in $(P_0)$. Then, as in $(P_0)$, $N_1$ depends only on $(1-\eta/4,K)$, and then for $\omega$ as in (\ref{7.1w-set}) we can use Lemma \ref{LemmaBound-u} with $m=0$ to get
\bn \label{rn4}
&&\hat{Q}_{S,1,\dl,\nu_0}(t,x,x')  \nonumber \\ \nonumber
&&\leq C_{\ref{LemmaBound-u}}(\omega)\int_{(t-a_n^{2\al+2\eps_0}-\dl)^{+}}^{t}\int_{\re}\mathds{1}_{\{|x-y|>(t+a_n^{2\al+2\eps_0}-s)^{1/2-\nu_0}\vee (2|x-x'|)\}}
 \\ \nonumber
&&\quad \times (G_{t+a_n^{2\al+2\eps_0}-s}(y-x')-G_{t+a_n^{2\al+2\eps_0}-s}(y-x))^2e^{2R_1|y|}e^{2|y-x|}(\bar{d}_N^{1-\eta/4+\eps/2})^{2\gamma}\mu(dy)ds  \\ \nonumber
&&= C_{\ref{LemmaBound-u}}(\omega)\int_{(t-a_n^{2\al+2\eps_0}-\dl)^{+}}^{t}\int_{\re}\mathds{1}_{\{|x-y|>(t+a_n^{2\al+2\eps_0}-s)^{1/2-\nu_0}\vee (2|x-x'|)\}}(G_{t+a_n^{2\al+2\eps_0}-s}(y-x')
 \\ \nonumber
&&\quad  -G_{t+a_n^{2\al+2\eps_0}-s}(y-x))^2e^{2R_1|y|}e^{2|y-x|}(2^{-N}\vee(|y-x|+\sqrt{t-s})^{(2-\eta/2+\eps)\gamma}\mu(dy)ds  \\ \nonumber
&&\leq C_{\ref{LemmaBound-u}}(\omega)e^{2R_1K}\int_{(t-a_n^{2\al+2\eps_0}-\dl)^{+}}^{t}\int_{\re}\mathds{1}_{\{|x-y|>(t+a_n^{2\al+2\eps_0}-s)^{1/2-\nu_0}\vee (2|x-x'|)\}}
 \\ \nonumber
&&\quad \times (G_{t+a_n^{2\al+2\eps_0}-s}(y-x')-G_{t+a_n^{2\al+2\eps_0}-s}(y-x))^2e^{2(R_1+1)|y-x|}(1+|x-y|)^{(2-\eta/2+\eps)\gamma}\mu(dy)ds  \\ \nonumber
&&\leq C_{\ref{LemmaBound-u}}(\omega)C(R_1,K,\eta,\nu_0,\nu_1)\int_{t-a_n^{2\al+2\eps_0}-\dl}^{t}
|t+a_n^{2\al+2\eps_0}-s|^{\eta/2-\eps_0-1}e^{-\nu_1(t+a_n^{2\al+2\eps_0}-s)^{-2\nu_0}/32}   \\
&&\quad \times \bigg[1\wedge \frac{d^2}{t+a_n^{2\al+2\eps_0}-s}\bigg]^{1-\nu_1/2}ds,
\en
where we used Lemma \ref{Lemma-MP4.3}(b) in the last inequality.
%We will need the inequality
%\bn  \label{rn5}
%e^{-\nu_1(t+a_n^{2\al+2\eps_0}-s)^{-2\nu_0}/32} \leq e^{-\nu_1(t'-t)^{-2\nu_0}/64} +e^{-\nu_1(t-s)^{-2\nu_0}/64}.
%\en
From (\ref{rn4}), change of variable and Lemma \ref{Lem4.1MP}(a) we get
\bn \label{rn6}
&&\hat{Q}_{S,1,\dl,\nu_0}(t,x,x')  \\ \nonumber
&&\leq C_{\ref{LemmaBound-u}}(\omega) C(R_1,K,\eta,\nu_0,\nu_1)\int_{t-\dl}^{t}e^{-\nu_1(t-s)^{-2\nu_0}/64}
(t-s)^{\eta/2-1-\eps_0}\bigg[1\wedge \frac{d^2}{t-s}\bigg]^{1-\nu_1/2}ds \\ \nonumber
&&\leq C_{\ref{LemmaBound-u}}(\omega) C(R_1,K,\eta,\nu_0,\nu_1)\int_{t-\dl}^{t}(t-s)^{1+\eta/2-\eps_0}\bigg[1\wedge \frac{d^2}{t-s}\bigg]^{1-\nu_1/2}ds \\ \nonumber
&&\leq C_{\ref{LemmaBound-u}}(\omega) C(R_1,K,\eta,\nu_0,\nu_1)(\dl\wedge d^2)^{1-\nu_1/2}\dl^{1+\eta/2-\eps_0}. \nonumber
\en
By remark \ref{Remark5.3} we can choose $C_{\ref{LemmaBound-u}}$ with $\eps_0=0$ and this completes the proof.
\end{proof}
\begin{lemma} \label{Lemma7.2}
Let $0\leq m \leq \bar{m}+1$ and assume $(P_m)$. Fix $\theta\in (0,\gamma-1+\eta/2-\eps_0)$. Then, for any $K\in \mathds{N}^{\geq K_1},$ \\ $R>2/\theta,n\in \mathds{N}, \eps_0\in (0,1)$ and $\beta\in [0,\frac{\eta}{\eta+1}]$, there exists a $C_{\ref{Lemma7.2}}(\eta,\eps_0,K,R_1)>0$ and \\ $N_{\ref{Lemma7.2}}=N_{\ref{Lemma7.2}}(m,n,R,\eps_0,K,\beta,\eta)(\omega)\in \mathds{N}$ a.s. such that for any $\nu_1\in(R^{-1},\theta/2), \nu_0\in(0,\nu_1/24), \dl\in[a_n^{2\al},1],$ \\ $N\in \mathds{N}$, and $(t,x)\in \re_+\times \re$, on
\bn \label{rn7}
\{\omega:(t,x)\in Z(N,n,K,\beta), N\geq N_{\ref{Lemma7.2}}\},
\en
\bn \label{rn888}
&&\hat{Q}_{S,2,\dl,\nu_0}(t,x,x') \nonumber \\
&&\leq C_{\ref{Lemma7.2}}(\eta,\eps_0,K,R_1)[a_n^{-2\eps_0}+2^{4N_{\ref{Lemma7.2}}}]\big[(d^2a_n^{-2\al(1-\eta/2)-3\eps_0})\wedge (d^{\eta-2\eps_0})\big]
 \bar{d}_N^{2\gamma-\nu_0/2}\big[(\bar{d}_{n,N})^{2\gamma(\tilde{\gamma}_m-1)} +a_n^{2\gamma\beta}\big] \nonumber \\
   && \ \forall t  \leq K, \ |x'|\leq K+1.
\en
Here $d=d((t,x),(t,x')), \ \bar{d}_N=d\vee2^{-N}$ and $\bar{d}_{n,N}=a_n^{\al}\vee \bar{d}_N$. Moreover $N_{\ref{Lemma7.2}}$ is stochastically bounded uniformly in $(n,\beta)$.
\end{lemma}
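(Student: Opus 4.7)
The proof will closely parallel the argument used for Lemma \ref{Lemma-Q2} (i.e.\ Lemma 5.5 of \cite{MP09}), with two essential modifications: first, since $\hat{Q}_{S,2,\dl,\nu_0}$ involves the Gaussian kernel $G$ rather than its derivative $G'$, I will apply Lemma \ref{Lemma-MP4.3}(a) in place of Lemma \ref{Lemma-MP4.4}(a), trading the power $-2+\eta/2-\eps$ in $\tau$ for the better power $-1+\eta/2-\eps$; second, because we are working with $\tilde u_{2,\dl}(t,a_n^{2\al+2\eps_0},x)$, the time variable $\tau=t+a_n^{2\al+2\eps_0}-s$ in the inner integral is bounded below by $a_n^{2\al+2\eps_0}$ rather than by the upper endpoint alone, which is precisely the mechanism that produces the minimum $(d^2 a_n^{-2\al(1-\eta/2)-3\eps_0})\wedge d^{\eta-2\eps_0}$ appearing in the conclusion.

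I will define $N_{\ref{Lemma7.2}}=N_1(m,n,1-\nu_0/(8\gamma),\eps_0/4,K+1,\beta,\eta)$ so that $N_{\ref{Lemma7.2}}$ is stochastically bounded uniformly in $(n,\beta)$ by the hypothesis $(P_m)$. On the event (\ref{rn7}), Lemma \ref{LemmaBound-u} will be applied with the H\"older exponent $\xi=1-\nu_0/(8\gamma)$; since $\dl\geq a_n^{2\al}$ forces $\sqrt{t+a_n^{2\al+2\eps_0}-s}\geq a_n^{\al}$ whenever $s\leq t$, the maximum with $a_n^{\al}$ appearing in the bound (\ref{rd4}) can be dropped, producing
\bn
|u(s,y)|^{2\gamma}\leq C_{\ref{LemmaBound-u}}(\omega)^{\gamma} e^{2\gamma|y-x|}\bar d_N(s,y)^{2\gamma\xi}\bigl[\bar d_N(s,y)^{\gamma(\tilde\gamma_m-1)}+a_n^{\gamma\beta}\mathds{1}_{\{m>0\}}\bigr]^{2},\nonumber
\en
where $\bar d_N(s,y)=d((s,y),(t,x))\vee 2^{-N}$. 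On the indicator set $\{|x-y|\leq \tau^{1/2-\nu_0}\vee 2d\}$, we have $|y-x|\leq \tau^{1/2-\nu_0}+2d\leq C(K)$, so all exponentials $e^{2R_1|y|}e^{2\gamma|y-x|}$ collapse into a constant depending on $K,R_1$, and $\bar d_N(s,y)$ is controlled by $C(\bar d_N\vee\sqrt{t-s}\vee\tau^{1/2-\nu_0})\leq C(\bar d_N+\sqrt{\tau})$.

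Next I will apply Lemma \ref{Lemma-MP4.3}(a) (with arbitrarily small $\eps$ set to $\eps_0$) to obtain
\bn
\int_{\re}\bigl(G_{\tau}(y-x')-G_{\tau}(y-x)\bigr)^2\mu(dy)\leq C(\eta,\eps_0)\,\tau^{\eta/2-1-\eps_0}\Bigl[1\wedge\frac{d^2}{\tau}\Bigr],\nonumber
\en
and the resulting integrand will be broken into the part coming from $|y-x|$ contributing to $\bar d_N(s,y)$ being roughly $\sqrt\tau$ versus roughly $\bar d_N$. After the substitution $\tau=t+a_n^{2\al+2\eps_0}-s$, the $s$-integral reduces to
\bn
\mathcal I:=\int_{a_n^{2\al+2\eps_0}}^{\dl+a_n^{2\al+2\eps_0}}\tau^{\eta/2-1-\eps_0}\Bigl[1\wedge\frac{d^2}{\tau}\Bigr]\bigl(\bar d_N+\sqrt\tau\bigr)^{2\gamma\xi}\bigl[(\bar d_{n,N}+\sqrt\tau)^{2\gamma(\tilde\gamma_m-1)}+a_n^{2\gamma\beta}\bigr]\,d\tau.\nonumber
\en
The $(\bar d_N+\sqrt\tau)$-factors will be absorbed into $C\bar d_N^{2\gamma\xi}[\bar d_{n,N}^{2\gamma(\tilde\gamma_m-1)}+a_n^{2\gamma\beta}]$ after splitting $\{\sqrt\tau\leq \bar d_N\}$ versus $\{\sqrt\tau>\bar d_N\}$ and using $2\gamma\xi\geq 2\gamma-\nu_0/4$. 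The remaining pure time integral $\int\tau^{\eta/2-1-\eps_0}[1\wedge d^2/\tau]\,d\tau$ is elementary: when $d^2\geq a_n^{2\al+2\eps_0}$ it produces a term of order $d^{\eta-2\eps_0}$ (by splitting at $\tau=d^2$ and using $\eta/2-\eps_0\in(0,1)$ in the first half and $\eta/2-1-\eps_0<0$ in the second), while when $d^2<a_n^{2\al+2\eps_0}$ the $1\wedge(d^2/\tau)$ always realizes the second branch and one obtains $d^2\int_{a_n^{2\al+2\eps_0}}^{1}\tau^{\eta/2-2-\eps_0}\,d\tau\leq C d^2 a_n^{(2\al+2\eps_0)(\eta/2-1-\eps_0)}\leq Cd^2 a_n^{-2\al(1-\eta/2)-3\eps_0}$. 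Combining the two regimes gives exactly the $\min$ in the claimed bound.

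The principal technical nuisance, and the step I expect to require the most care, is making the two-regime splitting of $\mathcal I$ mesh properly with the two-regime behaviour of $\bar d_N(s,y)^{2\gamma\xi}[\bar d_N(s,y)^{\gamma(\tilde\gamma_m-1)}+a_n^{\gamma\beta}]^{2}$: one has to check in the cross case ($\sqrt\tau>\bar d_N$ but $\tau<d^2$, and vice versa) that the extra powers of $\sqrt\tau$ do not inflate the time-integral beyond what Lemma \ref{Lem4.1MP} can accommodate. I will handle this by the same parameter-by-parameter case analysis as in the proof of Lemma \ref{Lem4.1MP}, keeping $\xi$ close enough to $1$ that the loss from $2\gamma\xi$ against $2\gamma$ is always absorbed into the $\bar d_N^{-\nu_0/2}$ slack, and using the bound $\nu_0<\nu_1/24$ precisely at this point. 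Once these bookkeeping checks are carried out, collecting the constants yields (\ref{rn888}) with $C_{\ref{Lemma7.2}}$ depending only on $\eta,\eps_0,K,R_1$, and the stochastic boundedness of $N_{\ref{Lemma7.2}}$ in $(n,\beta)$ is inherited from $N_1$ supplied by $(P_m)$.
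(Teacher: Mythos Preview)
Your plan is essentially the paper's own argument: set $\xi$ close to $1$ via $N_1$ from $(P_m)$, bound $|u(s,y)|$ through Lemma~\ref{LemmaBound-u}, apply Lemma~\ref{Lemma-MP4.3}(a) to the squared Gaussian difference, and evaluate the remaining time integral in two ways to produce the minimum. The paper carries this out more tersely: after Lemma~\ref{LemmaBound-u} it simply pulls the factor $\bar d_N^{2\gamma\xi}\big[(a_n^{\al}\vee\bar d_N)^{\tilde\gamma_m-1}+\mathds{1}_{\{m>0\}}a_n^\beta\big]^{2\gamma}$ outside the integral, then bounds the surviving $\int\tau^{\eta/2-1-\eps_0}\big(1\wedge d^2/\tau\big)\,ds$ once by Lemma~\ref{Lem4.1MP}(b) (yielding the $d^{\eta-2\eps_0}$ branch) and once by the crude estimate $1\wedge d^2/\tau\leq d^2/\tau$ with the lower endpoint $\tau\geq a_n^{2\al+2\eps_0}$ (yielding the $d^2a_n^{-2\al(1-\eta/2)-3\eps_0}$ branch). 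Your two-regime computation of $\mathcal I$ is exactly this, done by hand rather than via Lemma~\ref{Lem4.1MP}(b).

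One small correction to your sketch: on the indicator set you have $|x-y|\leq\tau^{1/2-\nu_0}\vee 2d$, so $\bar d_N(s,y)\leq C(\bar d_N+\tau^{1/2-\nu_0})$, not $C(\bar d_N+\sqrt\tau)$. The extra $\tau^{-\nu_0}$ is harmless---it is precisely what the $\bar d_N^{-\nu_0/2}$ slack and the constraint $\nu_0<\nu_1/24$ are there to absorb---but your writeup should carry $\tau^{1/2-\nu_0}$ through rather than $\sqrt\tau$. Also, after the substitution $\tau=t+a_n^{2\al+2\eps_0}-s$ the upper limit is $\dl$ (or $t+a_n^{2\al+2\eps_0}$, whichever is smaller), not $\dl+a_n^{2\al+2\eps_0}$.
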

\begin{proof}
The proof follows the same lines of the proof of Lemma 7.2 in \cite{MP09}. Set $\xi=1-1/(24R)$ and $N_{\ref{Lemma7.2}}(m,n,R,\eps_0,K,\beta,\eta)=N_1(m,n,\xi,\eps_0,K,\beta,\eta)$, which is stochastically bounded informally in $(n,\beta)$ by $(P_m)$. For $\omega$ as in (\ref{rn7}), $t\leq K, \ |x'|\leq K+1$ we get from Lemma \ref{LemmaBound-u} and then Lemma \ref{Lemma-MP4.3}(a),
\bn \label{rn8}
&&\hat{Q}_{S,2,\dl,\nu_0}(t,x,x') \nonumber \\
&&=\int_{(t+a_n^{2\al+2\eps_0}-\dl)^{+}}^{t}\int_{\re}\mathds{1}_{\{|x-y|\leq(t+a_n^{2\al+2\eps_0}-s)^{1/2-\nu_0}\vee (2|x-x'|)\}} (G_{t+a_n^{2\al+2\eps_0}-s}(y-x')-G_{t+a_n^{2\al+2\eps_0}-s}(y-x))^2 \nonumber \\
&&\quad \times e^{2R_1|y|}|u(s,y)|^{2\gamma}\mu(dy)ds  \nonumber \\
&&\leq C_{\ref{LemmaBound-u}}(\omega)\int_{(t+a_n^{2\al+2\eps_0}-\dl)^{+}}^{t} \int_{\re} (G_{t+a_n^{2\al+2\eps_0}-s}(y-x')-G_{t+a_n^{2\al+2\eps_0}-s}(y-x))^2 e^{2R_1K}e^{2(R_1+1)4(K+1)}
 \nonumber \\
&&\quad\times \bar{d}_N^{2\gamma\xi}\big[(a_n^{\al}\vee  \bar{d}_N)^{\bar{\gamma}_m-1}+\mathds{1}_{\{m>0\}}a_n^{\beta}\big]|^{2\gamma}\mu(dy)ds \nonumber \\
&&\leq C_{\ref{LemmaBound-u}}(\omega) C(\eta,\eps_0,R_1,K)\bar{d}_N^{2\gamma\xi}\big[(a_n^{\al}\vee \bar{d}_N)^{\bar{\gamma}_m-1}+\mathds{1}_{\{m>0\}}a_n^{\beta}\big]^{2\gamma}  \nonumber \\
&&\quad \times \int_{(t+a_n^{2\al+2\eps_0}-\dl)^{+}}^{t} (t+a_n^{2\al+2\eps_0}-s)^{\eta/2-1-\eps_0}\bigg(1\wedge \frac{d^2}{t+a_n^{2\al+2\eps_0}-s}\bigg)ds.
\en
From Lemma \ref{Lem4.1MP}(b) with $t+a_n^{2\al+2\eps_0}$ instead of $t$, $\Delta=d^2$, $\Delta_1=\dl$ and $\Delta_2=a_n^{2\al+2\eps_0}$ we have
\bn \label{newww2}
&&\int_{(t+a_n^{2\al+2\eps_0}-\dl)^{+}}^{t} (t+a_n^{2\al+2\eps_0}-s)^{\eta/2-1-\eps_0}\bigg(1\wedge \frac{d^2}{t+a_n^{2\al+2\eps_0}-s}\bigg)ds  \nonumber \\
&&\leq C[(d^{2}\wedge \dl)^{\eta/2-\eps_0}+(d^2\wedge \dl)a_n^{(2\al+2\eps_0)(\eta/2-1-\eps_0)}].
\en
On the other hand by a simple integration we have
\bn \label{newww3}
&&\int_{(t+a_n^{2\al+2\eps_0}-\dl)^{+}}^{t} (t+a_n^{2\al+2\eps_0}-s)^{\eta/2-1-\eps_0}\bigg(1\wedge \frac{d^2}{t+a_n^{2\al+2\eps_0}-s}\bigg)ds  \nonumber \\
&&\leq d^2\int_{(t+a_n^{2\al+2\eps_0}-\dl)^{+}}^{t} (t+a_n^{2\al+2\eps_0}-s)^{\eta/2-2-\eps_0}ds  \nonumber \\
&&\leq C d^2a_n^{(2\al+2\eps_0)(\eta/2-1-\eps_0)},
\en
where we have used the bound $\dl \in [a_n^{2\al},1]$ in the last inequality.
From (\ref{rn8}), (\ref{newww2}), (\ref{newww3}) and  we get (\ref{rn888}).
%\bn
%&&\leq C(R_1,K,\omega)\big[(d^2a_n^{(-2\al-4\eps_0)(1-\eta/2+\eps_0)})\wedge &&a_n^{(2\al+2\eps_0)(\eta/2-\eps_0)}\big]\bar{d}_N^{2\gamma(1-\nu_1/2)}\big[(a_n^{\al}\vee \bar{d}_N)^{2\gamma(\bar{\gamma}_m-1)}+\mathds{1}_{\{m>0\}}a_n^{2\gamma\beta}\big] \nonumber \\
%&&\leq C(R_1,K,\omega)a_n^{-6\eps_0}\big[(d^2a_n^{-2\al(1-\eta/2)})\wedge a_n^{\al\eta}\big]\bar{d}_N^{2\gamma(1-\nu_1/2)}\big[(a_n^{\al}\vee \bar{d}_N)^{2\gamma(\bar{\gamma}_m-1)}+\mathds{1}_{\{m>0\}}a_n^{2\gamma\beta}\big]. \nonumber
%\en
\end{proof}
\paragraph{Proof of Proposition \ref{Prop5.14mod}}
The proof follows the same lines as the proof of Proposition 5.14 in \cite{MP09}. Let $R=\frac{25}{\nu_1 }$ and choose $\nu_0\in \big(\frac{1}{R},\frac{\nu_1}{24}\big)$. Recall the previously introduced notation $\bar{d}_N=d\vee 2^{-N}$. Let
\bn
\hat{Q}_{a_n^\alpha}(t,x,t',x')=\sum_{i=1}^{2} \hat{Q}_{S,i,a_n^\alpha,\nu_0}(t,x,x').
\en
By Lemmas \ref{Lemma7.1} and \ref{Lemma7.2} we have for all $K\in \mathds{N}$ there is a constant $C_1(K,R_1,\eta,\nu_0,\nu_1,\eps_0)>0$ and \\
$N_2(m,n,\nu_1,\eps_0,K,\beta,\eta)\in \mathds{N}$ a.s. stochastically bounded uniformly in $(n,\beta)$, such that for all $N\in \mathds{N}$, $(t,x)\in \re_+\times \re$,
\bn
\textrm{on } \{\omega:(t,x)\in Z(N,n,m,K+1,\beta), N\geq N_2\} \nonumber
\en
\bn \label{tt100}
&&R_0^{\gamma}\hat{Q}_{a_n^\alpha}(t,x,x')^{1/2} \nonumber \\
&&\leq  C_12^{2N_2}[d\wedge a_n^{\alpha/2}]^{1-\nu_1/2}a_n^{\alpha(1+\eta/2-\eps_0)/2} \nonumber \\
&&\quad +C_1[a_n^{-\eps_0}+2^{2N_2}]\big[( a_n^{-\al(1-\eta/2)-2\eps_0}d)\wedge d^{\eta/2-\eps_0}\big]
 \bar{d}_N^{\gamma-\nu_0/4}  \big[(\bar{d}_{n,N})^{\gamma(\tilde{\gamma}_m-1)} +a_n^{\gamma\beta}\big]    \nonumber \\
%&&\leq C(K,\eta,\nu_1)[a_n^{-\eps_0}+2^{2N_2}]\bigg\{ (d\wedge a_n^{\alpha/2})^{(\eta-2\eps_0-\nu_1/2)/2}\bar{d}_{N}^{\gamma}
%\big[(\bar{d}_{N}\vee a_n^{\al})^{\gamma(\bar{\gamma}_m-1)}+a_n^{\gamma\beta}\big] \nonumber \\ \nonumber \\
%&&+ (d\wedge a_n^{\alpha/2})^{1-\nu_1/2}\big[a_n^{(\alpha\gamma/2)(\bar{\gamma}_m-1+\eta/2-\eps_0)}+a_n^{(\alpha/2)(\gamma-1+\eta/2-\eps_0)}a_n^{\gamma\beta}\big] \bigg\}\nonumber \nonumber \\
&& \forall \ t \leq T_K, \ |x'|\leq K+2.
\en
Let $N_3=\frac{25}{\nu_1}[N_2+N_4(K,R_1,\eta,\nu_0,\nu_1,\eps_0)]$, where $N_4(K,R_1,\eta,\nu_0,\nu_1,\eps_0)$ is chosen large enough so that
\bn \label{tt101}
C_1(K,R_1,\eta,\nu_0,\nu_1,\eps_0)[a_n^{-\eps_0}+2^{2N_2}]2^{-N_3\nu_1/8} &\leq& C_1(K,R_1,\eta,\nu_0,\nu_1,\eps_0)[a_n^{-\eps_0}+2^{2N_2}]2^{-6N_2}2^{N_4(K,R_1,\eta,\nu_0,\nu_1,\eps_0)}\nonumber \\
&\leq& a_n^{-\eps_0}2^{-104}.
\en
Recall the notation introduced in (\ref{aaa}),
\bn
\hat{\Delta}_{1,u_2}(m,n,\eps_0,2^{-N},\eta)&=&2^{-N\gamma(1-\eps_0)}\big[(2^{-N}\vee a_n^{\al})^{\gamma(\bar{\gamma}_m-1)}+a_n^{\gamma\beta}\big], \nonumber \\
\hat{\Delta}_{2,u_2}(m,n,\eps_0,\eta)&=&a_n^{\alpha(1+\eta/2-\eps_0)/2}. \nonumber
\en
Let $\Delta_{i,u_2}=2^{-100}\hat{\Delta}_{i,u_2}, \ i=1,2$. Assume $d\leq 2^{-N}$ and $\nu_1\leq \eps_0$. From (\ref{tt100}) and (\ref{tt101}) we get for all
$(t,x)$ and $N$ on
\bn \label{tt102}
 \{\omega:(t,x)\in Z(N,n,m,K+1,\beta), N\geq N_3\},
\en
\bn \label{tt103}
R_0^{\gamma}\hat{Q}_{a_n^\alpha}(t,x,x')^{1/2} &\leq&2^{-104}a_n^{-\eps_0}\big[(a_n^{-\al(1-\eta/2)-2\eps_0}d)\wedge d^{\eta/2-\eps_0}\big]2^{-N(\gamma-5\nu_1/8)}
\big[(2^{-N}\vee a_n^{\al})^{\gamma(\bar{\gamma}_m-1)}+a_n^{\gamma\beta}\big] \nonumber \\
&&+ 2^{-104}a_n^{-\eps_0}(d\wedge a_n^{\alpha/2})^{1-5\nu_1/8}a_n^{\alpha(1+\eta/2-\eps_0)/2}\nonumber \nonumber \\
&=&a_n^{-\eps_0}\big[(a_n^{-\al(1-\eta/2)-2\eps_0}d)\wedge d^{\eta/2-\eps_0}\big]\hat \Delta_{1,u_2}(m,n,2^{-N},\eta)/16 \nonumber \\
&&+ (d\wedge a_n^{\alpha/2})^{1-5\nu_1/8}\hat \Delta_{2,u_2}(m,n,2^{-N},\eta)/16, \ \forall \ t\leq t' \leq T_K, \ |x'|\leq K+2.
\en
The rest of the proof is identical the proof of Proposition 5.14 in \cite{MP09}. We use Dubins-Schwartz theorem and (\ref{tt103}) to bound $|\tilde u_{2,\dl}(t,a_n^{2\al+2\eps_0},x')- \tilde u_{2,\dl}(t,a_n^{2\al+2\eps_0},x)|$ and we get (\ref{rtt}). \qed

\section{Proof of Theorems \ref{Theorem2.3MP-P0-ind} and Theorem \ref{thm-reg}} \label{Section-Proof2.3}
In this section we prove Theorem \ref{Theorem2.3MP-P0-ind}. Later in this section we prove Theorem \ref{thm-reg} as a consequence of Theorem \ref{Theorem2.3MP-P0-ind}. Before we start with the proofs of theses theorems, let us introduce some notation and prove a weaker auxiliary result.
\paragraph{Notation}
Denote by
\bn \label{Def-Z-K-N-xi-Set}
Z_{K,N,\xi}&=&\{(t,x)\in \re_{+}\times \re: t\leq T_K,|x|\leq K, d((t,x),(\hat{t},\hat{x}))\leq 2^{-N} \textrm{ for some }
\nonumber \\ &&(\hat{t},\hat{x}) \in [0,T_k] \times \re \textrm{ satisfying } |u(\hat{t},\hat{x})| \leq 2^{-N\xi}\}.\nonumber
\en
Recall that $\mu\in M_f^{\eta}$ for a fixed $\eta\in(0,1)$. Denote by $\eta'\equiv \eta-\varpi$ where $\varpi>0$ is arbitrarily small.
 \medskip \\
\begin{theorem} \label{Theorem4.1-MPS} Assume the hypothesis of Theorem \ref{holder-conj}, except allow $\gamma\in(0,1]$. Let $u_0\in \mathcal{C}_{tem}$ and $u=u^1-u^2$, where $u^i$ is a
$\mathcal{C}(\re_{+},\mathcal{C}(\re))$ a.s. solution of (\ref{SHE}), for $i=1,2$. Let $\xi\in(0,1)$ satisfy
 \bn
\label{hyp-Thm-4.1-MPS} &&\exists N_\xi=N_\xi(K,\omega)\in \mathds{N} \textrm{ a.s. such that } \forall N\geq N_\xi, (t,x)\in
Z_{K,N,\xi} \nonumber \\ && d((x,t),(t',y))\leq 2^{-N},\ t,t'\leq T_K \Rightarrow |u(t,x)-u(t',y)|\leq 2^{-N\xi}.
\en
Let
$0<\xi_1<(\gamma\xi+\eta'/2)\wedge 1$. Then there is an $N_{\xi_1}=N_{\xi_1}(K,\omega)\in \mathds{N}$ a.s. such that for any
$N\geq N_{\xi_1}\in \mathds{N}$ and any $(t,x)\in Z_{K,N,\xi}$
\bn  \label{res-Thm-4.1-MPS}
 d((x,t),(t',y)) \leq 2^{-N}, \ t,t'\leq T_K \Rightarrow |u(t,x)-u(t',y)|\leq 2^{-N\xi_1}.
\en
 Moreover, there are strictly positive constants $R,\dl,C_{\ref{Theorem4.1-MPS}.1},
C_{\ref{Theorem4.1-MPS}.2}$ depending only on $(\xi,\xi_1)$ and $N(K)\in \mathds{N}$, such that
\bn
P(N_{\xi_1}\geq N) \leq C_{\ref{Theorem4.1-MPS}.1}(P(N_{\xi} \geq N/R)+K^{2}\exp{(-C_{\ref{Theorem4.1-MPS}.2}2^{N\dl}})), \ \forall N\geq N(K).
\en
\end{theorem}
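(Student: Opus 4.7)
\noindent\textbf{Proof plan for Theorem \ref{Theorem4.1-MPS}.}
The strategy is a bootstrap: starting from the a priori modulus of continuity of exponent $\xi$ at the ``near-zero'' set $Z_{K,N,\xi}$, one improves it to exponent $\xi_1<(\gamma\xi+\eta'/2)\wedge 1$ by exploiting that, near a point where $u$ is small, the noise coefficient $D(s,y)=\sigma(s,y,u^1(s,y))-\sigma(s,y,u^2(s,y))$ is small (via (\ref{Dbound})), so the stochastic integral that represents $u(t,x)-u(t',y)$ is controlled both by this smallness and by the heat kernel regularity associated with a fractal measure $\mu$. I would proceed by writing, from (\ref{ud}) extended trivially for $s>t\wedge t'$,
\begin{equation*}
u(t,x)-u(t',y)=\int_0^{t\vee t'}\!\int_{\re}\!\bigl[G_{t-s}(z-x)\mathds{1}_{\{s\le t\}}-G_{t'-s}(z-y)\mathds{1}_{\{s\le t'\}}\bigr]D(s,z)\,W(ds,dz),
\end{equation*}
and estimating this stochastic integral via Dubins--Schwarz plus Gaussian tails.

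\noindent The quadratic variation $Q(t,x,t',y)$ would be split, for a tuning parameter $\delta\in(0,1]$ (to be chosen as a small power of $d=d((t,x),(t',y))$), into the near-in-time part $r\in[(t-\delta)_+,t\vee t']$ and the far-in-time part $r\in[0,(t-\delta)_+]$. In the near part I would use the hypothesis (\ref{hyp-Thm-4.1-MPS}): since $(t,x)\in Z_{K,N,\xi}$ there is $(\hat t,\hat x)\in[0,T_K]\times\re$ with $d((\hat t,\hat x),(t,x))\le 2^{-N}$ and $|u(\hat t,\hat x)|\le 2^{-N\xi}$, hence for every $(s,z)$ with $d((s,z),(t,x))\le\sqrt{\delta}+|z-x|$ the triangle inequality together with (\ref{hyp-Thm-4.1-MPS}) yields $|u(s,z)|\le C(\sqrt{\delta}+|z-x|+2^{-N})^{\xi}$ on the relevant exponential scale. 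Coupled with (\ref{Dbound}) and Lemma \ref{condmu}(d) applied to $\int e^{2R_1|z-x|}G_{t-r}(z-x)^{2}\mu(dz)$, the contribution of the near part would be of order $\delta^{\,\gamma\xi+\eta'/2-\varepsilon}$ on the proper spatial scale.

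\noindent For the far part I would use Lemma \ref{Lemma-MP4.3}(a) (plus Lemma \ref{Lemma-MP4.3}(b) and Lemma \ref{LemmaNewBound2} to control the exponential weights against $\mu$) to obtain
\begin{equation*}
\int_0^{(t-\delta)_+}\!\int_{\re}[G_{t-r}(z-x)-G_{t'-r}(z-y)]^2\,e^{2R_1|z|}|u(r,z)|^{2\gamma}\,\mu(dz)dr\ \le\ C\,d^{\,2-\nu_1}\,\delta^{\eta'/2-1-\varepsilon},
\end{equation*}
where the rough global bound $|u(r,z)|\le 2Ke^{|z|}$ from $r\le T_K$ is used. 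Balancing the two contributions by choosing $\delta=d^{\,2/(1+\varepsilon')}$ produces the target exponent $\gamma\xi+\eta'/2$. Standard BDG gives $L^p$ moments of the increment of order $d^{\,\xi_1' p}$ for any $\xi_1'<\gamma\xi+\eta'/2$, and a Kolmogorov/chaining argument over the lattice $\{(j2^{-N/R},k2^{-N/R}):|j|,|k|\le 2^{NR}K\}$ converts this into the a.s.\ modulus (\ref{res-Thm-4.1-MPS}) at a scaled level $N/R$ for a suitable $R=R(\xi,\xi_1)$.

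\noindent Finally, for the tail bound on $N_{\xi_1}$, I would discretize $Z_{K,N,\xi}$ by at most $C K^{2}2^{2NR}$ lattice points at scale $2^{-N/R}$ that is governed by $N_\xi$ (which accounts for the first term in the claimed tail), and use Gaussian concentration on each dyadic increment together with a Borel--Cantelli sum: outside the events $\{N_\xi\ge N/R\}$ all relevant quadratic variations are deterministically bounded as above, so Dubins--Schwarz plus a Gaussian tail of the form $\exp(-C 2^{N\delta})$ controls the probability of the bad event. The hard step will be step (iii): tracking the exponents carefully through the BDG/Lemma \ref{condmu} estimates so that the $\gamma\xi$ gain from the hypothesis and the $\eta'/2$ gain from the fractal heat-kernel regularity combine additively, uniformly in the choice of $p$ used for BDG, while also surviving the exponential weights $e^{R_1|z|}$ against $\mu$. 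The Gaussian tail strength $\delta>0$ and the constants $R$, $C_{\ref{Theorem4.1-MPS}.1}$, $C_{\ref{Theorem4.1-MPS}.2}$ emerge from the slack $(\gamma\xi+\eta'/2)-\xi_1>0$ quantified above.
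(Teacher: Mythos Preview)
Your time-only decomposition with the \emph{global} bound $|u(r,z)|\le 2Ke^{|z|}$ on the far-in-time piece does not reach the exponent $\gamma\xi+\eta'/2$. With that bound, Lemma \ref{lemma-new2} gives the far part a quadratic-variation contribution of order $d^{2}\delta^{\eta'/2-1}$, while the near part (after using the kernel-difference estimate of Lemma \ref{Lemma-MP4.3}(a) and $d^2\le\delta$) is of order $(\sqrt\delta\vee 2^{-N})^{2\gamma\xi}d^{\eta'}$. No choice of $\delta$ makes both $\le d^{2(\gamma\xi+\eta'/2)}$: the first forces $\delta\ge d^{2-2\gamma\xi/(1-\eta'/2)}$, the second forces $\delta\le d^{2}$, and these are incompatible for $\gamma\xi>0$. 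In particular your suggested $\delta=d^{2/(1+\varepsilon')}$ leaves the far part at order $d^{\eta'}$, which is too large whenever $\xi_1>\eta'/2$. The $\gamma\xi$ gain cannot come solely from the last $\delta$ units of time; it must be harvested at \emph{every} past time $s$.

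The paper fixes the time cutoff at $\varepsilon^{2}=2^{-2N}$ and adds a \emph{spatial} near/far split at each $s$: on $A_{1}^{1,s}(x)=\{z:|z-x|\le 2\sqrt{t-s}\,\varepsilon^{-\delta_{1}}\}$ the hypothesis (\ref{hyp-Thm-4.1-MPS}) is applied at the running scale $2^{-N'}\approx\sqrt{t-s}\,\varepsilon^{-\delta_{1}}$ (using that $(t,x)\in Z_{K,N',\xi}$ for all $N_{\xi}\le N'\le N$), which yields $|u(s,z)|\le C(t-s)^{\xi/2}\varepsilon^{-\delta_{1}\xi}$ for \emph{all} $s\in[0,t-\varepsilon^{2}]$ (Lemma \ref{lemma5.5-MPS}). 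This $(t-s)^{\xi\gamma}$ factor is precisely what, combined with the $(t-s)^{\eta'/2-2}d^{2}$ kernel bound, produces the $Q_{1,1}$ estimate $\lesssim |x-y|^{2(\gamma\xi+\eta'/2-\delta')}$ in Lemma \ref{lemma5.6-MPS}(c),(d). The spatially far regions $A_{2}$ contribute only $e^{-c\varepsilon^{-2\delta_{1}}}$ via Gaussian tails (Lemma \ref{lemma5.7-MPS}). With these deterministic bounds in hand, the probabilities $P_{3},P_{4}$ in the decomposition (\ref{rt6}) are \emph{zero} on the good event, and Dubins--Schwarz gives Gaussian (not merely polynomial) tails for $P_{1},P_{2}$, which is what produces the $\exp(-C_{\ref{Theorem4.1-MPS}.2}2^{N\delta})$ term after the chaining step.
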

Theorem \ref{Theorem2.3MP-P0-ind} was proved in \cite{MPS06} for the case of the $d$-dimensional stochastic heat equation driven by  colored noise.
\paragraph{Proof of Theorem \ref{Theorem4.1-MPS}}
The proof follows the same lines as the proof of Theorem 4.1 in \cite{MPS06}. Fix arbitrary
$(t,x),(t',y)$ such that $d((x,t),(t',y))\leq \eps\equiv 2^{-N}, \ (N\in\mathds{N})$ and $t\leq t'$ (the case $t'\leq t$ works
analogously). Since $\xi_1<(\gamma\xi+\eta'/2)\wedge 1$ and $\xi\gamma \in (0,1)$, we can choose $\dl\in (0,\eta'/2)$ such that \bn
\label{rt1} 1>\gamma\xi+\eta'/2-\dl>\xi_1. \en We can also pick $\dl'\in(0,\dl)$ and $p\in(0,\xi\gamma)$ such that \bn
\label{rt2} 1>p+\eta'/2-\dl >\xi_1, \en and \bn \label{rt3} 1>\xi\gamma+\eta'/2-\dl' >\xi_1. \en Let $N_1=N_1(\omega,\xi,\xi_1)$
to be chosen below, \bn \label{rt4} &&P\big(|u(t,x)-u(t,y)|\geq |x-y|^{\eta'/2-\dl}\eps^p, (t,x)\in Z_{K,N,\xi}, N\geq
N_1\big)\nonumber \\ &&+P\big(|u(t,x)-u(t',x)|\geq |t'-t|^{\eta'/4-\dl/2}\eps^p, (t,x)\in Z_{K,N,\xi},t'\leq T_K, N\geq
N_1\big). \en
We introduce the following notation. Let
\bn \label{rt5}
D^{x,y,t,t'}(z,s)&=&[G_{t-s}(x-z)-G_{t'-s}(y-z)]^{2}u^{2\gamma}(s,z), \nonumber \\
D^{x,t'}(z,s)&=&G^2_{t'-s}(x-z)u^{2\gamma}(s,z).
\en
From (\ref{rt5}) follows that (\ref{rt4}) is bounded by
\bn \label{rt6}
&& P\bigg(|u(t,x)-u(t,y)|\geq |x-y|^{\eta'/2-\dl}\eps^p, (t,x)\in Z_{K,N,\xi}, N\geq N_1, \nonumber \\
&&\quad\int_{0}^{t}\int_{\re}D^{x,y,t,t}(z,s)\mu(dz)ds\leq |x-y|^{\eta'-2\dl'}\eps^{2p} \bigg)\nonumber \\
&&+P\bigg(|u(t,x)-u(t',x)|\geq |t'-t|^{\eta'/4-\dl/2}\eps^p, (t,x)\in Z_{K,N,\xi},t'\leq T_K, N\geq N_1,\nonumber \\
&&\quad\int_{t}^{t'}\int_{\re}D^{x,t'}(z,s)\mu(dz)ds+ \int_{0}^{t}\int_{\re}D^{x,x,t,t'}(z,s)\mu(dz)ds\leq
|t'-t|^{\eta'/2-\dl'}\eps^{2p} \bigg)\nonumber \\ &&+ P\bigg( \int_{0}^{t}\int_{\re}D^{x,y,t,t}(z,s)\mu(dz)ds >
|x-y|^{\eta'-2\dl'}\eps^{2p},(t,x)\in Z_{K,N,\xi}, N\geq N_1 \bigg)\nonumber \\
&&+P\bigg(\int_{t}^{t'}\int_{\re}D^{x,t'}(z,s)\mu(dz)ds+ \int_{0}^{t}\int_{\re}D^{x,x,t,t'}(z,s)\mu(dz)ds \nonumber \\ &&
\qquad \qquad \qquad \qquad \qquad \qquad \qquad > |t'-t|^{\eta'/2-\dl'}\eps^{2p},(t,x)\in Z_{K,N,\xi},t'\leq T_K, N\geq
N_1\bigg)\nonumber \\ &&=: P_1+P_2+P_3+P_4.
\en
The bounds on $P_1,P_2$ are derived by the Dubins-Schwarz theorem in a similar way as in the proof of Theorem 4.1.
in \cite{MPS06}. If $\dl''=\dl-\dl'$, then we can show that
\bn\label{rt67} P_1&\leq&
C_{\ref{rt67}}e^{-C'_{\ref{rt67}}|x-y|^{-\dl^{''}}}, \nonumber \\ P_2&\leq&
C_{\ref{rt67}}e^{-C'_{\ref{rt67}}|t-t'|^{-\dl^{''}/2}},
\en
 where the constants $C_{\ref{rt67}},C'_{\ref{rt67}}$ only depend on $R_0,R_1$ in (\ref{HolSigmaCon2}). \medskip \\
 In order to bound $P_3,P_4$, we need to split the integrals that are related to them into several parts. Let $\dl_1\in(0,\eta'/4)$ and $t_0=0,t_1=t-\eps^2,t_2=t$ and
$t_3=t'$. We also define
\bn \label{rt8}
A_1^{1,s}(x)&=&\{z\in \re: |x-z|\leq 2\sqrt{t-s}\eps^{-\dl_1}\} \textrm{ and }
A_2^{1,s}(x)=\re\setminus A_1^{1,s}(x), \nonumber \\ A_1^{2}(x)&=&\{z\in \re: |x-z|\leq 2\eps^{1-\dl_1}\} \textrm{ and }
A_2^{2}(x)=\re\setminus A_1^{2}(x).
\en For notational convenience the index $s$ in $A_i^{1,s}(x)$ is sometimes
omitted. Define,
\bn \label{rt9}
Q^{x,y,t,t'}:=\int_{0}^{t}\int_{\re}D^{x,y,t,t'}(z,s)\mu(dz)ds=\sum_{i,j=1,2}Q_{i,j}^{x,y,t,t'}, \nonumber
\en where
\bn
\label{rt10} Q_{i,j}^{x,y,t,t'}:=\int_{t_{i-1}}^{t_i}\int_{A_j^{1,s}(x)}D^{x,y,t,t'}(z,s)\mu(dz)ds,
\en
and
\bn \label{rt11}
Q^{x,t,t'}:=\int_{t}^{t'}\int_{\re}D^{x,t'}(z,s)\mu(dz)ds=\sum_{j=1,2}Q_{j}^{x,t,t'},\nonumber \en
 where
 \bn \label{rt12}
Q_{j}^{x,t,t'}:=\int_{t}^{t'}\int_{A_j^{2}(x)}D^{x,t'}(z,s)\mu(dz)ds.
\en Set
\bn N_1(\omega) =
\bigg\lceil\frac{5N_{\xi}(\omega)}{\dl_1}\bigg\rceil \geq \bigg\lceil\frac{N_{\xi}(\omega)+4}{1-\dl_1}\bigg\rceil\in
\mathds{N},
\en
where $N_{\xi}(\omega)$ was chosen in the hypothesis of Theorem \ref{Theorem4.1-MPS} and $\lceil
\cdot \rceil$ is the greatest integer value function. We introduce three
lemmas that will give us bounds on the quadratic variation terms in (\ref{rt10}) and (\ref{rt12}). Recall that
$\lam>0$ is a fixed constant that was used in the definition of $T_K$. Therefore, $Z_{K,N,\xi}$ depends on $\lam$, but its dependence in $\lam$ is often suppressed in
our notation.
\begin{lemma} \label{lemma5.5-MPS} Let $N\geq N_1$. Then on $\{\omega:(t,x) \in Z_{K,N,\xi}\}$,
\bn
|u(s,z)|&\leq& 10\eps^{(1-\dl_1)\xi} , \qquad \qquad  \qquad  \qquad  \quad  \ \ \forall s\in[t-\eps^2,t'], z\in A^{2}_1(x),
\nonumber \\ |u(s,z)|&\leq& (8+3K2^{N_{\xi}\xi})e^{\lam|z|}(t-s)^{\xi/2}\eps^{-\dl_1\xi} , \ \forall s\in[0,t-\eps^2], z\in
A^{1,s}_1(x).
\en
\end{lemma}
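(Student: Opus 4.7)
The plan is to reduce both inequalities to the hypothesis~(\ref{hyp-Thm-4.1-MPS}) by choosing, in each case, an auxiliary index $\tilde N\leq N$ such that $(t,x)\in Z_{K,\tilde N,\xi}$ and such that the target point $(s,z)$ lies within $d$-distance $2^{-\tilde N}$ of $(t,x)$. The starting observation is the monotonicity $Z_{K,N,\xi}\subset Z_{K,\tilde N,\xi}$ whenever $\tilde N\leq N$ (smaller $\tilde N$ only weakens both the radius condition and the smallness of $|u|$ at the witness $(\hat t,\hat x)$). First I would apply (\ref{hyp-Thm-4.1-MPS}) with the pair $((t,x),(\hat t,\hat x))$ itself to obtain $|u(t,x)|\leq |u(\hat t,\hat x)|+2^{-N\xi}\leq 2\cdot 2^{-N\xi}$, which is the base estimate on which everything else is built.

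For the first bound, fix $s\in[t-\eps^2,t']$ and $z\in A_1^{2}(x)$. Since $d((t,x),(t',y))\leq\eps$ forces $|t'-t|\leq\eps^2$, one has $|s-t|^{1/2}\leq\eps$, hence
\[
d((s,z),(t,x))\;\leq\;\eps+2\eps^{1-\dl_1}\;\leq\;3\eps^{1-\dl_1}.
\]
I would choose the unique $\tilde N\in\mathds{N}$ with $2^{-\tilde N-1}<3\eps^{1-\dl_1}\leq 2^{-\tilde N}$; the defining inequality $N\geq N_1\geq\lceil (N_\xi+4)/(1-\dl_1)\rceil$ is exactly what is needed to ensure $\tilde N\geq N_\xi$ (this is the purpose of taking $N_1\approx 5N_\xi/\dl_1$). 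Monotonicity gives $(t,x)\in Z_{K,\tilde N,\xi}$, and since $s\leq t'\leq T_K$, hypothesis~(\ref{hyp-Thm-4.1-MPS}) yields $|u(s,z)-u(t,x)|\leq 2^{-\tilde N\xi}\leq(6\eps^{1-\dl_1})^\xi$. Combining with $|u(t,x)|\leq 2\eps^\xi\leq 2\eps^{(1-\dl_1)\xi}$ produces a bound of the shape $C\eps^{(1-\dl_1)\xi}$; tracking constants gives the claimed $10\eps^{(1-\dl_1)\xi}$.

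For the second bound, fix $s\in[0,t-\eps^2]$ and $z\in A_1^{1,s}(x)$. The crude triangle-inequality estimate is
\[
\rho\;:=\;d((s,z),(t,x))\;=\;\sqrt{t-s}+|z-x|\;\leq\;3\sqrt{t-s}\,\eps^{-\dl_1}.
\]
I would split according to whether $\rho\leq 2^{-N_\xi}$ or $\rho>2^{-N_\xi}$. In the first subcase choose $\tilde N\geq N_\xi$ with $2^{-\tilde N-1}<\rho\leq 2^{-\tilde N}$; monotonicity again places $(t,x)\in Z_{K,\tilde N,\xi}$, and (\ref{hyp-Thm-4.1-MPS}) gives $|u(s,z)|\leq 2\eps^\xi+(2\rho)^\xi\leq 8\,(t-s)^{\xi/2}\eps^{-\dl_1\xi}$, where I used $t-s\geq\eps^2$ to absorb the $2\eps^\xi$ term. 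In the second subcase $(s,z)$ is too far from $(t,x)$ to exploit local continuity, so I would fall back on the a priori growth bound coming from $s\leq T_K$, namely $|u(s,z)|\leq 2Ke^{|z|}$; the inequality $\rho>2^{-N_\xi}$ combined with $\rho\leq 3\sqrt{t-s}\,\eps^{-\dl_1}$ gives $(t-s)^{\xi/2}\eps^{-\dl_1\xi}>3^{-\xi}2^{-N_\xi\xi}$, so $2K\leq 2K\cdot 3^\xi\cdot 2^{N_\xi\xi}\cdot (t-s)^{\xi/2}\eps^{-\dl_1\xi}$. This is exactly where the $3K\,2^{N_\xi\xi}$ factor in the stated bound arises; summing both subcases (and noting $|z|$ enters only through the crude estimate $2Ke^{|z|}$) delivers the claimed inequality.

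The core mechanism is elementary; the only delicate point is the bookkeeping of $\tilde N$ so that simultaneously $\tilde N\geq N_\xi$ (to invoke the hypothesis) and $2^{-\tilde N}$ is of the same order as the actual distance $d((s,z),(t,x))$ (to get a useful Hölder-type bound). This is what forces $N_1$ to scale like $N_\xi/\dl_1$. The remaining work is purely a matter of constant chasing, which I expect to be the most tedious but least conceptually difficult part.
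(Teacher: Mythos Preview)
Your proposal is correct and follows exactly the standard argument (the paper omits the proof and refers to Lemma~5.4 in \cite{MPS06}, which proceeds precisely along the lines you describe): monotonicity of $Z_{K,N,\xi}$ in $N$, choice of an auxiliary scale $\tilde N$ matching the actual distance $d((s,z),(t,x))$, and a dichotomy between the near regime (invoke hypothesis~(\ref{hyp-Thm-4.1-MPS})) and the far regime (invoke the crude growth bound $|u(s,z)|\leq 2Ke^{\lambda|z|}$ from $s\leq T_K$). Your bookkeeping of constants is essentially right; the only cosmetic discrepancy is that the crude bound naturally produces a factor closer to $6K2^{N_\xi\xi}$ than $3K2^{N_\xi\xi}$, but this has no bearing on the subsequent use of the lemma.
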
 The proof of Lemma \ref{lemma5.5-MPS} is similar to the proof of Lemma 5.4 in \cite{MPS06},
hence it is omitted.
\begin{lemma} \label{lemma5.6-MPS} If $0<\vartheta<\eta/2$, $\vartheta'\leq \gamma \xi +\eta'/2$, and $\vartheta' <1$, then
on $\{\omega:(t,x)\in Z_{K,N,\xi}\}$,
\begin{itemize}
  \item [\bf{(a)}] $$Q_{2,1}^{x,y,t,t}\leq C(\vartheta,K,\eta')\eps^{2(1-\dl_1)\xi\gamma}|x-y|^{2\vartheta}, $$
  \item [\bf{(b)}] $$Q_{2,1}^{x,x,t,t'}\leq C(\vartheta,K,\eta')\eps^{2(1-\dl_1)\xi\gamma}|t-t'|^{\vartheta},$$
  \item [\bf{(c)}] $$Q_{1,1}^{x,y,t,t}\leq C(\gamma,
      K,\eta')(8+3K2^{N_{\xi}\xi})^{2\gamma}\eps^{-2\dl_1\xi\gamma}|x-y|^{2\vartheta'},$$
  \item [\bf{(d)}] $$Q_{1,1}^{x,x,t,t'}\leq C(\gamma,
      K,\eta')(8+3K2^{N_{\xi}\xi})^{2\gamma}\eps^{-2\dl_1\xi\gamma}|t-t'|^{\vartheta'},$$
  \item [\bf{(e)}] $$Q_{1}^{x,t,t'}\leq C(K,\vartheta)\eps^{2\gamma\xi(1-\dl_1)}|t-t'|^{\eta'/2}.$$
\end{itemize}
\end{lemma}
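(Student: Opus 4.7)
The plan is to treat the five bounds separately, but always splitting the $(s,z)$-domain of integration into a ``near'' region where the pointwise bound on $u$ supplied by Lemma~\ref{lemma5.5-MPS} is available, and a ``far'' region where Gaussian decay forces any integral against $\mu$ to be smaller than any polynomial in $\eps$. The difference from the homogeneous case treated in~\cite{MPS06} is purely quantitative: every appearance of $(t-s)^{-1/2}$ in the $L^2(dy)$ heat-kernel bounds there will be replaced by $(t-s)^{\eta'/2-1-\varpi'}$ via Lemma~\ref{Lemma-MP4.3} or Lemma~\ref{condmu}, so the target Hausdorff dimension is $\eta'$ rather than $1$. All the time integrals will then be handled by Lemma~\ref{Lem4.1MP}, and $\varpi$ is chosen small enough that $\vartheta<\eta/2$ and $\vartheta'<\gamma\xi+\eta'/2$ still leave room for the resulting losses.

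For parts (a) and (b) (far region, $A_2^{1,s}(x)$), I will apply Lemma~\ref{heatest} with H\"older exponent $2\vartheta$ in space (resp.\ $\vartheta$ in time) to factor
\[
 [G_{t-s}(x-z)-G_{t-s}(y-z)]^{2}\le C|x-y|^{2\vartheta}(t-s)^{-(1+2\vartheta)}\bigl(e^{-(x-z)^{2}/(t-s)}+e^{-(y-z)^{2}/(t-s)}\bigr),
\]
and likewise in the time-increment case. Since $|x-y|\le\eps$, the condition $|x-z|>2\sqrt{t-s}\,\eps^{-\dl_{1}}$ defining $A_{2}^{1,s}$ also forces $|y-z|$ to be of the same order, so both exponentials are at most $\exp(-\eps^{-2\dl_{1}}/2)$. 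Using the crude bound $|u(s,z)|^{2\gamma}\le(2K)^{2\gamma}e^{2\gamma\lambda|z|}$ valid for $s\le T_{K}$, together with Lemma~\ref{LemmaNewBound2} applied to the remaining Gaussian (which controls the $\mu$-integral over $\{|x-z|>(t-s)^{1/2}\eps^{-\dl_{1}}\}$), the inner integral is bounded by $C\exp(-c\eps^{-2\dl_{1}}/32)$ for constants $C=C(K,R_{1},\eta')$ and $c=c(\eta')>0$. The remaining $s$-integral produces at worst a power of $\eps^{-1}$, and since $e^{-c\eps^{-2\dl_{1}}/32}$ is smaller than any power of $\eps$, we can absorb this and the explicit factor $\eps^{2(1-\dl_{1})\xi\gamma}$, leaving $C|x-y|^{2\vartheta}$ in (a) and $C|t-t'|^{\vartheta}$ in~(b).

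For parts (c) and (d) (near region, $A_{1}^{1,s}(x)$), the second bound of Lemma~\ref{lemma5.5-MPS} gives
\[
 |u(s,z)|^{2\gamma}\le C(\gamma,K)\bigl(8+3K\,2^{N_{\xi}\xi}\bigr)^{2\gamma}e^{2\gamma\lambda|z|}(t-s)^{\gamma\xi}\eps^{-2\dl_{1}\xi\gamma}
\]
on $A_{1}^{1,s}$ for $s\le t-\eps^{2}$. I then drop the indicator of $A_{1}^{1,s}$ (enlarging the region to all of $\re$) and apply Lemma~\ref{Lemma-MP4.3}(a), obtaining
\[
 \int_{\re}e^{2\gamma\lambda|z|}\bigl[G_{t-s}(x-z)-G_{t-s}(y-z)\bigr]^{2}\mu(dz)\le C\,e^{2\gamma\lambda|x|}(t-s)^{\eta'/2-1-\varpi'}\Bigl(1\wedge\frac{|x-y|^{2}}{t-s}\Bigr),
\]
with the analogous estimate for the time increment in~(d). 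Combining, Lemma~\ref{Lem4.1MP} with $p=1$, $q=\gamma\xi+\eta'/2-1-\varpi'$, $\Delta=|x-y|^{2}$ and $\Delta_{1}=t$ yields an exponent of $2\vartheta'=2(\gamma\xi+\eta'/2-\varpi')$ on $|x-y|$ (resp.\ $\vartheta'$ on $|t-t'|$), and the constants $C=C(\gamma,K,\eta')$ arising match the statement after absorbing $\varpi'$ into $\eta'$. The main obstacle here is book-keeping: one must verify that the hypothesis $\vartheta'\le\gamma\xi+\eta'/2$ falls into the regime where Lemma~\ref{Lem4.1MP}(a) rather than~(c) applies, and choose $\varpi'$ small enough that strict inequality is preserved.

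For part~(e), by the first bound of Lemma~\ref{lemma5.5-MPS} we have $|u(s,z)|^{2\gamma}\le C\eps^{2\gamma\xi(1-\dl_{1})}$ uniformly on $A_{1}^{2}(x)$ for $s\in[t,t']$, so
\[
 Q_{1}^{x,t,t'}\le C\eps^{2\gamma\xi(1-\dl_{1})}\int_{t}^{t'}\int_{\re}G_{t'-s}(x-z)^{2}\mu(dz)\,ds.
\]
Lemma~\ref{condmu}(a) with $r=2$, $\lambda=0$, and $\varpi$ small bounds the inner integral by $C(t'-s)^{-1+\eta'/2}$, whose integral over $[t,t']$ is $C|t-t'|^{\eta'/2}$, as required. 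The final assembly of constants $C(K,\vartheta)$ follows from choosing $\varpi$ small in terms of $\vartheta$. Throughout, the only genuinely new ingredient is that the $(t-s)^{\eta'/2-1}$ singularity---an order worse than the $(t-s)^{-1/2}$ that appears in the colored-noise setting of~\cite{MPS06}---must be digestible by the Lemma~\ref{Lem4.1MP} bookkeeping; this is exactly why the hypotheses $\vartheta<\eta/2$ and $\vartheta'<\gamma\xi+\eta'/2$ are stated with $\eta'$ rather than $1$.
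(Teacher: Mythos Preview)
You have misread the indices in parts (a) and (b). The second subscript in $Q_{i,j}$ refers to the spatial region $A_j^{1,s}(x)$, so $Q_{2,1}^{x,y,t,t}$ is the integral over the \emph{near} region $A_1^{1,s}(x)=\{|x-z|\le 2\sqrt{t-s}\,\eps^{-\dl_1}\}$ for $s\in[t-\eps^2,t]$, not the far region. The Gaussian-tail argument you propose (using $|x-z|>2\sqrt{t-s}\,\eps^{-\dl_1}$ and the crude bound $|u|\le 2Ke^{\lambda|z|}$) is precisely the argument for the $j=2$ terms, which belong to Lemma~\ref{lemma5.7-MPS}, not here. On the actual near region there is no exponential decay available, so your proposed bound cannot be obtained this way.

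The correct route for (a) and (b) is the obvious near-region one, and it is in fact simpler than your (c)--(d): since $s\in[t-\eps^2,t]$ forces $A_1^{1,s}(x)\subset A_1^2(x)$, the first bound of Lemma~\ref{lemma5.5-MPS} gives $|u(s,z)|\le 10\,\eps^{(1-\dl_1)\xi}$ uniformly on the domain of integration. Pull this out, enlarge the spatial integral to all of $\re$, and apply Lemma~\ref{condmu}(c) with $\dl=2\vartheta<\eta'$ to the remaining Gaussian-increment integral $\int_0^t\int_\re[G_{t-s}(x-z)-G_{t-s}(y-z)]^2\mu(dz)\,ds$ (respectively with $|t-t'|^{\vartheta}$ in (b)); no time-integral bookkeeping via Lemma~\ref{Lem4.1MP} is needed.

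Your treatment of (c)--(e) is essentially correct and close to the paper's. One small remark: Lemma~\ref{Lemma-MP4.3}(a) as stated does not carry the weight $e^{2\gamma\lambda|z|}$; the paper invokes Lemma~\ref{lemma-new2} (which does) and gets $(t-s)^{\eta'/2-2}d^2$ rather than $(t-s)^{\eta'/2-1}(1\wedge d^2/(t-s))$. Your route via Lemma~\ref{Lem4.1MP} works too, but you would first need a weighted version of Lemma~\ref{Lemma-MP4.3}(a), which follows by combining it with Lemma~\ref{condmu}(a) as in the derivation of Lemma~\ref{lemma-new2}.
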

\begin{proof} (a) From Lemma \ref{lemma5.5-MPS} and Lemma \ref{mu3cond}(c) we get
\bn
\label{rt20} Q_{2,1}^{x,y,t,t}&\leq&
100^{\gamma}\eps^{2(1-\dl_1)\xi\gamma}\int_{t-\eps^2}^{t}\int_{A_1^2(x)}[G_{t-s}(x-z)-G_{t-s}(y-z)]^2\mu(dz)ds \nonumber \\
&\leq& 100^{\gamma}\eps^{2(1-\dl_1)\xi\gamma}\int_{t-\eps^2}^{t}\int_{\re}[G_{t-s}(x-z)-G_{t-s}(y-z)]^2\mu(dz)ds \nonumber \\
&\leq&  C(\eta',K,\vartheta)\eps^{2(1-\dl_1)\xi\gamma}|x-y|^{2\vartheta},
\en
where we use the fact that $t\leq K$. (b) Repeat the same
steps in (\ref{rt21}) to get
\bn \label{rt21}
Q_{2,1}^{x,x,t,t'}\leq C(\vartheta,K,\eta')\eps^{2(1-\dl_1)\xi\gamma}|t-t'|^{\vartheta}.
\en
(c) From Lemma \ref{lemma5.5-MPS} we get
$Q_{1,1}^{x,y,t,t'}$,
\bn \label{rt22} &&Q_{1,1}^{x,y,t,t'} \nonumber \\ &&\leq
(8+3K2^{N_{\xi}\xi})^{2\gamma}\eps^{-2\gamma\dl_1\xi}
\int_{0}^{t-\eps^2}(t-s)^{\xi\gamma}\int_{\re}e^{2\gamma\lambda|z|}[G_{t-s}(x-z)-G_{t'-s}(y-z)]^2\mu(dz)ds. \nonumber \\
\en
From Lemma \ref{lemma-new2} we get
\bn  \label{rt2222}
&&\int_{\re}e^{2\gamma|z|}[G_{t-s}(x-z)-G_{t'-s}(y-z)]^2\mu(dz)ds \nonumber \\
&&\leq C(\lam,\gamma,\eta')(t-s)^{-2+\eta'/2}[|x-y|^{2} + |t-t'|^{}]e^{2\lam|x|}e^{2\lam|x-y|} \nonumber \\
&&\leq C(\lam,\gamma,K,\eta')(t-s)^{-2+\eta'/2}[|x-y|^{2} + |t-t'|^{}]  \nonumber \\
&&\leq C(\lam,\gamma,K,\eta')\eps^{2(1-\vartheta')}(t-s)^{-2+\eta'/2}[|x-y|^{2\vartheta'} + |t-t'|^{\vartheta'}], \nonumber \\
&& \ \forall x\in [-K+1,K+1],  \ t', t\geq 0, \ [x-y]^2+|t-t'|\leq \eps^2.
\en
From (\ref{rt22}), (\ref{rt2222}) and our choice of $\vartheta'$, (c) and (d) follow. \medskip \\
(e) From Lemmas \ref{lemma5.5-MPS} and \ref{condmu}(a) we have
\bn \label{rt24}
Q_{1}^{x,x,t,t'} &\leq & Ce^{2\gamma \xi(1-\dl_1)}\int_{t}^{t'}\int_{\re}e^{2\gamma\lambda|z|}G^2_{t'-s}(x-z)\mu(dz)ds \nonumber \\
&\leq&C(K,\vartheta)e^{2\gamma \xi(1-\dl_1)}|t'-t|^{\eta'/2}.
\en
\end{proof} \\\\
Next we consider the terms for which $j=2$. We
will use the fact that for $t\leq T_K$ we have
\bn \label{rt23}
|u(t,x)|\leq Ke^{ |x|}.
\en

\begin{lemma}\label{lemma5.7-MPS}
For $0<\vartheta < \eta'/2$ we have for $i=1,2$, on $\{\omega:(t,x)\in Z_{K,N,\xi}\}$,
\begin{itemize}
  \item [\bf{(a)}] $$Q_{i,2}^{x,y,t,t}\leq C(K,\vartheta,\eta',\gamma)e^{-\eps^{-2\dl_1}(1-\vartheta)/4}|x-y|^{2\vartheta}, $$
  \item [\bf{(b)}] $$Q_{i,2}^{x,x,t,t'}\leq C(K,\vartheta,\eta',\gamma)e^{-\eps^{-2\dl_1}(1-\vartheta/2)/4}|t-t'|^{\vartheta},$$
  \item [\bf{(c)}] $$Q_{2}^{x,t,t'}\leq C(K,\vartheta,\eta',\gamma)e^{-\eps^{-2\dl_1}(1-\vartheta)/2}|t-t'|^{\eta'/2}.$$
\end{itemize}
\end{lemma}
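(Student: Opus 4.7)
The plan is to reduce all three bounds to deterministic Gaussian estimates by using the a priori pointwise bound $|u(s,z)|^{2\gamma}\leq K^{2\gamma}e^{2\gamma|z|}$ that is valid for $s\leq T_K$ (it comes from the definition of $T_K$). Once this factor is pulled out of each integrand, the region of integration — either $A_2^{1,s}(x)$ or $A_2^{2}(x)$ — forces $|x-z|$ to exceed $2\sqrt{t-s}\eps^{-\dl_1}$ or $2\eps^{1-\dl_1}$, so the Gaussian kernels carry exponents at most $-\eps^{-2\dl_1}/2$. In every case the strategy is to split this Gaussian exponent into three pieces: one that produces the advertised factor $\exp(-\eps^{-2\dl_1}(1-\vartheta)/c)$, one that absorbs the weight $e^{2\gamma|z|}$ (using $|z|\leq K+|x-z|$ on $|x|\leq K$, together with the fact that on the far field $(x-z)^2/(t-s)$ dominates $|x-z|$), and a final residual Gaussian that can be integrated against $\mu$.

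For (a) and (b) I would apply Lemma \ref{heatest}(a) (respectively its time-increment analogue) with exponent $\dl=2\vartheta$, giving
\[
[G_{t-s}(x-z)-G_{t-s}(y-z)]^2 \leq C|x-y|^{2\vartheta}(t-s)^{-1-2\vartheta}\bigl(e^{-(x-z)^2/(t-s)}+e^{-(y-z)^2/(t-s)}\bigr),
\]
and the analogous time bound. After performing the three-way split of the exponent, the residual Gaussian factor is integrated against $\mu$ by Lemma \ref{condmu}(d) to yield a factor of the form $(t-s)^{\vartheta'-1+\eta'/2-\epsilon}$ for some $\vartheta' \geq 0$; since $\vartheta<\eta'/2$, the resulting $s$-integral over $[0,t]\subset[0,K]$ is finite. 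The Hölder factors $|x-y|^{2\vartheta}$ (part (a)) and $|t-t'|^{\vartheta}$ (part (b)) come directly from Lemma \ref{heatest}.

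For (c) there is no increment and the argument is more direct: on $A_2^{2}(x)$, for $s\in[t,t']$, $(x-z)^2/(2(t'-s))\geq 2\eps^{-2\dl_1}$ because $t'-s\leq|t-t'|\leq\eps^2$. I would write $G_{t'-s}(x-z)^2 = G_{t'-s}(x-z)\cdot \tfrac{1}{\sqrt{2\pi(t'-s)}}e^{-(x-z)^2/(2(t'-s))}$, extract the factor $\exp(-\eps^{-2\dl_1}(1-\vartheta))$ from the pointwise Gaussian term (keeping a small $\vartheta$-fraction of it to absorb $e^{2\gamma|z|}$), and bound the remaining $G_{c(t'-s)}(x-z)\,e^{\lambda|z|}$ against $\mu$ by Lemma \ref{condmu}(a) with $r=2$, yielding $(t'-s)^{-1+\eta'/2-\epsilon}$. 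Integrating in $s\in[t,t']$ gives $|t'-t|^{\eta'/2-\epsilon}$; choosing $\epsilon$ small enough and absorbing it into the $\vartheta$ in the prefactor delivers the claimed bound.

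The main obstacle is the bookkeeping of the Gaussian exponent split: the three demands (producing $e^{-\eps^{-2\dl_1}(1-\vartheta)/4}$, absorbing $e^{2\gamma|z|}$, and leaving enough decay for $\mu$-integration via Lemma \ref{condmu}) must all be extracted from a single quadratic exponent, uniformly in $s$. The key quantitative observation that makes this work is that on the far field $(x-z)^2/(t-s)\geq 4\eps^{-2\dl_1}$ is so much larger than any linear term $|z|\leq K+|x-z|$ that the absorption of $e^{2\gamma|z|}$ costs only a negligible fraction of the exponent — the remaining factor $e^{-\eps^{-2\dl_1}(1-\vartheta)/4}$ then controls the logarithmic singularity in $(t-s)^{-1-2\vartheta}$ and any $\mu$-dependent constants.
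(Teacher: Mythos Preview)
Your overall strategy --- pulling out $|u(s,z)|^{2\gamma}\leq C(K)e^{2\gamma|z|}$ and exploiting the far-field Gaussian decay on $A_2^{j}$ --- matches the paper's starting point, and your treatment of (c) is essentially right. The gap is in the $s$-integrability for (a) and (b). Applying Lemma~\ref{heatest}(a) with exponent $2\vartheta$ bounds $(G-G)^2$ by $C|x-y|^{2\vartheta}(t-s)^{-1-2\vartheta}$ times bare exponentials; after your three-way split, the residual exponential integrates against $\mu$ to give only $C(t-s)^{\eta'/2}$ (normalize and apply Lemma~\ref{condmu}(a) with $r=1$). The $s$-integrand is therefore of order $(t-s)^{-1-2\vartheta+\eta'/2}$, which is integrable on $[0,t]$ only when $\vartheta<\eta'/4$, not the full range $\vartheta<\eta'/2$ asserted in the lemma. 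This matters: the lemma is applied afterwards with $\vartheta=\eta'/2-\dl'$ for small $\dl'$.

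The paper avoids this by replacing the direct use of Lemma~\ref{heatest} with a H\"older split:
\[
\int_{A_2}(G-G)^2|u|^{2\gamma}\,\mu(dz)\leq\Bigl(\int(G+G)^2|u|^{\frac{2\gamma}{1-\vartheta}}\,\mu(dz)\Bigr)^{1-\vartheta}\Bigl(\int(G-G)^2\,\mu(dz)\Bigr)^{\vartheta}.
\]
The far-field pointwise bound $G_{r-s}(v-z)\leq Ce^{-\eps^{-2\dl_1}/8}G_{2(r-s)}(v-z)$ on $A_2$ is applied inside the first factor, which is then controlled by Lemma~\ref{condmu}(a) to give $C(t-s)^{(-1+\eta'/2)(1-\vartheta)}$ together with the exponential $e^{-\eps^{-2\dl_1}(1-\vartheta)/4}$; the second factor is bounded by Lemma~\ref{Lemma-MP4.3}(a) as $C(t-s)^{(\eta'/2-2)\vartheta}|x-y|^{2\vartheta}$. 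The combined $s$-integrand is $(t-s)^{-1+\eta'/2-\vartheta}$, integrable precisely for $\vartheta<\eta'/2$. Your direct route can be salvaged --- one may spend an additional slice of the Gaussian exponent to kill the $(t-s)^{-1-2\vartheta}$ singularity, since on $A_2$ the ratio $(x-z)^2/(t-s)$ blows up as $s\uparrow t$ --- but this costs extra bookkeeping and degrades the constant in the exponential; the H\"older argument is cleaner and delivers the stated bound directly.
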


\begin{proof}
The proof follows the same lines as the proof of Lemma 5.6 in \cite{MPS06}. Recall
$d((t,x),(t',y))\leq \eps$. For $i=1$ we deal with the case where $s\in[0,t-\eps^2]$ and $|x-z|>2\sqrt{t-s}\eps^{-\dl_1}$.
Since $|x-y|<\eps$ we have $|y-z|>||x-z|-|x-y||>2\sqrt{t-s}\eps^{-\dl_1}-\eps>\sqrt{t-s}\eps^{-\dl_1}$. Furthermore,
$t'-s=t'-t+t-s\leq \eps^2+t-s\leq 2(t-s)$. Therefore,
\bn\label{rt33}
\exp{\bigg(-\frac{(x-z)^2}{4(t'-s)}\bigg)}\vee
\exp{\bigg(-\frac{(y-z)^2}{4(t'-s)}\bigg)}&\leq & \exp{\bigg(-\frac{(x-z)^2}{8(t-s)}\bigg)}
\exp{\bigg(-\frac{(y-z)^2}{8(t-s)}\bigg)} \nonumber \\ &\leq &\exp\big({-\frac{\eps^{-2\dl_1}}{8}}\big).
\en
Therefore, for
$v=x$ or $v=y$ and $r=t$ or $r=t'$ we get
\bn \label{rt332} G_{r-s}(v-z)\leq
C(\theta)\exp{\bigg(-\frac{\eps^{-2\dl_1}}{4}(1-\theta)}\bigg)G_{(r-s)/\theta}(v-z),  \ \forall \theta \in (0,1).
\en
From (\ref{rt23}), (\ref{rt332}) with $\theta =1/2$ and H\"{o}lder's
inequality we have
\bn \label{rt34} &&Q_{1,2}^{x,y,t,t}  \nonumber \\ &&\leq
\int_{0}^{t-\eps^2}\bigg(\int_{A_2^{1,s}}[G_{t-s}(x-z)+G_{t'-s}(y-z)]^2|u(s,z)|^{\frac{2\gamma}{1-\vartheta}}\mu(dz)\bigg)^{1-\vartheta}
\nonumber \\ &&\quad \times \bigg(\int_{A_2^{1,s}}[G_{t-s}(x-z)-G_{t'-s}(y-z)]^2\mu(dz)ds\bigg)^{\vartheta}ds \nonumber \\ &&\leq
Ce^{{-\frac{\eps^{-2\dl_1}}{4}(1-\vartheta)}}\int_{0}^{t-\eps^2}\bigg(\int_{\re}e^{\frac{2\gamma|z|}{1-\vartheta}}[G_{2(t-s)}(x-z)+G_{2(t'-s)}(y-z)]^2\mu(dz)\bigg)^{1-\vartheta}
\nonumber \\ && \quad\times \bigg(\int_{\re}[G_{t-s}(x-z)-G_{t'-s}(y-z)]^2\mu(dz)\bigg)^{\vartheta}ds.
\en
Apply Lemmas \ref{condmu}(a) and \ref{Lemma-MP4.3}(a) to (\ref{rt34}) to get
\bn \label{rt36}
&&Q_{1,2}^{x,y,t,t}  \nonumber \\ &&\leq
C(K,\vartheta,\eta',\gamma)e^{{-\frac{\eps^{-2\dl_1}}{4}(1-\vartheta)}}\int_{0}^{t-\eps^2}\frac{1}{(t-s)^{(2-\eta')(1-\vartheta)/2}} \nonumber \\
&&\quad \times \bigg(\int_{\re}[G_{t-s}(x-z)-G_{t-s}(y-z)]^2\mu(dz)\bigg)^{\vartheta}ds  \nonumber \\ &&\leq
C(K,\vartheta,\eta',\gamma)e^{{-\frac{\eps^{-2\dl_1}}{4}(1-\vartheta)}}\int_{0}^{t}\frac{1}{(t-s)^{(2-\eta')(1-\vartheta)/2}}
\frac{(x-y)^{2\vartheta}}{(t-s)^{(2-\eta'/2)\vartheta}}ds  \nonumber \\
 &&\leq C(K,\vartheta,\eta',\gamma)e^{{-\frac{\eps^{-2\dl_1}}{4}(1-\vartheta)}}(x-y)^{2\vartheta},
\en
where we have used in the last inequality the facts
$\vartheta\in(0,\eta'/2)$ and therefore, \\ $(2-\eta')(1-\vartheta)/2+(2-\eta'/2)\vartheta = 1-\eta'/2+\vartheta<1$. From (\ref{rt36}) we get (a) for $i=1$. \medskip \\
To prove (b) for $i=1$, we use the
following inequality from the proof of Lemma 5.2 in \cite{MPS06} (see equation (52)),
\bn \label{rt37}
|G_{t}(w)-G_{t'}(w)|\leq C
t^{-1}|t-t'|(G_{t}(w)+G_{2t'}(w)).
\en
From (\ref{rt37}) and Lemma \ref{condmu}(a) we get
\bn \label{rt377}
&&\int_{\re}[G_{t-s}(x-z)-G_{t'-s}(x-z)]^2\mu(dz) \nonumber \\ &&\leq
C(t-s)^{-2}|t-t'|^2\int_{\re}(G_{t-s}(x-z)+G_{2(t'-s)}(x-z))^2\mu(dz) \nonumber \\ &&\leq
C(K,\eta')(t-s)^{-3+\eta'/2}|t-t'|^2.
\en
Repeat the same steps in (\ref{rt34}), this time with $\vartheta/2$ instead of $\vartheta$,
then, use again Lemma \ref{condmu}(a), (\ref{rt37}) and (\ref{rt377}) to get
\bn \label{rt38}
&&Q_{1,2}^{x,x,t,t'}  \nonumber \\ &&\leq
Ce^{{-\frac{\eps^{-2\dl_1}}{4}(1-\vartheta/2)}}\int_{0}^{t-\eps^2}
\bigg(\int_{\re}e^{\frac{2\gamma|z|}{1-\vartheta/2}}[G_{2(t-s)}(x-z)+G_{2(t'-s)}(y-z)]^2\mu(dz)\bigg)^{1-\vartheta/2}
\nonumber \\ && \quad \times \bigg(\int_{\re}[G_{t-s}(x-z)-G_{t'-s}(y-z)]^2\mu(dz)\bigg)^{\vartheta/2}ds \nonumber \\
&&\leq C(K,\vartheta,\eta',\gamma)e^{{-\frac{\eps^{-2\dl_1}}{4}(1-\vartheta/2)}}\int_{0}^{t}\bigg(\frac{1}{(t-s)^{(1-\eta'/2)(1-\vartheta/2)}} \bigg) \bigg(\frac{|t-t'|^{\vartheta}}{(t-s)^{(3-\eta'/2)\vartheta/2}}\bigg)ds \nonumber \\
&&\leq C(K,\vartheta,\eta',\gamma)e^{{-\frac{\eps^{-2\dl_1}}{4}(1-\vartheta/2)}}|t-t'|^{\vartheta},
\en
where we have used the fact that $(1-\eta'/2)(1-\vartheta/2)+(3-\eta'/2)\vartheta/2=1+\vartheta-\eta'/2<1$ in the last inequality. From (\ref{rt38}) we deduce (b) for $i=1$. \medskip \\
To prove (a) and (b) for $i=2$ and to prove (c), we notice that $s\in[t-\eps^2,t']$ and $|x-z|>2\eps^{1-\dl_1}$. Since $|x-y|\leq \eps$ we have $|y-z|\geq
||x-z|-|x-y||>2\eps^{1-\dl_1}-\eps>\eps^{1-\dl_1}$. Since $|t-t'|\leq \eps^2$ we have $t'-s=t'-t+t-s\leq \eps^2+t-s\leq 2\eps^2$. From these two inequalities, we get
\bn \label{cl1}
\bigg(\frac{(x-z)^2}{t-s} \bigg) \wedge  \bigg ( \frac{(y-z)^2}{t'-s} \bigg) \geq \frac{\eps^{-2\dl_1}}{2},
\en
and
\bn \label{cl3}
 \frac{(x-z)^2}{t'-s}\geq 2\eps^{-2\dl_1}.
\en
From (\ref{cl1}) we get inequality like (\ref{rt33}), then (a) and (b) follow for $i=2$ along the same lines as the case where $i=1$. Finally we use (\ref{cl3}) to get (\ref{rt332}) with $\theta =1-\vartheta$. We also apply (\ref{rt23}) to $Q_2^{x,t,t'}$ to get,
\bn \label{rt39}
Q_2^{x,t,t'}&=&\int_{t}^{t'}\int_{A_2^2} G^2_{t'-s}(x-z)u^{2\gamma}(s,z)\mu(dz)ds \nonumber \\ &\leq&K^{2\gamma}
\int_{t}^{t'}\int_{A_2^2} e^{2\gamma|z|}G^2_{t'-s}(x-z)\mu(dz)ds \nonumber \\
&\leq&C(K,\lam,\vartheta)e^{-\frac{\eps^{-2\dl_1}}{2}(1-\vartheta)} \int_{t}^{t'}\int_{\re}
e^{2\gamma|z|}G^{2}_{(t'-s)/\vartheta}(x-z)\mu(dz)ds \nonumber \\
&\leq&C(K,\vartheta,\eta',\gamma)e^{-\frac{\eps^{-2\dl_1}}{2}(1-\vartheta)} |t-t'|^{\eta'/2},
\en
where we used Lemma \ref{condmu}(a) in
the last inequality. Note that (\ref{rt39}) is (c) and this completes the proof.
\end{proof} \\\\
Let  $\vartheta'=\eta'/2-\dl'+\gamma\xi$.
Note that $\vartheta'<1$ (by(\ref{rt3})). Now use Lemma
\ref{lemma5.6-MPS}(a),(c) and Lemma \ref{lemma5.7-MPS}(a) with $\vartheta=\eta'/2-\dl'$ to get for $(t,x)\in Z_{K,N,\xi},|x-y|<\eps=2^{-N}$ and $N>N_1$,
\bn \label{pp1} Q^{x,y,t,t}&\leq&
Q^{x,y,t,t}_{1,1} + Q^{x,y,t,t}_{2,1}+Q^{x,y,t,t}_{1,2}+ Q^{x,y,t,t}_{2,2} \nonumber \\ &\leq &
C(\dl',K,\eta',\gamma)(\eps^{2(1-\dl_1)\xi\gamma}|x-y|^{2\vartheta} + (8+3K2^{N_{\xi}\xi})^{2\gamma}\eps^{-2\dl_1\xi\gamma}|x-y|^{2\vartheta'}, \nonumber \\
&&+e^{-\eps^{-2\dl_1}(1-\vartheta)/4}|x-y|^{2\vartheta}) \nonumber \\
&=&C(\dl',K,\eta',\gamma)|x-y|^{2(\eta'/2-\dl')}\big[\eps^{2(1-\dl_1)\xi\gamma} \nonumber \\
&&+(8+3K2^{N_{\xi}\xi})^{2\gamma}\eps^{-2\dl_1\xi\gamma}|x-y|^{2\gamma\eps} +e^{-\eps^{-2\dl_1}(1-\eta'/2+\dl')/4}\big]
\nonumber \\ &\leq&C(\dl',K,\eta',\gamma)|x-y|^{2(\eta'/2-\dl')}\big[2^{2N_{\xi}\xi\gamma}\eps^{2(1-\dl_1)\xi\gamma}
+e^{-\eps^{-2\dl_1}(2-\eta')/8}\big].
\en
Use Lemmas \ref{lemma5.6-MPS}(b),(d),(e),
\ref{lemma5.7-MPS}(b),(c) with the same $\vartheta, \vartheta'$ as in (\ref{pp1}) to get for $(t,x)\in Z_{K,N,\xi},|t-t'|<\eps^2$ and $N\geq N_1$,
\bn \label{pp2}
Q^{x,x,t,t'}+Q^{x,t,t'}&\leq& Q^{x,x,t,t'}_{2,1} + Q^{x,x,t,t'}_{1,1}+Q^{x,t,t'}_{1}+
Q_{1,2}^{x,x,t,t'}+Q_{2,2}^{x,x,t,t'}+Q^{x,t,t'}_{2}  \\
&\leq&C(\vartheta,K,\eta',\gamma)(\eps^{2(1-\dl_1)\xi\gamma}|t-t'|^{\vartheta} +(8+3K2^{N_{\xi}\xi})^{2\gamma}\eps^{-2\dl_1\xi\gamma}|t-t'|^{\vartheta'} \nonumber \\
&&+\eps^{2\gamma\xi(1-\dl_1)}|t-t'|^{\eta'/2} +e^{-\eps^{-2\dl_1}(1-\vartheta/2)/4}|t-t'|^{\vartheta}
+e^{-\eps^{-2\dl_1}(1-\vartheta)/2}|t-t'|^{\eta'/2}). \nonumber \en
From (\ref{pp2}) we get
\bn
 Q^{x,x,t,t'}+Q^{x,t,t'} &\leq&
C(\dl',K,\eta',\gamma)|t-t'|^{\vartheta}\big[\eps^{2(1-\dl_1)\xi\gamma}+(8+3K2^{N_{\xi}\xi})^{2\gamma}\eps^{-2\dl_1\xi\gamma}|t-t'|^{\vartheta'-\vartheta}\nonumber
\\ &&+\eps^{2\gamma\xi(1-\dl_1)}|t-t'|^{\eta'/2-\vartheta}+e^{-\eps^{-2\dl_1}(1-\vartheta/2)/4}|t-t'|^{\vartheta}+
e^{-\eps^{-2\dl_1}(1-\vartheta)/2}|t-t'|^{\eta'/2-\vartheta}\big]\nonumber \\
&\leq& C(\dl',K,\eta',\gamma)|t-t'|^{\eta'/2-\dl'}\big[\eps^{2(1-\dl_1)\xi\gamma}+(8+3K2^{N_{\xi}\xi})^{2\gamma}\eps^{-2\dl_1\xi\gamma}|t-t'|^{\gamma\xi}\nonumber
\\ &&+\eps^{2\gamma\xi(1-\dl_1)}|t-t'|^{\dl'}+e^{-\eps^{-2\dl_1}(1-\eta'/4+\dl/2)/4}+
e^{-\eps^{-2\dl_1}(1-\eta'/2+\dl')/2}|t-t'|^{\dl'}\big]\nonumber \\
&\leq&
C(\dl',K,\eta',\gamma)|t-t'|^{\eta'/2-\dl'}\big[\eps^{2(1-\dl_1)\xi\gamma}2^{2N_{\xi}\xi\gamma}+
e^{-\eps^{-2\dl_1}(4-\eta')/16}\big],
\en
where we have used the fact that $\dl'\in(0,\eta'/2)$ in the last inequality.
From (\ref{pp1}) and (\ref{pp2}) we conclude that $P_3=P_4=0$ in (\ref{rt6}) if
\bn \label{pp3}
C(\dl',K,\eta',\gamma)\big[\eps^{2(1-\dl_1)\xi\gamma}2^{2N_{\xi}\xi\gamma}+ e^{-\eps^{-2\dl_1}(2-\eta')/16}\big]&\leq&
\eps^{2p},
\en
The rest of the proof is similar to the proof of Theorem 4.1 in \cite{MPS06}. We find conditions on $N_{\xi}$ and $\dl_1$ so that (\ref{pp3}) is satisfied. Then we use the estimates in (\ref{rt67}) and the fact that $P_3=P_4=0$ to bound the probabilities in (\ref{rt4}). Finally a chaining argument is used to get the hypothesis of Theorem \ref{Theorem4.1-MPS}.
\qed

\paragraph{Proof of Theorem \ref{Theorem2.3MP-P0-ind}} The proof uses ideas
from the proof of Corollary 4.2 in \cite{MPS06}. From Theorem 2.4 in \cite{Zahle2004} we get that $u$ is uniformly H\"{o}lder
-$\rho$ continuous on compacts in $(0,\infty)\times \re$ for every $\rho\in(0,\eta/4)$. Define inductively $\xi_0=\eta/4$ and
$\xi_{n+1}=\big[\big(\xi_n\gamma+\eta/2 \big)\wedge 1 \big]\big(1-\frac{1}{n+3}\big)$ so that
\bn \label{xi-calc}
\xi_n\uparrow \frac{\eta}{2(1-\gamma)}\wedge1.
\en
Recall that $\eta'=\eta-\varpi$ for some arbitrarily small $\varpi\in(0,\eta)$. From the assumptions of Theorem \ref{Theorem2.3MP-P0-ind} we can chose $\varpi$ sufficiently small such that
\bn \label{xi-calc1}
\gamma>1-\eta'/2.
\en
From (\ref{xi-calc}) and  (\ref{xi-calc1}) we have
\bn \label{xi-calc2}
\xi_n\uparrow  1.
\en
Fix $n_0$ so that $\xi_{n_0}\geq \xi > \xi_{n_0-1}$. Apply Theorem \ref{Theorem4.1-MPS} inductively $n_0$ times
to get (\ref{hyp-Thm-4.1-MPS}) for $\xi_{n_0-1}$. (\ref{res-Thm-4.1-MPS}) follows with $\xi_1=\xi_{n_0}$.   \qed
\paragraph{Proof of Theorem \ref{thm-reg}:}
Let $(t_0,x_0)=(t_0,x_0)(\omega)\in S_0(\omega)$. From (\ref{Def-Z-K-N-Set}) follows that
\bn \label{reg11}
(t_0,x_0)\in Z(K,N)(\omega), \ \forall N\geq 0.
\en
From (\ref{reg11}) and Theorem \ref{Theorem2.3MP-P0-ind} follows that there exists $N_0(\xi,K,\omega)$ such that for all $N\geq N_0$ and $(t,x)\in [0,T_K]\times \re$ such that
$d((t,x),(t_0,x_0))\leq 2^{-N}$, we have
\bn \label{reg12}
|u(t,x)-u(x_0,t_0)|\leq 2^{-N\xi}.
\en
Let $(t',x')\in [0,T_K] \times \re$ such that $d((t',x'),(t_0,x_0))\leq 2^{-N_0}$. There exists $N'\geq N_0$ such that $2^{-N'-1}\leq d((t',x'),(t_0,x_0))\leq 2^{-N'}$. We get from (\ref{reg12}) that
\bn \label{reg13}
|u(t',x')-u(x_0,t_0)|\leq 2^{-\xi N'}\leq 2(d((t',x'),(t_0,x_0)))^{\xi},
\en
and we are done.  \qed

%\section{Acknowledgments}
%This paper was written during my Ph.D. studies under
%the supervision of Professor L. Mytnik. I am grateful to him for his guidance
%and numerous helpful conversations during the preparation of this work.

%\begin{thebibliography}{10}
%
%
%
%\end{thebibliography}

\section{Acknowledgments}
This paper was written during my Ph.D. studies under
the supervision of Professor L. Mytnik. I am grateful to him for his guidance
and numerous helpful conversations during the preparation of this work.

\bibliographystyle{plain}
\printindex

%\bibliography{thesis}
\def\cprime{$'$}

\end{document}